\pgfplotsset{compat=1.18} 
\theoremstyle{TH}%
\newtheorem{fact}{Fact}
\newcommand{\Nest}{N_{\text{est}}}
\newcommand{\Msep}{\widetilde{M}}
\newcommand{\ALG}{\textsf{RCN}\xspace}
\newcommand{\ALGWS}{\textsf{RCN}$^{\textsf{ws}}$\xspace}
\newcommand{\ALGplus}{\textsf{RCN}$^+$\xspace}
\newcommand{\IgnorantSAA}{\textsf{Naive SAA}\xspace}
\newcommand{\SubsampleSAA}{\textsf{Subsample SAA}\xspace}
\newcommand{\TrueSAA}{\textsf{True SAA}\xspace}
\newcommand{\KM}{\textsf{KM}\xspace}
\newcommand{\RKM}{\textsf{Robust KM}\xspace}
\newcommand{\CensoredSAA}{\textsf{Censored SAA}\xspace}
\newcommand{\U}{\mathcal{U}}
\newcommand{\Uest}{\U_{\textsf{est}}}
\newcommand{\Ind}[1]{\mathds{1}\left\{ #1 \right\}}
\renewcommand{\Pr}{\mathbb{P}}
\newcommand{\E}{\mathbb{E}}
\DeclarePairedDelimiter{\abs}{\lvert}{\rvert}
\newcommand{\id}{\textup{id}}
\newcommand{\ui}{\textup{ui}}
\newcommand{\ke}{\textup{ke}}
\newcommand{\Ber}{\text{Ber}}
\newcommand{\GKE}{\mathcal{G}^{\textup{ke}}}
\newcommand{\GI}{\mathcal{G}^{\textup{id}}}
\newcommand{\GUI}{\mathcal{G}^{\textup{ui}}}
\newcommand{\Gminus}{G^-}
\newcommand{\Gminushat}{\widehat{G}^{-}}
\newcommand{\Gminushatk}[1]{\widehat{G}^{-}_{#1}}
\newcommand{\F}{\mathcal{G}}
\newcommand{\CDF}{G}
\newcommand{\eCDF}{\widehat{\CDF}}
\renewcommand{\d}{d}
\newcommand{\doff}{\d^{\textup{off}}}
\newcommand{\qopt}{q^\star}
\newcommand{\order}{q}
\newcommand{\Event}{\mathcal{E}}
\newcommand{\Regret}{\textup{Regret}}
\newcommand{\UncensoredRegret}{\textup{Vanilla-Regret}}
\newcommand{\qbar}{M}
\newcommand{\B}[1]{\mathcal{B}(#1; G)}
\newcommand{\qcrit}{q^{\dagger}}
\newcommand{\qcrithat}{{q}^{\dagger}_{\offeCDF}}
\newcommand{\risk}{\Delta}
\newcommand{\riskws}{\risk^{\textsf{ws}}}
\newcommand{\width}{\zeta}
\newcommand{\qrisk}{q^{\risk}}
\newcommand{\qriskws}{q^{\riskws}}
\newcommand{\offeCDF}{\widehat{G}}
\newcommand{\offeCDFtrue}{\widehat{G}^{\textup{uc}}}
\newcommand{\inv}[1]{I_{#1}^{\textup{off}}}
\newcommand{\stockoutdays}{\mathcal{T}}
\newcommand{\sales}[1]{\soff_{#1}}
\newcommand{\qalg}{q^{\textup{alg}}}
\newcommand{\func}{Q^\dagger}
\newcommand{\soff}{s^{\textup{off}}}
\newcommand{\qoff}{q^{\textup{off}}}
\newcommand{\lambdasamples}{s^{\textup{off},\lambda}}
\newcommand{\ambig}[1]{\mathcal{F}(#1;G)}
\newcommand{\ber}
\newcommand{\minimaxquant}{minimax optimal ordering quantity}
\newcommand{\criticalquantile}{unidentifiable ordering quantity}
\crefname{assumption}{assumption}{assumptions}
\crefname{algocf}{algorithm}{algorithm}
\mathchardef\mhyphen="2D 
\DeclareMathOperator*{\argmin}{arg\,min}
\DeclareMathOperator*{\argmax}{arg\,max}
\let\originalleft\left
\let\originalright\right
\renewcommand{\left}{\mathopen{}\mathclose\bgroup\originalleft}
\renewcommand{\right}{\aftergroup\egroup\originalright}
\begin{document}


\RUNAUTHOR{Hssaine and Sinclair}

\RUNTITLE{Data-Driven Censored Newsvendor}

\TITLE{The Data-Driven Censored Newsvendor Problem}

\ARTICLEAUTHORS{%
\AUTHOR{Chamsi Hssaine}
\AFF{Department of Data Sciences and Operations,
University of Southern California, Marshall School of Business, \EMAIL{hssaine@usc.edu}}

\AUTHOR{Sean R. Sinclair}
\AFF{Department of Industrial Engineering and Management Sciences,
Northwestern University, \EMAIL{sean.sinclair@northwestern.edu}}
} 

\ABSTRACT{%
We study a censored variant of the data-driven newsvendor problem, where the decision-maker must select an ordering quantity that minimizes expected overage and underage costs based only on {{offline} {\em censored} sales data}, rather than historical demand realizations.
Our goal is to understand how the degree of historical demand censoring affects the performance of any learning algorithm for this problem. To isolate this impact, we adopt a distributionally robust optimization framework, evaluating policies according to their worst-case regret over an {\em ambiguity set} of distributions. This set is defined by the largest historical order quantity (the {\em observable boundary} of the dataset), and contains all distributions matching the true demand distribution up to this boundary, while allowing them to be arbitrary afterwards.
We demonstrate a {\it spectrum of achievability} under demand censoring by deriving a natural necessary and sufficient condition under which vanishing regret is an achievable goal. In regimes in which it is not, we exactly characterize the information loss due to censoring: an insurmountable lower bound on the performance of any policy, {\it even when the decision-maker has access to infinitely many demand samples}. We then leverage these sharp characterizations to propose a natural {\em robust} algorithm that adapts to the historical level of demand censoring.    
We derive finite-sample guarantees for this algorithm across all possible censoring regimes and show its near-optimality with matching lower bounds (up to polylogarithmic factors). We moreover demonstrate its robust performance via extensive numerical experiments on both synthetic and real-world datasets.
}%




\KEYWORDS{Newsvendor, censored data, data-driven decision-making, distributionally robust optimization, minimax regret, finite-sample guarantees} 

\maketitle


\section{Introduction}\label{sec:intro}
The classical newsvendor problem models settings where a decision-maker makes an inventory decision to fulfill future random demand, with the goal of minimizing expected overage and underage costs once demand is realized. In the most basic version of this problem --- wherein the decision-maker makes a single decision and knows the distribution from which demand is drawn --- the optimal solution is well-known to be the critical quantile, an ordering quantity that balances the likelihood of overage and underage according to their relative costs \citep{zipkin2000foundations}. In more realistic settings, however, the decision-maker does not know the true demand distribution; instead, she has access to data (e.g., historical demand) that allows her to learn good ordering policies despite this {information} gap. This is the so-called {\it data-driven} newsvendor problem. 

In the offline setting, when the decision-maker only needs to make a {\it single} ordering decision given historical demand realizations, the Sample Average Approximation (SAA) heuristic, which computes the sample critical quantile, has been shown to be near-optimal, with extensive efforts devoted to deriving tight performance guarantees for this natural policy \citep{chen2024survey}. This line of work, however, assumes that demand is {\it fully observable}, an assumption that fails to hold in many real-world applications, such as brick-and-mortar retail settings.  In these settings, more often than not, demand is {\it censored}. That is, if the decision-maker only had a fixed amount of inventory to give out over a period of time, she would only observe how many units were {\it sold}, as opposed to how many were actually {\it desired}. Learning in these settings, then, is likely to be more challenging, especially in highly resource-constrained settings where historical demand consistently dwarfs historical inventory levels (e.g., demand for monocultural events such as the Eras Tour, or for hotel rooms associated with conference venues). When this occurs, the decision-maker has likely {\it never} observed the true demand realizations.

We conjecture that this challenge is why the offline, data-driven censored newsvendor problem has been explored to a much lesser extent. Indeed, to the best of our knowledge, the design of algorithms with finite-sample guarantees for this problem has been studied in only two fairly recent works \citep{ban2020confidence, fan2022sample}, both of which rely on the crucial assumption that a constant fraction of historical inventory levels exceed the critical quantile (henceforth referred to as the {\it optimal newsvendor quantity}). Upon first reflection, the necessity of such an assumption is evident: if all historical inventory levels were zero, no demand would ever be observed. In this case, what can the decision-maker possibly hope to accomplish? Still, this assumption remains inherently unsatisfying, for two reasons. The first reason is practical: there exist many highly constrained real-world settings where there is no reason to expect that historical inventory levels were in fact high enough to capture the optimal newsvendor quantity, as in the aforementioned examples. The second reason is more fundamental; in particular, it is not clear that this assumption is even testable, given that it requires knowledge of the true newsvendor quantity. Faced with this fact, the decision-maker has no option but to deploy the proposed algorithms and hope for the best {(a solution we show fails dramatically when the assumption does not hold)}. Our work is motivated by this important gap in the literature. Namely, we seek to answer the following research questions:
\smallskip

\begin{center}
{\em 
{What is the impact of demand censoring on algorithm performance in the {offline} data-driven newsvendor problem? Can we characterize a lower bound on algorithm performance as a function of the level of historical demand censoring? Do there exist simple algorithms that asymptotically achieve this lower bound?
}
}
\end{center}

\subsection{Main Contributions}

\subsubsection*{A flexible modeling framework.} In an attempt to tackle these questions, one of the main contributions of this work is a modeling framework that allows us to formalize the impact of demand censoring on algorithm performance for the newsvendor problem. In particular, while a standard performance metric for the data-driven newsvendor problem is the additive optimality gap relative to the optimal newsvendor cost under complete information, we argue that this benchmark ceases to be meaningful in highly censored settings. 
This is most obvious for the pathological instance where all historical inventory levels are zero, in which case there is clearly no hope for the decision-maker to learn the optimal newsvendor quantity. 
Thus motivated, we turn to the distributionally robust optimization (DRO) framework. Specifically, we consider the {\it ambiguity set} of a given instance to be the set of all possible distributions sharing the same cumulative distribution function (cdf) as the true (unknown) demand distribution, up until the maximum historical inventory level, referred to as the {\it observable boundary} of the dataset and denoted by $\lambda$. After $\lambda$, distributions in the ambiguity set may take an arbitrary shape (\Cref{def:robustness_set}). Given the ambiguity set, we evaluate the performance of any data-driven policy according to its worst-case additive optimality gap compared to the optimal newsvendor quantity (i.e., its worst-case {regret}), {\it over all possible distributions in the ambiguity set} (\Cref{def:min_regret}). (Such an ambiguity set was first defined in \citet{bu2023offline} for the problem of offline pricing under censored demand.)

Introducing this modeling framework for the data-driven censored newsvendor problem directly addresses the benchmark issue described above. When $\lambda$ is low, much of the demand will never be observed. In such information-poor settings, we posit that it is more appropriate for a decision-maker to instead view nature as adversarial. This view is reflected by a large ambiguity set of distributions against which she needs to compete. In historically unconstrained settings, on the other hand, when $\lambda$ is large, existing work hints at the idea that the decision-maker's objective is fundamentally easier \citep{ban2020confidence,fan2022sample}. This other extreme is reflected in a much smaller ambiguity set of distributions against which to compete.

\subsubsection*{Impact of censoring on achievable algorithm performance.} This flexible framework allows us to formalize the existence of a {\it spectrum of achievability} determined by the observable boundary of the dataset. Our first main technical contribution is to exactly characterize this spectrum (\Cref{thm:minimax-risk-identifiable}). In particular, we provide a {\it necessary and sufficient} condition for any algorithm to achieve vanishing regret in the worst case. This condition turns out to be equivalent to the condition assumed in  \citet{ban2020confidence} and \citet{fan2022sample}: the observable boundary $\lambda$ must exceed the optimal newsvendor quantity. We moreover recover that in this {\it identifiable} regime, the minimax optimal ordering quantity is exactly equal to the optimal newsvendor quantity corresponding to the true demand distribution. This result formalizes that the optimality of the critical quantile is robust to a wide range of censoring levels.

In the {\it unidentifiable} setting where this condition fails to hold, however, our result implies that the regret of any algorithm is bounded away from zero, even if the decision-maker has access to infinitely many samples. We show this by providing an exact lower bound on the worst-case regret in such settings, which we refer to as the {\it minimax risk} $\risk$ and which can be interpreted as the information loss due to demand censoring. The minimax risk exhibits the natural property of being decreasing in $\lambda$, reflecting the intuition that the decision-maker's problem becomes easier as the data becomes less censored. We moreover produce an explicit expression for the minimax optimal ordering quantity, which similarly depends on $\lambda$, and at a high level hedges against a worst-case distribution that places weight exclusively on $\lambda$ and a known upper bound on the optimal newsvendor quantity.

\subsubsection*{A robust, near-optimal algorithm.} 
{These characterizations prove to be pivotal in demonstrating that a simple algorithm asymptotically achieves the fundamental lower bound due to demand censoring, as the number of samples grows large.}
Our algorithm, \textsf{Robust Censored Newsvendor} (\ALG, \Cref{alg:newsvendor}), proceeds in two stages. In the first stage, it tests for identifiability by estimating the fraction of demand that lies below $\lambda$. If this estimate exceeds the critical ratio by some appropriately tuned confidence parameter, the algorithm outputs an empirical estimate of the optimal newsvendor quantity. If it falls short of the critical ratio by some confidence term, on the other hand, it classifies the problem as unidentifiable and computes an empirical estimate of the minimax optimal ordering quantity. In between, it is unable to conclusively determine identifiability, and defaults to outputting $\lambda$. 

This practical algorithm lies on the spectrum of achievability characterized in \Cref{thm:minimax-risk-identifiable}. Namely, across {\it all} regimes of identifiability, it guarantees $O(1/\sqrt{N})$ worst-case regret with respect to the minimax risk with probability $1-\delta$, where $N$ is the number of samples associated with ordering quantity $\lambda$ (henceforth referred to as samples {at the boundary}), and $\delta$ determines the algorithm's confidence level for identifiability (\Cref{thm:minmax_regret}). While seemingly intuitive, the above description sweeps under the rug that our algorithm (necessarily) only uses censored data to compute all of its estimates. Hence, it is a priori unclear that any of these would in fact be unbiased, a prerequisite to our algorithm obtaining its strong guarantees. Herein lies a crucial design choice: our algorithm {\it only} uses samples at the boundary. This choice is quite subtle: while censored data in general will produce biased estimates, by subsetting amongst samples at the boundary, we are able to recover unbiasedness of all empirical estimates.

We subsequently complement these upper bounds with matching lower bounds (up to polylogarithmic factors) for all possible censoring regimes (\Cref{thm:lb}). We do so via a unified treatment for all possible instances, reducing the decision-making problem to that of hypothesis testing.
Finally, we demonstrate the strong practical performance of our algorithm via extensive computational experiments, on both synthetic and real-world datasets. We specifically explore the dependence of our algorithm's numerical performance on (i) the number of samples $N$, (ii) the observable boundary $\lambda$, and (iii) the variability of the underlying demand distribution. {We observe that our algorithm asymptotically achieves the minimax risk $\risk$ across all censoring regimes,} as compared to state-of-the-art benchmarks that assume identifiability~\citep{ban2020confidence,fan2022sample}. We moreover propose a practical modification to \ALG (\ALGplus, \Cref{alg:newsvendor_plus_plus}) which leverages the entirety of the dataset, as opposed to only samples at the boundary; we observe that this modification can yield significant performance improvements in the identifiable regime.

\subsubsection*{Paper organization.}  We review the related literature in the remainder of this section. We formally present our model in \cref{sec:preliminary}, and formalize the minimax risk of the censored newsvendor in \cref{sec:minimax-risk}. In \cref{sec:upper-bound} we design and analyze a best-of-both-worlds algorithm which asymptotically achieves the minimax risk. We complement this with a matching lower bound in \cref{sec:lower-bound}.  We demonstrate our algorithm's strong numerical performance in \cref{sec:experiments}, and conclude in \Cref{sec:conclusion}.
\subsection{Related Literature}
\label{sec:related_work}

{The newsvendor model is one of the most foundational models in the operations literature, beginning with the work of \citet{scarf1957min}. In the past few decades, there has been significant focus on incorporating data-driven methods to solve variants of this canonical model, when the decision-maker either has access to {\em offline} data or can collect data adaptively (i.e., {\em online}). In this section we discuss the most closely related literature to the offline censored newsvendor problem we consider. We refer readers to \citet{chen2024survey} for a comprehensive survey of data-driven newsvendor results.}

\subsubsection*{Data-Driven Newsvendor: Non-Parametric Setting.} 
{Our work falls within the large body of literature on the data-driven newsvendor in offline, non-parametric settings. Within this line of work, the most common assumption is that the decision-maker has access to {\it uncensored} demand samples. For this setting, the Sample Average Approximation (SAA) algorithm was first shown to be near-optimal in \citet{levi2007provably}. Their upper bound was later refined by \citet{levi2015data}, who established the dependence of SAA accuracy {on} the weighted mean spread of the demand distribution, and by \citet{lin2022data}, who characterized its dependence on the local flatness of the demand distribution {around} the optimal ordering quantity. \citet{cheung2019sampling} showed that these upper bounds are tight by providing a lower bound on the number of demand samples required for an algorithm to achieve low relative regret with high probability. While the bounds provided by these latter works are most relevant in the asymptotic regime in which the number of samples is large, \citet{besbes2023big} more recently provided an exact analysis of the SAA algorithm across all data sizes, demonstrating that only tens of samples suffice for this algorithm to achieve strong performance. Contrary to this line of work, we assume that the decision-maker only has access to {\it censored} demand samples, a setting in which the classical SAA algorithm is not implementable, {\it sans} modification. We note however that, for the special case where the minimum historical ordering quantity goes to infinity, our setting reduces to the uncensored setting, in which case our results recover near-optimality of the SAA algorithm.}

{While censored demand has been studied in the literature, the vast majority of existing work considers the {\it online} setting, in which the decision-maker makes sequential ordering decisions over a finite horizon. Early work by \citet{godfrey2001adaptive} and \citet{huh2009nonparametric} demonstrated the strong performance of gradient-based methods in these settings. \citet{huh2011adaptive} showed that adaptively using the well-known Kaplan-Meier (KM) estimator \citep{kaplan1958nonparametric}, which reconstructs the empirical cumulative distribution function by uniformly redistributing the mass of censored observations to uncensored observations, converges almost surely to the set of optimal solutions under discrete demand. \citet{besbes2013implications} later characterized the implications of demand censoring in repeated newsvendor problems. Less closely related to our work is the design of sequential decision-making algorithms under censored demand, for more general models with inventory carry-over (e.g., with warehouse capacity constraints~\citep{shi2016nonparametric}, setup costs~\citep{yuan2021marrying,fan2024don}, positive lead times \citep{agrawal2022learning, xie2024vc}, uncertain supply \citep{chen2024learning}), and in nonstationary settings \citep{lugosi2024hardness,keskin2025nonstationary}. 

The online setting differs fundamentally from the offline setting in the presence of censored demand since, in the online case, the decision-maker can adaptively adjust order quantities based on observed sales, thereby influencing future samples. In contrast, the offline setting offers no such control. This distinction partially explains the dearth of literature on the impact of censored data on single-period newsvendor decisions. 
\citet{ban2020confidence} was the first to tackle this question, under the assumption that the maximum historical ordering quantity exceeds the optimal ordering quantity (i.e., in the identifiable regime). They derive an asymptotically consistent estimator for the optimal policy and use this to provide asymptotic confidence intervals. In an early version of their work, \citet{fan2022sample} {studied} the sample complexity of learning the optimal ordering quantity when historical samples are generated from a given data-collecting policy. They show that a variant of the SAA algorithm is near-optimal, assuming that (i) historical inventory levels are independently and identically distributed, and (ii) a constant fraction of historical inventory levels exceed the optimal ordering quantity (an assumption that is analogous to identifiability in our setting).\footnote{Subsequent to our work, \cite{fan2025sample} echoed our results, establishing that under i.i.d. data-collecting policies, this condition is necessary to learn the newsvendor solution to a certain accuracy.} {We make two important contributions relative to these recent works. Firstly, contrary to \citet{ban2020confidence}, we are interested in designing algorithms with {\it finite-sample} guarantees. More importantly, however, the primary motivator behind our work is to characterize the fundamental limits that demand censoring places on algorithm performance, therefore requiring us to relax the identifiability assumptions upon which both of these papers rely. This motivation allows us to design simple algorithms that achieve these limits with strong guarantees {\it across all censoring regimes} (i.e., irrespective of how the historical ordering quantities are generated, and even if {\it no} historical ordering quantity exceeds the optimal newsvendor quantity).}\footnote{\minedit{Similarly subsequent to our work, \cite{kumar2026value} study the problem of exactly characterizing the worst-case regret of classical data-driven algorithms (e.g., Kaplan-Meier) for the newsvendor problem. Their results recover our main insight that any algorithm's performance is fundamentally limited by the level of demand censoring in the dataset. However, in contrast to our work, they do not seek to design algorithms that achieve robust performance across all levels of censoring.}}

Finally, we highlight that while the KM estimator is applicable to our setting, to the best of our knowledge, finite-sample guarantees do not exist for this heuristic. More importantly, however, this estimator only estimates the tail of the distribution at the observable boundary; past the observable boundary, it is not defined. As a result, we do not expect it to perform well in the unidentifiable regime, when the optimal newsvendor quantity is past the maximum historical ordering level. We demonstrate its poor performance in this regime in our numerical experiments (see \Cref{sec:experiments}).

\subsubsection*{Data-Driven Newsvendor: {Robust and Bayesian Settings}.} While our work is concerned with the non-parametric setting, we briefly mention the large body of literature on {robust and Bayesian} newsvendor models, where the decision-maker has access to additional information on the underlying demand distribution (e.g., a parametric form, or its moments). 

Our modeling framework's reliance on the ambiguity set induced by the true demand distribution and the observable boundary $\lambda$ is directly inspired by the distributionally robust newsvendor literature. In this body of work, \citet{scarf1957min} first studied the minimax solution of the newsvendor problem when both the mean and variance of the true demand distribution are known; \citet{gallego1993distribution} later simplified and extended the analysis to variants of the newsvendor model. \citet{perakis2008regret} consider the minimax regret objective when the decision-maker knows various distributional quantities such as the range and median. More recent work has assumed knowledge of the distribution's semi-variance, under demand asymmetry \citep{natarajan2018asymmetry}, and considered the distributionally robust newsvendor problem under a Wasserstein ambiguity set \citep{lee2021data, fiechtner2025wasserstein}.  

Contrary to this line of work, however, our ambiguity set construction exists not because of some exogenously given information about the true demand distribution; rather, it arises from the fact that demand is unobservable past $\lambda$. 
Before the observable boundary, we assume no additional information on the demand distribution.
In light of this, our work can best be viewed as lying at the intersection of the literature on non-parametric and distributionally robust newsvendor models. Recent work by \citet{xu2022robust} and \citet{fu2024distributionally} on the uncensored data-driven newsvendor is philosophically in the same hybrid vein, as they leverage historical samples to construct an ambiguity set defined by the nonparametric characteristics of the true distribution. We note that the notion of ambiguity set we consider is related to the first-order stochastic dominance (FSD) ambiguity set defined in \citet{fu2024distributionally}. Our ambiguity set, however, cannot exactly be cast as a FSD ambiguity set, given that we assume that the mean, as opposed to the support, of the underlying demand distribution is bounded. Moreover, \citet{fu2024distributionally} are interested in the absolute cost, as opposed to the notion of {regret} we consider to characterize the performance of our learning algorithm. Finally, \minedit{\citet{besbes2022beyond} and \citet{zhang2024more} consider robust optimization formulations for the uncensored newsvendor problem, when samples are drawn from a biased distribution.}

We briefly mention the line of work on the {\it Bayesian} newsvendor problem, originating with the works of \citet{scarf1959bayes}, followed by \citet{azoury1985bayes} and  \citet{liyanage2005practical} for the uncensored setting. In this {\it sequential} setting, the decision-maker has a prior belief on the true demand distribution, and updates it in each round as she observes additional demand realizations. Both uncensored \citep{saghafian2016newsvendor} and censored \citep{lariviere1999stalking,ding2002censored,lu2008analysis,bensoussan2009note,bisi2011censored,jain2015demand,mersereau2015demand,besbes2022exploration} settings have been studied under this framework. 

Finally, the {\it feature-based} newsvendor problem --- in which the demand realization depends on some observable features of the data --- has drawn increasing attention in recent years \citep{ban2019big, ding2024feature, zhang2024optimal, fu2024distributionally}. Extending our results to the contextual offline setting would be an interesting and practically relevant future direction.

\subsubsection*{Impact of Demand Censoring: Beyond the Classical Newsvendor.} There exists a large body of work in operations on demand estimation under censored data. A common approach is to assume that the demand distribution has a parametric form (see, e.g., \citet{nahmias1994demand}, \citet{agrawal1996estimating}, \citet{vulcano2012estimating}, \citet{mersereau2015demand}). In contrast, our focus is on the {nonparametric} setting, as previously discussed. Most relevant to our work in this regard is \citet{bu2023offline}, who introduced the concept of problem identifiability for an offline {\it pricing} problem under censored data, and whose hybrid robustness framework we leverage to characterize the impact of demand censoring on the {newsvendor} problem. It is notable that, in contrast to their results, we do not need a well-separatedness condition on the underlying demand distribution, and moreover are able to leverage the structure of the newsvendor model for an {\em exact} characterization of the worst-case regret. We also provide matching lower bounds across all regimes of identifiability.\footnote{Less closely related to our study is the line of work on estimating nonparametric choice models from limited choice data \citep{farias2013nonparametric}. There, the source of censoring comes from the fact that the customer's consideration set of items is unobserved in the data.}

Finally, we highlight the deep connection between the data-driven newsvendor problem and the statistics literature on quantile estimation (see, e.g. \citet{harrell1982new}, \citet{kalgh1982generalized}, \citet{tierney1983space}, \citet{yang1985smooth}, \citet{zielinski1999best} for classical works on nonparametric estimators in the uncensored setting). 
As mentioned above, in the censored setting, the classical benchmark is the KM estimator \citep{kaplan1958nonparametric}. More recent work has considered censored quantile {regression}, in which the censored outcome (in our case, demand) is determined by a set of observed covariates \citep{powell1986censored,portnoy2003censored}. In all of these works, the main point of focus has been deriving consistency and asymptotic normality properties of estimators, as opposed to finite-sample guarantees for some notion of regret. Moreover, as with the KM estimator, these asymptotic guarantees all rely on equivalent notions of identifiability.  

\section{Problem Formulation}
\label{sec:preliminary}

\smallskip

\subsubsection*{Technical notation.} In what follows, for $N \in \mathbb{N}_+$, we let $[N] = \{1,2,\ldots,N\}$. We moreover use $\Pr_G(\cdot)$ and $\E_G[\cdot]$ to respectively denote the probability of an event and the expectation of a random variable when the source of underlying randomness has a \mbox{cumulative distribution function (cdf) $G$}.

\subsubsection*{Model primitives.} We consider the classical single-period newsvendor problem, in which a decision-maker faced with random demand $D \geq 0$ must decide on the number of units to satisfy this demand. We let $G$ denote the cdf of $D$, and assume that $\E_G[D] < \infty$. 

For any ordering decision $q \geq 0$, once demand is realized and fulfilled to the maximum extent possible, the decision-maker incurs a lost sales penalty $b > 0$ for each unsatisfied unit of demand (also referred to as the {\it underage cost}). If all demand is satisfied and there is leftover inventory at the end of the period, the decision-maker incurs a per-unit {\it overage cost} $h > 0$. Given $G$, the decision-maker's goal is to determine an ordering quantity that minimizes the newsvendor cost, given by:
\begin{equation}
\label{eq:cost}
C_\CDF(q) = \E_\CDF\left[b(D - q)^+ + h(q - D)^+\right],
\end{equation}
where $(\cdot)^+ = \max\{\cdot,0\}$ is used to denote the positive part. When $G$ is known, the optimal ordering quantity, denoted by $\qopt_G$, is determined by the {{\it critical ratio}} $\rho = \frac{b}{b+h} \in (0,1)$ \citep{zipkin2000foundations}. Formally, {the so-called {\em critical quantile}} $\qopt_G$ is the $\rho$-th quantile of $G$, i.e., 
\begin{equation}
\label{eq:newsvendor}
\qopt_G = \inf\left\{q \mid G(q) \geq \rho\right\}.
\end{equation}

In the {data-driven} setting we consider, however, the decision-maker does not know $G$, but has access to historical (equivalently, offline) {\it sales} data. {The historical data is defined by $K \in \mathbb{N}^+$ historical ordering quantities, denoted by $\qoff_k$, for $k \in [K]$, with $\qoff_1 < \qoff_2<\ldots<\qoff_K$. We assume that the historical ordering quantities are exogenous and fixed.\footnote{\citet{bu2023offline} similarly consider a setup with finitely many fixed and exogenous historical ordering quantities.} For each ordering quantity $\qoff_k$, $k \in [K]$, the decision-maker observes $N_k$ sales samples \mbox{$\soff_{ki} = \min\left\{\doff_{ki},\qoff_k\right\}$}, $i \in [N_k]$, where $\doff_{ki}$ refers to the true realization of historical demand in period $i$ associated with ordering quantity $\qoff_k$, assumed to be drawn independently and identically (i.i.d.) from $G$. We moreover assume that $\doff_{ki}$ and $\qoff_k$ are independent, for all $k \in [K], i \in [N_k]$.  Let $\doff = \left(\doff_{ki}, i \in [N_k], k \in [K]\right)$ and \mbox{$\soff = \left(\soff_{ki}, i \in [N_k], k \in [K]\right)$}.  
We let $N = N_K$ be the number of samples associated with the largest historical ordering quantity; we moreover denote $\lambda = \qoff_K$ to be this largest historical ordering quantity. As we will later see, this latter quantity, henceforth referred to as the {\it observable boundary} of the dataset, plays a key role in learning the optimal order quantity.

Finally, we assume that the decision-maker has access to a known upper bound $\qbar$ on $\qopt_G$,  and let \ifdefined\arxiv \[\F = \left\{F \mid F \text{ has nonnegative support}, \E_F[D] < \infty, \, \qopt_F \leq \qbar\right\}\] \else \mbox{$\F = \left\{F \mid F \text{ has nonnegative support}, \E_F[D] < \infty, \, \qopt_F \leq \qbar\right\}$} \fi be the set of distributions satisfying the above assumptions on the true underlying cdf $G$.

\begin{remark}
{The assumption that $\qbar$ is known is common in the newsvendor literature (\citet{huh2009nonparametric}, \citet{besbes2013implications}, \citet{ban2020confidence}). For instance, if the decision-maker knows the support of the underlying distribution $G$, she may take $M$ to be an upper bound on this support. Our main results will show that access to a finite upper bound on $\qopt_G$ is in fact {\it necessary} to avoid pathologies in which infinite regret is unavoidable for the decision-maker. This fact will become clear in the subsequent analysis; hence, we defer a counterexample establishing necessity of this assumption to \Cref{remark:well-sep}.}
\end{remark}

\subsubsection*{Objective.} Let $\pi$ denote a policy which takes as input the historical sales data $\soff$, and outputs an ordering quantity $q^\pi \in [0,M]$. We use $\Pi$ to denote the set of all such mappings. 

In the classical data-driven newsvendor problem, the decision-maker observes the true realizations of historical demand, rather than censored sales data. In this uncensored setting, a standard metric to evaluate the performance of a policy $\pi$ is the additive optimality gap of $q^\pi$ relative to the optimal ordering quantity $\qopt_G$. We refer to this optimality gap as the {{\it vanilla regret}} of policy $\pi$, formally defined as:
\begin{align}\label{eq:uncensored-regret}
\UncensoredRegret(q^\pi) = C_G(q^\pi)-C_G(\qopt_G).
\end{align}

In the censored setting we consider, however, this notion of regret may or may not be meaningful, depending on the level of demand censoring. To see this, consider an extreme case where $\lambda = +\infty$, and demand is bounded. In this case, the decision-maker always observes the true demand when $\lambda$ was the ordering quantity. This setting reduces to the classical uncensored data-driven newsvendor over these demand samples, in which case competing against $\qopt_G$ is indeed a meaningful metric.

On the other hand, consider the pathological case in which $\lambda = 0$ (i.e., $\qoff_k = 0 \ \forall \ k \in [K]$). In this case, the decision-maker never observes {any} demand information! Since the dataset is completely uninformative in this case, it is philosophically more natural to consider an {adversarial} framework in which the policy should aim to perform well against {\it any} distribution chosen by nature.

{To interpolate between these two pathological cases, we draw from the distributionally robust optimization (DRO) literature, and consider the {\it ambiguity set} of demand distributions induced by $\lambda$.
}

\smallskip

\begin{definition}[Ambiguity set]
\label{def:robustness_set}
Given $\lambda$, the \emph{ambiguity set} associated with $G$ is the set of all distributions $F \in \F$ that share the same cdf as $G$, for $x < \lambda$.{\footnote{The philosophical dependence of the performance metric on $\lambda$ motivated the introduction of such an ambiguity set in \citet{bu2023offline}, for the problem of offline pricing with censored data.}} Formally:
\begin{align}
\ambig{\lambda} =\left\{F \in \F \mid F(x) = G(x) \ \forall \ x < \lambda\right\}.
\end{align}
\end{definition}

\smallskip

When $\lambda = 0$, $\ambig{\lambda} = \F$, i.e., the ambiguity set consists of all possible cdfs satisfying our mild distributional assumptions. When $\lambda = +\infty$, however, there is no ambiguity surrounding the underlying demand distribution, and $\ambig{\lambda} = \{G\}$. Now, for the non-pathological cases where $\lambda \in (0, +\infty)$, the ambiguity set captures the idea that, prior to $\lambda$, there may be hope of reconstructing the true cdf $G$, required to compute $\qopt_G$, by \Cref{eq:newsvendor}.  Past $\lambda$, however, the decision-maker never observes any demand realizations. Hence, even with infinitely many samples, she will never be able to estimate the tail of the true demand distribution. In this case, we recover the adversarial framework that was motivated when $\lambda = 0$; namely, for all intents and purposes, the tail of the demand distribution past $\lambda$ can be arbitrary. We illustrate the dependence of the ambiguity set on $\lambda$ in \Cref{fig:robustness_set}. This example moreover pictorially shows that, for $\lambda' > \lambda$, $\ambig{\lambda'} \subset \ambig{\lambda}$.

\begin{figure}[t]
\centering
{
\scalebox{.85}{
\tikzset{every picture/.style={line width=0.75pt}} 
\tikzset{every picture/.style={line width=0.75pt}} 

\begin{tikzpicture}[x=0.75pt,y=0.75pt,yscale=-1,xscale=1]

\draw [line width=1.5]    (78,40) -- (78,248) ;
\draw [shift={(78,36)}, rotate = 90] [fill={rgb, 255:red, 0; green, 0; blue, 0 }  ][line width=0.08]  [draw opacity=0] (11.61,-5.58) -- (0,0) -- (11.61,5.58) -- cycle    ;
\draw [line width=1.5]    (78,248) -- (392.5,248) ;
\draw [shift={(396.5,248)}, rotate = 180] [fill={rgb, 255:red, 0; green, 0; blue, 0 }  ][line width=0.08]  [draw opacity=0] (11.61,-5.58) -- (0,0) -- (11.61,5.58) -- cycle    ;
\draw    (174,237) -- (174,258) ;
\draw  [dash pattern={on 0.84pt off 2.51pt}]  (174,74) -- (174,247.5) ;
\draw    (337,240) -- (337,261) ;
\draw  [dash pattern={on 0.84pt off 2.51pt}]  (337.5,74) -- (337,250.5) ;
\draw [line width=1.5]    (78,248) .. controls (126.5,242) and (114,169) .. (174.5,157) ;
\draw  [dash pattern={on 0.84pt off 2.51pt}]  (81,156) -- (337.25,156) ;
\draw  [dash pattern={on 0.84pt off 2.51pt}]  (174,74) -- (337.5,74) ;
\draw [color={rgb, 255:red, 208; green, 2; blue, 27 }  ,draw opacity=1 ][line width=1.5]    (174.5,157) .. controls (179.5,76) and (147.5,75) .. (337.5,74) ;
\draw [color={rgb, 255:red, 245; green, 166; blue, 35 }  ,draw opacity=1 ][line width=1.5]    (174.5,157) .. controls (178.5,77) and (218.5,84) .. (337.5,74) ;
\draw [color={rgb, 255:red, 74; green, 144; blue, 226 }  ,draw opacity=1 ][line width=1.5]    (174.5,157) .. controls (318.5,154) and (333.5,144) .. (337.5,74) ;
\draw [color={rgb, 255:red, 144; green, 19; blue, 254 }  ,draw opacity=1 ][line width=1.5]    (174.5,157) .. controls (292.5,134) and (313.5,132) .. (337.5,74) ;
\draw [color={rgb, 255:red, 0; green, 0; blue, 0 }  ,draw opacity=1 ][line width=1.5]    (174.5,157) -- (337.5,74) ;
\draw [color={rgb, 255:red, 80; green, 227; blue, 194 }  ,draw opacity=1 ][line width=1.5]    (174.5,157) .. controls (212.5,93) and (248.5,97) .. (337.5,74) ;
\draw [color={rgb, 255:red, 65; green, 117; blue, 5 }  ,draw opacity=1 ][line width=1.5]    (241.5,122) .. controls (263.5,91) and (301.5,93) .. (337.5,74) ;
\draw [color={rgb, 255:red, 184; green, 233; blue, 134 }  ,draw opacity=1 ][line width=1.5]    (241.5,122) .. controls (291.5,123) and (301.5,93) .. (337.5,74) ;
\draw  [dash pattern={on 0.84pt off 2.51pt}]  (242,122) -- (242,249) ;
\draw    (242,238) -- (242,259) ;
\draw  [dash pattern={on 0.84pt off 2.51pt}]  (242,122) -- (78.5,122) ;

\draw (168,263) node [anchor=north west][inner sep=0.75pt]   [align=left] {$\displaystyle \lambda $};
\draw (28,148) node [anchor=north west][inner sep=0.75pt]   [align=left] {$\displaystyle \Gminus(\lambda)$};
\draw (328,262) node [anchor=north west][inner sep=0.75pt]   [align=left] {$\displaystyle M$};
\draw (67,67) node [anchor=north west][inner sep=0.75pt]   [align=left] {$\displaystyle 1$};
\draw (65,241) node [anchor=north west][inner sep=0.75pt]   [align=left] {$\displaystyle 0$};
\draw (400,245) node [anchor=north west][inner sep=0.75pt]   [align=left] {$\displaystyle x$};
\draw (39,26) node [anchor=north west][inner sep=0.75pt]   [align=left] {$\displaystyle F( x)$};
\draw (235,263) node [anchor=north west][inner sep=0.75pt]   [align=left] {$\displaystyle \lambda' $};
\draw (28,112) node [anchor=north west][inner sep=0.75pt]   [align=left] {$\displaystyle \Gminus(\lambda')$};

\end{tikzpicture}}
}
\caption{
{Illustration of the ambiguity set $\ambig{\lambda}$ induced by an observable boundary $\lambda$ and cdf $G$, represented by the black curve. Here, the seven colored curves are cdf's in $\ambig{\lambda}$: they coincide with $G(x)$ for all $x < \lambda$, and are arbitrary afterwards. Note that only the light and dark green curves are contained in $\ambig{\lambda'}$, since all other curves deviate from $G(x)$ for some $x \in [\lambda, \lambda')$.\label{fig:robustness_set}}}
\end{figure}

Given the ambiguity set $\ambig{\lambda}$, our performance metric for any policy $\pi \in \Pi$ is its worst-case optimality gap against the optimal newsvendor cost of {\it any} distribution $F \in \ambig{\lambda}$. We refer to this additive optimality gap as the {\it regret} of a policy, which we formally define below.

\smallskip
\begin{definition}[Regret]\label{def:min_regret}
Given true demand distribution $G$ and {dataset $\soff$} {with observable boundary $\lambda$}, the \emph{regret} of policy $\pi \in \Pi$ with access to censored demand samples $\soff$ is defined as:
 \begin{equation}\label{eq:regret}
\Regret(q^\pi) = \sup_{F \in \ambig{\lambda}} C_F(q^\pi) - C_F(\qopt_F).
\end{equation}
\end{definition}

\medskip

We will equivalently refer to the regret of a policy as its {\it worst-case} regret, in line with the fact that we take the supremum over all $F \in \ambig{\lambda}$.\footnote{While we focus on the additive optimality gap of newsvendor policies, another common metric that has been considered for the data-driven uncensored newsvendor problem is the {\it relative} optimality gap, i.e., $\frac{C_G(q^\pi)-C_G(\qopt_G)}{C_G(\qopt_G)}$. While our exact algorithm and analytical results are tied to the additive regret definition, we conjecture that our techniques still apply to the relative regret metric.}
{Notice that the regret of a policy, as defined in \Cref{eq:regret}, implicitly depends on (i) the true demand distribution $G$, and (ii) the observable boundary $\lambda$. As a result, our guarantees will be instance-dependent, as a function of these two model primitives, in addition to $b$ and $h$}.

The aim is to design a policy with low regret, for any true underlying distribution $G$, as we scale $N$, the number of data samples at the boundary.\footnote{The number of samples associated with the largest ordering level is a common choice for the scaling parameter in censored settings (see, e.g., \citet{bu2023offline} and \citet{fan2022sample}). At a high level, $N$ can be viewed as the ``bottleneck'' of the dataset, in that it is the limiting factor in accurately estimating the true underlying cdf $G$.} In the remainder of our work, we refer to this problem as the {\it data-driven censored newsvendor problem}.

\smallskip 

{Observe that, since {$G \in \ambig{\lambda}$} for all $\lambda$, $\Regret(q^\pi) \geq \UncensoredRegret(q^\pi)$ for any policy $\pi$.} In the same vein, since $\ambig{\lambda'} \subset \ambig{\lambda}$ for $\lambda' > \lambda$, the regret of a policy is (weakly) decreasing in $\lambda$. This formalizes the natural idea that {the censored setting is indeed {at least as hard} as the uncensored setting}, and only becomes more challenging as more demand realizations are censored.  In the following section we formalize the extent to which this is true, and how this depends on the value of $\lambda$.

\begin{remark}
While we make minimal assumptions on the underlying demand distribution for our main set of results, in Appendix \ref{app:well_separated} we explore the value the decision-maker can derive from additional distributional information. In particular, we study the case where the distribution is {\it globally well-separated} (i.e., continuous with a known lower bound $\gamma > 0$ on its probability density function).\footnote{This assumption is commonly made in the literature on the data-driven newsvendor \citep{huh2009nonparametric,besbes2013implications}.}
\end{remark}

\section{Cost of Demand Censoring in the Data-Driven Newsvendor}\label{sec:minimax-risk}

In the uncensored setting, it is well-known that the vanilla regret (defined in \Cref{eq:uncensored-regret}) of any policy is lower bounded by 
\[\inf_{\pi \in \Pi} \mathbb{E}_G\left[C_G(q^\pi)-C_G(\qopt_G)\right] = \Omega(1/\sqrt{N}).\]
This lower bound is achieved by the classical sample average approximation (SAA) algorithm {\citep{levi2007provably}}, which outputs the $\rho$-th empirical quantile of $G$, i.e.,
\begin{align}\label{eq:uncensored-saa}
{\qopt_{\offeCDF}} = \inf\left\{q \mid \frac{1}{\sum_{k\in[K]}N_k}\sum_{k=1}^K\sum_{i=1}^{N_k} \Ind{\doff_{ki} \leq q} \geq \rho \right\}.
\end{align}

We first highlight that this solution is not implementable in the censored setting, since $\doff_{ki}$ is unobserved. While there exist natural adaptations of ${\qopt_{\offeCDF}}$ to the censored setting (e.g., replacing $\Ind{\doff_{ki} \leq q}$ by $\Ind{\soff_{ki} \leq q}$ in \Cref{eq:uncensored-saa}, or conditioning on uncensored samples), we show in \Cref{sec:experiments} that these naive solutions exhibit remarkably poor performance, both on vanilla and worst-case regret. Rather than searching for a policy that exhibits strong performance, however, in this section we ask a more fundamental question: {\it Is vanishing regret even achievable in the censored setting?}

We return to our extreme cases to answer this question neither in the affirmative nor in the pejorative. To see this, consider first the case where $\lambda = +\infty$ and demand is bounded. In this case, $\ambig{\lambda} = \{G\}$, as argued in \Cref{sec:preliminary}. Moreover, samples for which $\lambda$ was the ordering quantity are uncensored; computing ${\qopt_{\offeCDF}}$ over these samples guarantees $O(1/\sqrt{N})$ regret, by existing results for the uncensored setting \citep{chen2024survey}. Consider now the case where $\lambda = 0$. In this case, the ambiguity set $\ambig{\lambda} = \F$, implying that the quantity output by any policy $\pi$ must simultaneously compete against the space of {\it all} possible distributions. For any quantity $q^\pi$, however, it is not difficult to construct a distribution such that $q^\pi$ incurs constant loss relative to the optimal ordering quantity $\qopt_F$, {\it independent of the number of samples $N$.} To illustrate this idea, consider the policy that computes the $\rho$-th empirical quantile with respect to sales. This policy outputs $q^{\pi} = 0$ when $\lambda = 0$. Consider now the atomic distribution $F \in \ambig{\lambda}$ for which $D = M$ with probability 1. For this distribution, $\qopt_F = M$, which implies that $\Regret(q^\pi) = \Omega(M)$, a constant independent of $N$. 

Thus motivated, in this section we investigate the dependence of the optimal achievable performance of any policy on the observable boundary $\lambda$.

\subsection{Algorithm Performance and Minimax Risk}

Understanding whether or not vanishing regret is achievable, for a fixed $\lambda$, is closely related {to} the concept of problem identifiability, first introduced by \citet{bu2023offline}.

\smallskip

\begin{definition}[Problem identifiability]
\label{def:identifiability}
Given true demand distribution $G$ and observable boundary $\lambda$, the data-driven censored newsvendor problem is \emph{identifiable} if there exists a policy $\pi$ such that, for any $\epsilon > 0$:
\begin{equation}
\label{eq:identifiability}
\lim_{N \rightarrow \infty} \Pr_{{G}}\left[ \sup_{F \in \ambig{\lambda}} \Big\{ C_F(q^\pi) - C_F(\qopt_F)\Big\} < \epsilon \right] = 1,
\end{equation}
where the historical ordering quantities $\qoff_k$ are fixed, and the probability is taken over the randomness in the censored demand samples {$\doff_{ki} \sim G$.}  {If no such policy exists, the problem is \emph{unidentifiable}.}
%
\end{definition}

\smallskip

In words, a problem is unidentifiable if, even with infinitely many samples, no policy can achieve zero regret in the worst case. Note the unwieldiness of this definition, however, as it is a statement about the space of {\it all} policies $\pi \in \Pi$. To gain further insight into identifiability, we introduce the more tractable notion of minimax risk, which will be central to all of our results.
\smallskip 
\begin{definition}[Minimax risk]
\label{def:risk}
The \emph{minimax risk} $\risk$ of a data-driven censored newsvendor problem defined by true demand distribution $G$ and {observable boundary $\lambda$} is the minimum achievable regret of any constant $q \in [0,M]$. Formally:
\begin{equation}
\label{eq:risk}
\risk = \inf_{\order \in [0,M]} \sup_{F \in \ambig{\lambda}} \Big\{C_F(\order) - C_F(\qopt_F)\Big\}.
\end{equation}
We refer to the quantity that achieves $\risk$, if it exists, as the {\emph{\minimaxquant}} $\qrisk$.
\end{definition}
\smallskip

At a high level, the minimax risk can be thought of as the {\it cost of demand censoring}. It quantifies the information loss due to the decision-maker never being able to observe demand realizations that exceed $\lambda$. This concept moreover allows us to distinguish between the two sources of information loss in the data-driven censored newsvendor problem: (i) the information loss due to the fact there are finitely many samples of data, which also exists in the uncensored setting and depends on $N$, and (ii) the information loss due to censoring, which is dependent on $\lambda$, but independent of $N$.

The following proposition creates a formal connection between the minimax risk and identifiability. It implies that, if $\risk > 0$, {\it the regret of any policy is lower bounded by a constant.}

\begin{proposition}\label{prop:iff}
{The data-driven censored newsvendor problem is identifiable if and only if \mbox{$\risk = 0$}.}
\end{proposition}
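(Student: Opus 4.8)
The plan is to reduce both implications to a single structural fact about the \emph{worst-case regret function} $R(q) := \sup_{F \in \ambig{\lambda}}\{C_F(q) - C_F(\qopt_F)\}$, so that by \Cref{def:risk} we may write $\risk = \inf_{q \in [0,M]} R(q)$. The observation driving the entire argument is that any policy $\pi \in \Pi$ outputs, for each realization of the data $\soff$, a single number $q^\pi \in [0,M]$; consequently the inner quantity appearing in the identifiability condition \eqref{eq:identifiability} is exactly $R(q^\pi)$, which is bounded below by $\risk$ for \emph{every} realization, no matter how cleverly $\pi$ uses the data. Both directions will then follow once I show that this infimum is attained, so that a constant (data-oblivious) policy can realize it.

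First I would establish that $R$ attains its infimum on $[0,M]$. For each fixed $F \in \ambig{\lambda}$, the map $q \mapsto C_F(q) - C_F(\qopt_F)$ is nonnegative (since $\qopt_F$ minimizes $C_F$) and $\max\{b,h\}$-Lipschitz on $[0,M]$, because the newsvendor cost \eqref{eq:cost} has subgradient $h\,F(q) - b\,(1 - F(q)) \in [-b, h]$; in particular it is continuous and bounded above by $\max\{b,h\}\, M$. As a pointwise supremum of continuous functions, $R$ is therefore lower semicontinuous and finite on the compact interval $[0,M]$, and hence attains its infimum at some $\qrisk$, i.e.\ $R(\qrisk) = \risk$. (Convexity of $R$, inherited from convexity of each $C_F$ in $q$, gives an alternative route, but lower semicontinuity already suffices.)

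With attainment in hand the two implications become immediate. If $\risk = 0$, I would take the constant policy that ignores the data and always outputs $\qrisk$; its worst-case regret is the \emph{deterministic} quantity $R(\qrisk) = \risk = 0$, so for every $\epsilon > 0$ the event in \eqref{eq:identifiability} holds surely and the limiting probability equals $1$, establishing identifiability. Conversely, suppose $\risk > 0$ and fix an arbitrary policy $\pi \in \Pi$. Since $q^\pi \in [0,M]$ for every realization of $\soff$, we have $\sup_{F \in \ambig{\lambda}}\{C_F(q^\pi) - C_F(\qopt_F)\} = R(q^\pi) \geq \risk$ surely; choosing $\epsilon = \risk$ makes the event in \eqref{eq:identifiability} empty, so its probability is $0$ for every $N$ and the limit cannot equal $1$. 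As this holds for arbitrary $\pi$, no identifying policy exists and the problem is unidentifiable; the contrapositive yields the remaining implication, completing the equivalence.

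The only genuinely technical step is the attainment claim of the second paragraph, where the supremum ranges over the infinite-dimensional ambiguity set $\ambig{\lambda}$. The key point I expect to be the main obstacle is that the Lipschitz modulus $\max\{b,h\}$ is \emph{uniform} in $F$, which is precisely what upgrades the pointwise supremum $R$ to a lower semicontinuous (indeed finite and convex) function and thereby guarantees a minimizer on $[0,M]$. Everything else reduces to the elementary but conceptually central fact that a data-driven policy can only ever output a constant in $[0,M]$ along each sample path, so its worst-case regret is trapped below by $\risk$.
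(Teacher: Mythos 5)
Your proof is correct, and it takes a genuinely different route from the paper's on the harder direction. The easy direction ($\risk > 0$ implies unidentifiability) is the same in both arguments: on every sample path a policy outputs some constant $q^\pi \in [0,M]$, so its worst-case regret is at least $\risk$ surely, and choosing $\epsilon \in (0,\risk]$ forces the probability in \Cref{def:identifiability} to be $0$ for every $N$. For the converse, however, the paper does not argue directly: it invokes its main algorithmic result (\Cref{thm:minmax_regret}), which shows that the data-driven policy \ALG attains worst-case regret $\risk + \widetilde{O}(1/\sqrt{N})$ with probability $1 - O(1/\sqrt{N})$, and then lets $N \to \infty$. You instead prove that the infimum defining $\risk$ is attained --- each map $q \mapsto C_F(q) - C_F(\qopt_F)$ is $\max\{b,h\}$-Lipschitz uniformly in $F$ (this is exactly the paper's \Cref{lem:newsvendor_lipschitz}), so the worst-case regret function $R$ is finite, Lipschitz, hence continuous on the compact interval $[0,M]$ and has a minimizer $\qrisk$ --- and then observe that the constant policy $q^\pi \equiv \qrisk$ has \emph{deterministic} worst-case regret $R(\qrisk) = \risk = 0 < \epsilon$, so the limit in \Cref{def:identifiability} is trivially $1$. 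Your route is self-contained and far more elementary: it needs neither the algorithm nor the closed-form characterization of \Cref{thm:minimax-risk-identifiable}. What it gives up is operational content: your witnessing policy is an oracle policy, since $\qrisk$ depends on the unknown $G$, whereas the paper's witness is computable from the censored data alone and works uniformly across instances. This distinction matters only if one reads \Cref{def:identifiability} as implicitly restricting to policies constructible without knowledge of $G$; as the definition is literally stated (the instance $(G,\lambda)$ is fixed and one merely asks for existence of a mapping in $\Pi$), a constant mapping is admissible and your argument stands --- though it is worth flagging in your write-up that under this reading the proposition reduces to attainment of the infimum plus the definition, which is presumably why the authors chose to exhibit an implementable policy instead.
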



\Cref{prop:iff} thus motivates us to move the goalpost for the success of any given policy to incurring regret that is upper bounded by $\risk + o(1)$. We say that any policy that achieves this is {\it near-optimal}.

We defer the proof of \Cref{prop:iff} to Appendix \ref{apx:iff}. While it is easy to establish that identifiability implies $\risk = 0$, the other direction is more challenging, as it requires exhibiting a policy that achieves vanishing regret when $\risk = 0$. The proof of this direction actually follows from our main algorithmic contribution in \Cref{sec:upper-bound}, in which we design and analyze such a policy. 

We highlight that neither $\risk$ nor $\qrisk$ is computable by the decision-maker, since the supremum in \Cref{eq:risk} is taken over the ambiguity set $\ambig{\lambda}$, which itself depends on $G$, unknown to the decision-maker. As a result, $\risk$ is not an operationalizable quantity. Despite this fact, in the following section, we obtain {\it explicit characterizations} of $\risk$ and $\qrisk$, depending on the problem primitives. Not only will these be important in understanding the 
{cost of demand censoring in the data-driven newsvendor problem}, but they will also be instrumental in designing a simple algorithm 
{that asymptotically achieves the fundamental lower bound $\risk$}.

\subsection{Closed-Form Characterization of Minimax Risk}\label{sec:closed-form}

In our first main contribution, we leverage the more tractable notion of minimax risk by providing a {closed-form characterization} of $\risk$ and $\qrisk$, and {in doing so} obtain a simple necessary and sufficient condition for identifiability.
For ease of notation, we let $\Gminus(\lambda) = \Pr_G(D < \lambda)$ denote the mass of demand observations that lie before $\lambda$. At a high level, $\Gminus(\lambda)$ can be viewed as a proxy for the ``censoring level'' of the data, since $1-\Gminus(\lambda)$ is precisely the fraction of demand that the decision-maker will never observe, even with infinitely many samples. \Cref{thm:minimax-risk-identifiable} establishes that whether or not a problem is identifiable hinges solely on the relative ordering of $\Gminus(\lambda)$ and $\rho$.

{
\begin{theorem}\label{thm:minimax-risk-identifiable}
Consider a data-driven censored newsvendor problem with demand distribution $G$ and {observable boundary $\lambda$}.
\begin{enumerate}
\item If $\Gminus(\lambda) \geq \rho$: 
\[\qrisk = \qopt_G < \lambda \qquad , \qquad \risk = 0.\]
\item If $\Gminus(\lambda) < \rho$: 
\[\qrisk = \frac{bM+h\lambda-(b+h)\Gminus(\lambda)M}{(b+h)(1-\Gminus(\lambda))} \qquad , \qquad \risk = \frac{h\left(b-(b+h)\Gminus(\lambda)\right)(M-\lambda)}{(b+h)(1-\Gminus(\lambda))} \geq 0.\]
\end{enumerate}
\end{theorem}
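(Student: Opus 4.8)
The plan is to reduce everything to the convex-analytic description of the newsvendor cost and then separate the two cases according to where the critical quantile falls relative to the observable boundary $\lambda$. First I would record the standard identity $C_F(q) = b\int_q^\infty(1-F(s))\,ds + h\int_0^q F(s)\,ds$, whose (a.e.) derivative is $C_F'(q) = (b+h)F(q)-b$. Integrating this derivative across the minimizer gives the two regret formulas
\[
C_F(q)-C_F(\qopt_F) = \int_{\qopt_F}^{q}\!\big((b+h)F(s)-b\big)\,ds \ \text{ (over-order)}, \qquad C_F(q)-C_F(\qopt_F)=\int_{q}^{\qopt_F}\!\big(b-(b+h)F(s)\big)\,ds \ \text{ (under-order)}.
\]
The only structural facts I would extract from the definition of $\ambig{\lambda}$ are that every $F\in\ambig{\lambda}$ satisfies $F(s)=G(s)\le\Gminus(\lambda)$ for $s<\lambda$, $F(s)\in[\Gminus(\lambda),1]$ for $s\ge\lambda$, and $\qopt_F\le M$. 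In particular, when $\Gminus(\lambda)<\rho$ these force $\qopt_F\in[\lambda,M]$ for every admissible $F$.

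For Case 1 ($\Gminus(\lambda)\ge\rho$): since all $F\in\ambig{\lambda}$ agree with $G$ on $[0,\lambda)$ and $G$ already crosses level $\rho$ there, the critical quantile is pinned down inside the region of agreement, so $\qopt_F=\qopt_G\ (\le\lambda)$ for every $F$. Ordering $q=\qopt_G$ then makes $C_F(\qopt_G)-C_F(\qopt_F)$ vanish for every $F$ simultaneously, so $\risk=0$ and $\qrisk=\qopt_G$. (The only subtlety is strictness of $\qopt_G<\lambda$, which holds as soon as $G$ attains $\rho$ strictly before $\lambda$; the boundary case $\qopt_G=\lambda$ leaves $\risk=0$ unchanged.)

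For Case 2 ($\Gminus(\lambda)<\rho$) I would prove matching lower and upper bounds. For the lower bound, test the decision-maker against the two extreme distributions in $\ambig{\lambda}$ that agree with $G$ on $[0,\lambda)$ and then place the residual mass $1-\Gminus(\lambda)$ either entirely at $\lambda$ (call it $F_1$, with $\qopt_{F_1}=\lambda$) or entirely at $M$ (call it $F_2$, with $\qopt_{F_2}=M$). Plugging these into the regret formulas yields, for $q\in[\lambda,M]$, over-order regret $h(q-\lambda)$ under $F_1$ and under-order regret $\big(b-(b+h)\Gminus(\lambda)\big)(M-q)$ under $F_2$. Since the inner supremum dominates the maximum of these two linear pieces, minimizing that pointwise maximum over $q$ lower-bounds $\risk$; the two pieces cross at exactly the claimed $\qrisk$, and one checks that for $q<\lambda$ the maximum only grows, so $\inf_q\max=\risk$ as stated.

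The crux is the matching upper bound: I must show the single quantity $q^\dagger:=\qrisk$ is good against \emph{every} $F\in\ambig{\lambda}$, not merely $F_1,F_2$. Fixing $q^\dagger$ and an arbitrary admissible $F$, I split on the sign of $\qopt_F-q^\dagger$. If $\qopt_F\le q^\dagger$ (over-ordering), then on $(\qopt_F,q^\dagger]$ we have $F(s)\le1$, so the integrand is at most $h$, and since $\qopt_F\ge\lambda$ the regret is at most $h(q^\dagger-\lambda)$. If $\qopt_F\ge q^\dagger$ (under-ordering), then on $[q^\dagger,\qopt_F)$ we have $F(s)\ge\Gminus(\lambda)$, so the integrand is at most $b-(b+h)\Gminus(\lambda)$, and since $\qopt_F\le M$ the regret is at most $\big(b-(b+h)\Gminus(\lambda)\big)(M-q^\dagger)$. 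By the very definition of $q^\dagger$ as the balancing point, both bounds equal the claimed $\risk$, so $\sup_{F\in\ambig{\lambda}}\big(C_F(q^\dagger)-C_F(\qopt_F)\big)\le\risk$; combined with the lower bound this yields the value $\risk$ and identifies $\qrisk=q^\dagger$. The main obstacle is precisely this uniform control of an infinite-dimensional supremum, and what makes it tractable is that the only features of $F$ past $\lambda$ entering the regret bounds are the crude envelopes $F(s)\in[\Gminus(\lambda),1]$ and $\qopt_F\in[\lambda,M]$, which are exactly saturated by the two-point distributions $F_1$ and $F_2$.
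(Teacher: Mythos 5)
Your proof is correct, and it takes a genuinely different route from the paper's. The paper first proves an auxiliary lemma giving an \emph{exact} closed-form expression for the worst-case regret $\sup_{F\in\ambig{\lambda}}\{C_F(q)-C_F(\qopt_F)\}$ at \emph{every} $q$ (split into $q<\lambda$, $q\in[\lambda,\qcrit_G]$, $q>\qcrit_G$), by fixing the adversary's critical quantile $\tilde q=\qopt_F$ and optimizing over all distributions with that quantile, using an expectation-based cost-difference identity; the theorem then follows by minimizing that exact curve. You instead work with the CDF-integral representation $C_F'(s)=(b+h)F(s)-b$ and prove only \emph{matching bounds at the candidate point}: a lower bound on $\inf_q\sup_F$ by testing against the two extreme distributions $F_1$ (mass at $\lambda$) and $F_2$ (mass at $M$), and an upper bound at $q^\dagger=\qcrit_G$ via the envelope $F(s)\in[\Gminus(\lambda),1]$ for $s\ge\lambda$ together with $\qopt_F\in[\lambda,M]$, which uniformly controls the infinite-dimensional supremum. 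Your argument is leaner and arguably more transparent about why the two-point distributions are extremal (your envelopes are exactly saturated by $F_1,F_2$, which is the content of the paper's Proposition 3.2); what it does not deliver is the full worst-case regret function $\Regret(q)$ for all $q$, which the paper's lemma provides and which is reused repeatedly downstream (in the algorithm's regret analysis and in the lower-bound constructions). One further point in your favor: you correctly flag the boundary case where $G$ approaches level $\rho$ only in the limit as $q\to\lambda^-$, in which $\qopt_G=\lambda$ rather than $\qopt_G<\lambda$; the paper's own lemma asserts strictness without addressing this, while your treatment notes that the substantive conclusions $\risk=0$ and $\qrisk=\qopt_G$ are unaffected.
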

}

\smallskip
For ease of notation, in the unidentifiable regime 
 we refer to the \minimaxquant \ $\qrisk$ as the \emph{\criticalquantile}, denoted as:
\begin{align}\label{eq:qcrit}
\qcrit_G = \frac{bM+h\lambda-(b+h)\Gminus(\lambda)M}{(b+h)(1-\Gminus(\lambda))}.
\end{align}

{The following result then emerges as a corollary of \Cref{thm:minimax-risk-identifiable} and \Cref{prop:iff}. 

\begin{corollary}\label{cor:risk-to-id}
A data-driven censored newsvendor problem with demand distribution $G$ and {observable boundary $\lambda$} is identifiable if and only if $\Gminus(\lambda) \geq \rho$, or if 
 $\Gminus(\lambda) < \rho$ and $M = \lambda$.
\end{corollary}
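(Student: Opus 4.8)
The plan is to derive the corollary directly from the two results it invokes, Proposition \ref{prop:iff} and Theorem \ref{thm:minimax-risk-identifiable}. By Proposition \ref{prop:iff}, identifiability is equivalent to $\risk = 0$, so the entire task reduces to reading off exactly when the minimax risk vanishes from the closed-form expressions supplied by Theorem \ref{thm:minimax-risk-identifiable}. I would split along the two regimes of the theorem. In the first regime, $\Gminus(\lambda) \geq \rho$, the theorem gives $\risk = 0$ outright, so the problem is identifiable; this matches the first disjunct of the claim, and all the remaining content lies in the second regime.

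In the regime $\Gminus(\lambda) < \rho$, I would examine the sign of each factor in the closed form $\risk = \frac{h\left(b-(b+h)\Gminus(\lambda)\right)(M-\lambda)}{(b+h)(1-\Gminus(\lambda))}$. The factors $h$ and $b+h$ are positive by the cost assumptions $b,h > 0$; the factor $1-\Gminus(\lambda)$ is positive since $\Gminus(\lambda) < \rho < 1$; and the factor $b-(b+h)\Gminus(\lambda)$ is \emph{strictly} positive, because $\Gminus(\lambda) < \rho = \frac{b}{b+h}$ rearranges exactly to $(b+h)\Gminus(\lambda) < b$. Hence $\risk$ equals a strictly positive constant times $(M-\lambda)$, so $\risk = 0$ if and only if $M = \lambda$. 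This yields the second disjunct. Combining via Proposition \ref{prop:iff}, identifiability holds iff $\risk = 0$ iff $\Gminus(\lambda) \geq \rho$, or $\Gminus(\lambda) < \rho$ and $M = \lambda$, which is precisely the statement.

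The one point requiring care, rather than being a genuine obstacle, is confirming $M - \lambda \geq 0$ in the second regime, needed for consistency with the theorem's assertion $\risk \geq 0$. This follows by noting that $\Gminus(\lambda) = \Pr_G(D < \lambda) < \rho$ forces $\lambda \leq \qopt_G$: were $\lambda > \qopt_G$, monotonicity of $G$ together with $G(\qopt_G) \geq \rho$ (which holds by definition of the critical quantile in \Cref{eq:newsvendor}) would give $\Gminus(\lambda) \geq G(\qopt_G) \geq \rho$, a contradiction; since $\qopt_G \leq M$ by assumption, $\lambda \leq M$. I would emphasize, however, that this sign verification is not logically essential to the equivalence $\risk = 0 \Leftrightarrow M = \lambda$, which holds purely from the strict positivity of the remaining factors established above. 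As such, the corollary is essentially an immediate bookkeeping consequence of the sharp characterization in Theorem \ref{thm:minimax-risk-identifiable}, and I would expect the proof to be short.
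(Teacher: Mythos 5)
Your proposal is correct and follows exactly the route the paper intends: the paper states \Cref{cor:risk-to-id} as an immediate consequence of \Cref{prop:iff} (identifiability $\iff \risk = 0$) and the closed forms in \Cref{thm:minimax-risk-identifiable}, with the strict positivity of $b-(b+h)\Gminus(\lambda)$ when $\Gminus(\lambda)<\rho$ making $\risk=0 \iff M=\lambda$ in the second regime. Your sign check $\lambda \leq M$ also mirrors the paper's own argument (Proposition \ref{prop:lam-m} in Appendix \ref{apx:lam-m}), so there is nothing missing.
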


{In Appendix \ref{apx:lam-m} we establish that $\Gminus(\lambda) < \rho$ implies $\lambda \leq \qopt_G \leq M$.} Hence, for $\Gminus(\lambda) < \rho$ and $M = \lambda$ to simultaneously hold, it must be that $\qopt_G = \lambda = M$, a condition that we do not expect to hold in practice. {For instance, if $G(x)$ is continuous for all $x < \lambda$, it is not hard to see that $\Gminus(\lambda) < \rho$ implies $\qopt_G > \lambda$, and so $\lambda < M$.} We henceforth make the following mild assumption in the remainder of the paper. 

\begin{assumption}\label{asp:avoid-path}
Suppose $\Gminus(\lambda) < \rho$. Then, $\lambda < M$.
\end{assumption}

Under this assumption, we say that we are in the {\it identifiable regime} if $\Gminus(\lambda) \geq \rho$, and in the {\it unidentifiable regime} if $\Gminus(\lambda) < \rho$.\footnote{{We highlight that \Cref{asp:avoid-path} does not affect any of our results. It is simply introduced to make clear that the main salient driver of identifiability is the relationship between $\Gminus(\lambda)$ and $\rho$, rather than a spurious artifact of the maximum ordering quantity $M$.}}
}

We discuss the implications of \Cref{thm:minimax-risk-identifiable}, deferring a proof sketch to the end of the section. {At a high level, \Cref{thm:minimax-risk-identifiable} can be viewed as characterizing a ``spectrum of achievability'' for the data-driven censored newsvendor problem, based on the additional distributional information of $\Gminus(\lambda)$.} In particular, when $\lambda = +\infty$ and demand is bounded, we have that $\Gminus(\lambda) = 1$. \Cref{thm:minimax-risk-identifiable} then recovers the fact that the optimal
newsvendor cost $C_G(\qopt_G)$ is achievable with infinitely many samples in this case, and that the
problem is identifiable as a result. When $\lambda = 0$, on the other hand, $\Gminus(\lambda) = 0$ \Cref{thm:minimax-risk-identifiable} then
classifies the problem as unidentifiable, with $\risk = \frac{bhM}{b+h} > 0$. This recovers the intuition that, for any policy, there exists a worst-case distribution that forces it to incur constant loss.\footnote{This result subsumes \Cref{thm:minimax-risk-identifiable} of \citet{perakis2008regret}, for the special case where the interval $[\lambda,M]$ is the support of the demand distribution. As we will see at the end of this section, our analysis similarly uncovers the insight that the worst-case regret is achieved by a distribution that places weight on either $\lambda$ or $M$. For general $\lambda, M$, however, the arguments used to derive the minimax optimal ordering quantity and regret critically rely on our specific informational structure (i.e., that $\Gminus(\lambda)$ is fixed).}

\begin{figure}[t]
{
\begin{subfigure}[t]{.45\textwidth}
\centering
\includegraphics[width=\textwidth]{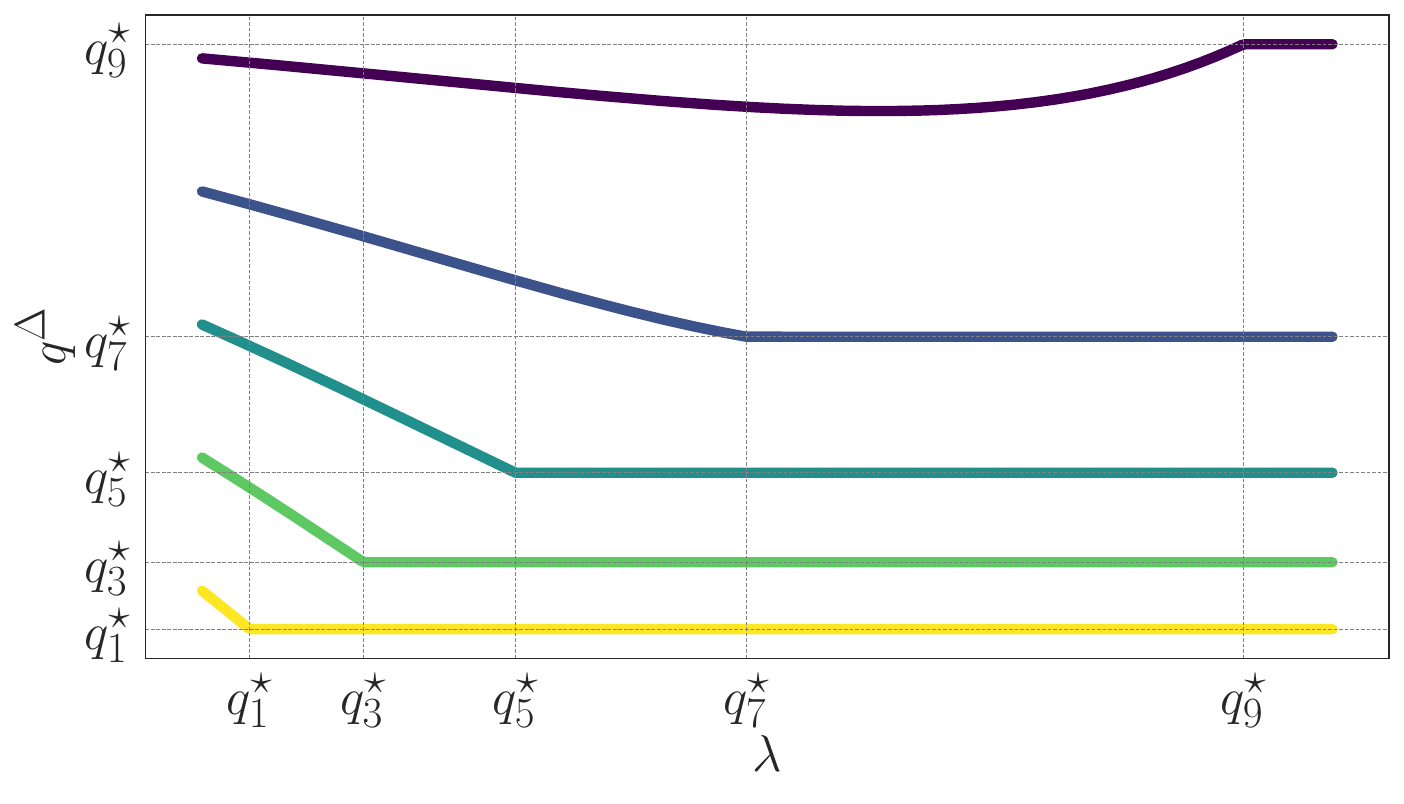}
\caption{\small $\qrisk$ vs. $\lambda$}
\label{fig:risk_a}
\end{subfigure}
\hfill
\begin{subfigure}[t]{.45\textwidth}
\centering
\includegraphics[width=\textwidth]{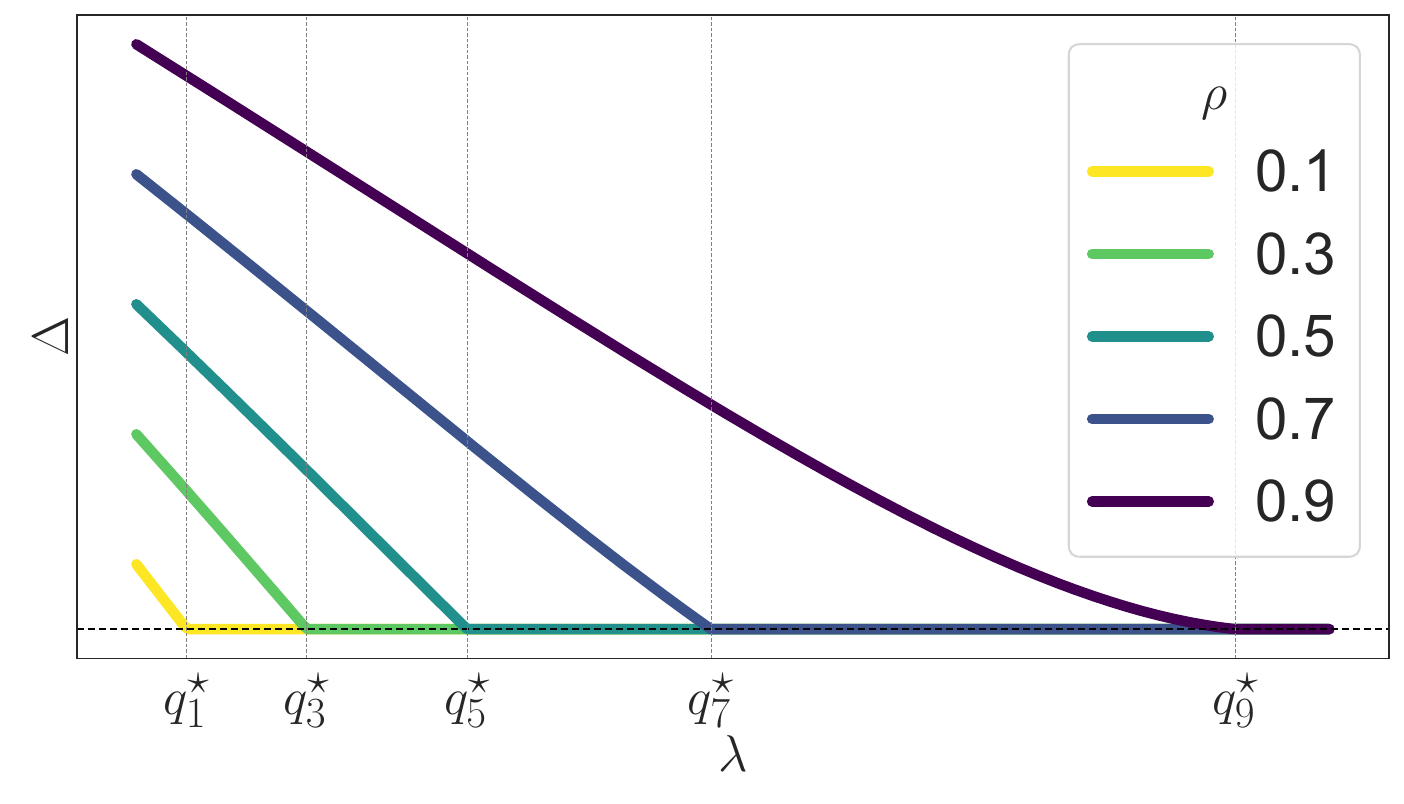}
\caption{\small $\risk$ vs. $\lambda$}
\label{fig:risk_b}
\end{subfigure}
}
\caption{Dependence of $\qrisk$ and $\risk$ on $\lambda$ for $D \sim \text{Exponential}(1/80)$, $\qbar = 200$, $h = 1$, and $\rho \in \{0.1,0.3,0.5,0.7,0.9\}$. We abuse notation and let $\qopt_{10\rho}$ denote the optimal newsvendor quantity associated with $\rho$. By \Cref{thm:minimax-risk-identifiable}, $\lambda \geq \qopt_{10\rho}$ corresponds to the identifiable regime.\label{fig:minimax_risk_figure}}
\end{figure}

{\Cref{fig:minimax_risk_figure} illustrates the dependence of the minimax risk $\risk$ and the minimax optimal ordering quantity $\qrisk$} on $\lambda$, as it interpolates between these two extremes. For $\Gminus(\lambda) < \rho$, even with infinitely many historical samples, for any policy that outputs $q^\pi$, there exists a distribution in the ambiguity set that can make it such that the policy over- (resp., under-) orders, causing constant regret. {This can be seen in \Cref{fig:risk_b}, where $\risk > 0$ for all $\lambda < \qopt_G$.}  As $\lambda$ increases from 0 (and $\Gminus(\lambda)$ increases as a result), $\risk$ strictly decreases. This reflects the phenomenon that, as the observable boundary increases, the size of the ambiguity set decreases, which causes a decrease in information loss. Once $\lambda$ reaches $\qopt_G$ (i.e., $\Gminus(\lambda)\geq \rho$), we have a phase transition, with $\risk = 0$. The problem becomes identifiable, with vanishing regret being achievable throughout this region, despite the fact that demand remains censored by the historical ordering quantities. Hence, \Cref{thm:minimax-risk-identifiable} highlights that, for a wide range of values of $\lambda$, demand censoring is not a barrier to effective decision-making.

We conclude our discussion with an analysis of the \minimaxquant \ $\qrisk$. Perhaps surprisingly, \Cref{thm:minimax-risk-identifiable} establishes that $\qrisk = \qopt_G$, for {\it any} value of $\Gminus(\lambda) \geq \rho$. {This can be seen in \Cref{fig:risk_a}, where $\qrisk$ plateaus for $\lambda \geq \qopt_G$.} So, not only is vanishing regret achievable, but in fact, nature is so limited in this regime that {\it any} distribution $F \in \ambig{\lambda}$ is such that $\qopt_F = \qopt_G$. When $\Gminus(\lambda) < \rho$, on the other hand, $\qrisk$ hedges between over- and under-ordering. This can best be seen when $\Gminus(\lambda) = 0$, in which case $\qrisk$ mixes between the maximum ordering quantity $M$ and the observable boundary $\lambda$ at a rate of $\rho$, by \Cref{thm:minimax-risk-identifiable}. This intuition is also reflected in the fact that, for fixed $\Gminus(\lambda)$, $\risk$ is increasing in $M-\lambda$, since hedging between over- and under-ordering becomes harder as unseen demand samples take on a larger set of possible values. Finally, \Cref{fig:risk_a} and \Cref{fig:risk_b} illustrate that both $\qrisk$ and $\risk$ are increasing in $b$, for fixed values of $h$ and $\lambda$, as under-ordering becomes costlier.

\smallskip 

{We now provide a proof sketch of \Cref{thm:minimax-risk-identifiable},} highlighting important auxiliary results upon which we rely in the rest of the paper. We defer a formal proof of the theorem to Appendix \ref{apx:minimax-risk-identifiable}.

\subsubsection*{Proof sketch.}
Consider first the case where $\Gminus(\lambda) \geq \rho$. \Cref{lem:identifiable_same_opt_quantile} first establishes that, for all \mbox{$F\in\ambig{\lambda}$}, the optimal newsvendor ordering quantity $\qopt_F$ coincides with $\qopt_G$. Its proof can be found in Appendix \ref{apx:identifiable_same_opt_quantile}.

\begin{lemma}\label{lem:identifiable_same_opt_quantile}
Suppose $\Gminus(\lambda) \geq \rho$. Then, for all $F \in \ambig{\lambda}$, $\qopt_F = \qopt_G < \lambda$.
\end{lemma}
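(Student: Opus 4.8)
The plan is to leverage the single structural feature shared by every distribution in the ambiguity set: by \Cref{def:robustness_set}, each $F \in \ambig{\lambda}$ agrees with $G$ on the entire region strictly below $\lambda$, i.e. $F(x) = G(x)$ for all $x < \lambda$. The hypothesis $\Gminus(\lambda) = \Pr_G(D < \lambda) \geq \rho$ then says that all of these distributions have already accumulated mass at least $\rho$ before reaching $\lambda$; consequently their $\rho$-quantiles should be pinned down entirely by this common left portion, and the freedom that nature enjoys past $\lambda$ should be irrelevant to the quantile.

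First I would show that the $\rho$-quantile of $G$ is attained below the boundary, namely $\qopt_G < \lambda$. Writing $\qopt_G = \inf\{q \mid G(q) \geq \rho\}$, I argue the contrapositive: if $\qopt_G \geq \lambda$ then $G(q) < \rho$ for every $q < \lambda$, and passing to the left limit gives $\Gminus(\lambda) = \lim_{x \uparrow \lambda} G(x) \leq \rho$. Under $\Gminus(\lambda) \geq \rho$ this can only occur in the knife-edge case $\Gminus(\lambda) = \rho$ where $G$ approaches $\rho$ strictly from below as $x \uparrow \lambda$; ruling this boundary case out (it does not affect the ensuing risk characterization even if it occurs) yields $\qopt_G < \lambda$. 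In the generic case $\Gminus(\lambda) > \rho$, the left limit exceeding $\rho$ directly furnishes a point $x_0 < \lambda$ with $G(x_0) \geq \rho$, so that $\qopt_G \leq x_0 < \lambda$.

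Next I would transfer this to an arbitrary $F \in \ambig{\lambda}$ and conclude $\qopt_F = \qopt_G$. Because $F$ and $G$ coincide on $(-\infty, \lambda)$ and $\qopt_G < \lambda$, the two defining sublevel sets $\{q \mid G(q) \geq \rho\}$ and $\{q \mid F(q) \geq \rho\}$ agree on the relevant region. Concretely, for every $q < \qopt_G$ we have $F(q) = G(q) < \rho$, so $\qopt_F \geq \qopt_G$; and by right-continuity of $G$ the infimum is attained, giving $G(\qopt_G) \geq \rho$, whence $F(\qopt_G) = G(\qopt_G) \geq \rho$ (as $\qopt_G < \lambda$) and thus $\qopt_F \leq \qopt_G$. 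Combining the two inequalities yields $\qopt_F = \qopt_G < \lambda$, uniformly over all $F \in \ambig{\lambda}$.

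I expect the only delicate point to be the boundary behavior exactly at $\lambda$. Establishing the strict inequality $\qopt_G < \lambda$ requires separating the knife-edge $\Gminus(\lambda) = \rho$ (where the cdf piles mass up precisely at the boundary) from the generic case, and the quantile-matching step must invoke right-continuity of the cdf to guarantee that the infimum defining $\qopt_G$ is genuinely attained rather than merely approached. Everything else follows directly from the fact that membership in $\ambig{\lambda}$ fixes the common left tail up to $\lambda$, so that no distribution in the ambiguity set can move its $\rho$-quantile once $\rho$ is reached before $\lambda$.
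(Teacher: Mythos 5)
Your proposal is correct and takes essentially the same route as the paper: both arguments rest on the observation that $\Gminus(\lambda) \geq \rho$ forces the $\rho$-quantile of $G$ to be attained strictly below $\lambda$, where every $F \in \ambig{\lambda}$ coincides with $G$, so the quantile is shared by the entire ambiguity set. If anything, your version is more careful than the published one --- the paper simply asserts that $\Gminus(\lambda) \geq \rho$ yields some $q < \lambda$ with $G(q) \geq \rho$, silently glossing over the knife-edge case $\Gminus(\lambda) = \rho$ with a continuous cdf (e.g., $G$ uniform on $[0,1]$ and $\lambda = \rho$, where $\qopt_G = \lambda$ and the strict inequality fails while the equality $\qopt_F = \qopt_G$ still holds), which you identify explicitly, and your two-sided inequality via right-continuity makes the quantile-matching step airtight.
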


Importantly, {\it this fact is independent of the decision-maker's ordering decision $q$}. Letting $q = \qopt_G$, this then implies that $C_F(q) - C_F(\qopt_F) = 0$, for all $F \in \ambig{\lambda}$. Hence, by definition of the minimax risk, $\risk = 0$, with $\qrisk = \qopt_G$. 

\smallskip

Consider now the case where $\Gminus(\lambda) < \rho$.  \Cref{lem:sup_expression_identifiability} is the main workhorse for our result: it provides closed-form expressions for the worst-case regret over $\ambig{\lambda}$, given $q$. Its proof can be found in Appendix \ref{apx:sup_expression_identifiability}.

\begin{lemma}\label{lem:sup_expression_identifiability} Suppose $\Gminus(\lambda) < \rho$. Then, for any $q \in [0,M]$,
    \begin{align}
    {\Regret(q)} & := \sup_{F \in \ambig{\lambda}} C_F(q) - C_F(\qopt_F) \notag 
   \\& = \begin{cases}
        b (M-q) + (b+h)\bigg[\E_G\Big[(q-D)\Ind{D \leq q}] - (M-D)\mathds{1}\{D < \lambda\}\Big]\bigg]  &\text{if }  q < \lambda, \\
         \left(b - (b+h)\Gminus(\lambda)\right)(M-q)&\text{if } q \in \left[\lambda,\qcrit_G \right] \\
        {h(q-\lambda)}  &\text{if } q > \qcrit_G.
    \end{cases} \notag 
    \end{align}
\end{lemma}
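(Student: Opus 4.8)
The plan is to reduce everything to the elementary cost-difference identity
\[
C_F(q) - C_F(\qopt_F) = \int_{\qopt_F}^{q} \big[(b+h)F(x) - b\big]\,dx,
\]
which follows by writing $C_F(q) = b\int_q^\infty (1-F(x))\,dx + h\int_0^q F(x)\,dx$ (valid because $\E_F[D]<\infty$) and differentiating in $q$. Before using it, I would first record the structural fact that every $F\in\ambig{\lambda}$ satisfies $\qopt_F\in[\lambda,M]$: since $F(x)=G(x)\le\Gminus(\lambda)<\rho$ for all $x<\lambda$ we get $\qopt_F\ge\lambda$, while $\qopt_F\le M$ is built into $\F$. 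This simultaneously pins down the range of the optimal quantile and identifies $[\lambda,M]$ as the region over which $F$ is free to vary.

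For the upper bound I would fix an arbitrary $F\in\ambig{\lambda}$ and split according to whether $q\ge\qopt_F$ (over-ordering) or $q<\qopt_F$ (under-ordering), using the identity above. In the over-ordering case I bound the integrand via $F(x)\le 1$, giving regret at most $h(q-\qopt_F)\le h(q-\lambda)$. In the under-ordering case the integrand is $b-(b+h)F(x)$, and on $[\lambda,\qopt_F]$ I use the monotonicity bound $F(x)\ge\Gminus(\lambda)$ together with $\qopt_F\le M$; when $q<\lambda$ the part of the integral over $[q,\lambda)$ is frozen because $F=G$ there. This yields the claimed piecewise upper bounds: $\big(b-(b+h)\Gminus(\lambda)\big)(M-q)$ for $q\ge\lambda$, and the $q<\lambda$ expression after rewriting $\int_q^\lambda G$ and $\Gminus(\lambda)(M-\lambda)$ in the stated $\E_G$ form. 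Note that for $q<\lambda$ only under-ordering can occur, since $\qopt_F\ge\lambda>q$.

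To show these bounds are attained, I would exhibit two explicit distributions in $\ambig{\lambda}$, each obtained by gluing $G$ on $[0,\lambda)$ to a single atom of mass $1-\Gminus(\lambda)$: an atom at $M$, which forces $\qopt_F=M$ and makes the policy under-order, recovering $\big(b-(b+h)\Gminus(\lambda)\big)(M-q)$ (and, for $q<\lambda$, the first-case formula exactly); and an atom at $\lambda$, which forces $\qopt_F=\lambda$ and makes the policy over-order, recovering $h(q-\lambda)$. Both are feasible since their critical quantiles lie in $[\lambda,M]$ and their means are finite, and for each the pointwise CDF bounds used above hold with equality, so the supremum is genuinely achieved. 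I would then combine the two regimes via $\Regret(q)=\max\{(b-(b+h)\Gminus(\lambda))(M-q),\,h(q-\lambda)\}$ for $q\ge\lambda$ and verify that the two linear pieces cross exactly at $\qcrit_G$, which cleanly separates the second and third cases of the statement.

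The main obstacle I anticipate is bookkeeping rather than conceptual: verifying that the $q<\lambda$ upper bound $\int_q^\lambda[b-(b+h)G(x)]\,dx + (b-(b+h)\Gminus(\lambda))(M-\lambda)$ coincides with the stated $b(M-q)+(b+h)\E_G\big[(q-D)\Ind{D\le q}-(M-D)\Ind{D<\lambda}\big]$. This amounts to the identities $\E_G[(q-D)\Ind{D\le q}]=\int_0^q G$ and $\int_0^\lambda G=\lambda\Gminus(\lambda)-\E_G[D\Ind{D<\lambda}]$ (the latter exact, as the atom-at-$\lambda$ contributions cancel), after which the two forms match term by term. Some care is also needed at the regime boundaries (e.g.\ $q=\lambda$ and $q=\qcrit_G$) to confirm continuity of the piecewise expression and that the atomic distributions remain in $\F$.
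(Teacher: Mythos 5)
Your proposal is correct, and it takes a genuinely different route from the paper's proof. The paper works from the expectation-form cost-difference identity $C_F(q_1)-C_F(q_2)=b(q_2-q_1)+(b+h)\E_F\left[(q_1-D)\Ind{D\le q_1}-(q_2-D)\Ind{D\le q_2}\right]$, rewrites the supremum over $F\in\ambig{\lambda}$ as a nested supremum --- an outer one over $\tilde q=\qopt_F\in[\lambda,M]$ and an inner one over all $F$ with that critical quantile --- and then identifies the inner worst case by mass-shifting exchange arguments (``any $F$ placing mass on $(\tilde q,q]$ is dominated by one that moves it to $\tilde q$,'' etc.), finally optimizing over $\tilde q$. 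You instead use the equivalent CDF-integral representation $C_F(q)-C_F(\qopt_F)=\int_{\qopt_F}^{q}\left[(b+h)F(x)-b\right]dx$, obtain upper bounds valid for \emph{every} $F\in\ambig{\lambda}$ at once from the pointwise bounds $F(x)\le 1$ (over-ordering) and $F(x)\ge\Gminus(\lambda)$ on $[\lambda,\cdot)$ together with $\qopt_F\in[\lambda,M]$ (under-ordering), and then exhibit two explicit witnesses --- $G$ glued to an atom at $M$, and $G$ glued to an atom at $\lambda$ --- for which every inequality is tight. Your two witnesses are exactly the extreme points $F_0$ and $F_{1-\Gminus(\lambda)}$ of the paper's restricted ambiguity set $\B{\lambda}$, so your argument also yields, as a by-product, the unidentifiable-regime case of the paper's later result that two-point-mass distributions are worst case (\Cref{prop:ber-is-worst-case}). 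What each approach buys: the paper's nested-supremum machinery is reused nearly verbatim to prove the identifiable-regime counterpart (\Cref{prop:worst-case-for-identifiable}) and makes explicit how nature trades off its choice of $\qopt_F$; your bound-then-attain structure is shorter and more elementary, avoids the delicate ``strictly worse distribution'' exchange steps, and makes transparent both the $\max$-of-two-affine-functions form of the regret on $[\lambda,M]$ and the crossing at $\qcrit_G$. Your bookkeeping concern is benign: the identities $\E_G[(q-D)\Ind{D\le q}]=\int_0^q G$ and $\E_G[(\lambda-D)\Ind{D<\lambda}]=\int_0^\lambda G$ (the atom at $\lambda$ contributes zero) reconcile your $q<\lambda$ expression with the stated one exactly as you sketch, and continuity at $q=\lambda$ and $q=\qcrit_G$ is immediate from the affine forms.
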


\smallskip

 At a high level, when $q < \lambda$ (i.e., the decision-maker orders no more than what she has historically observed), any worst-case demand distribution sets $\qopt_F  = M$, ensuring that $q$ incurs high underage costs. This is in line with the intuition that exploration is important under high censoring levels.

When $q \geq \lambda$, on the other hand, the worst-case demand distribution selects the {worst} of $\qopt_F = M$ (high underage costs) and $\qopt_F = \lambda$ (high overage costs). We show that the former is optimal for nature when $q \in [\lambda,\qcrit_G]$, while the latter is optimal for $q > \qcrit_G$. 

We finalize the argument by comparing the three regimes of $q$, and showing that the minimax {risk} is achieved at $q = \qcrit_G$. At this point,
\[(b-(b+h)\Gminus(\lambda))(M-q) = h(q-\lambda) = \frac{h(M-\lambda)(b-(b+h)\Gminus(\lambda))}{(b+h)(1-\Gminus(\lambda))}.\]
Therefore, $q$ is precisely set such that nature is indifferent between enforcing high underage and high overage costs, given this ordering quantity. The expression for $\risk$ follows from algebra. Finally, it is easy to argue that, when $\Gminus(\lambda) < \rho$, $\qcrit_G \in [\lambda,M]$ and $\risk \geq 0$.
\hfill\Halmos

\subsubsection*{Characterization of worst-case distributions and behavior of worst-case regret.}
The above analysis allows us to gain insight into the worst-case distribution that achieves the minimax risk $\risk$.
To formalize this, we define a distribution $F_p$ parametrized by $p \in \{0,1-\Gminus(\lambda)\}$:
\begin{equation*}
    F_p(x) = \begin{cases}
        G(x) & x < \lambda \\
        \Gminus(\lambda) + p & x \in [\lambda, M)  \\
        1 & x = M
    \end{cases}
\end{equation*}
By construction, $F_p \in \ambig{\lambda}$. Moreover, when $p = 0$, $F_p$ places the entirety of the remaining mass on $M$; when $p = 1-\Gminus(\lambda)$, on the other hand, the entirety of the remaining mass is placed on $\lambda$.  We define the {\em restricted ambiguity set} as the set of all valid distributions $F_p$, i.e.,:
\begin{equation*}
\B{\lambda} = \big\{F_p \mid p \in \left\{0,1 - \Gminus(\lambda)\right\} \big\}.
\end{equation*}
\Cref{prop:ber-is-worst-case} below establishes that $\B{\lambda}$ is indeed the worst-case family of distributions, for any ordering quantity $q$.

\begin{proposition}\label{prop:ber-is-worst-case}
Fix $q \in [0,M]$. Then,
\[\sup_{F\in\ambig{\lambda}}C_F(q) - C_F(\qopt_F) = \sup_{F\in\B{\lambda}}C_F(q)-C_F(\qopt_F).\]
\end{proposition}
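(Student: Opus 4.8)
The plan is to prove the two inequalities separately. Since every $F_p \in \B{\lambda}$ lies in $\ambig{\lambda}$ by construction, the inclusion $\B{\lambda} \subseteq \ambig{\lambda}$ immediately gives $\sup_{F\in\B{\lambda}} C_F(q) - C_F(\qopt_F) \le \sup_{F\in\ambig{\lambda}} C_F(q) - C_F(\qopt_F) = \Regret(q)$. The entire content is therefore the reverse inequality, and here I would exploit the fact that \Cref{lem:sup_expression_identifiability} and \Cref{prop:worst-case-for-identifiable} already pin down the exact value of $\Regret(q)$ in every regime. It thus suffices to exhibit, for each $q$, a single distribution $F^\star \in \B{\lambda}$ whose regret equals this known closed form: since $F^\star$ attains the supremum taken over the \emph{larger} set $\ambig{\lambda}$, it is a fortiori the maximizer over $\B{\lambda}$, and equality follows.

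To make the required computation transparent, I would first record the integral representation $C_F(q) = b\,\E_F[D] - bq + (b+h)\int_0^q F(x)\,dx$, valid for any $F$ with finite mean. The crucial consequence is that the common term $b\,\E_F[D]$ cancels in the regret, so that $C_F(q) - C_F(\qopt_F) = \phi_F(q) - \min_{q'}\phi_F(q')$ with $\phi_F(q') := -bq' + (b+h)\int_0^{q'}F$. Hence the regret depends on $F$ only through its integrated cdf on $[\lambda,M]$ and the location of its $\rho$-quantile, precisely the two features the two-point family $F_p$ controls directly: the $\rho$-quantile of $F_p$ equals $\qopt_G$ when $\Gminus(\lambda)\ge\rho$, equals $\lambda$ when $\Gminus(\lambda)+p \ge \rho > \Gminus(\lambda)$, and equals $M$ when $\Gminus(\lambda)+p < \rho$, while $\int_\lambda^x F_p = (\Gminus(\lambda)+p)(x-\lambda)$ on $[\lambda,M)$ is affine in both $x$ and $p$.

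Armed with this, I would check the two natural extremal members of $\B{\lambda}$. Taking $p=0$ (all free mass at $M$, so $\qopt_{F_0}=M$ in the unidentifiable regime) reproduces the branches $b(M-q) + (b+h)[\E_G[(q-D)\Ind{D\le q}] - \E_G[(M-D)\Ind{D<\lambda}]]$ for $q<\lambda$ and $(b-(b+h)\Gminus(\lambda))(M-q)$ for $q\in[\lambda,\qcrit_G]$, the underage-driven regimes; taking $p=1-\Gminus(\lambda)$ (all free mass at $\lambda$, so $\qopt_{F_p}=\lambda$) reproduces $h(q-\lambda)$ for $q>\qcrit_G$, the overage-driven regime. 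In the identifiable regime $\Gminus(\lambda)\ge\rho$, every $F_p$ shares the quantile $\qopt_G<\lambda$, so any $F_p$ recovers the vanilla regret $C_G(q)-C_G(\qopt_G)$ when $q<\lambda$, while $p=1-\Gminus(\lambda)$ recovers the \Cref{prop:worst-case-for-identifiable} expression for $q\ge\lambda$. Each of these is a short substitution into the integral formula above. The main obstacle, and the only place demanding care, is the bookkeeping around $\qopt_{F_p}$: because the $\rho$-quantile of $F_p$ jumps from $M$ to $\lambda$ as $p$ crosses $\rho-\Gminus(\lambda)$, I must confirm for each regime of $q$ that the extremal $p$ I select places the quantile at the value ($\lambda$ or $M$) assumed by the corresponding branch of \Cref{lem:sup_expression_identifiability}, and that the resulting $F^\star$ indeed lies in $\ambig{\lambda}$ (i.e., satisfies $\qopt_{F^\star}\le M$). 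Verifying these alignments, which amounts to matching the ``high underage versus high overage'' dichotomy already identified in the proof sketch of \Cref{lem:sup_expression_identifiability}, completes the argument.
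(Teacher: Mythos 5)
Your proposal is correct and follows essentially the same route as the paper's proof: both reduce the problem to the closed forms in \Cref{lem:sup_expression_identifiability} and \Cref{prop:worst-case-for-identifiable}, both use the quantile structure of $F_p$ (the paper's \Cref{lem:bernoulli_opt_quantile}), and both identify the same extremal members of $\B{\lambda}$ ($p=0$ for the underage-driven branches, $p=1-\Gminus(\lambda)$ for the overage-driven branch and the identifiable case with $q\geq\lambda$). The only difference is packaging: the paper computes $\sup_{F\in\B{\lambda}}C_F(q)-C_F(\qopt_F)$ exactly by optimizing over $p$ and then observes that it matches the known value over $\ambig{\lambda}$, whereas you sandwich via the inclusion $\B{\lambda}\subseteq\ambig{\lambda}$ and a witness attaining the known value, which spares you from verifying which $p$ is optimal within $\B{\lambda}$.
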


We defer the proof of the fact to Appendix \ref{apx:bernoulli-is-worst}. \Cref{prop:ber-is-worst-case} provides useful intuition as to the drivers of regret, as previously discussed. Namely, it precisely reflects the intuition of nature adversarially generating a high- or low-demand distribution to ensure that the decision-maker incurs high underage or overage costs, respectively. \footnote{Similar insights have been derived in the uncensored setting (see, e.g., \citet{besbes2023big,perakis2008regret}).}

\smallskip

\begin{remark}\label{remark:well-sep}
\Cref{thm:minimax-risk-identifiable} and \Cref{prop:ber-is-worst-case} together help to establish why access to an upper bound $M$ on $\qopt_G$ is in fact necessary for the minimax risk $\risk$ to be bounded in the unidentifiable regime. In particular, Part 2 of \Cref{thm:minimax-risk-identifiable} implies that $\risk \to +\infty$ as $M\to+\infty$, achieved by a worst-case distribution in $\B{\lambda}$ setting $\qopt_F = M \to +\infty$.
\end{remark}

\begin{figure}[t]
\centering
{
\includegraphics[width=.75\textwidth]{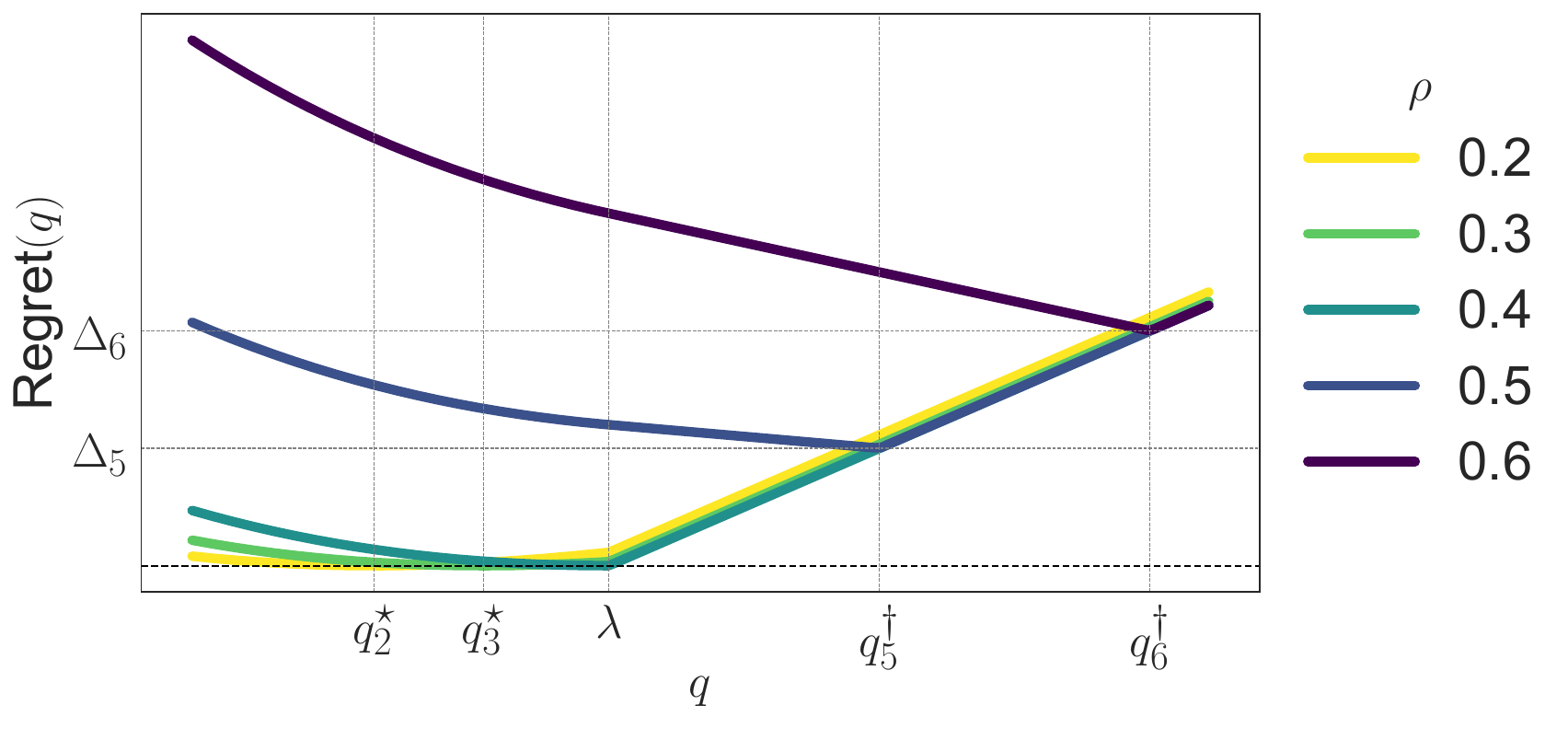}
}
\caption{{$\Regret(q)$ vs. $q$ for $D \sim \text{Exponential}(1/80)$, $\lambda = G^{-1}(0.4)$, $\qbar = 200$, $h = 1$, and $\rho \in \{0.2, 0.3, 0.4, 0.5, 0.6\}$. {We abuse notation and let $\qopt_{10\rho}$ and $\qcrit_{10\rho}$ respectively denote the optimal newsvendor and unidentifiable ordering quantities associated with $\rho$.} {Similarly, $\risk_{10\rho}$ denotes the value of $\risk$ under parameter $\rho$.}  When $\rho = 0.4$, $\Gminus(\lambda) = \rho$ by construction. In this case, we have $\qcrit_4 = \qopt_4 = \lambda$. {Finally, the bottom-most dashed line corresponds to $\Regret(q) = 0$.} 
 \label{fig:regret_sup}}}
{}
\end{figure}

\smallskip

We conclude the section by gaining insights into the structure of the worst-case regret, for a fixed ordering quantity $q$. We provide a plot of $\Regret(q)$ versus $q$ for the special case of exponentially distributed demand in \Cref{fig:regret_sup}. 
In the unidentifiable regime (i.e., $\rho \in \{0.5, 0.6\}$), this figure illustrates that the worst-case regret is decreasing and convex in $q$, for $q < \lambda$. It then linearly decreases for $q \in [\lambda,\qcrit_G]$, and increases for $q > \qcrit_G$. Observe in \Cref{fig:regret_sup} that, in the identifiable regime (i.e., $\rho \in \{0.2, 0.3, 0.4\}$), the worst-case regret is linear in $q$ for $q > \lambda$. This shows that, despite the fact that $\qrisk = \qopt_G$ in this regime, the worst-case regret is still not equivalent to the vanilla regret defined in \Cref{eq:uncensored-regret}, which is nonlinear in $q$. We formalize this in \Cref{prop:worst-case-for-identifiable} below, and defer its proof to Appendix \ref{apx:worst-case-id}.

\smallskip

\begin{proposition}\label{prop:worst-case-for-identifiable}
Suppose $\Gminus(\lambda) \geq \rho$. Then, for any $q \in [0,M]$,
    \begin{align*}
    \Regret(q) &:= \sup_{F \in \ambig{\lambda}} C_F(q) - C_F(\qopt_F) \\
    & = \begin{cases}
        {C_G(q)-C_G(\qopt_G)} \qquad &\text{if } q < \lambda \\
        b(\qopt_G-q) + (b+h)\bigg[(q-\lambda)+\E_G\Big[(\lambda-D)\Ind{D < \lambda} - (\qopt_G - D)\Ind{D \leq \qopt_G}\Big]\bigg] \quad &\text{if } q \geq \lambda.
    \end{cases}
    \end{align*}
\end{proposition}

\smallskip

\Cref{prop:worst-case-for-identifiable} establishes that, while $\Regret(q)$ is indeed equivalent to the vanilla regret when $q < \lambda$, setting $q \geq \lambda$ allows for a distribution in the ambiguity set to kick in, resulting in an increase in cost. In the proof of \Cref{prop:worst-case-for-identifiable} we argue that this worst-case distribution ensures that demand comes in low (specifically, with an atom at $\lambda$), implying that $q$ over-orders in the worst case. In this case, the worst-case regret is linear in $q$, as observed in \Cref{fig:regret_sup}.

\smallskip

In \Cref{sec:upper-bound} we will see that the exact characterization of worst-case regret across all regimes of identifiability, as well as the closed-form expression for $\qrisk$, will allow us to derive a {simple algorithm that achieves the minimax risk $\risk$}.\footnote{This lies in contrast to the pricing problem considered in \citet{bu2023offline}, where a derivation of the minimax optimal price remains out of reach, contributing to a more complicated algorithm that instead optimizes optimistic and pessimistic bounds on the worst-case regret.}

\smallskip

\section{A Near-Optimal Algorithm}\label{sec:upper-bound}

As discussed in \Cref{sec:minimax-risk}, our goal is to design an algorithm whose regret asymptotically achieves the minimax risk $\risk$, {\it across both regimes of identifiability}. \Cref{thm:minimax-risk-identifiable}, however, established that even determining identifiability requires knowledge of $\Gminus(\lambda)$, which the decision-maker does not have. Therefore, performing well across all regimes seems to be a challenging task. In this section, we design a robust algorithm that achieves the optimal order of regret (up to polylogarithmic factors) in both the identifiable and unidentifiable regimes, in spite of this challenge.

\subsection{Algorithm Description}\label{sec:alg-desc}

At a high level, our algorithm, \textsf{Robust Censored Newsvendor} (\ALG), proceeds in {a} hierarchical fashion. Motivated by the above challenge, we first test for identifiability, i.e., whether $\Gminus(\lambda) \geq \rho$. While doing so would be trivial in the uncensored setting, recall that our algorithm only has access to {\it sales}, rather than true demand data. The following fact, however, will allow us to design a test that correctly determines whether $\Gminus(\lambda) \geq \rho$, with high probability.

\begin{fact}\label{key-fact}
Let $\lambdasamples = \left(\soff_{Ki} \ \mid \ i \in [N]\right)$. For all $i \in [N]$, \[\Ind{\lambdasamples_i < \lambda} = \Ind{\min\{\lambda,\doff_i\} < \lambda}= \Ind{\doff_i < \lambda}.\]
\end{fact}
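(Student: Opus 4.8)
The plan is to prove both equalities directly from the definition of the boundary samples, with the only nontrivial content being a two-case argument for the second equality that hinges critically on the inequality being strict.

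First I would unpack the definition. Recall that $\lambdasamples$ collects exactly those sales observations $\soff_{ki}$ for which the associated ordering quantity satisfies $\qoff_k = \lambda$, and that by the model primitives $\soff_{ki} = \min\{\doff_{ki}, \qoff_k\}$. Substituting $\qoff_k = \lambda$, each entry therefore satisfies $\lambdasamples_i = \min\{\lambda, \doff_i\}$, where $\doff_i$ is the corresponding (unobserved) true demand realization. This immediately gives the first equality $\Ind{\lambdasamples_i < \lambda} = \Ind{\min\{\lambda, \doff_i\} < \lambda}$ as a matter of definition, requiring no argument.

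For the second equality I would split on the value of $\doff_i$ relative to $\lambda$. If $\doff_i < \lambda$, then $\min\{\lambda, \doff_i\} = \doff_i < \lambda$, so both indicators evaluate to $1$. If instead $\doff_i \geq \lambda$, then $\min\{\lambda, \doff_i\} = \lambda$, which is \emph{not} strictly less than $\lambda$, so $\Ind{\min\{\lambda, \doff_i\} < \lambda} = 0$; and likewise $\Ind{\doff_i < \lambda} = 0$ since $\doff_i \geq \lambda$. In both cases the two indicators agree, establishing $\Ind{\min\{\lambda, \doff_i\} < \lambda} = \Ind{\doff_i < \lambda}$ and completing the proof.

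There is no real obstacle here — the statement is elementary — but the step worth emphasizing is \emph{why} the strict inequality is essential. Censoring at $\lambda$ maps every demand value at or above $\lambda$ to the single point $\lambda$, so the data cannot distinguish $\doff_i = \lambda$ from $\doff_i > \lambda$. The event $\{\doff_i < \lambda\}$, however, depends only on whether demand falls strictly below the censoring threshold, and this event is preserved exactly under the censoring map. Had we instead used the non-strict event $\{\cdot \leq \lambda\}$, the quantity $\Ind{\min\{\lambda, \doff_i\} \leq \lambda}$ would be identically $1$ and would fail to recover $\Ind{\doff_i \leq \lambda}$; it is precisely this observation that justifies defining $\Gminushat(\lambda)$ with a strict inequality and restricting to boundary samples, which in turn yields the unbiasedness needed in the downstream concentration argument.
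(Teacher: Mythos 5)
Your proof is correct and follows exactly the reasoning the paper relies on: the first equality is immediate from the definition $\lambdasamples_i = \min\{\doff_i, \lambda\}$ for boundary samples, and the second follows from the elementary equivalence $\min\{\lambda, \doff_i\} < \lambda \iff \doff_i < \lambda$, which your two-case argument establishes. The paper states this fact without proof precisely because the argument is this elementary, and your additional remark on why strictness of the inequality (as opposed to $\leq$) is what makes the censored estimate unbiased is exactly the design insight the paper emphasizes around \Cref{key-fact}.
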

\Cref{key-fact} therefore implies that we can compute an unbiased estimate of $\Gminus(\lambda)$ by restricting our attention to the samples at the boundary. Thus motivated, our algorithm computes the fraction of samples that lie strictly below $\lambda$, i.e.,
\[\Gminushat(\lambda) = \frac{1}{N} \sum_{i \in [N]} \Ind{\lambdasamples_i < \lambda}.
\]
We refer to this quantity as the {\it censored} estimate of $\Gminus(\lambda)$ since it uses the censored sales data, rather than the true (unobserved) demand data that would be required to compute the SAA of $\Gminus(\lambda)$.

Once computed, the censored estimate $\Gminushat(\lambda)$ may fall into one of three cases, depending on {the} confidence parameter $\width$ that our algorithm takes as input. In particular, if $\Gminushat(\lambda) \geq \rho + \width$, our algorithm classifies the problem as {\it likely identifiable}. In this case, motivated by \Cref{thm:minimax-risk-identifiable}, it outputs a censored estimate of $\qopt_G$, denoted by $\qopt_{\offeCDF}$, defined as the $\rho$-th sample quantile of $\lambdasamples$. Formally:
\[\qopt_{\offeCDF} = \inf\left\{x \mid \frac1N\sum_{i\in[N]} \Ind{\lambdasamples_i \leq x} \geq \rho\right\}.\]
For ease of notation, we let 
$\offeCDF(x) = \frac1N\sum_{i\in[N]} \Ind{\lambdasamples_i \leq x}.$ Note again here that $\qopt_{\offeCDF}$ relies only on the samples for which the historical ordering quantity was $\lambda$, as was the case for $\Gminushat(\lambda)$. 

If, on the other hand $\Gminushat(\lambda) < \rho-\width$, the algorithm classifies the problem as {\it likely unidentifiable}, and outputs a censored estimate of $\qcrit_G$, denoted by $\qcrithat$ and defined as:
\[\qcrithat = \frac{bM+h\lambda-(b+h)\Gminushat(\lambda)M}{(b+h)(1-\Gminushat(\lambda))}.\]

Finally, if $\Gminushat(\lambda) \in [\rho-\width,\rho+\width)$, our algorithm cannot determine problem identifiability with confidence, in which case it outputs $\lambda$. We present a formal description of our algorithm in \Cref{alg:newsvendor}.

\begin{remark}
Our algorithm’s first phase can be interpreted through the lens of hypothesis testing, as it seeks to determine whether or not $\Gminus(\lambda) \leq \rho$. Our test, which uses the empirical estimate $\Gminushat(\lambda)$, can be shown to implement the uniformly most powerful test for this hypothesis~\citep{casella2024statistical}.
\end{remark}

\begin{algorithm}[!t]
\DontPrintSemicolon 
\KwIn{Observable boundary $\lambda$, censored demand samples $\lambdasamples$,  confidence term $\width$}
\KwOut{Ordering quantity $\qalg$}
Compute censored SAA of $\Gminus(\lambda)$:
\begin{equation}
\label{eq:g_hat}
\Gminushat(\lambda) = \frac{1}{N} \sum_{i \in [N]} \Ind{\lambdasamples_i < \lambda}.
\end{equation}
\uIf(\tcp*[h]{Likely identifiable}){$\Gminushat(\lambda) \geq \rho+\width$}{
Compute censored SAA of $\qopt_G$:
\begin{align}\label{eq:saa}
\qopt_{\offeCDF} = \inf \left\{x \mid \offeCDF(x) \geq \rho \right\},
\end{align}
where  $\offeCDF(x) = \frac{1}{N}\sum_{i \in [N]} \Ind{\lambdasamples_i \leq x}.$ Let $\qalg = \qopt_{\offeCDF}$.  
}
\uElseIf(\tcp*[h]{Likely unidentifiable}){$\Gminushat(\lambda) < \rho-\width$}{
Compute empirical estimate of $\qcrit_G$:
\begin{align}\label{eq:qcrithat}
\qcrithat= \frac{bM+h\lambda-(b+h)\Gminushat(\lambda)M}{(b+h)(1-\Gminushat(\lambda))}.
\end{align}
Let $\qalg = \qcrithat$.
}
\uElse(\tcp*[h]{Knife-edge case}){Let $\qalg = \lambda$.
}
\Return{$\qalg$}
	\caption{\textsf{Robust Censored Newsvendor} (\ALG)}
	\label{alg:newsvendor}
\end{algorithm}

\subsection{Algorithm Analysis}

\Cref{thm:minmax_regret} establishes that this intuitive algorithm achieves vanishing minimax regret relative to $\risk$ with constant probability, for an appropriately tuned confidence parameter $\width$.

\smallskip

\begin{theorem}
\label{thm:minmax_regret}
Fix $\delta \in (0,1)$, and let $\width = \sqrt{\frac{\log(2 / \delta)}{2N}}$. With probability at least $1 - 2 \delta$, \Cref{alg:newsvendor} outputs an ordering quantity $\qalg$ such that:
\begin{enumerate}[(i)]
\item if $\Gminus(\lambda) \geq \rho + 2 \width$:
\[ \Regret(\qalg) \leq \risk  + \lambda(b+h)\width =  \lambda(b+h)\width \]
\item if $\Gminus(\lambda) \in [\rho, \rho + 2 \width)$:
\[\Regret(\qalg) \leq \risk + 2\lambda(b+h)\width = 2\lambda(b+h)\width \]
\item if $\Gminus(\lambda) \in [\rho - 2 \width, \rho)$:
\[\Regret(\qalg) \leq \risk + {2 \max\left\{\frac{b}{h}, 1\right\}(M - \lambda)}(b+h) \width\]
\item if $\Gminus(\lambda) < \rho - 2 \width$:
\[\Regret(\qalg) \leq \risk + {\max\left\{\frac{b}{h},1\right\}(M-\lambda)}(b+h)\width.\]
\end{enumerate}
\end{theorem}

\smallskip

Letting $\delta = O(1/\sqrt{N})$, we obtain the following bound on the expected worst-case regret of \Cref{alg:newsvendor}.
\begin{corollary}\label{cor:exp-regret}
Let $\delta = c/\sqrt{N}$, for some constant $c > 0$. Then, there exists a constant $c' > 0$ (independent of $M, \lambda, \rho$, and $N$) such that:
\[\E_G\left[\Regret(\qalg)\right] \leq \begin{cases}c'(b+h)\left(\lambda + \max\left\{\rho\lambda,(1-\rho)M\right\}\right) \sqrt{\log N /N} \qquad &\text{if } \Gminus(\lambda) \geq \rho\\ 
\Delta + c'(b+h)\Bigg(\max\left\{\frac{\rho}{1-\rho},1\right\}(M-\lambda) + \max\{\rho M, (1-\rho)(M-\lambda)\}\Bigg)\sqrt{\log N /N} \quad &\text{if } \Gminus(\lambda) < \rho.
\end{cases}
\]
\end{corollary}

\smallskip

We make a few observations on our algorithm's guarantees before providing a proof sketch of the theorem at the end of this section. (We defer a proof of \Cref{cor:exp-regret} to Appendix \ref{apx:exp-regret}.) First of all, in the identifiable regime ($\Gminus(\lambda) \geq \rho$), since $\qrisk = \qopt_G$ by \Cref{thm:minimax-risk-identifiable}, our algorithm recovers existing regret guarantees (up to constant factors) for the uncensored setting \citep{chen2024survey}. We emphasize that this result holds for {\it any} value of $\lambda$ such that $\Gminus(\lambda) \geq \rho$, and for {\it any} set of historical ordering quantities. This further implies that demand censoring is effectively immaterial in the identifiable regime. 

{In the unidentifiable regime ($\Gminus(\lambda) < \rho$), it is also near-optimal, incurring an expected worst-case regret of $\Delta + O\left(\sqrt{\frac{\log N}{N}}\right)$}. Observe that our upper bound on the algorithm's regret in this regime is linear in $M-\lambda$. This is in line with the construction of the worst-case distribution described in \Cref{sec:minimax-risk}, for any quantity $q$: if $q$ is close to $\lambda$, the worst-case distribution $F\in\ambig{\lambda}$ sets the optimal newsvendor quantity $\qopt_F$ to $M$; if $q$ is close to $M$, on the other hand, $F$ sets $\qopt_F$ to $\lambda$. As the gap between $\lambda$ and $M$ increases, it becomes more difficult for the algorithm to hedge against nature, which is reflected in the regret guarantee.

We now provide a sketch of the proof of \Cref{thm:minmax_regret}, deferring a formal proof to Appendix \ref{apx:ub_proof}.

\subsubsection*{Proof sketch.}
We first upper bound our algorithm's regret by two sources of loss: (i) the minimax risk $\risk$, and (ii) the cost difference between $\qalg$ and $\qrisk$, for any distribution $F \in \ambig{\lambda}$. Namely, we have:
\begin{align}\label{eq:regret-decomp-sketch}
\Regret(\qalg) \leq \Delta + \sup_{F \in \ambig{\lambda}} C_F(\qalg) - C_F(\qrisk).
\end{align}

Noting that $\risk$ is independent of $\qalg$, it suffices to upper bound the second term. Recall, by \Cref{thm:minimax-risk-identifiable}, the value of $\qrisk$ depends on whether or not the problem is identifiable. Hence, \Cref{eq:regret-decomp-sketch} highlights that the algorithm incurs regret if: (i) it incorrectly predicts identifiability of the underlying problem (we refer to this as an {\it identifiability error}), in which case $\qalg$ and $\qrisk$ may be far apart, or (ii) it correctly predicts identifiability, but incurs some finite-sample estimation error of $\qrisk$. 

We begin by tackling the first source of loss. At a high level, identifiability errors should be rare if $\Gminushat(\lambda)$ and $\Gminus(\lambda)$ are ``close enough,'' with constant probability. To show this, we leverage \Cref{key-fact}, which implies that {$\Gminushat(\lambda)$ is an unbiased estimate of $\Gminus(\lambda)$}, thereby allowing us to leverage standard concentration arguments to show the following probabilistic bound on the distance between these two quantities:
\[\Pr_G\left(\abs{\Gminushat(\lambda) - \Gminus(\lambda)} {<} \width\right) \geq 1-\delta.\]

Restricting our attention to {the} event $\Event =  \left\{ \abs{\Gminushat(\lambda) - \Gminus(\lambda)} {<} \width \right\}$, then, we conclude that our algorithm correctly predicts identifiability in two regions: (i) $\Gminus(\lambda) \geq \rho+2\width$, and (ii) \mbox{$\Gminus(\lambda) < \rho-2\width$}. 

In the first case, $\qrisk = \qopt_G$ by \Cref{thm:minimax-risk-identifiable}, and our algorithm outputs $\qalg = \qopt_{\offeCDF}$. Hence, by \Cref{eq:regret-decomp-sketch}, it suffices to bound the loss incurred by setting $\qopt_{\offeCDF}$ instead of $\qopt_G$.  In the uncensored setting, standard arguments leverage concentration of the empirical cdf (using {\it uncensored demand} samples) to show that the SAA algorithm produces an estimate of $\qopt_G$ with a loss of $O(1/\sqrt{N})$. However, by \Cref{eq:saa}, $\qopt_{\offeCDF}$ is computed using the empirical cdf $\offeCDF$ of the {censored} sales data which, as discussed above, need not in general converge to $G$. We nonetheless show that when $\Gminus(\lambda) \geq \rho$, $\qopt_{\offeCDF} = \qopt_{\offeCDFtrue}$, where \emph{$\qopt_{\offeCDFtrue}$ is the SAA of $\qopt_G$ with access to the true (uncensored) demand samples!} Again, this is crucially due to the fact that our algorithm exclusively uses samples such that $\qoff_k = \lambda$ in the construction of $\qopt_{\offeCDF}$. Having established this fact, we are able to leverage concentration of the (true) empirical cdf to establish $O(1/\sqrt{N})$ regret for this case.

In the second case, $\qrisk = \qcrit_G$ by \Cref{thm:minimax-risk-identifiable}, and $\qalg = \qcrithat$. Noticing that $\qcrit_G = \func(\Gminus(\lambda))$ and  \mbox{$\qcrithat = \func(\Gminushat(\lambda))$}, where $\func(x) = \frac{bM+h\lambda-(b+h)xM}{(b+h)(1-x)}$, we conclude that the error in this case stems from the gap between $\Gminushat(\lambda)$ and $\Gminus(\lambda)$. Using the fact that $\abs{\Gminushat(\lambda)-\Gminus(\lambda)} < \width$, under event $\Event$, we obtain the $O(1/\sqrt{N})$ regret bound. 

In both of the above regions, our algorithm suffers only from finite-sample estimation error, as it correctly predicts (un)identifiability. When $\Gminus(\lambda) \in [\rho-2\width, \rho+2\width)$, however, $\Gminushat(\lambda)$ may lie in the knife-edge region $[\rho-\width,\rho+\width)$, in which case our algorithm defaults to $\qalg = \lambda$. Despite this, we argue that $\lambda$ is a ``safe'' choice for both regimes in this case. To see why $\lambda$ hedges against both possibilities, observe that, if $\Gminus(\lambda) \in [\rho-2\width,\rho+2\width)$, $\lim_{N\to\infty}\Gminus(\lambda) = \rho$, which implies that $\qopt_G$ approaches $\lambda$ as $N$ grows large. For the same reason, one can argue that $\qcrit_G$ approaches $\lambda$ as $N$ grows large. Hence, $\lambda$ is expected to be close to $\qrisk$ (formally, within $O(1/\sqrt{N})$ of $\qrisk$), whether or not the problem is identifiable. 

Putting these four cases together, we obtain the theorem.
\hfill\Halmos

\smallskip

We conclude the section by reframing \Cref{thm:minmax_regret} in terms of sample complexity, which ties the problem back to the question of identifiability. (Recall, the proof of the fact that $\risk = 0$ implies identifiability in \Cref{prop:iff} relied on the ability to provide a policy $\pi$ satisfying \Cref{def:identifiability}.)

\smallskip 

\begin{corollary}\label{cor:sample-complex}
For any $\epsilon > 0$ and $\delta \in (0,1)$, if $N \geq N(\epsilon,\delta)$, then with probability at least $1-2\delta$, the worst-case regret of \Cref{alg:newsvendor} is no greater than $\risk + \epsilon$, i.e.,
\[\Pr_G\left[ \sup_{F \in \ambig{\lambda}} \Big\{ C_F(\qalg) - C_F(\qopt_F)-\risk\Big\} < \epsilon \right] \geq 1-2\delta,
\]
where $N(\epsilon,\delta) =\frac{2(b+h)^2\log(2/\delta)}{\epsilon^2}\max\left\{\lambda^2,\max\{(b/h)^2,1\}(M-\lambda)^2\right\}$.
\end{corollary}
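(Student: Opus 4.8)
The plan is to obtain this sample-complexity statement as a direct corollary of \Cref{thm:minmax_regret} by inverting the regret bound there. Setting $\width = \sqrt{\frac{\log(2/\delta)}{2N}}$ --- exactly the choice made in \Cref{thm:minmax_regret} --- ensures that, with probability at least $1-2\delta$, the algorithm's regret is bounded by $\risk$ plus one of the four case-dependent additive terms. Since these four cases partition the possible values of $\Gminus(\lambda)$ relative to $\rho \pm 2\width$, this high-probability guarantee holds regardless of which regime the instance falls into; the goal is therefore simply to choose $N$ large enough that the additive term is at most $\epsilon$ in every case.

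First I would take a uniform upper bound across the four cases in \Cref{thm:minmax_regret}. Each additive term is a multiple of $(b+h)\width$, and the two largest coefficients are $2\lambda$ (arising in the regime $\Gminus(\lambda) \in [\rho,\rho+2\width)$) and $2\max\{b/h,1\}(M-\lambda)$ (arising in the regime $\Gminus(\lambda)\in[\rho-2\width,\rho)$). Hence in all four cases the regret is at most
\[\risk + 2(b+h)\,\width\,\max\left\{\lambda,\ \max\{b/h,1\}(M-\lambda)\right\}.\]
It then suffices to require the second term to be at most $\epsilon$.

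Next I would substitute $\width = \sqrt{\frac{\log(2/\delta)}{2N}}$ into this inequality, square both sides, and solve for $N$, yielding
\[N \geq \frac{2(b+h)^2\log(2/\delta)}{\epsilon^2}\,\max\left\{\lambda,\ \max\{b/h,1\}(M-\lambda)\right\}^2.\]
The only point requiring care is matching the nested maximum to the form stated in the corollary. This follows from the elementary identities $\max\{a,b\}^2 = \max\{a^2,b^2\}$ and $\max\{b/h,1\}^2 = \max\{(b/h)^2,1\}$ for nonnegative arguments, which together give $\max\{\lambda,\max\{b/h,1\}(M-\lambda)\}^2 = \max\{\lambda^2,\max\{(b/h)^2,1\}(M-\lambda)^2\}$, recovering exactly the claimed expression for $N(\epsilon,\delta)$.

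Since the substantive probabilistic and structural work is already carried out in \Cref{thm:minmax_regret}, this corollary is purely an algebraic inversion of that bound; the only obstacle --- a minor one --- is the bookkeeping of taking the worst of the four cases and correctly propagating the nested maxima through the squaring step to land on the stated closed form for $N(\epsilon,\delta)$.
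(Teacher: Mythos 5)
Your proposal is correct and is essentially the paper's own (implicit) argument: the corollary is stated as a direct consequence of \Cref{thm:minmax_regret}, obtained by taking the worst of the four case-dependent additive terms, substituting $\width = \sqrt{\log(2/\delta)/(2N)}$, and solving for $N$. Your bookkeeping of the maxima and the squaring step reproduces the stated $N(\epsilon,\delta)$ exactly, so there is nothing to add.
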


\smallskip 

Letting $\delta = O(1/\sqrt{N})$, \Cref{cor:sample-complex} implies that, when $\Gminus(\lambda) \geq \rho$, 
\[\lim_{N\to\infty}\Pr_G\left[ \sup_{F \in \ambig{\lambda}} \Big\{ C_F(\qalg) - C_F(\qopt_F)\Big\} < \epsilon \right] = 1,\]
which gives us identifiability, as promised in the proof of \Cref{prop:iff}.

\minedit{
\begin{remark}
A particularly appealing feature of the \ALG framework is its flexibility. In particular, one can first perform the identifiability test and, conditional on the test indicating that the instance is likely identifiable, output {\it any} estimate of $\qopt_G$. For instance, instead of the censored SAA $\qopt_{\widehat{G}}$ defined in \Cref{eq:saa}, one can output the $\rho$-quantile of the well-known Kaplan-Meier estimator of the cumulative distribution function \citep{kaplan1958nonparametric}.\footnote{\minedit{We refer the reader to \Cref{tab:rkm_comparison} in {Appendix \ref{app:sims_robust_km}} for a numerical evaluation of this policy.}}
\end{remark}
}
\section{Lower Bound}\label{sec:lower-bound}

In this section we show that the bounds established in \Cref{thm:minmax_regret} are tight, i.e., that any algorithm necessarily incurs {regret} lower bounded by $\risk + \Omega(1/\sqrt{N})$, in expectation. {While existing lower bounds apply to our problem (i.e., by taking $\lambda = +\infty$), we provide {\it instance-dependent} lower bounds, as a function of the regime of identifiability, which we formally define below.}

\smallskip

\begin{definition}[Identifiability regimes]
The three identifiability regimes of the {data-driven censored newsvendor problem} are defined as follows:
\begin{enumerate}[$(i)$]
\item the \emph{strictly identifiable} regime: the set of all distributions $G$ such that
\[\Gminus(\lambda) > \rho + \frac{c}{\sqrt{N}} \quad \text{for some } \ c > 0.\]
We let $\GI \subset \mathcal{G}$ denote the set of all such distributions.
\item the \emph{knife-edge} regime: the set of all distributions $G$ such that
\[\abs{\Gminus(\lambda)-\rho} \leq \frac{c}{\sqrt{N}} \quad \text{for some } c > 0.\]
We let $\GKE \subset \mathcal{G}$ denote the set of all such distributions.
\item the \emph{strictly unidentifiable} regime: the set of all distributions $G$ such that \[\Gminus(\lambda) < \rho-\frac{c}{\sqrt{N}} \quad \text{for some } \ c > 0.\]
We let $\GUI \subset \mathcal{G}$ denote the set of all such distributions.
\end{enumerate}
\end{definition}

\medskip

\Cref{thm:lb} establishes that our algorithm achieves the optimal regret guarantee (up to polylogarithmic factors) {across the three identifiability regimes}.

\begin{theorem}\label{thm:lb}
Fix $b, h, \lambda$, and $M$, {with $\lambda < M$.} For any policy $\pi \in \Pi$ with access to $N$ censored demand samples $\lambdasamples$:
\begin{enumerate}[(i)]
\item in the \emph{strictly identifiable} regime:
\begin{align*}
\sup_{G \in \GI}\E_{G}\bigg[\Regret(q^\pi)-\Delta\bigg] \geq  \frac{\lambda(b+h)\sqrt{1-\rho}\min\{\rho,1-\rho\}e^{-1/2}}{64\sqrt{N}}
\end{align*}
\item in the \emph{knife-edge} regime:
\begin{align*}
\sup_{G \in \GKE}\E_{G}\bigg[\Regret(q^\pi)-\Delta\bigg] \geq \frac{\lambda(b+h)\sqrt{1-\rho}\min\{\rho,1-\rho\}e^{-1/2}}{32\sqrt{N}}
\end{align*}
\item in the \emph{strictly unidentifiable} regime:
\begin{align*}
\sup_{G \in \GUI}\E_{G}\bigg[\Regret(q^\pi)-\Delta\bigg] \geq \frac{h(M-\lambda)\sqrt{1-\rho}\min\{\rho,1-\rho\}\min\left\{\rho,{3\rho-1}\right\}e^{-1/2}}{64\sqrt{N}}
\end{align*}
\end{enumerate}
\end{theorem}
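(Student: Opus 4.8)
The plan is to invoke Le Cam's two-point method, casting the problem of achieving low regret as a binary hypothesis test between two demand distributions $G_0,G_1$ drawn from the relevant identifiability class. In each regime I would construct $G_0,G_1$ so that (i) their induced laws over a single boundary sample $\min\{\doff_i,\lambda\}$ are nearly indistinguishable, with $\mathrm{KL}(G_0\|G_1)=O(1/N)$, and (ii) no ordering quantity $q$ is simultaneously near-optimal for both instances in the sense of \Cref{lem:sup_expression_identifiability}. Writing $\Regret_{G_j}$ for the worst-case regret and $\Delta_j$ for the minimax risk under $G_j$, property (ii) takes the quantitative form of a uniform separation $\big(\Regret_{G_0}(q)-\Delta_0\big)+\big(\Regret_{G_1}(q)-\Delta_1\big)\geq \mathrm{sep}$ for every $q$. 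Standard arguments then give $\inf_\pi\max_{j}\E_{G_j}[\Regret(q^\pi)-\Delta]\geq \tfrac12\,\mathrm{sep}\,(1-\mathrm{TV})$, and I would control the $N$-fold total variation via the Bretagnolle--Huber inequality $1-\mathrm{TV}\geq \tfrac12 e^{-N\,\mathrm{KL}(G_0\|G_1)}$, choosing the perturbation size so that $N\,\mathrm{KL}=\tfrac12$; this is the source of the $e^{-1/2}$ factor common to all three bounds. Since $G_0,G_1$ lie in the relevant class, the max lower-bounds the supremum in the theorem.

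For the strictly identifiable regime I would place atoms at two points $a_1<a_2<\lambda$ with $a_2-a_1=\Theta(\lambda)$, plus a common censored atom at $\lambda$. Setting $\Pr_{G_0}(D=a_1)=\rho+\epsilon$ and $\Pr_{G_1}(D=a_1)=\rho-\epsilon$ while shifting mass $2\epsilon$ onto $a_2$ in $G_1$ (and keeping a constant-order baseline mass at $a_2$ so that all likelihood ratios are $1+O(\epsilon)$, hence $\mathrm{KL}=O(\epsilon^2)$) yields $\Gminus(\lambda)\geq\rho+\epsilon$ for both---so both lie in $\GI$ when $\epsilon=c/\sqrt N$---yet $\qopt_{G_0}=a_1$ while $\qopt_{G_1}=a_2$. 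On $(a_1,a_2)$ the two CDFs equal $\rho\pm\epsilon$, so a short integration shows that for every $q$, $\Regret_{G_0}(q)+\Regret_{G_1}(q)\geq(b+h)\epsilon(a_2-a_1)=\Theta\big((b+h)\lambda/\sqrt N\big)$, the minimum being attained for $q\in(a_1,a_2)\subset[0,\lambda)$, where worst-case and vanilla regret agree by \Cref{prop:worst-case-for-identifiable} (and $\Delta=0$). Optimizing the free constants recovers the $\sqrt{1-\rho}\min\{\rho,1-\rho\}$ factors. The knife-edge bound follows from the identical construction centered at $\Gminus(\lambda)=\rho$, which accounts for its matching $\lambda(b+h)$ scaling.

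For the strictly unidentifiable regime I would use the two-atom worst-case family of \Cref{prop:ber-is-worst-case}: let $G_j$ place mass $\Gminus_j$ at $0$ and $1-\Gminus_j$ at $M$, so that the boundary samples are $\mathrm{Bernoulli}(\Gminus_j)$ on $\{0,\lambda\}$ and $\qopt_{G_j}=M$. Taking $\Gminus_0=u$ and $\Gminus_1=u+2\epsilon$, both strictly below $\rho-c/\sqrt N$, the barrier is now the impossibility of resolving $\Gminus(\lambda)$ to precision $o(1/\sqrt N)$, and $\mathrm{KL}(G_0\|G_1)=\Theta\big(\epsilon^2/(u(1-u))\big)$. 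Because $\Delta_j$ differs across instances, the relevant object is the excess regret $\Regret_{G_j}(q)-\Delta_j$, which by \Cref{lem:sup_expression_identifiability} is a V-shaped function minimized at $\qcrit_{G_j}=\func(\Gminus_j)$ with one-sided slopes $b-(b+h)\Gminus_j$ (left) and $h$ (right). Differentiating gives $\func'(u)=-h(M-\lambda)/\big((b+h)(1-u)^2\big)$, hence $\qcrit_{G_0}-\qcrit_{G_1}=\Theta\big(h(M-\lambda)\epsilon/((b+h)(1-u)^2)\big)$; comparing the two V's then shows the sum of excess regrets is at least $\min\{b-(b+h)u,\,h\}\cdot(\qcrit_{G_0}-\qcrit_{G_1})=\Theta\big(h(M-\lambda)/\sqrt N\big)$ for every $q$, which after optimizing $u$ yields the stated constants (including the $\min\{\rho,3\rho-1\}$ term).

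The step I expect to be the main obstacle is the unidentifiable (and knife-edge) separation, where the benchmark $\Delta$ is itself instance-dependent: the two-point comparison is between excess regrets measured against two distinct baselines $\Delta_0\neq\Delta_1$, so the argument must lean on the exact piecewise-linear slopes of the worst-case-regret function from \Cref{lem:sup_expression_identifiability} rather than on a single smooth loss, and must verify the separation uniformly over all $q$ (including the edge cases $q<\lambda$ and $q\to M$, and the differing ambiguity sets of $G_0$ and $G_1$). Confirming that the instances can be perturbed purely through the boundary mass while keeping $\mathrm{KL}=O(1/N)$, and then tuning $\epsilon$, $u$, $a_1,a_2$ and the atom weights to extract the precise $\rho$-dependent constants, is where the remaining bookkeeping concentrates.
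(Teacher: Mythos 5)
Your proposal is correct and follows essentially the same route as the paper: a two-point Le Cam reduction with the Bretagnolle--Huber inequality applied to point-mass hard instances in each regime, with the separation term extracted from the piecewise-linear worst-case regret characterizations (\Cref{lem:sup_expression_identifiability}, \Cref{prop:worst-case-for-identifiable}) so that excess regret dominates the estimation error of $\qrisk$, and Bernoulli-type KL bounds tuned so that $N\cdot\mathrm{KL}$ is constant. The only differences are cosmetic choices of the hard instances (the paper uses atoms at $\{0,\lambda/2\}$ for the identifiable case, Bernoulli instances straddling $\rho$ for the knife-edge case, and Bernoulli instances on $\{0,1\}$ for the unidentifiable case, reducing via its Lemmas~\ref{lem:risks-for-all-regimes}--\ref{lem:hypothesis}), plus the constant bookkeeping you explicitly defer.
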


\medskip  

In the strictly identifiable regime, our bound reflects that the censored setting suffers from the same finite-sample information loss as the uncensored setting (up to constant factors) \citep{chen2024survey}. Moreover, our lower bounds have the same dependence on $\lambda$ and $M-\lambda$, respectively, for the strictly identifiable and unidentifiable regimes, as the upper bounds derived in \Cref{thm:minmax_regret}. Additionally, the lower bound in the knife-edge regime is exactly twice the lower bound for the strictly identifiable regime. Such a phenomenon also appears in the upper bound, and highlights an additional unavoidable source of loss when $\Gminus(\lambda) \approx \rho$, the regime in which identifiability is the hardest to test. 

{Our proof follows similar lines as those used to derive lower bounds for the uncensored setting (see, e.g., \citet{chen2024survey,lin2022data}).} Specifically, for each regime, we identify two ``hard'' distributions with support on two point masses. While the means of these two distributions are within $\Theta(1/\sqrt{N})$ of each other, their respective optimal ordering quantities lie on opposite ends of their support. We then rely on the closed-form characterization of the worst-case regret derived in \Cref{lem:sup_expression_identifiability} to argue that, for these two distributions, the regret incurred by any policy is proportional to the estimation error of the minimax optimal order quantity $\qrisk$. We conclude by showing that the problem of estimating $\qrisk$ reduces to the problem of testing for the identity of the demand distribution, for which the error rate is lower bounded by $\Omega\left(1/\sqrt{N}\right)$. {We refer the reader to \cref{app:lb_proof} for the formal proof of this result.}

\section{Computational Experiments}
\label{sec:experiments}

In this section we demonstrate the practical efficacy of our algorithm via extensive computational experiments.  We test all algorithms on synthetic data in \cref{sec:experiments_synthetic}, and consider a real-world retail dataset in \cref{sec:exp_real_data}.


\subsection{Description of Metrics and Benchmark Policies}\label{sec:experiments_benchmarks}

{We evaluate our algorithm, \ALG, using $\delta = 0.3$, and benchmark it} against {five} other policies from the literature, for all sets of experiments:
\begin{itemize}
    \item {\IgnorantSAA}: This algorithm ignores the potential impact of demand censoring and outputs the $\rho$-th sample quantile, using all samples in the dataset, i.e.:
    \[
    q^{\textsf{naive}} = \inf\left\{ q \ \big{|} \ \frac{1}{\sum_k N_k} \sum_k \sum_{i \in [N_k]} \Ind{\soff_{ki} \leq q} \geq \rho\right\}.
    \]
    \item {\SubsampleSAA}: This algorithm subsamples the dataset to only retain uncensored samples $(k,i)$ such that $\doff_{ki} < \qoff_k$, and outputs the $\rho$-th sample quantile for this subset, i.e.:
    \[
    q^{\textsf{subsample}} = \inf\left\{ q \ \big{|} \ \frac{1}{|\{(k,i)\ \mid \ \doff_{ki} < \qoff_k\}|} \sum_k \sum_{i : \doff_{ki} < \qoff_k} \Ind{\doff_{ki} \leq q} \geq \rho\right\}.
    \]
    If $|\{(k,i)\ \mid \ \doff_{ki} < \qoff_k\}| = 0$, the algorithm outputs $\lambda$.
    \item {\KM}: {This algorithm outputs the $\rho$-quantile of the Kaplan-Meier (KM) estimator of the cdf}~\citep{kaplan1958nonparametric}, using the \textsf{lifelines} package~\citep{davidson_pilon_2024_14007206}.
    \item {\CensoredSAA}: This is the algorithm designed in \citet{fan2022sample} for the identifiable regime, when historical inventory levels are i.i.d. samples from a given distribution. 
    Formally, their algorithm is parameterized by an accuracy parameter $\epsilon > 0$. It then successively computes the following $K$ empirical cdf's, for $k = 1, \ldots,K$:
    \[\hat{F}_-^k(x) = \frac{1}{\sum_{k' \geq k}N_{k'}}\sum_{k'\geq k}\sum_{i\in[N_{k'}]} \Ind{\soff_{k'i}<x},\]
    and terminates at the first value of $k$ for which $\hat{F}_-^k(\qoff_k) \geq \rho - 2\beta$, where $\beta = \frac{\min\{b,h\}}{18(b+h)}\cdot\epsilon$. For this value of $k$, it outputs $q^\CensoredSAA = \inf\left\{q \mid \hat{F}_-^k(q) \geq \rho-2\beta\right\}$.
    If the termination condition is never satisfied, it simply outputs $\lambda$. {We evaluate the algorithm with $\epsilon = 1 / \sqrt{N}$.}
    \item {\TrueSAA}: This is the classical SAA heuristic, which outputs the $\rho$-th sample quantile with respect to the {\it true} demand data, i.e.:
    \[q^{\textsf{SAA}} = \inf\left\{ q \ \big{|} \ \frac{1}{\sum_k N_k} \sum_{k} \sum_{i \in [N_k]} \Ind{\doff_{ki} \leq q} \geq \rho\right\}.\]
    Though this algorithm is not implementable, it serves as a useful benchmark in the identifiable regime (where $\qopt_G$ is learnable), to numerically isolate the impact of demand censoring.
\end{itemize}

\smallskip

In \cref{sec:exp_real_data} we consider a practically motivated variant of \ALG, referred to as \ALGplus, which improves the sample efficiency of \ALG by incorporating {\em all} sales data.

\medskip 

We report the following metrics, depending on the regime of identifiability:
\begin{itemize}
    \item {\em Unidentifiable regime}: When $\Gminus(\lambda) < \rho$, we are interested in all policies' performance relative to the unidentifiable ordering quantity $\qcrit_G$, which is minimax optimal in this regime. Specifically, we report $\Regret(q^\pi)-\risk$, and its relative counterpart:
    \begin{align}\label{eq:rel-ui}
     \mathcal{R}^{{ui}}(q^\pi) = \frac{\Regret(q^\pi) - \risk}{\risk}.
    \end{align}
    In this case, reporting $\Regret(q^\pi)-\risk$ instead of $\Regret(q^\pi)$ allows us to set aside the unavoidable loss that {\it any} policy incurs in the unidentifiable regime. This metric therefore allows us to quantify various policies' ability to learn the minimax optimal ordering quantity.
    \item {\em Identifiable regime}: When $\Gminus(\lambda) \geq \rho$, we are interested {in} all policies' performance relative to the optimal newsvendor quantity $\qopt_G$. Specifically, we report the vanilla regret $C_G(q^\pi)-C_G(\qopt_G)$, and its relative counterpart:
    \begin{align}\label{eq:rel-id}
    \mathcal{R}^{id}(q^\pi) = \frac{C_G(q^\pi) - C_G(\qopt_G)}{C_G(\qopt_G)}.
    \end{align}
\end{itemize}

{
\begin{remark}
Reporting different metrics depending on the identifiability regime allows us to investigate the robustness of various policies, with the desideratum being low regret both when the optimal newsvendor cost is a meaningful benchmark, as well as when it's not. 
\end{remark}
}

\subsection{Synthetic Experiments}
\label{sec:experiments_synthetic}

In this section we investigate how our policy's performance is impacted as we vary (i) the number of samples $N$, (ii) the observable boundary $\lambda$, and (iii) demand variability.

\subsubsection{Experimental setup.} \label{sec:experiment_setup}
Across all experiments, we let $h = 1$ and vary $b \in \{3,9,49\}$, such that $\rho \in \{0.75, 0.9, 0.98\}$.\footnote{In most practical settings we expect the per-unit underage cost to be significantly higher than the overage cost.}
We moreover consider the following demand distributions:
\begin{itemize}
    \item {\em Uniform}: Demand is drawn from a discrete uniform distribution with support $\{0,1,\ldots,100\}$.
    \item {\em Exponential}: Demand is exponentially distributed with mean 80.
    \item {\em Poisson}: Demand is drawn from a Poisson distribution with mean 80.
    \item {\em Normal}: Demand $D = \max\{0, X\}$ where $X \in \mathcal{N}(80, \sigma)$ is a Gaussian random variable with mean $80$ and {standard deviation} $\sigma \in \{20,25,...,40\}$.
\end{itemize}
We let $M = 320$, an upper bound on $\qopt_G$ across all of the above distributions. 

We consider $K = 2$ historical ordering quantities. 
To generate these, we first fix the observable boundary $\lambda$, and define the larger ordering quantity to be $\qoff_2 = \lambda$. For each of the above instances, we sample $N$ demand samples from $G$, and censor them at $\lambda$. We generate the other historical ordering quantity by sampling $\qoff_1 \sim \text{Uniform}[\frac{\lambda}{4}, \frac{3 \lambda}{4}]$. Once we have fixed $\qoff_1$, we sample $N$ true demand samples from $G$, and similarly censor them at $\qoff_1$.
For each distribution, we vary $\lambda \in \left\{\left(\frac12+\frac{k}{7}\right)\qopt_G, k \in [7]\right\}$ (i.e., eight equally spaced points between $\frac{\qopt_G}{2}$ and $\frac{3\qopt_G}{2}$), where $\qopt_G$ is the optimal newsvendor quantity under $\rho = 0.9$. Varying $\lambda$ in this way allows us to generate enough instances for each regime of identifiability. In particular, we will see that our algorithm's performance depends on whether the problem is {\it easily identifiable} ($\Gminus(\lambda) \gg \rho$), {\it easily unidentifiable} ($\Gminus(\lambda) \ll \rho$), or in a {\it difficult regime} ($\Gminus(\lambda) \approx \rho$).

\subsubsection*{Algorithm evaluation.} We run 100 replications for each experiment. \minedit{For each replication, we sample the offline dataset $\soff$ according to the above procedure.  We then compute the relevant regret metric (where $C_G(\cdot)$ is computed via Monte Carlo estimation over $10^7$ samples from the true distribution $G$), and report the average of each metric across all 100 replications.}

\subsubsection{Impact of {$N$}.} 

\begin{figure}[!t]
    \begin{subfigure}{.48\textwidth}
        \centering
        \includegraphics[width=\textwidth]{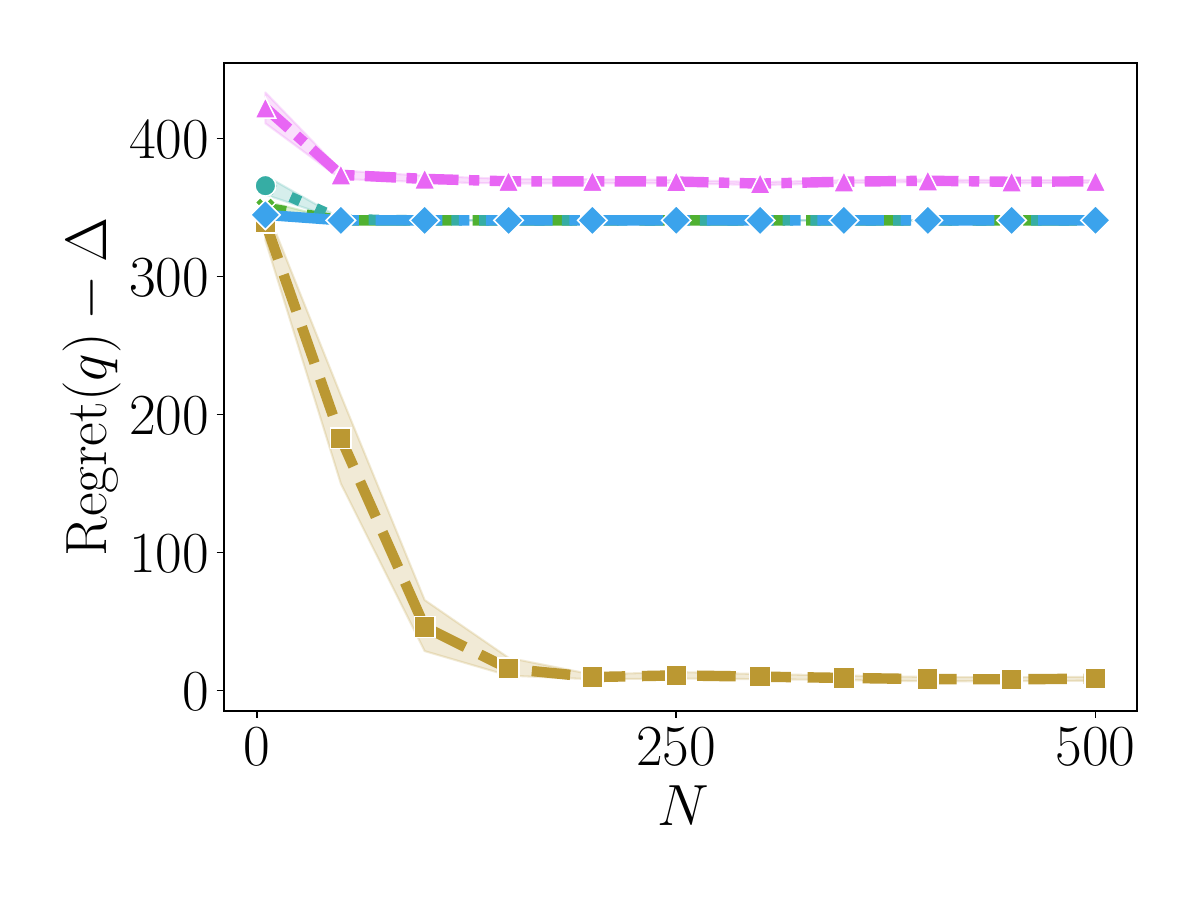}
        \caption{$\lambda = 69.93$ (Unidentifiable)}\label{fig:na}
    \end{subfigure}
     \begin{subfigure}{.48\textwidth}
        \centering
        \includegraphics[width=\textwidth]{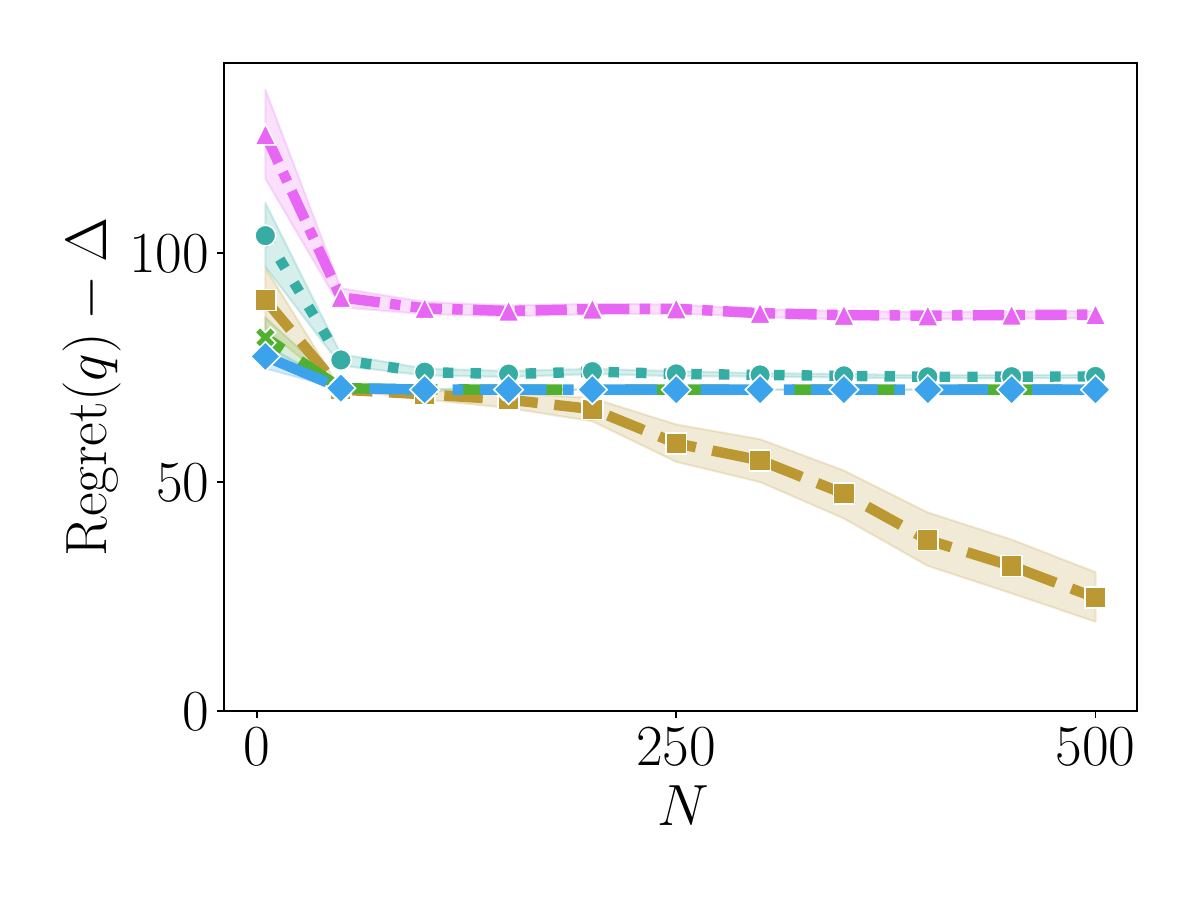}
        \caption{$\lambda = 82.64$ (Unidentifiable)}\label{fig:nb}
    \end{subfigure}
    \begin{subfigure}{.48\textwidth}
        \centering
        \includegraphics[width=\textwidth]{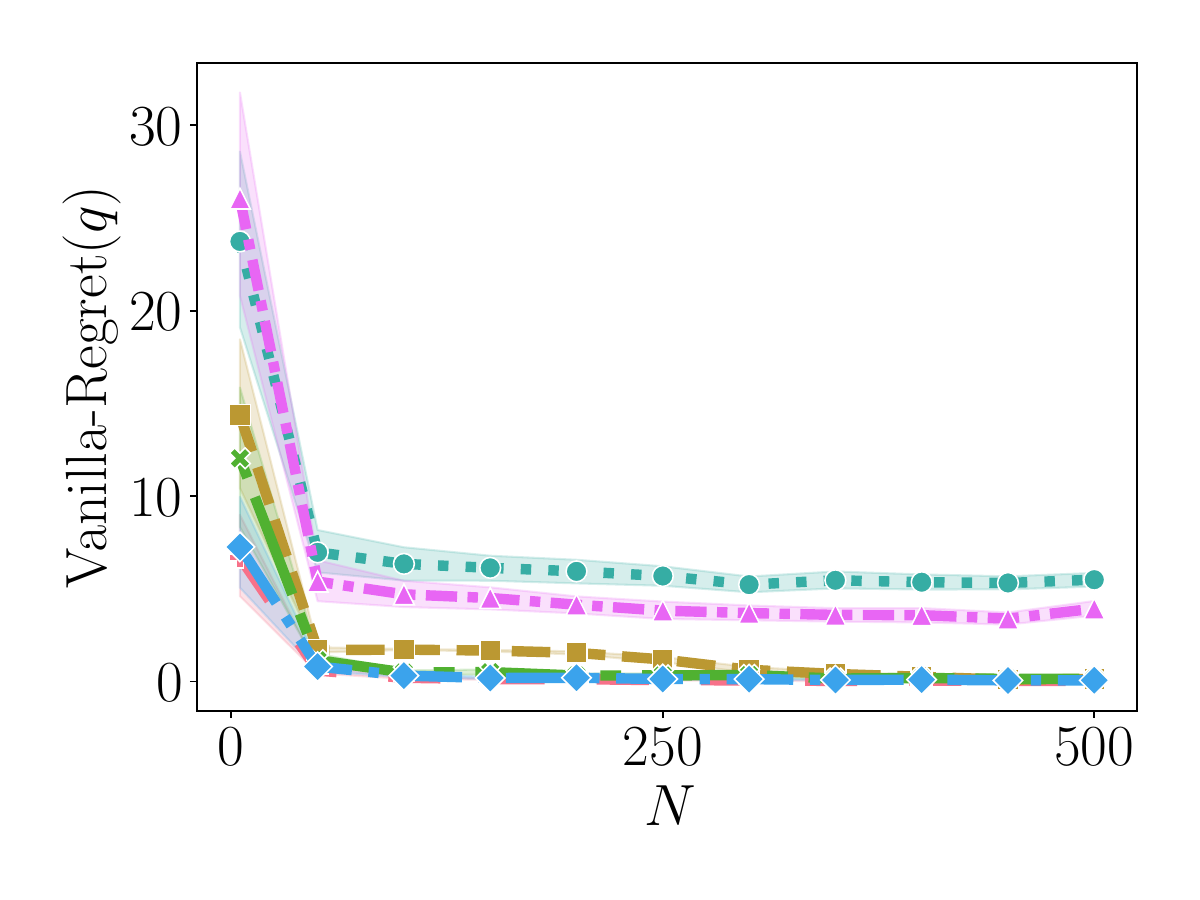}
        \caption{$\lambda = 95.36$ (Identifiable)}\label{fig:nc}
    \end{subfigure}
    \begin{subfigure}{.48\textwidth}
        \centering
        \includegraphics[width=\textwidth]{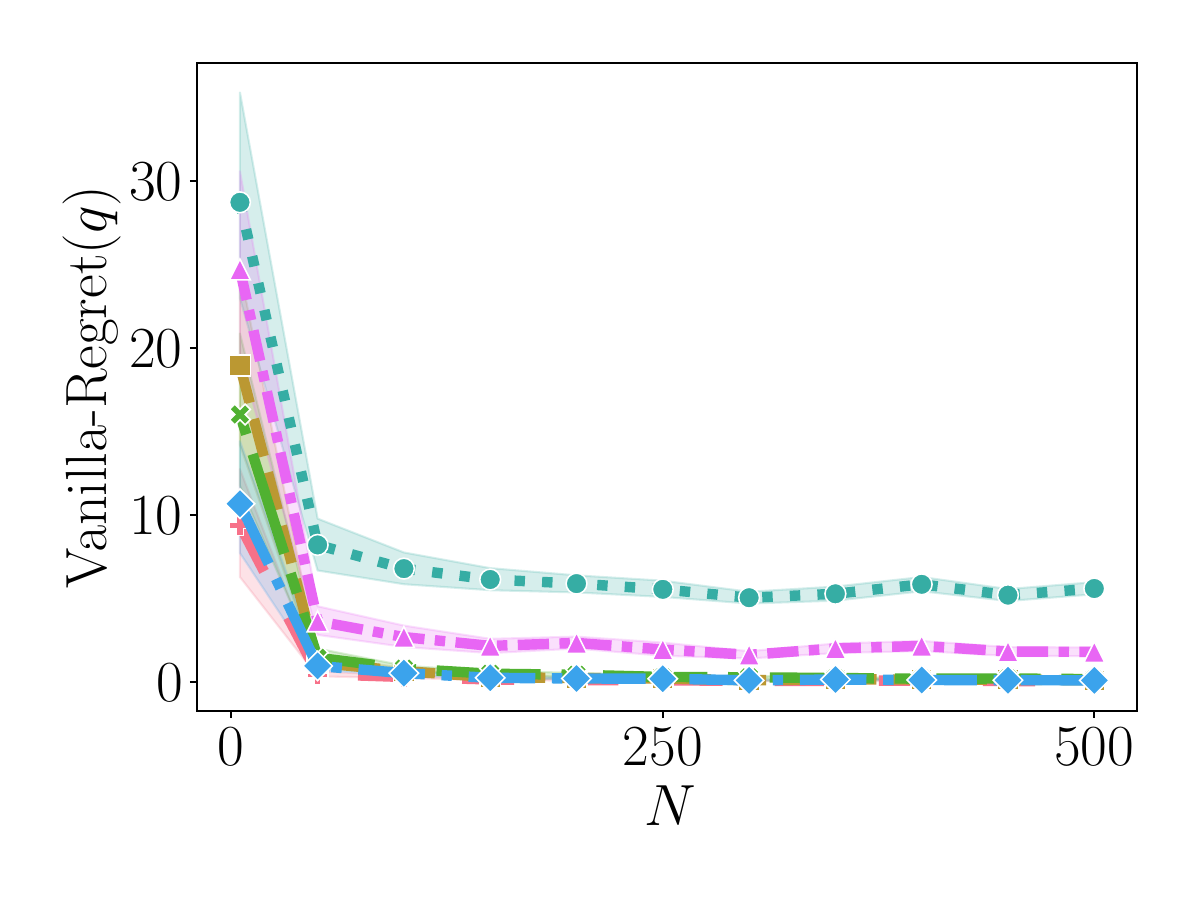}
        \caption{$\lambda = 108.07$ (Identifiable)}\label{fig:nd}
    \end{subfigure}
    \begin{subfigure}{\textwidth}
        \centering
        \includegraphics[width = .75\textwidth]{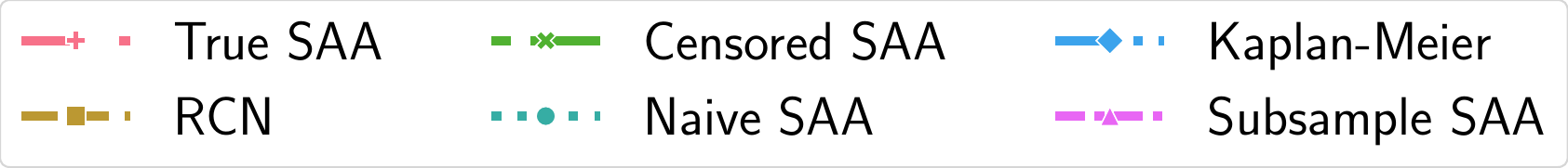}
    \end{subfigure}
    \caption{\centering {$\Regret(q) - \risk$ vs. $N$ (when unidentifiable) and $\UncensoredRegret(q)$ vs. $N$ (when identifiable) for the different benchmark policies, for $\lambda \in \{69.93,82.64,95.36,108.07\}$. Here, $D \sim \text{Uniform}\{0,100\}$ and $\rho = 0.9$, with $\qopt_G = 89$.}}
    \label{fig:line_plots}
\end{figure}

We first investigate the performance of all policies of interest as $N$ grows large. Here, the underlying demand distribution $D\sim\text{Uniform}\{0,100\}$ and $\rho = 0.9$, for various values of $\lambda$. For this value of $\rho$, $\qopt_G = 89$; therefore, the problem is unidentifiable for $\lambda \in \{69.93, 82.64\}$ and identifiable for $\lambda \in \{95.36, 108.07\}$. (The results for all other sets of parameters and distributions are entirely analogous. We omit them as such.)

\smallskip

When the problem is unidentifiable, we plot $\Regret(q^\pi)-\risk$ versus $N$ (see \Cref{fig:na,fig:nb}). \ALG is the only algorithm whose worst-case regret asymptotically converges to the minimax risk $\risk$. We observe that $\Regret(q^\pi)-\risk$ converges to a constant for all other policies. This is expected, as these latter policies aim to estimate $\qopt_G$, as opposed to attempting to hedge against nature in the unidentifiable regime. Hence, additional samples do not help their performance.  Due to censoring at $\lambda$, these algorithms often converge to outputting $\lambda$, which is why \CensoredSAA and \KM achieve similar performance.

Notice moreover that our algorithm's convergence to $\risk$ is much slower when $\lambda = 82.64$ compared to when $\lambda = 69.93$. This is because the problem is much closer to identifiability (which would occur at $\lambda \geq \qopt_G = 89$), meaning that \ALG requires significantly more samples to test whether or not the problem is identifiable. Specifically, when $N$ is small, we are in the difficult regime where \ALG does not have confidence about whether or not the problem is identifiable, and defaults to outputting $\lambda$, instead of an estimate of $\qcrit_G$. With more samples, however, \ALG correctly classifies the problem as {likely unidentifiable}, and outputs $\qalg = \qcrit_{\offeCDF}$. Despite this burn-in period, across all values of $\lambda$ in the {unidentifiable} regime \ALG ensures a $5\%$ relative optimality gap $\mathcal{R}^{ui}(\qalg)$ in roughly $N \sim 150$ samples.

\smallskip

{When the problem is identifiable, given that the optimal newsvendor cost is indeed achievable in the worst case, we plot \UncensoredRegret\xspace versus $N$ (see \Cref{fig:nc,fig:nd}). We observe that \ALG, \KM, \CensoredSAA, and \TrueSAA all quickly converge to the optimal newsvendor quantity $\qopt_G$, resulting in vanishing vanilla regret. \minedit{On the other hand, the performance of \SubsampleSAA and \IgnorantSAA plateaus as $N$ grows large.} This again is expected since these policies learn biased estimates of the underlying demand distribution.
Note that, for $\lambda = 95.36$, while our algorithm's regret is very low, it lies slightly above that of the three other policies for small values of $N$. This is the same phenomenon as the one observed when $\lambda = 82.64$: our policy requires a {burn-in period} when $\lambda$ is close to $\qopt_G$ to determine identifiability.}

\subsubsection{Impact of observable boundary.}
\label{sec:experiment_observable_boundary}

We next study the impact of the observable boundary $\lambda$ on policy performance.  Intuitively, since $\Regret(q^\pi)-\risk$ represents the loss due to errors in estimating $\qrisk$, \ALG should exhibit the best performance for extreme values of $\lambda$, when the problem is {\it easily unidentifiable} ($\lambda \ll \qopt_G$) or {\it easily identifiable} ($\lambda \gg \qopt_G$), since these regimes are where the algorithm will make the fewest identifiability errors. As $\lambda$ approaches $\qopt_G$ from either direction, however, one would expect the frequency with which \ALG either misclassifies the regime or defaults to $\lambda$ to increase, leading to higher values of $\Regret(q^\pi)-\risk$. 

To test this hypothesis, we fix $N = 500$ and $\rho = 0.9$, and consider the uniform, exponential, and Poisson distributions described in \cref{sec:experiments_benchmarks}. Our results are summarized in \cref{tab:rel-vs-lam-09}; the thick vertical line in each table marks the transition from the unidentifiable regime, where we report $\Regret(q^\pi) - \risk$ and $\mathcal{R}^{ui}(q^\pi)$ in parentheses, to the identifiable regime, where we report $\UncensoredRegret(q^\pi)$ and $\mathcal{R}^{id}(q^\pi)$ in parentheses. Results for other values of $\rho$ are provided in Appendix~\ref{apx:observe-boundary}.

\begin{table}[!ht]
\setlength\tabcolsep{3pt}
\small
\begin{subtable}[b]{\linewidth}
\centering
\begin{tabular}{lrrrr!{\vrule width 1.5pt}rrrr}
\toprule
$\lambda$ & 44.50 & 57.21 & 69.93 & 82.64 & 95.36 & 108.07 & 120.79 & 133.50 \\
\midrule
\TrueSAA &  &  &  &  & 0.0 (0.1\%) & 0.0 (0.1\%) & 0.0 (0.0\%) & 0.0 (0.0\%) \\
\midrule
\ALG & 4.0 (1.7\%) & 6.7 (3.3\%) & 7.7 (4.5\%) & 27 (27\%) & 0.1 (0.2\%) & 0.1 (0.2\%) & 0.1 (0.1\%) & 0.1 (0.2\%) \\
\CensoredSAA & 1033 (450\%) & 653 (320\%) & 341 (200\%) & 70 (70\%) & 0.1 (0.3\%) & 0.1 (0.2\%) & 0.1 (0.2\%) & 0.1 (0.3\%) \\
\KM & 1033 (450\%) & 653 (320\%) & 341 (200\%) & 70 (70\%) & 0.1 (0.1\%) & 0.1 (0.1\%) & 0.0 (0.1\%) & 0.1 (0.1\%) \\
\IgnorantSAA & 1033 (450\%) & 653 (320\%) & 341 (200\%) & 73 (73\%) & 5.0 (11\%) & 5.4 (12\%) & 4.5 (9.9\%) & 4.3 (9.6\%) \\
\SubsampleSAA & 1067 (465\%) & 684 (335\%) & 369 (217\%) & 87 (87\%) & 3.5 (7.7\%) & 1.9 (4.2\%) & 2.0 (4.5\%) & 1.8 (4.1\%) \\
\bottomrule
\end{tabular}
\caption{Uniform, $\qopt_G = 89$}
\label{tab:uniform}       
\end{subtable}
\medskip
\begin{subtable}[b]{\linewidth}
\centering
\begin{tabular}{lrrrr!{\vrule width 1.5pt}rrrr}
\toprule
$\lambda$ & 92.07 & 118.38 & 144.68 & 170.99 & 197.30 & 223.60 & 249.91 & 276.22 \\
\midrule
\TrueSAA &  &  &  &  & 0.4 (0.2\%) & 0.4 (0.2\%) & 0.3 (0.2\%) & 0.3 (0.2\%) \\
\midrule
\ALG & 6.7 (4.2\%) & 6.9 (6.0\%) & 21 (29\%) & 4.3 (19\%) & 1.0 (0.5\%) & 7.4 (4.0\%) & 3.9 (2.1\%) & 0.8 (0.5\%) \\
\CensoredSAA & 345 (216\%) & 148 (128\%) & 45 (64\%) & 4.6 (20\%) & 0.6 (0.3\%) & 0.9 (0.5\%) & 0.8 (0.4\%) & 0.6 (0.3\%) \\
\KM & 345 (216\%) & 148 (128\%) & 45 (64\%) & 4.5 (19\%) & 0.6 (0.3\%) & 0.6 (0.3\%) & 0.5 (0.3\%) & 0.5 (0.3\%) \\
\IgnorantSAA & 345 (216\%) & 148 (128\%) & 57 (82\%) & 29 (125\%) & 25 (13\%) & 20 (11\%) & 18 (9.8\%) & 15 (8.1\%) \\
\SubsampleSAA & 410 (257\%) & 214 (185\%) & 108 (153\%) & 57 (243\%) & 40 (22\%) & 31 (17\%) & 25 (14\%) & 20 (11\%) \\
\bottomrule
\end{tabular}
\caption{Exponential, $\qopt_G = 184.21$}
\label{tab:exponential}       
\end{subtable}
\medskip
\begin{subtable}[b]{\linewidth}
\centering
\begin{tabular}{lrrrr!{\vrule width 1.5pt}rrrr}
\toprule
$\lambda$ & 46 & 59.14 & 72.29 & 85.43 & 98.57 & 111.71 & 124.86 & 138 \\
\midrule
\TrueSAA &  &  &  &  & 0.0 (0.2\%) & 0.0 (0.1\%) & 0.0 (0.2\%) & 0.0 (0.2\%) \\
\midrule
\ALG & 0.0 (0.0\%) & 0.5 (0.2\%) & 2.4 (1.1\%) & 7.1 (4.7\%) & 0.1 (0.3\%) & 0.0 (0.3\%) & 0.1 (0.3\%) & 0.1 (0.4\%) \\
\CensoredSAA & 2260 (900\%) & 2131 (891\%) & 1542 (697\%) & 247 (165\%) & 0.1 (0.4\%) & 0.1 (0.3\%) & 0.1 (0.4\%) & 0.1 (0.4\%) \\
\KM & 2260 (900\%) & 2131 (891\%) & 1542 (697\%) & 247 (165\%) & 0.1 (0.3\%) & 0.0 (0.3\%) & 0.1 (0.3\%) & 0.1 (0.4\%) \\
\IgnorantSAA & 2260 (900\%) & 2131 (891\%) & 1542 (697\%) & 247 (165\%) & 2.0 (12\%) & 2.0 (12\%) & 1.9 (12\%) & 1.6 (9.8\%) \\
\SubsampleSAA & 2260 (900\%) & 2138 (894\%) & 1544 (699\%) & 250 (168\%) & 0.2 (1.2\%) & 0.1 (0.7\%) & 0.3 (1.7\%) & 0.3 (1.7\%) \\
\bottomrule
\end{tabular}
\caption{Poisson, $\qopt_G = 92$}
\label{tab:poisson}       
\end{subtable}
\centering
\caption{
Impact of $\lambda$ on policy performance. Values to the left of the thick vertical line correspond to the unidentifiable regime, where we report $\Regret(q^\pi) - \risk$ and $\mathcal{R}^{ui}(q^\pi)\%$; values to the right of the thick vertical line correspond to the identifiable regime, where we report $\UncensoredRegret(q^\pi)$ and $\mathcal{R}^{id}(q^\pi)\%$.}
\label{tab:rel-vs-lam-09}
\end{table}

When the problem is {\it easily unidentifiable} (i.e., $\lambda \ll \qopt_G$), \ALG is the only near-optimal algorithm, incurring regret of at most 6\% relative to the minimax risk $\risk$. (This occurs in the case of the exponential distribution, for $\lambda = 118.38$; see \Cref{tab:exponential}.) While our algorithm approximates the unidentifiable quantile $\qcrit_G$, the non-robust algorithms incur regret that is at least twice as high as $\risk$ in the best case, and 10 times as high in the worst case. Observe that the performance of these algorithms is near-identical across many instances. This is because, for small values of $\lambda$, any SAA-style solution outputs a quantity close to $\lambda$, since most demand realizations are censored.

When the problem is {\it easily identifiable} (i.e. $\lambda \gg \qopt_G$), \ALG remains near-optimal: in the worst case, its cost is within $4\%$ of the true newsvendor optimum (occurring at $\lambda = 223.60$ in \Cref{tab:exponential}). \CensoredSAA and \KM similarly perform exceptionally well, with a relative vanilla regret of at most 1\% in all cases. Again, this highlights that censoring is essentially immaterial in this regime.  Finally, \SubsampleSAA and \IgnorantSAA retain their poor performance across most instances. This performance slightly improves for larger values of $\lambda$, since fewer demand samples are censored as $\lambda$ increases.

We note that across all three tables, as $\lambda$ approaches $\qopt_G$ from below, \ALG incurs higher regret. This is in line with our intuition that determining identifiability becomes more challenging close to the identifiability boundary $\lambda = \qopt_G$, with \ALG making more misclassification errors. The exception to this trend is the exponential distribution (\Cref{tab:exponential}), where we observe the algorithm's regret steeply decreasing at $\lambda = 170.99$. For this instance, \ALG classifies the problem as ``knife-edge'' in 99\% of replications, thereby defaulting to outputting $\lambda$. However, for this value of $\lambda$, $\qcrit_G = 184.97$; this explains the algorithm's strong performance in the knife-edge regime.

Overall, our results illustrate the robustness of \ALG  to varying values of $\lambda$, with strong performance relative to the true newsvendor cost when the problem is identifiable. Moreover, our experiments show that the benefits of our algorithm are the largest in highly censored settings.\footnote{\minedit{We refer the reader to Appendix \ref{apx:knife-edge} for additional experiments that ``zoom into'' the knife-edge regime for each of these three distributions. In this ``zoomed in'' regime, we observe that \ALG, \KM, and \CensoredSAA achieve comparable performance according to both worst-case and vanilla regret metrics.}}

\subsubsection{Demand variability.}
\label{sec:experiments_variance_parameters}

\begin{figure}[!t]
    \centering
        \centering
        \includegraphics[scale=0.4]{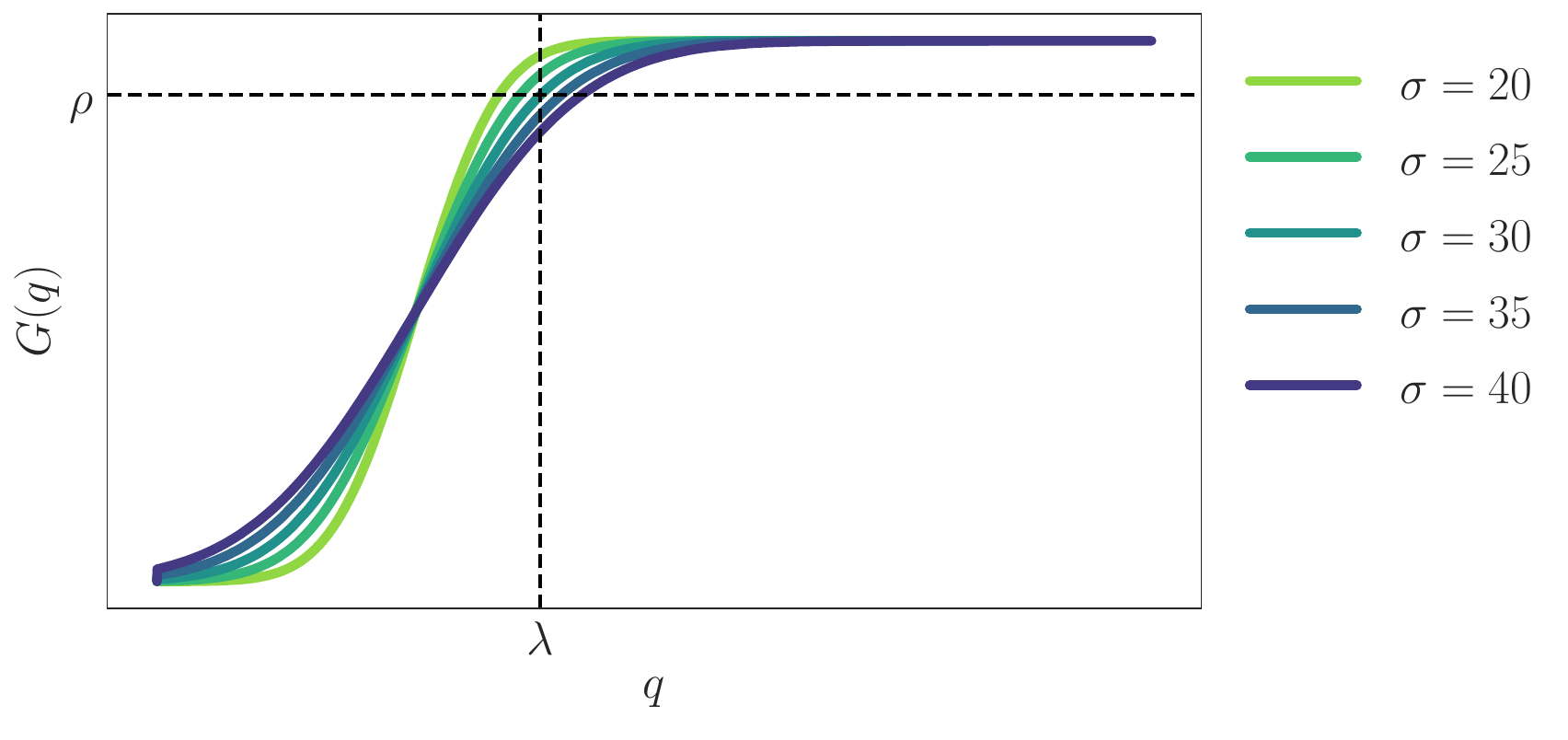} 
        \caption{Cumulative distribution function of $D = \max\{0,X\}$, with $X\sim \mathcal{N}(80,\sigma^2)$, $\sigma \in \{20,25,\ldots,40\}$.}
        \label{fig:normal}
\end{figure}
\begin{table}
\centering
            \small
        \centering
\begin{tabular}{lrrr!{\vrule width 1.5pt}rr}
\toprule
$\sigma$ & 20 & 25 & 30 & 35 & 40 \\
\midrule
\TrueSAA & 0.0 (0.1\%) & 0.1 (0.2\%) & 0.1 (0.1\%) &  & \\
\midrule
\ALG & 0.1 (0.3\%) & 1.2 (2.6\%) & 0.0 (0.0\%) & 20 (37\%) & 24 (29\%) \\
\CensoredSAA & 0.2 (0.5\%) & 0.2 (0.4\%) & 0.1 (0.2\%) & 20 (36\%) & 57 (68\%) \\
\KM & 0.1 (0.3\%) & 0.1 (0.3\%) & 0.1 (0.2\%) & 20 (36\%) & 57 (68\%) \\
\IgnorantSAA & 4.3 (12\%) & 5.4 (12\%) & 6.4 (12\%) & 26 (48\%) & 61 (73\%) \\
\SubsampleSAA & 1.2 (3.5\%) & 3.8 (8.7\%) & 8.2 (16\%) & 34 (63\%) & 76 (91\%) \\
\bottomrule
\end{tabular}
        \caption{
        Impact of $\sigma$ on policy performance. Here, $\rho = 0.9$ and $\lambda = 118.46$. Values of $\sigma$ to the right of the thick vertical line correspond to the unidentifiable regime, where we report $\Regret(q^\pi) - \risk$ and  $\mathcal{R}^{ui}(q^\pi)\%$; values of $\sigma$ to the left of the thick vertical line correspond to the identifiable regime, where we report $\UncensoredRegret(q^\pi)$ and $\mathcal{R}^{id}(q^\pi)\%$.}
        \label{tab:normal}
\end{table}

We conclude our synthetic experiments by investigating the impact of demand variability on algorithm performance. Here, we fix \minedit{$N = 500$}, $\rho = 0.9$, and let \mbox{$D = \max\{0,X\}$}, with {\mbox{$X \sim \mathcal{N}(80, \sigma^2)$}, varying $\sigma \in \{20,25,\ldots,40\}$.}
In order to cover both identifiable and unidentifiable regimes, we choose $\lambda = 118.46 = G^{-1}(0.9)$, where $G$ corresponds to the distribution with $\sigma = 30$. 
\Cref{fig:normal} shows the cdf of $D$ for these five values of $\sigma$. Comparing $G(q)$ to $\rho$ on this figure, we observe that $\qopt_G$ is increasing in $\sigma$. Moreover, we can see that the problem is identifiable for $\sigma \in \{20,25,30\}$ and unidentifiable for $\sigma \in \{35,40\}$. \Cref{tab:normal} presents our results across these identifiability regimes. (Entirely analogous results for other values of $\lambda$ are deferred to Appendix \ref{apx:demand-var}.)

We again observe that \ALG has comparable performance to \CensoredSAA and \KM in the identifiable regime, {despite} using half as many samples. Note moreover that, when $\sigma = 30$, \ALG achieves {\em perfect} performance. This occurs because the algorithm finds itself in the {knife-edge} case and defaults to outputting $\lambda$, and, as we can see in \Cref{fig:normal}, $\qopt_G = \lambda$ for $\sigma = 30$. At $\sigma = 35$, having just entered the unidentifiable regime, we observe a degradation in performance across all algorithms, with \ALG, \CensoredSAA and \KM achieving similar relative regret of around 36\%. This is due to the fact that all three algorithms output quantities close to $\lambda$, but slightly off from $\qrisk = \qcrit_G$. When $\sigma = 40$, $\qopt_G \gg \lambda$; in this easily unidentifiable regime, our policy's performance improves, in contrast to that of \CensoredSAA and \KM, which decreases. Overall, these experiments underscore that \ALG is robust to demand variability.

\subsection{Real-World Dataset}
\label{sec:exp_real_data}

Finally, we complement our synthetic experiments by demonstrating our algorithm's robust performance on a real-world grocery retail dataset \citep{wang2025freshretailnet}. Given the poor performance of \IgnorantSAA and \SubsampleSAA in our synthetic experiments, for this set of experiments we only consider \KM and \CensoredSAA as benchmark algorithms.

\subsubsection{Dataset description and pre-processing.} The original dataset consists of three months of detailed hourly sales data for 863 distinct perishable products across 898 grocery stores in China.\footnote{In the remainder of this section, we abuse terminology and refer to each (product, store) tuple as a product.} While the dataset does not specify the number of units in stock at the beginning of each day, it indicates whether an out-of-stock event occurred for each (product, hour) tuple. We use this out-of-stock indicator to reconstruct historical inventory levels as follows.

For each product $p$ in the dataset, we first filter out all days that began with an out-of-stock event. For each remaining day $t$, we reconstruct the initial inventory level of the product, denoted by $\inv{pt}$, by first considering the set of all days in which a stockout event occurred. We use $\stockoutdays_p$ to denote this set. Then, for all $t \in \stockoutdays_p$, $\inv{pt}$ corresponds to the total sales recorded up until the stockout event. Now, for all $t \in \stockoutdays_p^c$, we let $\inv{pt} = \max\left\{\min\limits_{\substack{t'\in\stockoutdays_p:\\\inv{pt'} > \sales{pt}}}\inv{pt'}, \sales{pt}+1\right\}$, where $\sales{pt}$ is used to denote the recorded sales on day $t$. In words, on non-stockout days we let the inventory level be the smallest known inventory level (i.e., across all stockout days); if the recorded sales exceeds all known inventory levels, we assume the inventory level exceeded the recorded sales by 1. We use this data to construct $K_p$ selling seasons for product $p$, where each selling season corresponds to days $t$ for which the initial inventory levels where identical. With this setup in hand, in the remainder of the section we adopt the same notation as before, letting $N_{pk}$ denote the number of days associated with selling season $k$ of product $p$, with $\soff_{pki}$ and $\qoff_{pki}$ respectively denoting the observed sales and historical ordering quantities for day $i \in [N_{pk}]$. Finally, let $\mathcal{P}$ be the set of all products in our pre-processed dataset. 

\Cref{fig:train-stats-oos} shows the distribution of out-of-stock frequency (i.e., the percentage of days in which demand was censored) in our dataset. On average, products stocked out on 31\% of days, with 25\% of products stocking out at least 40\% of the time. These results highlight that {\it demand censoring is a common occurrence in practice}, and underscores the importance of algorithms that are robust to high levels of censoring.

\begin{figure}[t]
\centering
\includegraphics[scale=0.35]{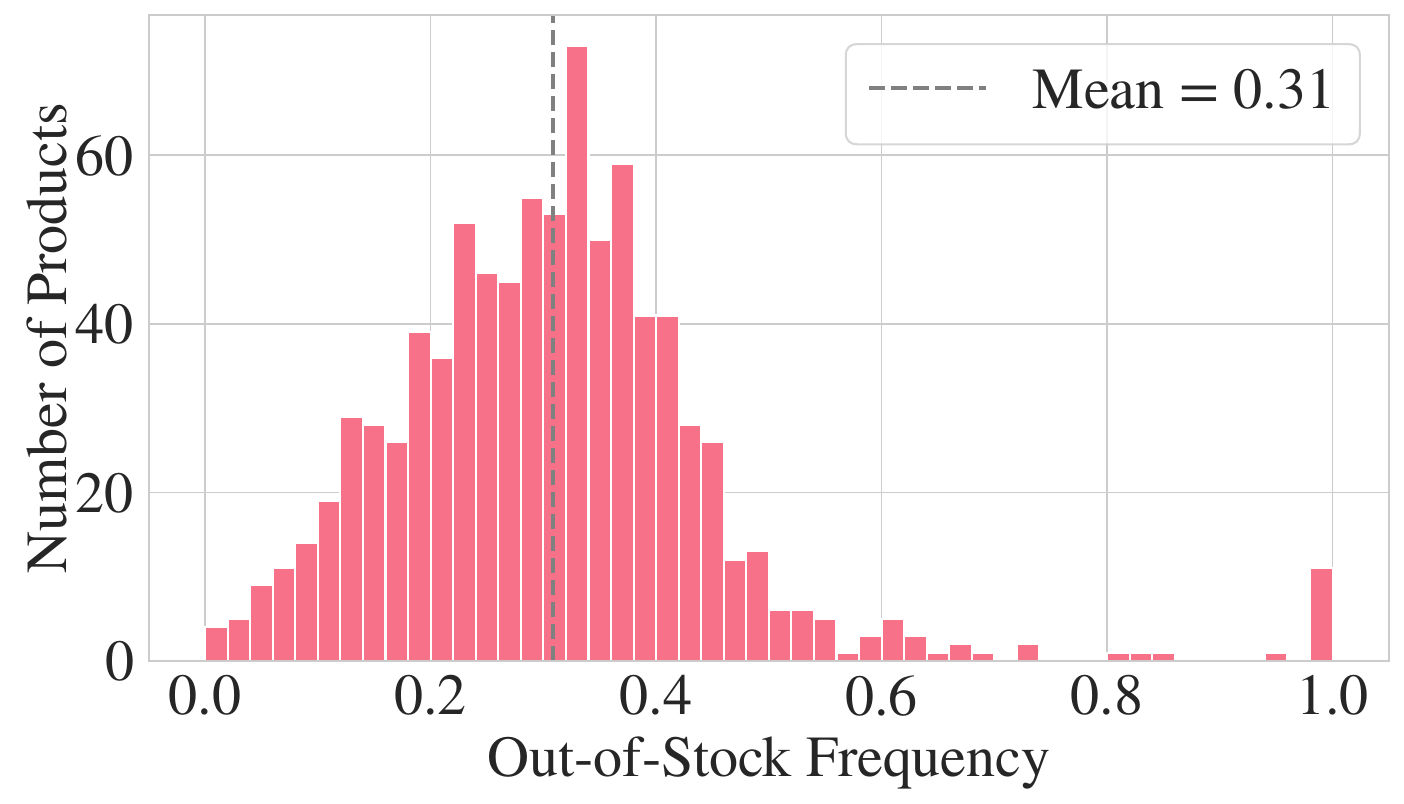}
\caption{Distribution of out-of-stock frequency.}\label{fig:train-stats-oos}
\end{figure}

\paragraph{Training and testing datasets.}  The dataset provided by \cite{wang2025freshretailnet} is randomly partitioned into training and evaluation sets, for each product $p \in \mathcal{P}$. These sets respectively comprise $2,605,058$ and $215,978$ samples (i.e., a 92-8 split), where each sample corresponds to a $(\soff_{pki}, \qoff_{pki})$ pair. Only the training set is used by the algorithms; the testing set is exclusively used for evaluation. We highlight that there is no notion of {\it true} demand distribution $G_p$ that the decision-maker could use to evaluate the benchmark algorithms, given that there is no data past the observable boundary of the testing set. Indeed, this is precisely the obstacle that motivates the introduction of the distributionally robust optimization framework for this problem. Given this issue, we use our testing set to construct a partial demand distribution $G_p$ {\it up until the observable boundary $\lambda_p$}. This partial distribution, constructed using the Kaplan-Meier estimator, is used to determine identifiability for each tested value of $\rho$; we also report the worst-case regret of each algorithm using this partial cdf.

\Cref{fig:sample-dist} shows the partial cdf's for six products in our dataset. These cdf's exhibit significant heterogeneity, both with respect to the observed support of the distributions (where, for instance, the blue distribution is supported on a wide range of values in $[0,15]$, whereas the pink curve is supported on few values in $[0,4]$) as well as $\Gminus(\lambda)$ itself (with these two distributions having $\Gminus(\lambda) = 0.05$ and $\Gminus(\lambda) = 1$, respectively). \Cref{fig:gminus-lam-hist}, which shows the distribution of $\Gminus(\lambda)$ across all products in the dataset, further underscores this heterogeneity. We observe that the average value of $\Gminus(\lambda)$ across products is 0.83, with only 66\% of products such that $\Gminus(\lambda) \geq 0.9$. Given this, we expect non-robust algorithms to perform particularly poorly in settings where $b$ is much larger than $h$, as is frequently the case in practice, and assumed in the literature \citep{huh2009nonparametric}.

\begin{figure}[t]
 \centering
 \begin{subfigure}[b]{0.45\textwidth}
\includegraphics[width=\textwidth]{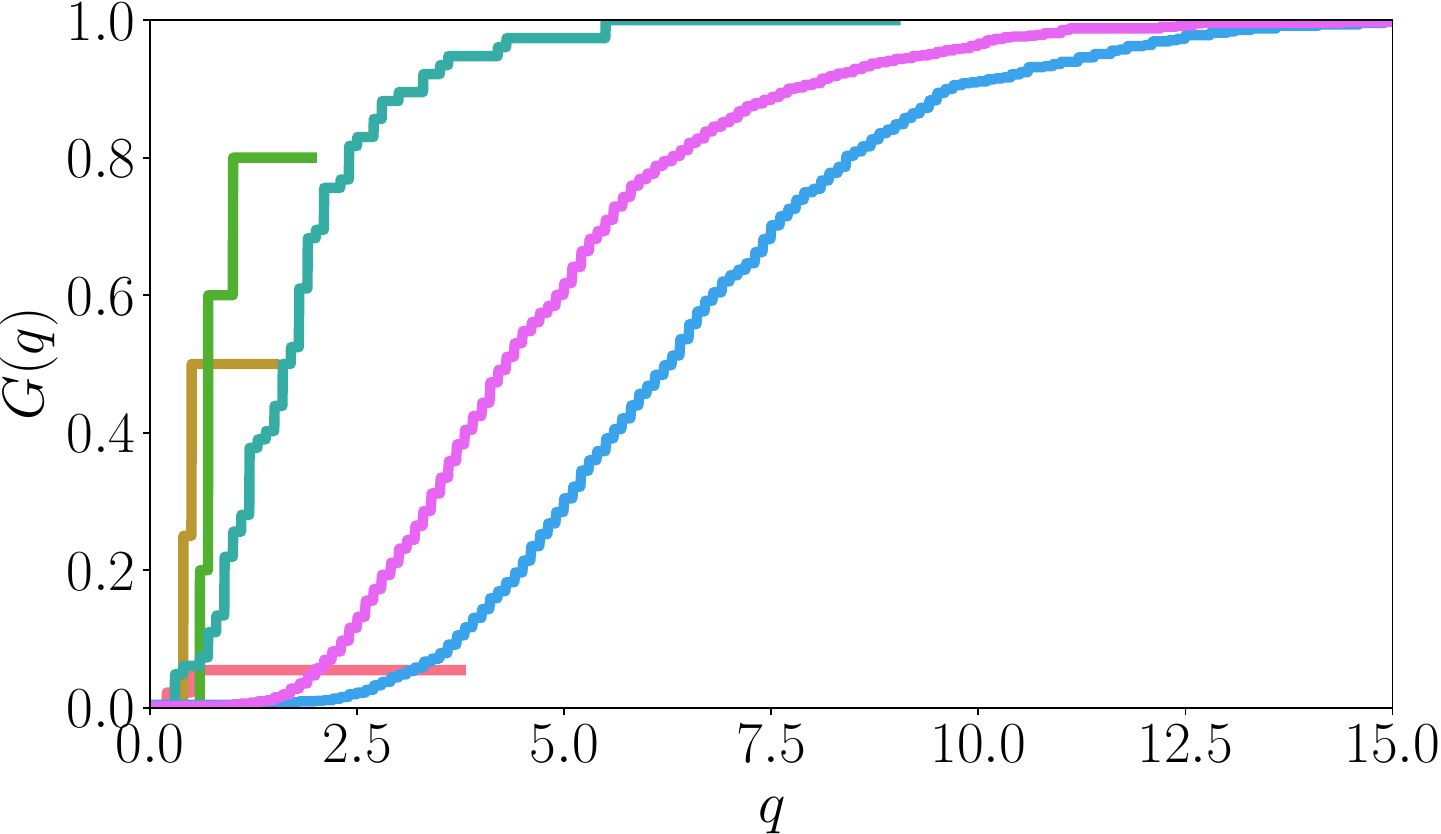}
\caption{\small Partial cdfs of six sample products. }\label{fig:sample-dist}
\label{fig:oos}
\end{subfigure}
\begin{subfigure}[b]{0.45\textwidth}
\centering
\includegraphics[width=\textwidth]{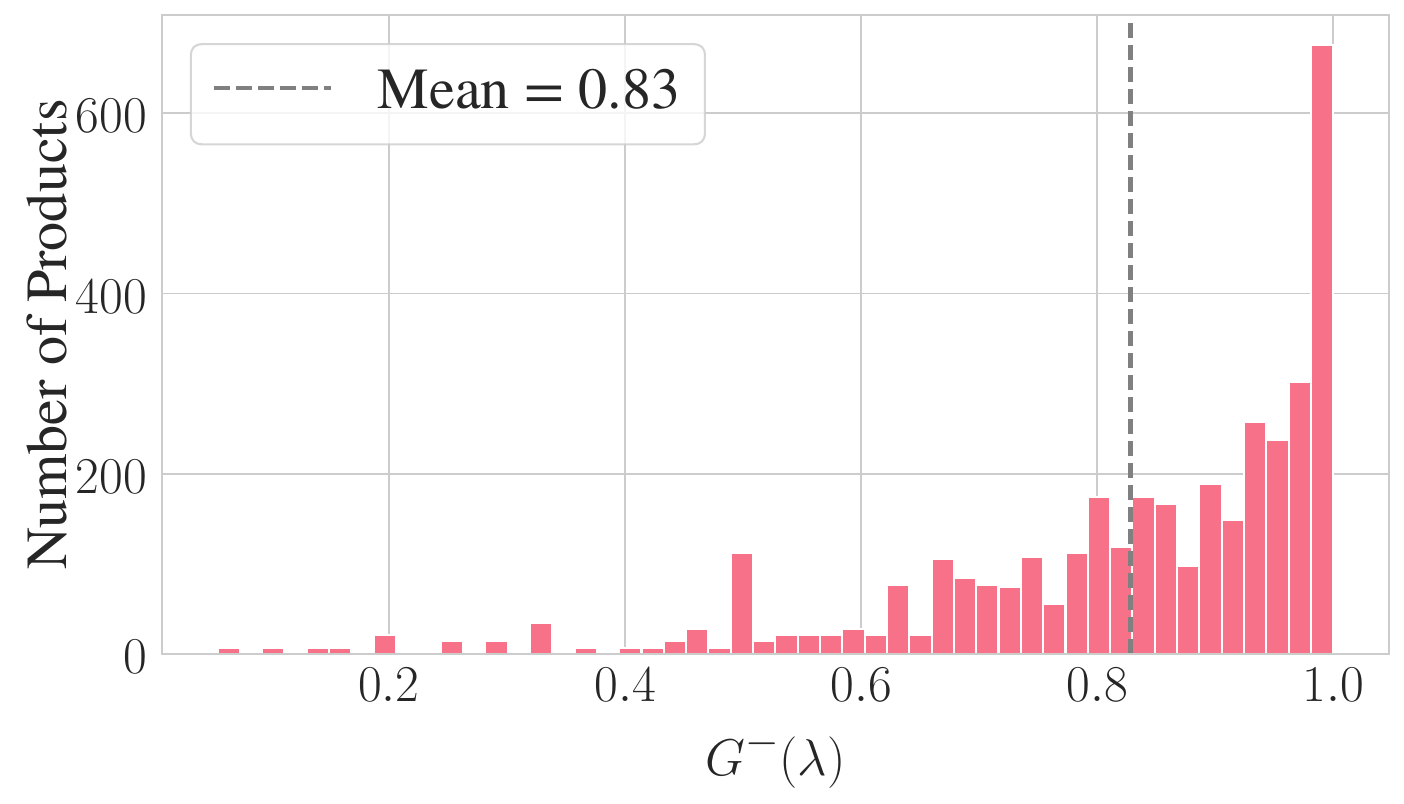}
\caption{\small Distribution of $\Gminus(\lambda)$}\label{fig:gminus-lam-hist}
\end{subfigure}
\caption{Descriptive statistics on observed level of demand censoring.}\label{fig:censoring-descriptive-stats}
\end{figure}

\subsubsection{Algorithms.} 
Despite the strong theoretical guarantees of \ALG, our algorithm discards potentially valuable data by exclusively using sales data where the historical ordering quantity is equal to $\lambda$. To illustrate the salience of this issue, we plot statistics related to the sample sizes associated with each historical selling season, for each product in our dataset. \Cref{fig:num-order-levels} first shows the distribution of $K_p$, the number of selling seasons for each product. On average, products have over 20 historical ordering quantities that the algorithm can leverage. Moreover, \Cref{fig:train-stats-nk} shows that the vast majority of products have fewer than 10 samples at the boundary, with the average sample size at the boundary less than 4, and only 1\% of products having $N_K \geq 20$. In contrast, \Cref{fig:train-stats-lower-vals}, which plots the average number of samples associated with each selling season, shows that the sample size increases significantly as we move away from the boundary. In particular, by the 10th-highest historical selling season, the number of samples exceeds 75; it reaches 200 by the 15th-highest historical selling season. These observations underscore the potential value of using the entirety of the offline dataset.

\begin{figure}[t]
\begin{subfigure}[b]{0.45\textwidth}
\centering
\includegraphics[width=\textwidth]{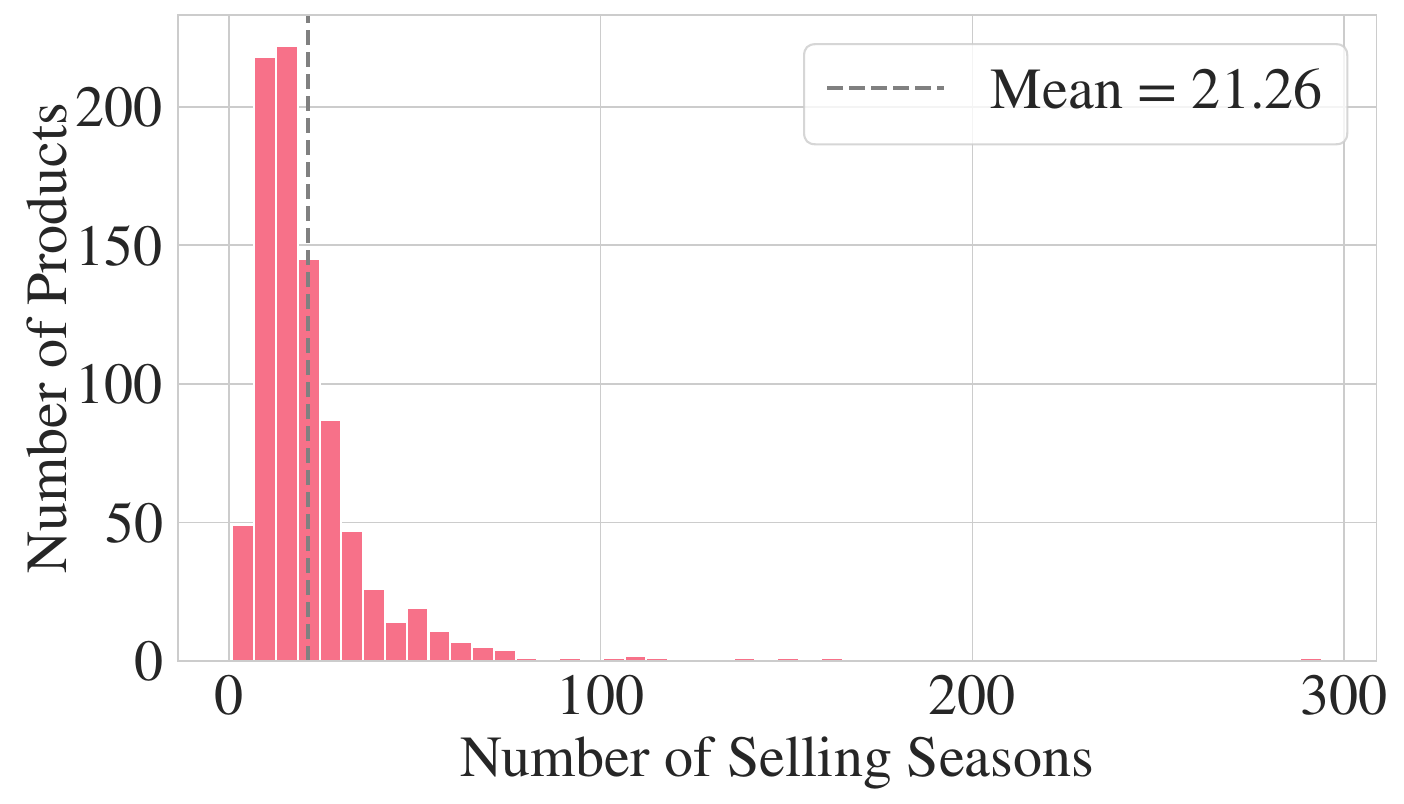}
\caption{\centering \small Distribution of number of selling seasons}\label{fig:num-order-levels}
\end{subfigure}
\begin{subfigure}[b]{0.45\textwidth}
\centering
\includegraphics[width=\textwidth]{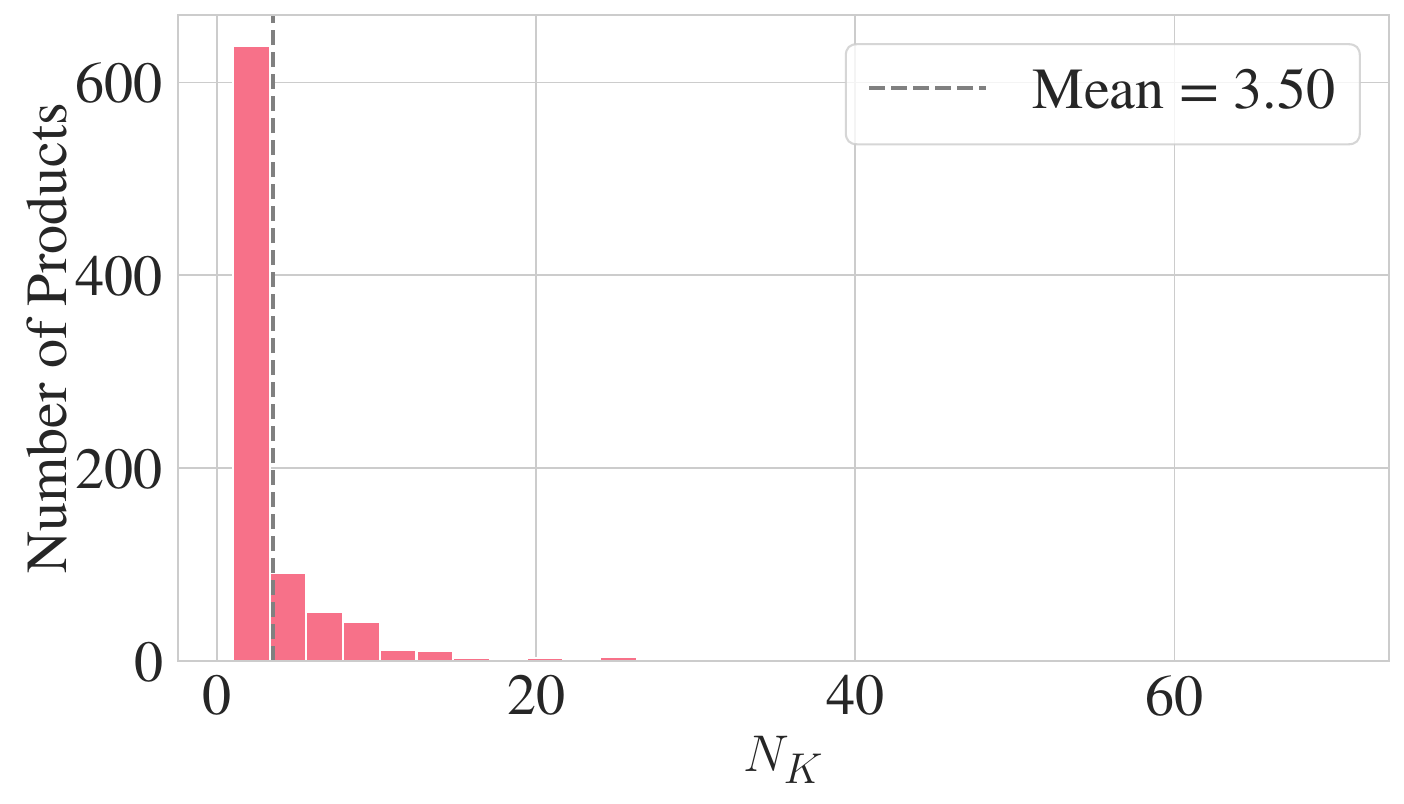}
\caption{\centering \small Distribution of $N_K$}\label{fig:train-stats-nk}
\end{subfigure}
\\
\centering \begin{subfigure}[b]{0.45\textwidth}
\centering
\includegraphics[width=\textwidth]{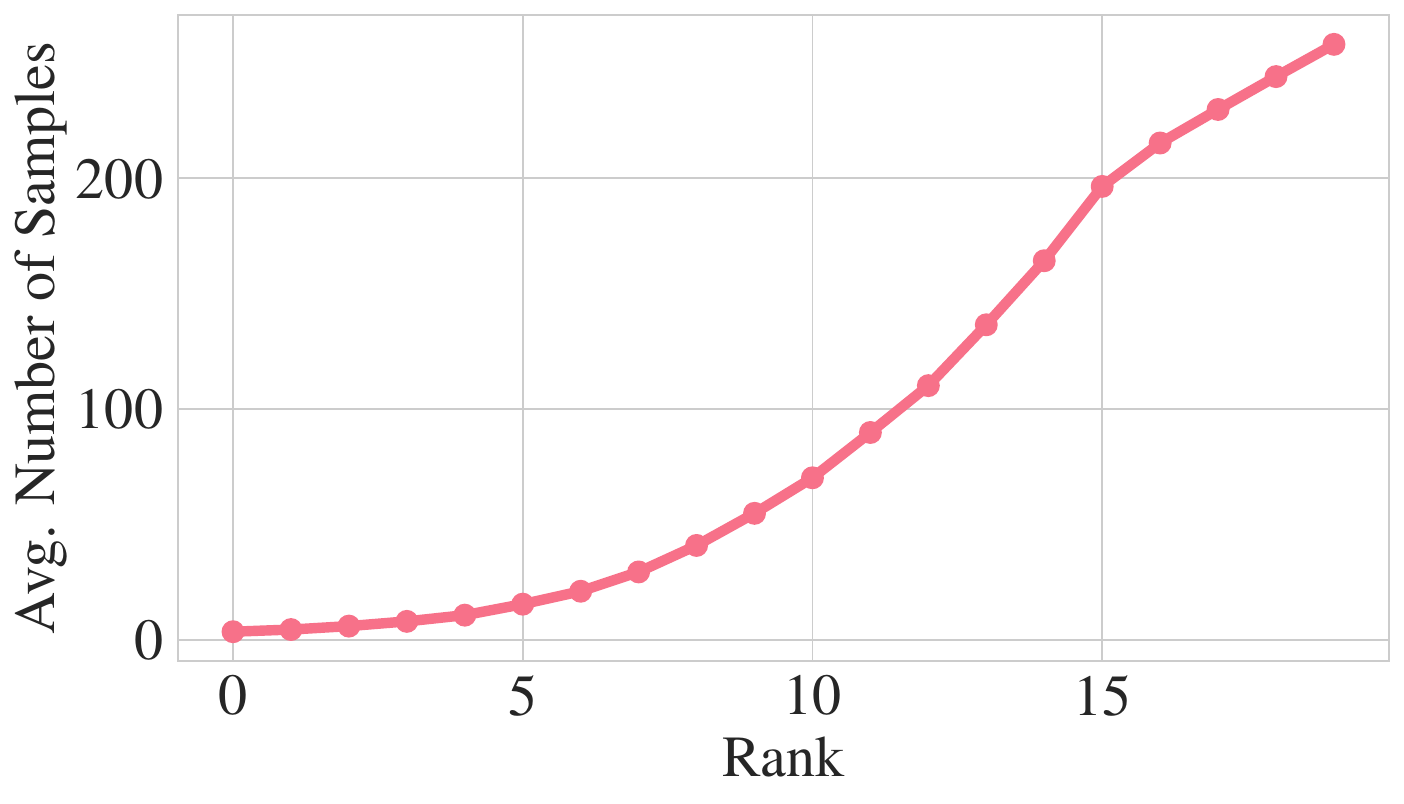}
\caption{\centering \small Average number of samples vs. season rank}\label{fig:train-stats-lower-vals}
\end{subfigure}
\caption{Descriptive statistics on training dataset sample size. In \Cref{fig:train-stats-lower-vals}, rank 0 corresponds to the selling season associated with $\lambda_p$; rank 1 corresponds to the selling season with the next highest ordering quantity, etc.}
\label{fig:train-stat}
\end{figure}

We address this sample inefficiency by proposing \ALGplus, an extension of \ALG that leverages {\it all} sales data. Before presenting the modified algorithm, we provide some intuition behind our approach.\footnote{For notational simplicity we omit all quantities' dependence on the product $p$ in the remainder of this subsection.} Suppose there exists historical ordering quantity $\qoff_k < \lambda$ such that $\Gminus(\qoff_k) \geq \rho$. Then, $\Gminus(\lambda) \geq \rho$, which implies that the underlying problem is identifiable, with minimax optimal ordering quantity $\qrisk = \qopt_G$, by \Cref{thm:minimax-risk-identifiable}. Our modified algorithm leverages this insight to improve the sample efficiency of \ALG. Specifically, the first stage of our algorithm can be viewed through the lens of multiple hypothesis testing. For each $k \in [K]$, we test whether $\Gminus(\qoff_k) \geq \rho$. If any $k$ passes this test, the problem is classified as {\it likely identifiable}, and we compute the censored SAA of $\qopt_G$ using {\it all} of the data that has satisfied this test. If there doesn't exist a value of $k$ for which we estimate that $\Gminus(\qoff_k) \geq \rho$, then we may be in the unidentifiable regime, and we proceed as in \ALG. We provide a formal description of this practical extension in  \Cref{alg:newsvendor_plus_plus}. Note that, while $\Gminus(\cdot)$ is monotonically increasing in $\qoff_k$, this is not the case for $\Gminushat_k(\qoff_k)$ defined in \Cref{eq:gminus-qoff}, for two reasons: (i) the inherent randomness in $\soff_k$, and (ii) heterogeneity in $N_k$. As a result, it may be the case that, in the identifiable regime, $\lambda$ fails the identifiability test, but $\qoff_k$ passes it, for some $\qoff_k < \lambda$. This demonstrates that \ALGplus is indeed able to leverage potentially valuable information from historical ordering quantities away from the boundary. We defer a theoretical analysis of \ALGplus, which relies on entirely analogous arguments to those used in the analysis of \ALG, to Appendix \ref{apx:rcn-plus}. 
\minedit{
Specifically, for $\width_k = \widetilde{\Theta}\left(\sqrt{\frac{1}{N_k}}\right)$,  we recover a regret guarantee of $\risk + \widetilde{O}\left(\sqrt{\frac{1}{N_K}} \right)$ in the unidentifiable regime where $\Gminus(\lambda) < \rho$, as in \ALG.  In the strictly identifiable regime where $\Gminus(\qoff_k) \geq \rho + 2\width_k$ for some $k \in [K]$, and   our regret bound weakly improves to $\widetilde{O}\left(\sqrt{\frac{1}{\sum_{k\in\mathcal{U}_1} N_k}}\right)$, where $\mathcal{U}_1 = \left\{k: \Gminus(\qoff_k) \geq \rho + 2\width_k \right\}$ corresponds to the set of strictly identifiable ordering quantities. Interestingly, in the knife-edge regime where $\Gminus(\lambda) \geq \rho$ and $\Gminus(\qoff_k) < \rho + 2\width_k$ for all $k \in [K]$, we obtain a weaker regret bound of $\widetilde{O}\left(\max\left\{\sqrt{\frac{1}{N_K}},\sqrt{\frac{1}{\min_{k\in\mathcal{U}_2}N_k}}\right\}\right)$, where $\mathcal{U}_2 = \left\{k: \Gminus(\qoff_k) \geq \rho\right\}$ corresponds to the set of identifiable ordering quantities.} \minedit{This weaker bound highlights an a priori unexpected tension between estimation error and regret in this regime. In particular, since $\lambda$ is close to $\qopt_G$, the regret incurred from estimating $\qopt_G$ may be higher than the gap between $\lambda$ and $\qopt_G$. Hence, defaulting to $\lambda$ may actually be { beneficial} in this regime. Despite this shortcoming of our theoretical bounds, our experiments will show that \ALGplus (weakly) outperforms \ALG in all regimes.
}

\begin{algorithm}[!t]
\minedit{
\DontPrintSemicolon 
\KwIn{Dataset $\soff$, confidence terms $\width_k$ for $k \in [K]$, $\delta > 0$}
\KwOut{Ordering quantity $\qalg$}
\tcp*[h]{Conduct multiple hypothesis test of identifiability}

For all $k \in [K]$, compute censored SAA of $\Gminus(\qoff_k)$:
\begin{equation}\label{eq:gminus-qoff}
{\Gminushatk{k}(\qoff_k) = \frac{1}{N_k} \sum_{i \in [N_k]} \Ind{\soff_{ki} < \qoff_k}.}
\end{equation}
\noindent {Construct ``likely identifiable'' set $\Uest$, defined as:
\begin{equation}
\Uest = \left\{k \in [K] \mid \Gminushatk{k}(\qoff_k) \geq \rho + \width_k\right\}
\end{equation}}
\uIf(\tcp*[h]{Likely identifiable}){$\Uest \neq \emptyset$}{
Compute censored SAA of $\qopt_G$ using all samples associated with $\Uest$:
{
\begin{align}
\qalg = \inf \left\{x \mid \frac{1}{\sum_{k\in \Uest} N_k}\sum_{k\in\Uest}\sum_{i\in[N_k]}\Ind{\soff_{ki} \leq x} \geq \rho \right\}.
\end{align}
}
}
{
\uElseIf(\tcp*[h]{Likely unidentifiable}){$\Gminushatk{K}(\lambda) < \rho-\width_K$}{
Compute empirical estimate of $\qcrit_G$:
\begin{align}
\qalg= \frac{bM+h\lambda-(b+h)\Gminushatk{K}(\lambda)M}{(b+h)(1-\Gminushatk{K}(\lambda))}.
\end{align}
}
}
\uElse(\tcp*[h]{Knife-edge case}){$\qalg = \lambda$.
}
\Return{$\qalg$}
}
	\caption{\textsf{Robust Censored Newsvendor$^+$} (\ALGplus)}
	\label{alg:newsvendor_plus_plus}
\end{algorithm}

\subsubsection{Results.} In all experiments we let $M_p = 2.5\lambda_p$ for all $p \in \mathcal{P}$. We moreover let $h = 1$ and vary $b \in \{3,9,49\}$. For these respective values of $b$, 83\%, 66\%, and 49\% of products are identifiable. \Cref{fig:delta-hist} shows the distribution of minimax risk $\Delta$ across all unidentifiable products for these three values of $b$. As expected, this distribution depends significantly on $b$, with the average value of $\Delta$ increasing from 0.87, to 1.7, to 3.05 as $b$ increases. We moreover observe a wide range of $\Delta$ for $b = 49$, with $\Delta = 56$ in the worst case. 

\begin{figure}[t]
\centering
\includegraphics[scale=0.4]{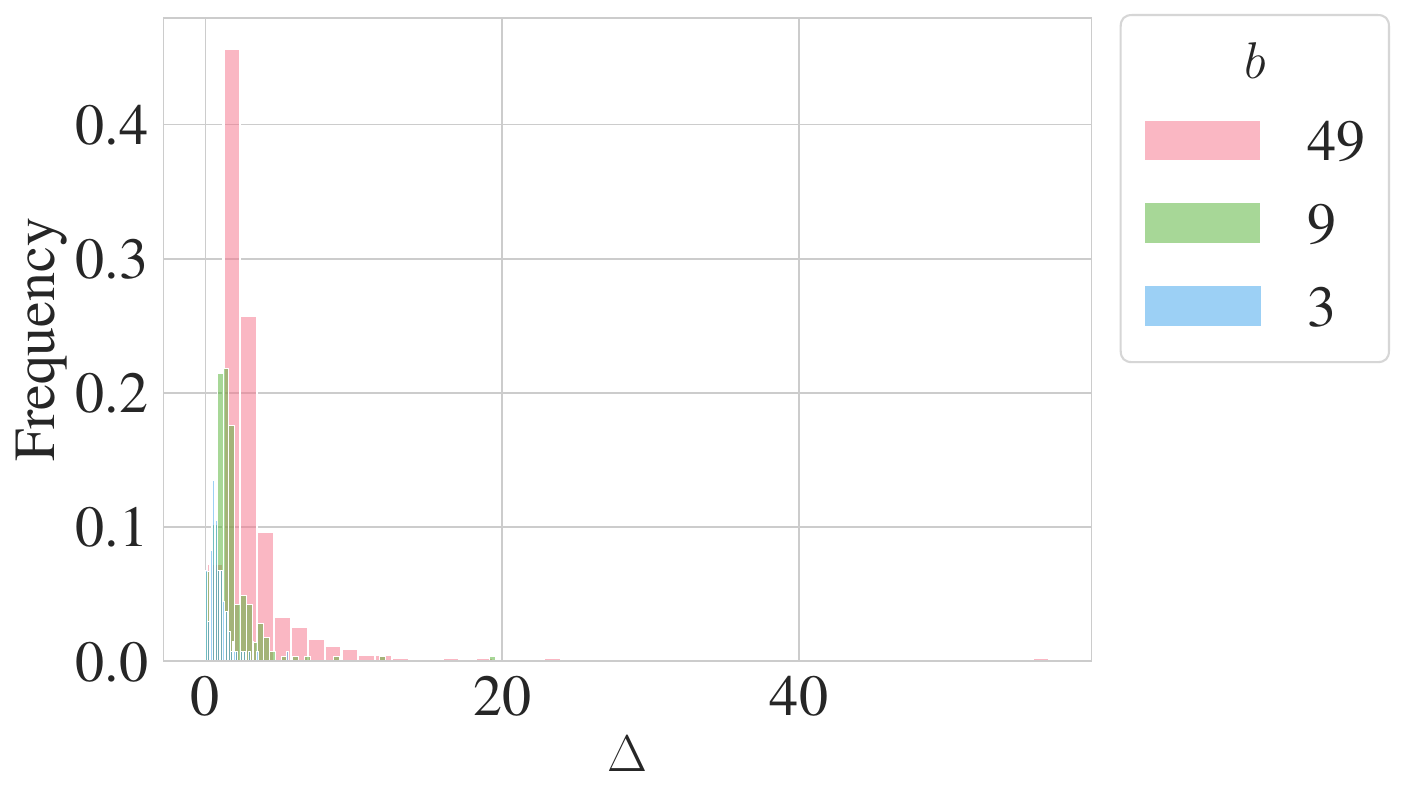}
\caption{Distribution of minimax risk $\risk$ for $b \in \{3,9,49\}$.}\label{fig:delta-hist}
\end{figure}

In \Cref{fig:real-results} we plot $\Regret(q)-\Delta$ for all four algorithms and each value of $b$, across both identifiable and unidentifiable instances. (Recall, since $\Delta = 0$ in the identifiable regime, this reduces to reporting the worst-case regret $\Regret(q)$ for these instances.)
As observed in our synthetic experiments, in the identifiable regime (\Cref{fig:real-id-reg}) \CensoredSAA and \KM are the best-performing algorithms,  with \mbox{$\Regret(q) \leq 1.3$} on average, across all values of $b$. 
\minedit{\ALG, however, achieves slightly higher regret, with \mbox{$\Regret(q) \in \{3.43,4.86,6.36\}$} on average.} The reason for this is two-fold. On the one hand, if \ALG classifies the instance as likely identifiable, it only uses samples at the boundary, of which there are very few; this results in a higher estimation error of $\qopt_G$. On the other hand, \ALG struggles to determine identifiability with confidence as $b$ grows larger and the gap between $\rho$ and $\Gminus(\lambda)$ decreases; this results in more misclassification errors, where it estimates either $\qcrit_G$ or simply outputs $\lambda$. 
\minedit{
Our results show that \ALGplus mitigates these issues to a certain extent, with $\Regret(q) \in \{2.94,4.81,6.36\}$ on average.  For small values of $b$, \ALGplus improves over \ALG since more order levels $k$ satisfy $\Gminushatk{k}(\qoff_k) \geq \rho + \width_k$, and are therefore used to compute the SAA estimate. These benefits decrease with larger values of $b$, due to a higher misclassification rate.  (We highlight that, in general, this misclassification rate is the reason that even \ALGplus cannot achieve the same performance as \CensoredSAA and \KM.)
}

\Cref{fig:real-uid-reg}, on the other hand, demonstrates that both of our algorithms generate substantial gains relative to non-robust algorithms {in the unidentifiable regime}, as the lost-sales penalty grows large. In the worst case ($b = 49$), \CensoredSAA and \KM incur $\text{Regret}(q)-\Delta \geq 26$ on average. 
\minedit{
In contrast, \ALG and \ALGplus --- whose behavior coincides in this regimes --- incur $\text{Regret}(q)-\Delta$ of 1.89.
}
Finally, we observe that when $b$ is small, even the non-robust algorithms incur low regret. The reason for this is that, even though $\lambda = 1.68 < \qcrit_G = 2.55$ on average, lost sales are not as costly. Hence, this regime is ``easy'' for any reasonable (non-robust) algorithm.

\begin{figure}[t]
 \centering
 \begin{subfigure}[b]{0.45\textwidth}
\includegraphics[width=\textwidth]{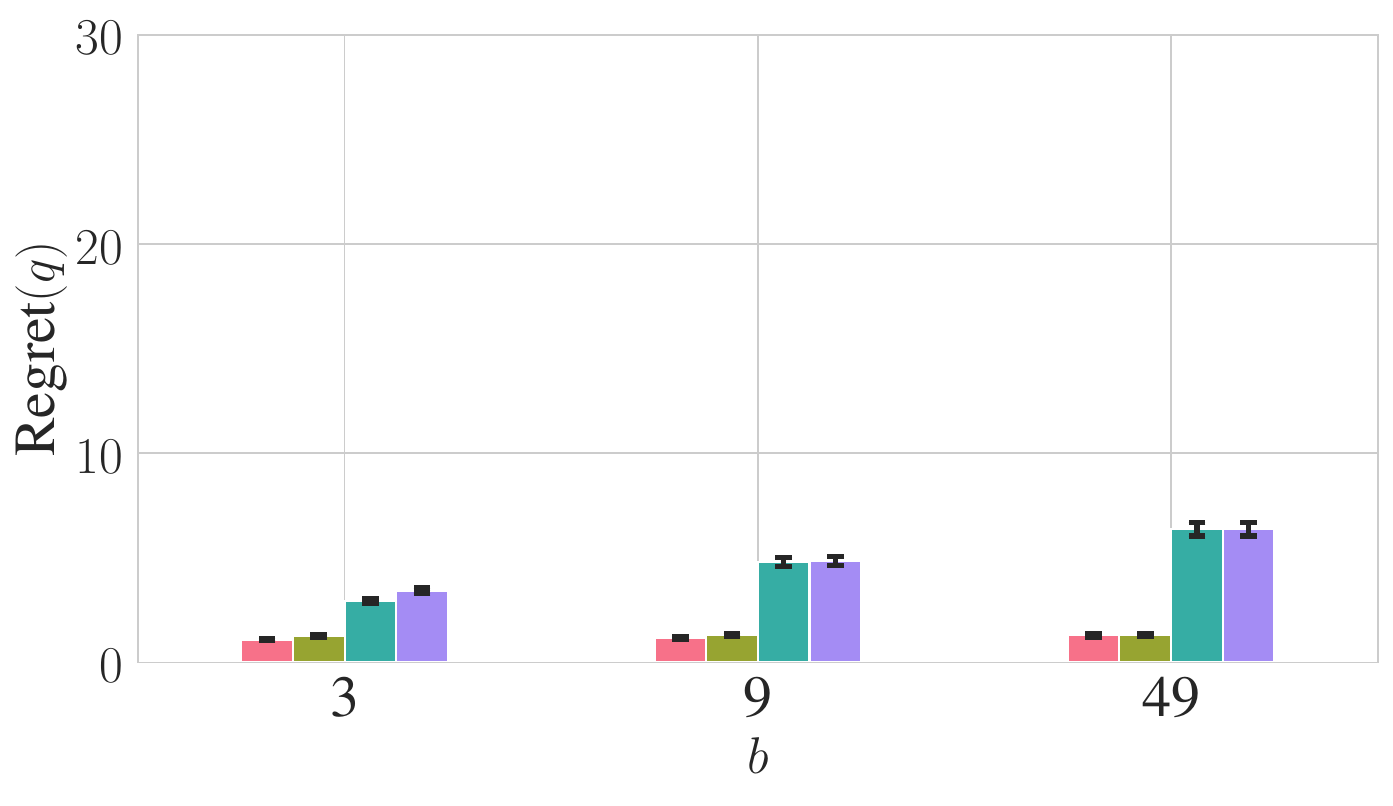}
\caption{\small Identifiable products}\label{fig:real-id-reg}
\end{subfigure}
\begin{subfigure}[b]{0.45\textwidth}
\centering
\includegraphics[width=\textwidth]{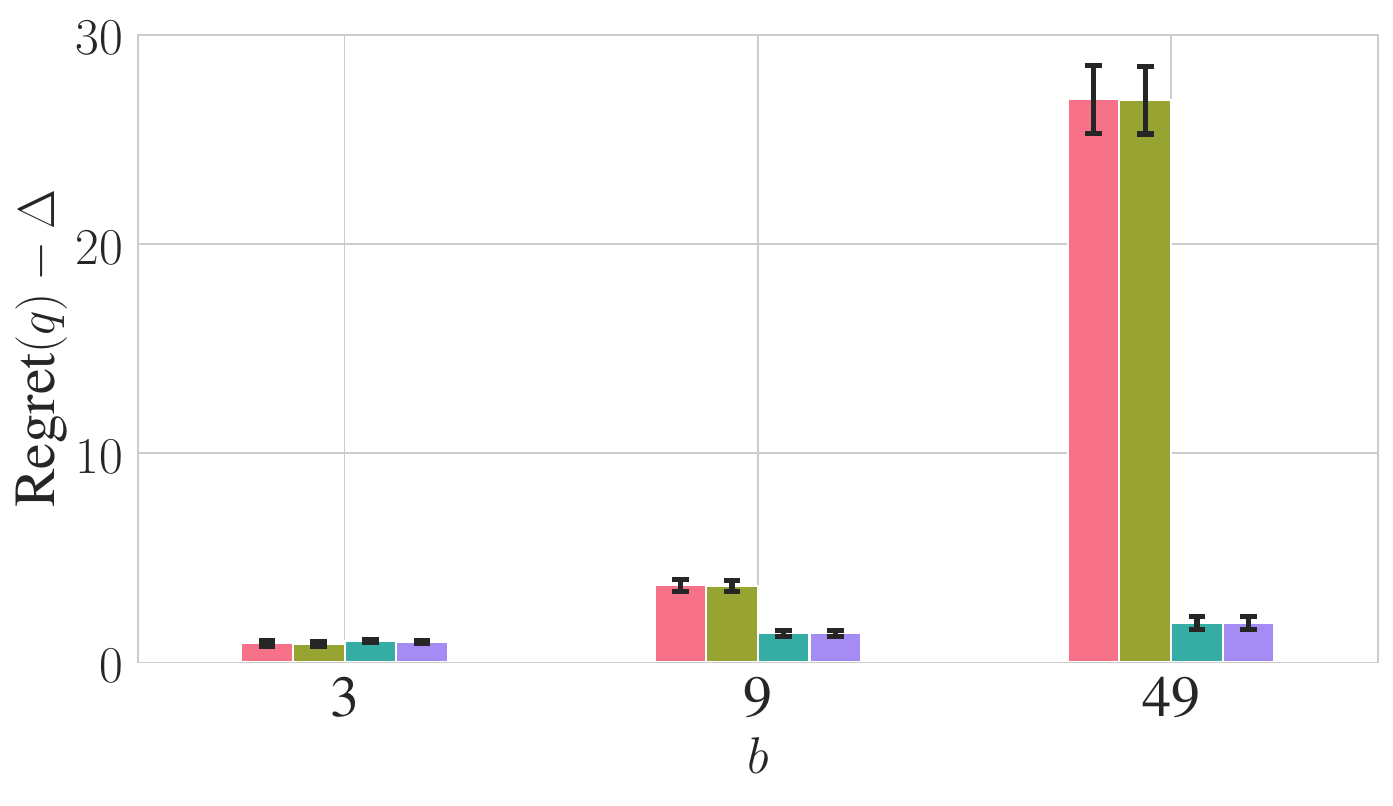}
\caption{\small Unidentifiable products}\label{fig:real-uid-reg}
\end{subfigure}
\begin{subfigure}[b]{\textwidth}
\centering
\includegraphics[width=0.7\textwidth]{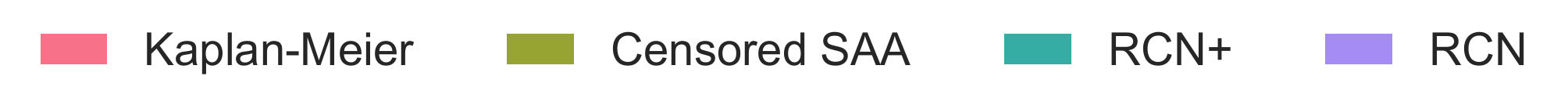}
\end{subfigure}
\caption{$\text{Regret}(q)-\Delta$ as a function of $b$, for both identifiable and unidentifiable products.  \minedit{The black lines correspond to a 95\% confidence interval.}}\label{fig:real-results}
\end{figure}

\minedit{Finally, we explore the robustness of our algorithms to historical out-of-stock frequency, a practical and interpretable proxy for regime difficulty. Specifically, in \Cref{tab:robustness-to-oos} we report the Pearson correlation, computed across products, between $\text{Regret}(q)-\Delta$ and the historical out-of-stock frequency.}  
\minedit{
Our findings corroborate \Cref{fig:real-results}. Namely, when $b = 3$, all algorithms perform well regardless of whether the product is identifiable; hence, there is no significant correlation between out-of-stock frequency and performance. However, for $b \in \{9, 49\}$, the regret of \KM and \CensoredSAA is positively correlated with out-of-stock frequency, whereas that of \ALG and \ALGplus is negatively correlated.}  This is due to the \minedit{decrease} in $\text{Regret}(q)-\risk$ observed between \Cref{fig:real-id-reg} and \Cref{fig:real-uid-reg}.

\begin{table}[t]
    \centering
    \minedit{
    \begin{tabular}{lcccc}
        \toprule
        \textbf{$b$} & \KM & \CensoredSAA & \ALG & \ALGplus \\
        \midrule
        3  & 0.038       & -0.028       & -0.050       & 0.052 \\
        9  & 0.196 ($^{**}$)  & 0.171 ($^{**}$)   & -0.082 ($^{**}$)  & -0.071 ($^{**}$) \\
        49 & 0.257 ($^{**}$)  & 0.257 ($^{**}$)   & -0.167($^{**}$)       & -0.167 ($^{**}$) \\
        \bottomrule
    \end{tabular}
    }
        \caption{\minedit{Sample correlation} between $\text{Regret}(q)-\risk$ and historical out-of-stock frequency. Here, $(^{**})$ represents statistically significant results ($p$-value $<$ 0.05).}
    \label{tab:robustness-to-oos}
\end{table}

\section{Conclusion}\label{sec:conclusion}

In this work we considered the offline data-driven censored newsvendor problem. Specifically, we leveraged the distributionally robust optimization framework to isolate the impact of demand censoring on newsvendor decision-making. Through this framework we established crisp characterizations of the information loss due to censoring, and used these to design a robust algorithm with near-optimal performance guarantees across all possible demand censoring levels. Our algorithm is practical and intuitive, and demonstrates strong numerical performance on both synthetic and real-world datasets.

Several future directions emerge from our work. Most immediately, we conjecture that all of our analytical and algorithmic techniques go through under common assumptions on the demand distribution, as noted in \Cref{remark:well-sep}.  It would moreover be interesting to see if our framework and insights port over to the  {\it feature-based} setting, in which the decision-maker also has access to uncensored contextual information that may help mitigate the effect of demand censoring.  {Lastly, it would be interesting to extend our modeling framework to the setting with inventory carry-over, where demand may be endogenous to previous inventory levels.\footnote{This is frequently the case, for instance, for retailers who offer grocery delivery, and who can nudge customers to choose products with higher inventory levels in order to avoid stockouts~\citep{knight2025disclosing}.}}

\medskip

\ACKNOWLEDGMENT{The authors gratefully acknowledge the department editor, associate editor, and three anonymous referees whose comments and suggestions were tremendously helpful in improving the paper. The authors moreover thank Vishal Gupta, Qi Luo, Will Ma, Omar Mouchtaki, Bahar Taskesen, and Linwei Xin for their helpful comments on a preliminary version of this manuscript.}

\newpage

\bibliographystyle{informs2014} 
\bibliography{references} 

@article{fiechtner2025wasserstein,
  title={Wasserstein Distributionally Robust Regret Optimization},
  author={Fiechtner, Lukas-Benedikt and Blanchet, Jose},
  journal={arXiv preprint arXiv:2504.10796},
  year={2025}
}

@article{zhang2024more,
  title={More data or better data? impact of costly data collection on the newsvendor problem},
  author={Zhang, Zijin and Ahn, Hyun-Soo and Baardman, Lennart},
  journal={Impact of Costly Data Collection on the Newsvendor Problem (September 06, 2024)},
  year={2024}
}

@article{keskin2025nonstationary,
  title={The nonstationary newsvendor: Data-driven nonparametric learning},
  author={Keskin, N Bora and Min, Xu and Song, Jing-Sheng Jeannette},
  journal={Available at SSRN 3866171},
  year={2025}
}

@article{kumar2026value,
  title={What is the Value of Censored Data? An Exact Analysis for the Data-driven Newsvendor},
  author={Kumar, Rachitesh and Mouchtaki, Omar},
  journal={arXiv preprint arXiv:2602.16842},
  year={2026}
}

@article{mersereau2015demand,
  title={Demand estimation from censored observations with inventory record inaccuracy},
  author={Mersereau, Adam J},
  journal={Manufacturing \& Service Operations Management},
  volume={17},
  number={3},
  pages={335--349},
  year={2015},
  publisher={INFORMS}
}

@article{wang2025freshretailnet,
  title={FreshRetailNet-50K: A Stockout-Annotated Censored Demand Dataset for Latent Demand Recovery and Forecasting in Fresh Retail},
  author={Wang, Yangyang and Gu, Jiawei and Long, Li and Li, Xin and Shen, Li and Fu, Zhouyu and Zhou, Xiangjun and Jiang, Xu},
  journal={arXiv preprint arXiv:2505.16319},
  year={2025}
}

@article{knight2025disclosing,
  title={Disclosing Low Product Availability: An Online Platform’s Strategy for Mitigating Stockout Risk},
  author={Knight, Benjamin and Mitrofanov, Dmitry},
  journal={Management Science},
  year={2025},
  publisher={INFORMS}
}

@misc{davidson_pilon_2024_14007206,
  author       = {Davidson-Pilon, Cameron},
  title        = {lifelines, survival analysis in Python},
  month        = oct,
  year         = 2024,
  publisher    = {Zenodo},
  version      = {v0.30.0},
  doi          = {10.5281/zenodo.14007206},
  url          = {https://doi.org/10.5281/zenodo.14007206}
}

@article{kaplan1958nonparametric,
  title={Nonparametric estimation from incomplete observations},
  author={Kaplan, Edward L and Meier, Paul},
  journal={Journal of the American Statistical Association},
  volume={53},
  number={282},
  pages={457--481},
  year={1958},
  publisher={Taylor \& Francis}
}

@article{massart1990tight,
  title={The tight constant in the Dvoretzky-Kiefer-Wolfowitz inequality},
  author={Massart, Pascal},
  journal={The Annals of Probability},
  pages={1269--1283},
  year={1990},
  publisher={JSTOR}
}

@article{sason2015reverse,
  title={On reverse Pinsker inequalities},
  author={Sason, Igal},
  journal={arXiv preprint arXiv:1503.07118},
  year={2015}
}

@book{casella2024statistical,
  title={Statistical inference},
  author={Casella, George and Berger, Roger},
  year={2024},
  publisher={CRC Press}
}

@article{besbes2023big,
  title={How big should your data really be? Data-driven newsvendor: Learning one sample at a time},
  author={Besbes, Omar and Mouchtaki, Omar},
  journal={Management Science},
  volume={69},
  number={10},
  pages={5848--5865},
  year={2023},
  publisher={INFORMS}
}

@misc{duchi2016lecture,
  author    = {Duchi, John},
  title     = {Lecture Notes on Statistics and Information Theory},
  year      = {2024},
  url       = {https://web.stanford.edu/class/stats311/lecture-notes.pdf},
  urldate   = {2024-11-04}
}

@BOOK{zipkin2000foundations,
  title     = "Foundations of inventory management",
  author    = "Zipkin, Paul",
  publisher = "Irwin Professional Publishing",
  month     =  apr,
  year      =  2000,
  address   = "Maidenhead, England",
  language  = "en"
}

@book{lattimore2020bandit,
  title={Bandit algorithms},
  author={Lattimore, Tor and Szepesv{\'a}ri, Csaba},
  year={2020},
  publisher={Cambridge University Press}
}

@article{vulcano2012estimating,
  title={Estimating primary demand for substitutable products from sales transaction data},
  author={Vulcano, Gustavo and Van Ryzin, Garrett and Ratliff, Richard},
  journal={Operations Research},
  volume={60},
  number={2},
  pages={313--334},
  year={2012},
  publisher={INFORMS}
}

@article{nahmias1994demand,
  title={Demand estimation in lost sales inventory systems},
  author={Nahmias, Steven},
  journal={Naval Research Logistics (NRL)},
  volume={41},
  number={6},
  pages={739--757},
  year={1994},
  publisher={Wiley Online Library}
}

@article{agrawal1996estimating,
  title={Estimating negative binomial demand for retail inventory management with unobservable lost sales},
  author={Agrawal, Narendra and Smith, Stephen A},
  journal={Naval Research Logistics (NRL)},
  volume={43},
  number={6},
  pages={839--861},
  year={1996},
  publisher={Wiley Online Library}
}

@article{bu2023offline,
  title={Offline pricing and demand learning with censored data},
  author={Bu, Jinzhi and Simchi-Levi, David and Wang, Li},
  journal={Management Science},
  volume={69},
  number={2},
  pages={885--903},
  year={2023},
  publisher={INFORMS}
}

@article{besbes2022beyond,
  title={Beyond IID: data-driven decision-making in heterogeneous environments},
  author={Besbes, Omar and Ma, Will and Mouchtaki, Omar},
  journal={Advances in Neural Information Processing Systems},
  volume={35},
  pages={23979--23991},
  year={2022}
}

@article{lee2021data,
  title={A data-driven distributionally robust newsvendor model with a Wasserstein ambiguity set},
  author={Lee, Sangyoon and Kim, Hyunwoo and Moon, Ilkyeong},
  journal={Journal of the Operational Research Society},
  volume={72},
  number={8},
  pages={1879--1897},
  year={2021},
  publisher={Taylor \& Francis}
}

@article{liyanage2005practical,
  title={A practical inventory control policy using operational statistics},
  author={Liyanage, Liwan H and Shanthikumar, J George},
  journal={Operations Research Letters},
  volume={33},
  number={4},
  pages={341--348},
  year={2005},
  publisher={Elsevier}
}

@article{lariviere1999stalking,
  title={Stalking information: Bayesian inventory management with unobserved lost sales},
  author={Lariviere, Martin A and Porteus, Evan L},
  journal={Management Science},
  volume={45},
  number={3},
  pages={346--363},
  year={1999},
  publisher={INFORMS}
}

@article{bensoussan2009note,
  title={A note on “The censored newsvendor and the optimal acquisition of information”},
  author={Bensoussan, Alain and Sethi, Suresh P},
  journal={Operations Research},
  volume={57},
  number={3},
  pages={791--794},
  year={2009},
  publisher={INFORMS}
}

@article{lu2008analysis,
  title={Analysis of perishable-inventory systems with censored demand data},
  author={Lu, Xiangwen and Song, Jing-Sheng and Zhu, Kaijie},
  journal={Operations Research},
  volume={56},
  number={4},
  pages={1034--1038},
  year={2008},
  publisher={INFORMS}
}

@article{ding2002censored,
  title={The censored newsvendor and the optimal acquisition of information},
  author={Ding, Xiaomei and Puterman, Martin L and Bisi, Arnab},
  journal={Operations Research},
  volume={50},
  number={3},
  pages={517--527},
  year={2002},
  publisher={INFORMS}
}

@article{azoury1985bayes,
  title={Bayes solution to dynamic inventory models under unknown demand distribution},
  author={Azoury, Katy S},
  journal={Management Science},
  volume={31},
  number={9},
  pages={1150--1160},
  year={1985},
  publisher={INFORMS}
}

@article{tierney1983space,
  title={A space-efficient recursive procedure for estimating a quantile of an unknown distribution},
  author={Tierney, Luke},
  journal={SIAM Journal on Scientific and Statistical Computing},
  volume={4},
  number={4},
  pages={706--711},
  year={1983},
  publisher={SIAM}
}

@article{xie2024vc,
  title={Vc theory for inventory policies},
  author={Xie, Yaqi and Ma, Will and Xin, Linwei},
  journal={arXiv preprint arXiv:2404.11509},
  year={2024}
}

@article{farias2013nonparametric,
  title={A nonparametric approach to modeling choice with limited data},
  author={Farias, Vivek F and Jagabathula, Srikanth and Shah, Devavrat},
  journal={Management Science},
  volume={59},
  number={2},
  pages={305--322},
  year={2013},
  publisher={INFORMS}
}

@article{ding2024feature,
  title={Feature-based inventory control with censored demand},
  author={Ding, Jingying and Huh, Woonghee Tim and Rong, Ying},
  journal={Manufacturing \& Service Operations Management},
  volume={26},
  number={3},
  pages={1157--1172},
  year={2024},
  publisher={INFORMS}
}

@article{zhang2024optimal,
  title={Optimal robust policy for feature-based newsvendor},
  author={Zhang, Luhao and Yang, Jincheng and Gao, Rui},
  journal={Management Science},
  volume={70},
  number={4},
  pages={2315--2329},
  year={2024},
  publisher={INFORMS}
}

@article{ban2019big,
  title={The big data newsvendor: Practical insights from machine learning},
  author={Ban, Gah-Yi and Rudin, Cynthia},
  journal={Operations Research},
  volume={67},
  number={1},
  pages={90--108},
  year={2019},
  publisher={INFORMS}
}

@article{bisi2011censored,
  title={A censored-data multiperiod inventory problem with newsvendor demand distributions},
  author={Bisi, Arnab and Dada, Maqbool and Tokdar, Surya},
  journal={Manufacturing \& Service Operations Management},
  volume={13},
  number={4},
  pages={525--533},
  year={2011},
  publisher={INFORMS}
}

@article{jain2015demand,
  title={Demand estimation and ordering under censoring: Stock-out timing is (almost) all you need},
  author={Jain, Aditya and Rudi, Nils and Wang, Tong},
  journal={Operations Research},
  volume={63},
  number={1},
  pages={134--150},
  year={2015},
  publisher={Informs}
}

@article{powell1986censored,
  title={Censored regression quantiles},
  author={Powell, James L},
  journal={Journal of econometrics},
  volume={32},
  number={1},
  pages={143--155},
  year={1986},
  publisher={Elsevier}
}

@article{portnoy2003censored,
  title={Censored regression quantiles},
  author={Portnoy, Stephen},
  journal={Journal of the American Statistical Association},
  volume={98},
  number={464},
  pages={1001--1012},
  year={2003},
  publisher={Taylor \& Francis}
}

@article{yang1985smooth,
  title={A smooth nonparametric estimator of a quantile function},
  author={Yang, Shie-Shien},
  journal={Journal of the American Statistical Association},
  volume={80},
  number={392},
  pages={1004--1011},
  year={1985},
  publisher={Taylor \& Francis}
}

@article{zielinski1999best,
  title={Best equivariant nonparametric estimator of a quantile},
  author={Zieli{\'n}ski, Ryszard},
  journal={Statistics \& Probability Letters},
  volume={45},
  number={1},
  pages={79--84},
  year={1999},
  publisher={Elsevier}
}

@article{kalgh1982generalized,
  title={A generalized quantile estimator},
  author={Kalgh, WD and Lachenbruch, Peter A},
  journal={Communications in Statistics-Theory and Methods},
  volume={11},
  number={19},
  pages={2217--2238},
  year={1982},
  publisher={Taylor \& Francis}
}

@article{harrell1982new,
  title={A new distribution-free quantile estimator},
  author={Harrell, Frank E and Davis, CE},
  journal={Biometrika},
  volume={69},
  number={3},
  pages={635--640},
  year={1982},
  publisher={Oxford University Press}
}

@article{saghafian2016newsvendor,
  title={The newsvendor under demand ambiguity: Combining data with moment and tail information},
  author={Saghafian, Soroush and Tomlin, Brian},
  journal={Operations Research},
  volume={64},
  number={1},
  pages={167--185},
  year={2016},
  publisher={INFORMS}
}

@article{besbes2022exploration,
  title={The exploration-exploitation trade-off in the newsvendor problem},
  author={Besbes, Omar and Chaneton, Juan M and Moallemi, Ciamac C},
  journal={Stochastic Systems},
  volume={12},
  number={4},
  pages={319--339},
  year={2022},
  publisher={INFORMS}
}

@article{scarf1959bayes,
  title={Bayes solutions of the statistical inventory problem},
  author={Scarf, Herbert},
  journal={The Annals of Mathematical Statistics},
  volume={30},
  number={2},
  pages={490--508},
  year={1959},
  publisher={JSTOR}
}

@article{xu2022robust,
  title={A robust data-driven approach for the newsvendor problem with nonparametric information},
  author={Xu, Liang and Zheng, Yi and Jiang, Li},
  journal={Manufacturing \& Service Operations Management},
  volume={24},
  number={1},
  pages={504--523},
  year={2022},
  publisher={INFORMS}
}

@article{natarajan2018asymmetry,
  title={Asymmetry and ambiguity in newsvendor models},
  author={Natarajan, Karthik and Sim, Melvyn and Uichanco, Joline},
  journal={Management Science},
  volume={64},
  number={7},
  pages={3146--3167},
  year={2018},
  publisher={INFORMS}
}

@article{perakis2008regret,
  title={Regret in the newsvendor model with partial information},
  author={Perakis, Georgia and Roels, Guillaume},
  journal={Operations Research},
  volume={56},
  number={1},
  pages={188--203},
  year={2008},
  publisher={INFORMS}
}

@article{gallego1993distribution,
  title={The distribution free newsboy problem: review and extensions},
  author={Gallego, Guillermo and Moon, Ilkyeong},
  journal={Journal of the Operational Research Society},
  volume={44},
  number={8},
  pages={825--834},
  year={1993},
  publisher={Taylor \& Francis}
}

@article{fan2025sample,
  title={Sample complexity of policy learning for inventory control with censored demand},
  author={Fan, Xiaoyu and Chen, Boxiao and Zhou, Zhengyuan},
  journal={Available at SSRN 4178567},
  year={2025}
}

@article{scarf1957min,
  title={A min-max solution of an inventory problem},
  author={Scarf, Herbert E},
  year={1958},
journal = {Studies in the mathematical theory
of inventory and production}
}

@article{fu2024distributionally,
  title={Distributionally Robust Newsvendor Under Stochastic Dominance with a Feature-Based Application},
  author={Fu, Mingyang and Li, Xiaobo and Zhang, Lianmin},
  journal={Manufacturing \& Service Operations Management},
  volume={26},
  number={5},
  pages={1962--1977},
  year={2024},
  publisher={INFORMS}
}

@article{chen2024learning,
  title={Learning to order for inventory systems with lost sales and uncertain supplies},
  author={Chen, Boxiao and Jiang, Jiashuo and Zhang, Jiawei and Zhou, Zhengyuan},
  journal={Management Science},
  year={2024},
  publisher={INFORMS}
}

@article{godfrey2001adaptive,
  title={An adaptive, distribution-free algorithm for the newsvendor problem with censored demands, with applications to inventory and distribution},
  author={Godfrey, Gregory A and Powell, Warren B},
  journal={Management Science},
  volume={47},
  number={8},
  pages={1101--1112},
  year={2001},
  publisher={INFORMS}
}

@article{chen2024survey,
  title={Survey of Data-driven Newsvendor: Unified Analysis and Spectrum of Achievable Regrets},
  author={Chen, Zhuoxin and Ma, Will},
  journal={arXiv preprint arXiv:2409.03505},
  year={2024}
}

@article{fan2022sample,
  title={Sample complexity of policy learning for inventory control with censored demand},
  author={Fan, Xiaoyu and Chen, Boxiao and Zhou, Zhengyuan},
  journal={Available at SSRN 4178567},
  year={2022}
}

@article{levi2015data,
  title={The data-driven newsvendor problem: New bounds and insights},
  author={Levi, Retsef and Perakis, Georgia and Uichanco, Joline},
  journal={Operations Research},
  volume={63},
  number={6},
  pages={1294--1306},
  year={2015},
  publisher={INFORMS}
}

@article{cheung2019sampling,
  title={Sampling-based approximation schemes for capacitated stochastic inventory control models},
  author={Cheung, Wang Chi and Simchi-Levi, David},
  journal={Mathematics of Operations Research},
  volume={44},
  number={2},
  pages={668--692},
  year={2019},
  publisher={INFORMS}
}

@article{huh2011adaptive,
  title={Adaptive data-driven inventory control with censored demand based on Kaplan-Meier estimator},
  author={Huh, Woonghee Tim and Levi, Retsef and Rusmevichientong, Paat and Orlin, James B},
  journal={Operations Research},
  volume={59},
  number={4},
  pages={929--941},
  year={2011},
  publisher={INFORMS}
}

@article{yuan2021marrying,
  title={Marrying stochastic gradient descent with bandits: Learning algorithms for inventory systems with fixed costs},
  author={Yuan, Hao and Luo, Qi and Shi, Cong},
  journal={Management Science},
  volume={67},
  number={10},
  pages={6089--6115},
  year={2021},
  publisher={INFORMS}
}

@article{agrawal2022learning,
  title={Learning in Structured MDPs with Convex Cost Functions: Improved Regret Bounds for Inventory Management},
  author={Agrawal, Shipra and Jia, Randy},
  journal={Operations Research},
  volume={70},
  number={3},
  pages={1646--1664},
  year={2022},
  publisher={INFORMS}
}

@article{levi2007provably,
  title={Provably near-optimal sampling-based policies for stochastic inventory control models},
  author={Levi, Retsef and Roundy, Robin O and Shmoys, David B},
  journal={Mathematics of Operations Research},
  volume={32},
  number={4},
  pages={821--839},
  year={2007},
  publisher={INFORMS}
}

@article{lin2022data,
  title={Data-driven newsvendor problem: Performance of the sample average approximation},
  author={Lin, Meichun and Huh, Woonghee Tim and Krishnan, Harish and Uichanco, Joline},
  journal={Operations Research},
  volume={70},
  number={4},
  pages={1996--2012},
  year={2022},
  publisher={INFORMS}
}

@article{huh2009nonparametric,
  title={A nonparametric asymptotic analysis of inventory planning with censored demand},
  author={Huh, Woonghee Tim and Rusmevichientong, Paat},
  journal={Mathematics of Operations Research},
  volume={34},
  number={1},
  pages={103--123},
  year={2009},
  publisher={INFORMS}
}

@article{fan2024don,
  title={Don’t Follow {RL} Blindly: Lower Sample Complexity of Learning Optimal Inventory Control Policies with Fixed Ordering Costs},
  author={Fan, Xiaoyu and Chen, Boxiao and Lennon Olsen, Tava and Pinedo, Michael and Qin, Hanzhang and Zhou, Zhengyuan},
  journal={Available at SSRN 4828001},
  year={2024}
}

@article{shi2016nonparametric,
  title={Nonparametric data-driven algorithms for multiproduct inventory systems with censored demand},
  author={Shi, Cong and Chen, Weidong and Duenyas, Izak},
  journal={Operations Research},
  volume={64},
  number={2},
  pages={362--370},
  year={2016},
  publisher={INFORMS}
}

@article{lugosi2024hardness,
  title={On the hardness of learning from censored and nonstationary demand},
  author={Lugosi, G{\'a}bor and Markakis, Mihalis G and Neu, Gergely},
  journal={INFORMS Journal on Optimization},
  volume={6},
  number={2},
  pages={63--83},
  year={2024},
  publisher={INFORMS}
}

@article{besbes2013implications,
  title={On implications of demand censoring in the newsvendor problem},
  author={Besbes, Omar and Muharremoglu, Alp},
  journal={Management Science},
  volume={59},
  number={6},
  pages={1407--1424},
  year={2013},
  publisher={INFORMS}
}

@book{blm,
    author = {Boucheron, Stéphane and Lugosi, Gábor and Massart, Pascal},
    title = "{Concentration Inequalities: A Nonasymptotic Theory of Independence}",
    publisher = {Oxford University Press},
    year = {2013},
    month = {02},
    abstract = "{This monograph presents a mathematical theory of concentration inequalities for functions of independent random variables. The basic phenomenon under investigation is that if a function of many independent random variables does not depend too much on any of them then it is concentrated around its expected value. This book offers a host of inequalities to quantify this statement. The authors describe the interplay between the probabilistic structure (independence) and a variety of tools ranging from functional inequalities, transportation arguments, to information theory. Applications to the study of empirical processes, random projections, random matrix theory, and threshold phenomena are presented. The book offers a self-contained introduction to concentration inequalities, including a survey of concentration of sums of independent random variables, variance bounds, the entropy method, and the transportation method. Deep connections with isoperimetric problems are revealed. Special attention is paid to applications to the supremum of empirical processes.}",
    isbn = {9780199535255},
    doi = {10.1093/acprof:oso/9780199535255.001.0001},
    url = {https://doi.org/10.1093/acprof:oso/9780199535255.001.0001},
}

@article{ban2020confidence,
  title={Confidence intervals for data-driven inventory policies with demand censoring},
  author={Ban, Gah-Yi},
  journal={Operations Research},
  volume={68},
  number={2},
  pages={309--326},
  year={2020},
  publisher={INFORMS}
}


%
%
%

\newpage

\AtBeginEnvironment{APPENDICES}{%
  \renewcommand{\theHsection}{appendix.\Alph{section}}%
  \renewcommand{\theHsubsection}{\theHsection.\arabic{subsection}}%
  \renewcommand{\theHsubsubsection}{\theHsubsection.\arabic{subsubsection}}%
  \renewcommand{\theHfigure}{\theHsection.\arabic{figure}}%
  \renewcommand{\theHtable}{\theHsection.\arabic{table}}%
  \renewcommand{\theHequation}{\theHsection.\arabic{equation}}%
  \renewcommand{\theHtheorem}{\theHsection.\arabic{theorem}}%

  \crefalias{section}{appendix}%
  \crefalias{subsection}{appendix}%
  \crefalias{subsubsection}{appendix}%
}

\crefalias{section}{appendix}
\begin{APPENDICES}
\OneAndAHalfSpacedXI 
\section{Useful Facts}\label{apx:useful-facts}

\subsection{Closed-form expression for difference in newsvendor costs}

We repeatedly rely on the following closed-form expression for the difference between the newsvendor cost under two arbitrary ordering quantities $q_1$ and $q_2$.  
\begin{proposition}
\label{lem:cost_difference_robustness}
For any distribution $F\in\F$, $q_1, q_2 \in [0,M]$:
\begin{equation}\label{eq:cost_difference_robustness}
C_F(q_1) - C_F(q_2) = b(q_2 - q_1) + (b+h)\E_F\left[(q_1-D)\Ind{D \leq q_1} - (q_2-D)\Ind{D \leq q_2}\right].
\end{equation}
\end{proposition}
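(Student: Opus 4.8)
The plan is to first obtain a closed-form expression for the single-quantity cost $C_F(q)$, and then take the difference, exploiting the fact that the troublesome $\E_F[D]$ term will cancel. The starting point is the elementary pointwise identity $(D-q)^+ = (D-q) + (q-D)^+$, valid for all realizations of $D$ (since $x^+ - (-x)^+ = x$ for any real $x$). Substituting this into the integrand of \Cref{eq:cost} yields
\[
b(D-q)^+ + h(q-D)^+ = b(D-q) + (b+h)(q-D)^+,
\]
which is the one algebraic step doing all the work: it trades the two-sided overage/underage penalty for a linear term plus a single positive part.

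Next I would take expectations under $F$. Since $F \in \F$ guarantees $\E_F[D] < \infty$, all the resulting expectations are finite and the expectation splits across the sum, giving
\[
C_F(q) = b\,\E_F[D] - bq + (b+h)\,\E_F\!\left[(q-D)^+\right].
\]
Rewriting $(q-D)^+ = (q-D)\Ind{D \leq q}$ then puts this in the form $C_F(q) = b\,\E_F[D] - bq + (b+h)\,\E_F\!\left[(q-D)\Ind{D \leq q}\right]$.

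Finally, I would apply this formula at $q = q_1$ and $q = q_2$ and subtract. The key observation is that the $b\,\E_F[D]$ term is independent of the ordering quantity, so it cancels exactly; the linear terms combine to $-bq_1 + bq_2 = b(q_2 - q_1)$, and the remaining expectations assemble into $(b+h)\,\E_F\!\left[(q_1-D)\Ind{D \leq q_1} - (q_2-D)\Ind{D \leq q_2}\right]$, which is precisely \Cref{eq:cost_difference_robustness}.

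There is no genuine obstacle here — the argument is a short and routine computation. The only point requiring any care is ensuring that the expectations may be split, which is exactly where the assumption $\E_F[D] < \infty$ (built into membership in $\F$) is invoked; it is also worth emphasizing that the cancellation of the mean term is what makes the final expression well-behaved and free of the unknown quantity $\E_F[D]$, a feature the paper relies on repeatedly in later worst-case computations over the ambiguity set.
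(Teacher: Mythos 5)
Your proof is correct and follows essentially the same route as the paper's: both derive the representation $C_F(q) = b\,\E_F[D-q] + (b+h)\,\E_F\left[(q-D)\Ind{D \leq q}\right]$ and then subtract, letting the mean term cancel. The only cosmetic difference is that you establish this via the pointwise identity $(D-q)^+ = (D-q) + (q-D)^+$ before taking expectations, whereas the paper splits the indicator $\Ind{D > q} = 1 - \Ind{D \leq q}$ inside the expectation — the same algebra in a different order.
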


\begin{proof}{\it Proof.}
By definition, for any $q$:
\begin{align}
C_F(q) & = b\E_F[(D-q)^+] + h\E_F[(q-D)^+] \nonumber \\
& = b\E_F[(D-q)\Ind{D > q}] + h\E_F[(q-D)\Ind{D \leq q}] \nonumber \\
& = b\E_F[(D-q)(1 - \Ind{D \leq q})] + h\E_F[(q-D)\Ind{D \leq q}] \nonumber \\
& = b\E_F[D-q] - b\E_F[(D-q)\Ind{D \leq q}] + h\E_F[(q-D)\Ind{D \leq q}] \nonumber \\
& = b\E_F[D-q] + (b+h)\E_F[(q-D)\Ind{D \leq q}]. \label{eq:cost_alternative}
\end{align}
Applying  \eqref{eq:cost_alternative} to $q_1$ and $q_2$, we obtain:
\begin{align*}
    C_F(q_1) - C_F(q_2) & = b\E_F[D-q_1] - b\E_F[D - q_2] + (b+h)\E_F[(q_1-D)\Ind{D \leq q_1}] \\
    &\qquad- (b+h)\E_F[(q_2-D)\Ind{D \leq q_2}] \\
    & = b(q_2 - q_1) + (b+h)\E_F[(q_1-D)\Ind{D \leq q_1} - (q_2-D)\Ind{D \leq q_2}].
\end{align*}
\hfill\Halmos
\end{proof}

\subsection{On the relationship between $\lambda$ and $M$}\label{apx:lam-m}

We implicitly rely on the following fact in all of our results.

\begin{proposition}\label{prop:lam-m}
Suppose $\Gminus(\lambda) < \rho$. Then, $\lambda \leq \qopt_G \leq M$.
\end{proposition}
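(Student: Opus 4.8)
The plan is to treat the two inequalities separately, since the upper bound is immediate from the standing assumptions while the lower bound needs a one-line argument about the critical quantile. First, the upper bound $\qopt_G \leq M$ requires no work beyond unpacking definitions: the true demand distribution $G$ belongs to $\F$, and every member of $\F$ satisfies $\qopt_F \leq \qbar = M$ by construction of the set. Hence $\qopt_G \leq M$.

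For the lower bound $\lambda \leq \qopt_G$, I would show that no ordering quantity strictly below $\lambda$ can reach the critical ratio $\rho$. Fix any $q < \lambda$. Because $\{D \leq q\} \subseteq \{D < \lambda\}$ whenever $q < \lambda$, monotonicity of probability gives $G(q) = \Pr_G(D \leq q) \leq \Pr_G(D < \lambda) = \Gminus(\lambda)$. Combining this with the hypothesis $\Gminus(\lambda) < \rho$ yields $G(q) < \rho$ for every $q < \lambda$. Recalling that $\qopt_G = \inf\{q \mid G(q) \geq \rho\}$, this shows that every $q$ in the feasible set of that infimum must satisfy $q \geq \lambda$, so the infimum is at least $\lambda$, i.e., $\qopt_G \geq \lambda$.

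I do not expect any substantive obstacle here, as the statement is essentially a careful restatement of the definitions of $\qopt_G$ and $\Gminus(\lambda)$. The only point requiring attention is the distinction between weak and strict inequalities in the presence of a possible atom of $G$ at $\lambda$: the containment $\{D \leq q\} \subseteq \{D < \lambda\}$ holds precisely because $q < \lambda$ is strict, which is why the relevant comparison is against $\Gminus(\lambda) = \Pr_G(D < \lambda)$ rather than $G(\lambda) = \Pr_G(D \leq \lambda)$. Keeping this strict-versus-weak bookkeeping correct is the entire content of the argument.
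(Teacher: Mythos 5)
Your proof is correct and follows essentially the same argument as the paper: both reduce the claim to the monotonicity fact that $G(q) \leq \Gminus(\lambda) < \rho$ for all $q < \lambda$, combined with the definition of $\qopt_G$ as an infimum and the standing assumption $\qopt_G \leq \qbar = M$. The only difference is presentational — the paper argues by contraposition (assuming $\qopt_G < \lambda$ and deducing $\Gminus(\lambda) \geq \rho$), whereas you argue directly that the feasible set of the infimum lies in $[\lambda, \infty)$, which if anything is slightly cleaner since it avoids invoking attainment of $G(\qopt_G) \geq \rho$.
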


\begin{proof}{\it Proof.}
We argue that $\Gminus(\lambda) < \rho$ implies $\lambda \leq \qopt_G$ via the converse. Namely, suppose $\lambda > \qopt_G$. Recall, by definition of $\qopt_G$, $G(\qopt_G) \geq \rho$, which implies there exists $q < \lambda$ such that $G(q) \geq \rho$, i.e., $\Gminus(\lambda) \geq \rho$. Note that $\qopt_G \leq M$ by assumption, and we obtain the claim.
\hfill\Halmos
\end{proof}

\newpage

\section{Omitted Proofs}

\subsection{\Cref{sec:minimax-risk} Omitted Proofs}\label{apx:minimax-proofs}

\subsubsection{Proof of \Cref{prop:iff}}\label{apx:iff}

\begin{proof}{\it Proof.}
Suppose first that $\risk > 0$. For any policy $\pi$,
$\sup_{F \in \ambig{\lambda}} \left\{ C_F(q^\pi) - C_F(\qopt_F)\right\} \geq \risk > 0$, by definition of $\risk$. Therefore, for all $\epsilon \in (0,\risk)$, $\sup_{F \in \ambig{\lambda}} \Big\{ C_F(q^\pi) - C_F(\qopt_F)\Big\} > \epsilon$, which implies that the problem is unidentifiable.

The proof of the converse is a corollary of our main algorithmic result, in which we design a policy $\pi$ such that $\sup_{F \in \ambig{\lambda}} \Big\{ C_F(q^\pi) - C_F(\qopt_F)\Big\} = \widetilde{O}(1/\sqrt{N})$ {with probability at least $1-O(1/\sqrt{N})$} when $\risk = 0$ (see \Cref{thm:minmax_regret}). Taking $N \to \infty$ then implies that the problem is identifiable. \hfill\Halmos 
\end{proof}

\medskip

\subsubsection{Proof of \Cref{lem:identifiable_same_opt_quantile}}\label{apx:identifiable_same_opt_quantile}

\begin{proof}{\it Proof.}
By definition,
$\qopt_G =  \inf\{q: G(q) \geq \rho\}.$ If $\Gminus(\lambda) \geq \rho$, there must exist $q < \lambda$ such that $G(q) \geq \rho$, which implies $\qopt_G < \lambda$.

Consider now $\qopt_F$. Again, by definition,
\begin{align*}
\qopt_F &= \inf\{q: F(q) \geq \rho\} \\
&= \inf\left\{q: G(q) \mathds{1}\{q < \lambda\} + (\Pr_G(D < \lambda) + \Pr_F(\lambda \leq D < q))\mathds{1}\{q \geq \lambda\} \geq \rho\right\},
\end{align*}
where the second equality uses the fact that $F\in\ambig{\lambda}$, hence $F(x) = G(x) \ \forall \ x < \lambda$. Again, using the fact that $\Gminus(\lambda) \geq \rho$, there must be $q < \lambda$ such that $G(q) \geq \rho$. Hence, the above infimum is necessarily achieved at $\qopt_G$.
\hfill\Halmos
\end{proof}

\medskip

\subsubsection{Proof of \Cref{lem:sup_expression_identifiability}}\label{apx:sup_expression_identifiability}
\begin{proof}{\it Proof.}
By \cref{lem:cost_difference_robustness}, we have:
\begin{align}\label{eq:worst-case-rewrite}
&\sup_{F \in \ambig{\lambda}} C_F(q) - C_F(\qopt_F) \notag \\ &= \sup_{\tilde{q}\in[\lambda,M]}\sup_{\substack{F\in \ambig{\lambda}:\\\qopt_F=\tilde{q}}}b(\tilde{q} - q) + (b+h)\E_F\left[(q-D)\Ind{D \leq q} - (\tilde{q}-D)\Ind{D \leq \tilde{q}}\right] \notag \\
&=\sup_{\tilde{q}\in[\lambda,M]}b(\tilde{q} - q) + (b+h)\left(\sup_{\substack{F\in \ambig{\lambda}:\\
\qopt_F=\tilde{q}}} \E_F\left[(q-D)\Ind{D \leq q} - (\tilde{q}-D)\Ind{D \leq \tilde{q}}\right]\right).
\end{align}
Here, the supremum is over all distributions $F \in \ambig{\lambda}$ such that {$\tilde{q} = \qopt_F \geq \lambda$}, since \mbox{$\qopt_F < \lambda \implies F^{-}(\lambda) \geq \rho.$} However, since $F \in \ambig{\lambda}$, $F^{-}(\lambda) = \Gminus(\lambda) < \rho$, a contradiction.

We analyze \eqref{eq:worst-case-rewrite} depending on the following {three} cases: (i) $q > \tilde{q}$, (ii) $q < \tilde{q}$, and (iii) $q = \tilde{q}$.
\begin{enumerate}[(i)]
\item $q > \tilde{q}$: In this case, the worst-case distribution $F$ that has $\tilde{q}$ as its newsvendor solution maximizes:
\begin{align}\label{eq:worst-case-1}
\E_F\left[(q-D)\Ind{D \leq q} - (\tilde{q}-D)\Ind{D \leq \tilde{q}}\right] = \E_F\left[(q-\tilde{q})\Ind{D \leq \tilde{q}} + ({q}-D)\Ind{\tilde{q} <D \leq q}\right]. 
\end{align}
Observe that $q-D < q-\tilde{q}$ for all $D \in (\tilde{q},q]$. Hence, for any distribution $F$ such that \mbox{$\Pr_F(D \in (\tilde{q},q]) > 0$}, there exists a strictly worse distribution that sets $\Pr_F(D \in (\tilde{q},q]) = 0$, moving all of the remaining mass to $\tilde{q}$. (Note that such a distribution will still have $\qopt_F = \tilde{q}$.) For any such $F$, then,
$\eqref{eq:worst-case-1} = (q-\tilde{q})\Pr_F(D \leq \tilde{q}).$
Using this in \eqref{eq:worst-case-rewrite}:
\begin{align}\label{eq:worst-case-1a}
&\sup_{\tilde{q}\in[\lambda,q)}b(\tilde{q}-q) + (b+h)\left(\sup_{\substack{F\in \ambig{\lambda}:\\\qopt_F=\tilde{q}}}\E_F\left[(q-D)\Ind{D \leq q} - (\tilde{q}-D)\Ind{D \leq \tilde{q}}\right]\right)\notag \\
&=
\sup_{\tilde{q}\in[\lambda,q)}b(\tilde{q}-q)+(b+h)(q-\tilde{q})\left(\sup_{\substack{F\in \ambig{\lambda}:\\\qopt_F=\tilde{q}}}\Pr_F(D\leq \tilde{q})\right)\notag \\
&=\sup_{\tilde{q}\in[\lambda,q)}b(\tilde{q}-q)+(b+h)(q-\tilde{q}),
\end{align}
achieved by letting $F$ be such that $\Pr_F(D\leq \tilde{q}) = 1$, since $q > \tilde{q}$ by assumption. Further simplifying, we obtain:
\begin{align}\label{eq:worst-case-1a-a}
\eqref{eq:worst-case-1a} &= \sup_{\tilde{q}\in[\lambda,q)} h(q-\tilde{q}) =h(q-\lambda).
\end{align}
\item $q < \tilde{q}$: Suppose first that $q < \lambda$. In this case, for a fixed $\tilde{q}$:
\begin{align}
\label{eq:worst_case_other_setting}
&\sup_{\substack{F\in\ambig{\lambda}:\\\qopt_F=\tilde{q}}} \E_F\left[(q-D)\Ind{D \leq q} - (\tilde{q}-D)\Ind{D \leq \tilde{q}}\right] \notag \\ &=\sup_{\substack{F\in\ambig{\lambda}:\\\qopt_F=\tilde{q}}} \E_G\left[(q-D)\Ind{D \leq q}\right] - \E_G\left[(\tilde{q}-D)\Ind{D < \lambda}\right] - \E_F\left[(\tilde{q}-D)\Ind{\lambda \leq D < \tilde{q}}\right].
\end{align}
Since $\tilde{q}-D > 0$ for all $D \in [\lambda,\tilde{q})$, a similar argument as above establishes that any worst-case distribution $F \in \ambig{\lambda}$ necessarily sets $\Pr_F(D \in [\lambda,\tilde{q})) = 0$. Using this fact above, we obtain:
\begin{align*}
&\sup_{\substack{F\in\ambig{\lambda}:\\\qopt_F=\tilde{q}}} \E_F\left[(q-D)\Ind{D \leq q} - (\tilde{q}-D)\Ind{D \leq \tilde{q}}\right] \\&= \E_G[(q-D)\Ind{D \leq q}] - \E_G\left[(\tilde{q}-D)\Ind{D < \lambda}\right].
\end{align*}
Further using this in \eqref{eq:worst-case-rewrite}, we then have:
\begin{align}\label{eq:worst-case-q<q-a}
&\sup_{\tilde{q}\in[\lambda,M]}b(\tilde{q} - q) + (b+h)\left(\sup_{\substack{F\in \ambig{\lambda}:\\\qopt_F=\tilde{q}}} \E_F\left[(q-D)\Ind{D \leq q} - (\tilde{q}-D)\Ind{D \leq \tilde{q}}\right]\right)\notag \\
&=\sup_{\tilde{q}\in[\lambda,M]}b(\tilde{q} - q) + (b+h)\left(\E_G[(q-D)\Ind{D \leq q}] - \E_G\left[(\tilde{q}-D)\Ind{D < \lambda}\right]\right) \notag \\
&= -bq+(b+h)\left(\E_G[(q-D)\Ind{D \leq q}]+\E_G[D\Ind{D < \lambda}]\right) \notag \\
&\quad+\sup_{\tilde{q}\in[\lambda,M]}\bigg\{\tilde{q}(b- (b+h)\Gminus(\lambda))\bigg\},
\end{align}
where the supremum is attained at $\tilde{q} = M$, since $\Gminus(\lambda) < \rho \implies b-(b+h)\Gminus(\lambda) > 0$. Therefore:
\begin{align}\label{eq:worst-case-q<q-b}
\eqref{eq:worst-case-q<q-a}&=b(M-q) + (b+h)\left(\E_G[(q-D)\Ind{D \leq q}] - \E_G\left[(M-D)\Ind{D < \lambda}\right]\right).
\end{align}

Now, if $q \geq \lambda$, for a fixed $\tilde{q}$ we have:
\begin{align}
\label{eq:last_case_for_worst_case_distribution}
&\E_F\left[(q-D)\Ind{D \leq q} - (\tilde{q}-D)\Ind{D \leq \tilde{q}}\right]\notag \\ &= \E_F\left[(q-\tilde{q})\Ind{D \leq q} - (\tilde{q}-D)\Ind{{q} <D \leq \tilde{q}}\right] \notag \\
&= \E_F\left[-(\tilde{q}-{q})\Ind{D \leq q} - (\tilde{q}-D)\Ind{{q} <D \leq \tilde{q}}\right] \notag \\
&=\E_F\left[-(\tilde{q}-{q})\Ind{D < \lambda} - (\tilde{q}-{q})\Ind{\lambda \leq D \leq q} - (\tilde{q}-D)\Ind{{q} <D \leq \tilde{q}}\right] \notag \\
&= -(\tilde{q}-q)\Gminus(\lambda)-(\tilde{q}-q)\Pr_F(\lambda\leq D \leq q)-\E_F\bigg[(\tilde{q}-D)\Ind{{q} <D \leq \tilde{q}}\bigg].
\end{align}
Since $0 < \tilde{q}-D < \tilde{q}-q$ for all $D \in (q,\tilde{q})$, the supremum of the above is achieved for $F$ such that  \mbox{$\Pr_F(\lambda \leq D < \tilde{q}) = 0$}. Then,
\begin{align}\label{eq:worst-case-2}
&\sup_{\tilde{q}\in[q,M]}b(\tilde{q} - q) + (b+h)\left(\sup_{\substack{F\in \ambig{\lambda}:\\\qopt_F=\tilde{q}}} \E_F\left[(q-D)\Ind{D \leq q} - (\tilde{q}-D)\Ind{D \leq \tilde{q}}\right]\right)\notag \\
&=\sup_{\tilde{q}\in[q,M]}b(\tilde{q} - q) - (b+h)(\tilde{q}-q)\Gminus(\lambda) \notag \\&=\sup_{\tilde{q}\in[q,M]} (\tilde{q}-q)\left(b-(b+h)\Gminus(\lambda)\right) \notag \\
&= (M-q)(b-(b+h)\Gminus(\lambda)),
\end{align}
again, since $\Gminus(\lambda) < \rho$ by assumption.
\item $q = \tilde{q}$: In this case, $C_F(q)-C_F(\tilde{q}) = 0$ for all $F \in \ambig{\lambda}$.
\end{enumerate}

Putting these three cases together and applying them to \eqref{eq:worst-case-rewrite}, we have:
\begin{align*}
&\sup_{F\in\ambig{\lambda}}C_F(q)-C_F(\qopt_F)\\&=\begin{cases}b(M-q) + (b+h)\left(\E_G[(q-D)\Ind{D \leq q}] - \E_G\left[(M-D)\Ind{D < \lambda}\right]\right) \quad &\text{if } q <\lambda\\
\max\left\{h(q-\lambda),\left(b-(b+h)\Gminus(\lambda)\right)(M-q)\right\} \quad &\text{if } q \geq \lambda.
\end{cases}
\end{align*}
Noting that $h(q-\lambda) > (b-(b+h)\Gminus(\lambda))(M-q) \iff q > \qcrit_G$, with equality at $\qcrit_G$, we obtain the result.\hfill\Halmos
\end{proof}

\medskip

\subsubsection{Proof of \Cref{thm:minimax-risk-identifiable}}\label{apx:minimax-risk-identifiable}

\begin{proof}{\it Proof.} We proceed separately for each regime.

\paragraph{Case I: $\Gminus(\lambda) \geq \rho$.}
By \Cref{lem:identifiable_same_opt_quantile}, $\qopt_F = \qopt_G < \lambda$ for all $F \in \ambig{\lambda}$. Therefore, for all $F\in\ambig{\lambda}$, $q \in [0,M]$:
\begin{align*}
C_F(q) - C_F(\qopt_F) &= C_F(q)-C_F(\qopt_G).
\end{align*}
Letting $q = \qopt_G$, this implies that $C_F(\qopt_G)-C_F(\qopt_F) = 0$ for all $F \in \ambig{\lambda}$. Using the fact that $\sup_{F\in\ambig{\lambda}}C_F(q)-C_F(\qopt_F) \geq 0$ for all $q$ by definition of $\qopt_F$, we then have \mbox{$\risk := \inf_{q} \sup_{F\in\ambig{\lambda}}C_F(q)-C_F(\qopt_F) = 0$}, precisely achieved at $\qrisk = \qopt_G$. 

\medskip

\paragraph{Case II: $\Gminus(\lambda) < \rho$.} 
Recall, $\qcrit_G := \frac{bM+h\lambda-(b+h)\Gminus(\lambda)M}{(b+h)(1-\Gminus(\lambda))}$ by definition. We analyze \mbox{$\inf_{q \in [0,M]}\sup_{F \in \ambig{\lambda}} C_F(q) - C_F(\qopt_F)$} by partitioning the proof into three cases: (i) $q < \lambda$, (ii) $q \in [\lambda,\qcrit_G]$, and (iii) $q > \qcrit_G$.

By \Cref{lem:sup_expression_identifiability}, for $q < \lambda$:
\[\sup_{F \in \ambig{\lambda}} C_F(q) - C_F(\qopt_F)
    = 
        b (M-q) + (b+h)\bigg[\E_G\Big[(q-D)\Ind{D \leq q}] - (M-D)\mathds{1}\{D < \lambda\}\Big]\bigg].\]
Hence, we have:
\begin{align}
&\inf_{q < \lambda} \sup_{F \in \ambig{\lambda}} C_F(q) - C_F(\qopt_F) \notag\\
&= \inf_{q < \lambda} bM - (b+h)\E_G\left[(M-D)\Ind{D < \lambda}\right] -bq+(b+h)\E_G\left[(q-D)\Ind{D \leq q}\right],\label{eq:inf} 
\end{align}
where \eqref{eq:inf} uses the fact that $F\in\ambig{\lambda}$ for the first expectation.  
Now, the newsvendor cost can equivalently be written as 
\begin{align}\label{eq:alt-newsvendor}
C_G(q) = b\left(\E_G[D]-q\right) + (b+h)\E_G[(q-D)\Ind{D \leq q}].
\end{align}
Plugging this formulation into \eqref{eq:inf}, we obtain:
\begin{align}
\inf_{q < \lambda} \sup_{F \in \ambig{\lambda}} C_F(q) - C_F(\qopt_F) &=bM - (b+h)\E_G\left[(M-D)\Ind{D < \lambda}\right]-b\E_G[D] +\inf_{q<\lambda} C_G(q).
\end{align}

Recall, $\Gminus(\lambda) < \rho$ implies $ \qopt_G \geq \lambda$. Hence, by convexity of $C_G(q)$, $C_G(\lambda) \leq C_G(q)$ for all $q < \lambda$. Then:
\begin{align*}
&\inf_{q < \lambda} \sup_{F \in \ambig{\lambda}} C_F(q) - C_F(\qopt_F) \\ &= bM -(b+h)\E_G\left[(M-D)\mathds{1}\{D < \lambda\}\right] - b\E_G[D] + C_G(\lambda) \\
&= bM -(b+h)\E_G\left[(M-D)\mathds{1}\{D < \lambda\}\right] - b\lambda + (b+h)\E_G[(\lambda-D)\Ind{D\leq\lambda}],
\end{align*}
where the second equality follows from \eqref{eq:alt-newsvendor}. Simplifying, we finally obtain:
\begin{align}\label{eq:inf-before-lam}
\inf_{q < \lambda} \sup_{F \in \ambig{\lambda}} C_F(q) - C_F(\qopt_F) &=  b (M-\lambda) - (b+h)\E_G\Big[(M-\lambda)\Ind{D < \lambda}\Big]\notag\\
&= (M-\lambda)\left(b-(b+h)\Gminus(\lambda)\right).
\end{align}

Suppose now $q \in \left[\lambda,\qcrit_G \right]$.  Again, by \Cref{lem:sup_expression_identifiability}:
\begin{align}\label{eq:before-crit}
     \min_{q \in [\lambda, \qcrit_G]} \sup_{F \in \ambig{\lambda}} C_F(q) - C_F(\qopt_F) &= \min_{q \in [\lambda, \qcrit_G]} (M-q)\left(b - (b+h)\Gminus(\lambda)\right) \notag \\
     &=(M-\qcrit_G)\left(b-(b+h)\Gminus(\lambda)\right),
\end{align}
where the final equality follows from the fact that the right-hand side of \eqref{eq:before-crit} is decreasing in $q$, since $\Gminus(\lambda) < \rho$. It is easy to derive that $\qcrit_G \geq \lambda$ for $\Gminus(\lambda) < \rho$. Comparing \eqref{eq:inf-before-lam} and \eqref{eq:before-crit}, this implies:
\begin{align}
     \min_{q \in [\lambda, \qcrit_G]} \sup_{F \in \ambig{\lambda}} C_F(q) - C_F(\qopt_F) \leq \inf_{q < \lambda} \sup_{F \in \ambig{\lambda}} C_F(q) - C_F(\qopt_F)\notag.
\end{align}
Hence, setting $q = \qcrit_G$ (weakly) dominates setting $q < \lambda$.

We conclude by considering $q \in (\qcrit_G, M]$.  By \Cref{lem:sup_expression_identifiability}:
\begin{align*}
     \inf_{q \in (\qcrit_G, M]} \sup_{F \in \ambig{\lambda}} C_F(q) - C_F(\qopt_F) &= \inf_{q \in (\qcrit_G, M]} h(q - \lambda) = h(\qcrit_G-\lambda).
\end{align*}
By definition of $\qcrit_G$,
\[h(\qcrit_G-\lambda) = (M-\qcrit_G)(b-(b+h)\Gminus(\lambda)) = \frac{h(b-(b+h)\Gminus(\lambda)(M-\lambda)}{(b+h)(1-\Gminus(\lambda))}.\]
Putting this together with \eqref{eq:before-crit}, we conclude that $\qrisk = \qcrit_G$, with
\[\Delta = \frac{h(b-(b+h)\Gminus(\lambda)(M-\lambda)}{(b+h)(1-\Gminus(\lambda))}.\]
\hfill\Halmos
\end{proof}

\medskip

\subsubsection{Proof of \Cref{prop:worst-case-for-identifiable}}\label{apx:worst-case-id}

\begin{proof}{\it Proof.}
Recall that, in this regime, by \Cref{lem:identifiable_same_opt_quantile}, $\qopt_F = \qopt_G < \lambda$ for all $F\in\ambig{\lambda}$. Applying this fact to \Cref{lem:cost_difference_robustness}, for any $F \in \ambig{\lambda}$ we have:
\begin{align}\label{eq:id-step-1}
C_F(q) - C_F(\qopt_F) &= b(\qopt_G - q) + (b+h)\E_F\left[(q-D)\Ind{D \leq q} - (\qopt_G-D)\Ind{D \leq \qopt_G}\right].
\end{align}

If $q < \lambda$, since $F \in \ambig{\lambda}$ and $\qopt_F < \lambda$, the above evaluates to:
\begin{align*}
C_F(q) - C_F(\qopt_F) &=b(\qopt_G - q) + (b+h)\E_G\left[(q-D)\Ind{D \leq q} - (\qopt_G-D)\Ind{D \leq \qopt_G}\right]\\
&= C_G(q)-C_G(\qopt_G),
\end{align*}
which proves the first case.

\smallskip

Suppose now that $q \geq \lambda$. Note that only the second term of \eqref{eq:id-step-1} has dependence on $F$. Again, since $\qopt_G < \lambda$, for all $F \in \ambig{\lambda}$ we have:
\begin{align}\label{eq:id-step-2}
&\E_F\left[(q-D)\Ind{D \leq q} - (\qopt_G-D)\Ind{D \leq \qopt_G}\right] \notag \\
&= \E_G\left[(q-D)\Ind{D < \lambda}\right] + \E_F\left[(q-D)\Ind{\lambda \leq D < q}\right] - \E_G\left[ (\qopt_G-D)\Ind{D \leq \qopt_G}\right] \notag \\
&\leq \E_G\left[(q-D)\Ind{D < \lambda}\right] + (q-\lambda)\Pr_F(D = \lambda) - \E_G\left[ (\qopt_G-D)\Ind{D \leq \qopt_G}\right],
\end{align}
where the inequality is attained by the distribution $F$ that places all remaining mass on $\lambda$. Re-arranging \eqref{eq:id-step-2}, we obtain:
\begin{align*}
&\sup_{F\in\ambig{\lambda}} \E_F\left[(q-D)\Ind{D \leq q} - (\qopt_G-D)\Ind{D \leq \qopt_G}\right] \notag \\
&= \E_G\left[(q-\lambda)\Ind{D < \lambda}\right] + \E_G[(\lambda-D)\Ind{D < \lambda}] \\&\qquad+ (q-\lambda)(1-\Gminus(\lambda)) - \E_G\left[ (\qopt_G-D)\Ind{D \leq \qopt_G}\right]\\
&= (q-\lambda) + \E_G[(\lambda-D)\Ind{D < \lambda}] - \E_G\left[ (\qopt_G-D)\Ind{D \leq \qopt_G}\right].
\end{align*}
Plugging this back into \eqref{eq:id-step-1}, the second case is shown.
\hfill\Halmos
\end{proof}

\subsection{{Worst-Case Nature of Two-Point Mass Distributions}}\label{apx:bernoulli-is-worst}

\subsubsection{Proof of \Cref{prop:ber-is-worst-case}}

\begin{proof}{\it Proof.}
This fact is a corollary of the derivation of $\Regret(q)$ in the proof of \Cref{lem:sup_expression_identifiability}. To show this, recall our three cases: (i) $q > \tilde{q} \geq \lambda$, (ii) $q < \lambda \leq \tilde{q}$, and (iii) $\lambda \leq q < \tilde{q}$, where $\tilde{q} := \qopt_F$.
\begin{enumerate}[(i)]
\item $q > \tilde{q} \geq \lambda$: In this case, we showed that any worst-case distribution $F$ is such that $\tilde{q} = \lambda$, and \mbox{$\Pr_F(D \leq \lambda) = 1$}. Therefore, the worst-case distribution places the entirety of the remaining mass at $\lambda$.
\item $q < \lambda \leq \tilde{q}$: In this case, we argued that any worst-case distribution $F$ is such that $\Pr_F(\lambda \leq D < \tilde{q}) = 0$ and $\tilde{q} = M$. This is satisfied by a distribution $F$ that places the entirety of the remaining mass at $M$.
\item $\lambda \leq q < \tilde{q}$: Here we similarly argued that any worst-case distribution $F$ is such that $\Pr_F(\lambda \leq D < \tilde{q}) = 0$ and $\tilde{q} = M$. Again, this is achieved by a distribution $F$ that places the entirety of the remaining mass on $M$.
\end{enumerate}
\hfill\Halmos
\end{proof}
\subsection{Proof of \Cref{thm:minmax_regret}}\label{apx:ub_proof}


\begin{proof}{\it Proof.}
 \Cref{prop:regret-decomp} below upper bounds our algorithm's regret by (i) the minimax risk $\risk$, and (ii) the cost difference between $\qalg$ and $\qrisk$, for any distribution $F \in \ambig{\lambda}$. We defer its proof to Appendix \ref{apx:regret-decomp}.

\begin{proposition}\label{prop:regret-decomp}
The regret of \Cref{alg:newsvendor} is upper bounded by:
\begin{align}\label{eq:regret-decomp}
\Regret(\qalg) \leq \Delta + \sup_{F \in \ambig{\lambda}} C_F(\qalg) - C_F(\qrisk).
\end{align}
\end{proposition}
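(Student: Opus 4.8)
The plan is to prove \Cref{prop:regret-decomp} by a simple add-and-subtract decomposition of the regret, exploiting the definition of the minimax risk $\risk$ together with the fact that $\qrisk$ attains it. The starting point is \Cref{def:min_regret}, which gives $\Regret(\qalg) = \sup_{F \in \ambig{\lambda}} \big[C_F(\qalg) - C_F(\qopt_F)\big]$. For each fixed $F \in \ambig{\lambda}$, I would insert the minimax optimal ordering quantity $\qrisk$ and split the integrand as
\[
C_F(\qalg) - C_F(\qopt_F) = \big(C_F(\qalg) - C_F(\qrisk)\big) + \big(C_F(\qrisk) - C_F(\qopt_F)\big).
\]

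Next I would take the supremum over $F \in \ambig{\lambda}$ of both sides and apply subadditivity of the supremum (the supremum of a sum is at most the sum of the suprema), yielding
\[
\Regret(\qalg) \leq \sup_{F \in \ambig{\lambda}} \big[C_F(\qalg) - C_F(\qrisk)\big] + \sup_{F \in \ambig{\lambda}} \big[C_F(\qrisk) - C_F(\qopt_F)\big].
\]
The final step is to identify the second term with $\risk$. By \Cref{def:risk}, $\risk = \inf_{q \in [0,M]} \sup_{F \in \ambig{\lambda}} \big[C_F(q) - C_F(\qopt_F)\big]$, and by \Cref{thm:minimax-risk-identifiable} this infimum is attained at $q = \qrisk$ in both the identifiable and unidentifiable regimes. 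Substituting $q = \qrisk$ therefore gives $\sup_{F \in \ambig{\lambda}} \big[C_F(\qrisk) - C_F(\qopt_F)\big] = \risk$, which recovers exactly the bound in \eqref{eq:regret-decomp}.

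There is no substantive obstacle here: the argument is purely a triangle-inequality-style decomposition at the level of the cost functional, and it holds for \emph{any} fixed quantity $\qalg$ (so the algorithm's internal logic plays no role at this stage). The only point meriting care is that $\qrisk$ genuinely attains the infimum defining $\risk$, so that the second supremum equals $\risk$ exactly rather than merely being upper bounded by it; this is guaranteed by the closed-form characterization of $\qrisk$ in \Cref{thm:minimax-risk-identifiable}, which exhibits the minimizer explicitly in each regime. The substance of the overall regret bound in \Cref{thm:minmax_regret} then lies entirely in controlling the remaining term $\sup_{F \in \ambig{\lambda}} \big[C_F(\qalg) - C_F(\qrisk)\big]$ via the concentration and unbiasedness arguments (\Cref{key-fact}), which this proposition cleanly isolates.
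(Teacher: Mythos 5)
Your proposal is correct and matches the paper's proof essentially verbatim: the paper likewise adds and subtracts $C_F(\qrisk)$, applies subadditivity of the supremum, and identifies $\sup_{F\in\ambig{\lambda}} \left[C_F(\qrisk)-C_F(\qopt_F)\right]$ with $\risk$. Your added care in noting that $\qrisk$ genuinely attains the infimum defining $\risk$ (via \Cref{thm:minimax-risk-identifiable}) is exactly the justification the paper implicitly invokes when it writes this identification as ``the definition of $\risk$.''
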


Hence, the remainder of the proof focuses on bounding the second term. Before doing so, we show a series of useful facts. First off, \Cref{lem:newsvendor_lipschitz} establishes Lipschitzness of the newsvendor cost function. Its proof can be found in Appendix \ref{apx:newsvendor_lipschitz}.
\begin{lemma}
\label{lem:newsvendor_lipschitz}
For any distribution $F$, and ordering quantities $q, q'$: 
\[
|C_F(q) - C_F(q')| \leq \max\{b,h\}|q - q'|.
\]
\end{lemma}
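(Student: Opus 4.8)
The plan is to prove the bound by controlling the integrand \emph{pointwise}, which yields the sharper constant $\max\{b,h\}$ rather than the $b+h$ that a naive term-by-term estimate would produce. Fix $q,q'$ and assume without loss of generality that $q < q'$ (the case $q = q'$ being trivial). Writing the cost difference as a single expectation via the definition in \eqref{eq:cost},
\[
C_F(q) - C_F(q') = \E_F\!\left[f(D)\right], \qquad f(d) := b\big((d-q)^+ - (d-q')^+\big) + h\big((q-d)^+ - (q'-d)^+\big),
\]
it suffices to show that $|f(d)| \le \max\{b,h\}\,(q'-q)$ for every $d \ge 0$; the claim then follows immediately from $|\E_F[f(D)]| \le \E_F[|f(D)|]$.

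The core of the argument is a three-way case analysis on the position of $d$ relative to the interval $[q,q']$. For $d \le q$ only the $h$-term is active and $f(d) = -h(q'-q)$; for $d \ge q'$ only the $b$-term is active and $f(d) = b(q'-q)$; in both cases $|f(d)| \le \max\{b,h\}(q'-q)$. The delicate case is $d \in (q,q']$, where a short computation gives $f(d) = b(d-q) - h(q'-d)$. Here the two penalties enter with \emph{opposite} signs, so instead of adding they partially cancel: since $b(d-q) \in [0,\,b(q'-q)]$ and $-h(q'-d) \in [-h(q'-q),\,0]$, we obtain $f(d) \in [-h(q'-q),\,b(q'-q)]$ and hence $|f(d)| \le \max\{b,h\}(q'-q)$ once again. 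Combining the three cases yields the uniform pointwise bound, and taking expectations finishes the proof.

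I expect the only real subtlety to lie in this middle region, and it is precisely where the naive approach fails: bounding the two summands of $f$ separately by their individual $1$-Lipschitz constants would give $b+h$, so the argument must exploit the structural fact that overage and underage costs are never simultaneously incurred. An alternative route is to observe that $C_F$ is convex with subdifferential contained in $\left[-b+(b+h)F(q^-),\,-b+(b+h)F(q)\right] \subseteq [-b,\,h]$, so that $|C_F'| \le \max\{b,h\}$ wherever the derivative exists; this, however, requires handling atoms of $F$ through one-sided derivatives, a complication that the pointwise argument sidesteps entirely.
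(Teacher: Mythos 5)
Your proof is correct and takes essentially the same route as the paper's: both write the cost difference as the expectation of a single pointwise function, perform the same three-way case analysis on the position of $D$ relative to $[q,q']$, use the sign cancellation between the overage and underage terms on the middle interval to obtain the constant $\max\{b,h\}$ rather than $b+h$, and conclude by taking expectations. The convexity/subdifferential alternative you mention is a nice aside, but your main argument matches the paper's proof step for step.
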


Moreover, by \Cref{key-fact}, $\Ind{\lambdasamples_i < \lambda} = \Ind{\doff_i < \lambda}$ for all $i \in [N]$. We use this to prove \Cref{lem:g_minus_concentration}, which establishes that $\Gminushat(\lambda)$ and $\Gminus(\lambda)$ are close with constant probability, below. We defer its proof to Appendix \ref{apx:g_minus_concentration}.

\begin{lemma}\label{lem:g_minus_concentration}
Let $\Event = \left\{ \abs{\Gminushat(\lambda) - \Gminus(\lambda)} {<} \width \right\}$. Then, $\Pr_G(\Event) \geq 1-\delta$.
\end{lemma}

We moreover use \Cref{key-fact} to show that $\qopt_{\eCDF}$ is precisely equal to the actual SAA of $\qopt_G$.  To formalize this, we introduce some additional notation. Let $\offeCDFtrue(x) = \frac{1}{N}\sum_{i \in [N]} \Ind{\doff_i \leq x}$ be the empirical cdf of the (true) uncensored demand, and \mbox{$\qopt_{\offeCDFtrue} = \inf\{x:\offeCDFtrue(x) \geq \rho\}$} be the corresponding newsvendor quantile. We repeatedly rely on the following lemma in the remainder of the proof. Its proof can be found in Appendix \ref{apx:saa_no_bias}.

\begin{lemma}\label{prop:saa_no_bias}
Suppose $\Gminushat(\lambda) \geq \rho$. Then, $\qopt_{\offeCDF} = \qopt_{\offeCDFtrue} < \lambda$.
\end{lemma}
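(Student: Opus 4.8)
The goal is to prove \Cref{prop:saa_no_bias}: under the event $\Gminushat(\lambda) \geq \rho$, the censored empirical quantile $\qopt_{\offeCDF}$ coincides exactly with the uncensored empirical quantile $\qopt_{\offeCDFtrue}$, and both lie strictly below $\lambda$. The essential reason this should be true is that the two empirical CDFs $\offeCDF$ and $\offeCDFtrue$ agree on all of $[0,\lambda)$ --- since sales and demand agree whenever demand is below the boundary --- and the condition $\Gminushat(\lambda) \geq \rho$ guarantees that the $\rho$-th quantile is determined entirely by this common portion below $\lambda$.

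The plan is as follows. First I would establish that $\offeCDF(x) = \offeCDFtrue(x)$ for every $x < \lambda$. This is the crux and follows directly from \Cref{key-fact}: for demand realizations $\doff_i < \lambda$ the censored sample satisfies $\lambdasamples_i = \min\{\lambda, \doff_i\} = \doff_i$, so for any $x < \lambda$ the events $\{\lambdasamples_i \leq x\}$ and $\{\doff_i \leq x\}$ coincide (both require the underlying demand to be at most $x < \lambda$, which censoring leaves untouched). Hence the indicator sums defining $\offeCDF(x)$ and $\offeCDFtrue(x)$ are identical term-by-term for $x < \lambda$. I would also note the relationship $\Gminushat(\lambda) = \frac{1}{N}\sum_i \Ind{\lambdasamples_i < \lambda} = \lim_{x \uparrow \lambda} \offeCDF(x)$, the empirical mass strictly below the boundary.

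Next I would use the hypothesis $\Gminushat(\lambda) \geq \rho$ to locate the quantile. Since $\offeCDF$ is nondecreasing and its left-limit at $\lambda$ equals $\Gminushat(\lambda) \geq \rho$, there must exist some $x < \lambda$ with $\offeCDF(x) \geq \rho$; concretely, the smallest censored sample value $v$ below $\lambda$ at which the cumulative empirical mass first reaches $\rho$ is strictly less than $\lambda$. By the infimum definition of $\qopt_{\offeCDF} = \inf\{x : \offeCDF(x) \geq \rho\}$, this forces $\qopt_{\offeCDF} < \lambda$. The same argument applied to $\offeCDFtrue$ --- using that $\offeCDFtrue(x) = \offeCDF(x) \geq \rho$ for this same $x < \lambda$ --- gives $\qopt_{\offeCDFtrue} < \lambda$ as well. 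Finally, because both quantiles are infima of sets that, when intersected with $(-\infty, \lambda)$, are defined by identical functions (the two CDFs agree on $[0,\lambda)$) and because both infima are already achieved strictly below $\lambda$, the two infima must be equal: $\qopt_{\offeCDF} = \qopt_{\offeCDFtrue} < \lambda$.

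The only subtlety I anticipate --- and the step I would be most careful about --- is handling the boundary behavior cleanly, namely ensuring that the agreement of the two CDFs on the open interval $[0,\lambda)$ combined with the infimum definition genuinely pins down \emph{equality} of the quantiles rather than merely both being below $\lambda$. The argument is that once we know each infimum is attained at some point strictly less than $\lambda$, only the values of the respective CDFs on $[0,\lambda)$ matter for computing that infimum, and on this set the two functions are identical; hence the defining sets $\{x < \lambda : \offeCDF(x) \geq \rho\}$ and $\{x < \lambda : \offeCDFtrue(x) \geq \rho\}$ coincide, and so do their infima. I would write this last comparison explicitly to avoid any ambiguity about what happens exactly at $x = \lambda$, where the censored CDF may jump (all censored mass piles up at $\lambda$) but which is irrelevant since the quantile is already determined below $\lambda$.
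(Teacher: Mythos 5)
Your proof is correct and takes essentially the same approach as the paper's: both arguments rest on the observation behind \Cref{key-fact} (censoring at $\lambda$ leaves samples with $\doff_i < \lambda$ untouched, so $\offeCDF$ and $\offeCDFtrue$ coincide on $[0,\lambda)$) combined with the hypothesis $\Gminushat(\lambda) \geq \rho$ forcing both empirical quantiles strictly below $\lambda$. The only difference is bookkeeping in the final step: the paper verifies pointwise that $\offeCDF(x) \geq \rho$ for all $x \geq \qopt_{\offeCDFtrue}$ (using the domination $\lambdasamples_i \leq \doff_i$) and $\offeCDF(x) < \rho$ for all $x < \qopt_{\offeCDFtrue}$, whereas you deduce equality of the two infima from equality of the level sets $\{x < \lambda : \offeCDF(x) \geq \rho\}$ and $\{x < \lambda : \offeCDFtrue(x) \geq \rho\}$ --- both closings are sound, and yours even avoids needing the domination argument.
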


Having established these useful facts, we may begin with the proof of our upper bound. We prove each of the four regret bounds separately, beginning with the two ``extreme'' cases of $\Gminus(\lambda)\geq \rho +2\width$ and $\Gminus(\lambda) < \rho-2\width$. Moreover, in the remainder of the proof we condition on the good event $\Event$ defined in \Cref{lem:g_minus_concentration}.

\smallskip

\paragraph{Case I: $\Gminus(\lambda) \geq \rho + 2 \width$.}
Under event $\Event$, $\Gminushat(\lambda) {>} \rho + \width$, which implies $\qalg=\qopt_{\offeCDF}$ by construction.  Since $\Gminushat(\lambda) \geq \rho$, by \cref{prop:saa_no_bias},
    $\qopt_{\offeCDF} = \qopt_{\offeCDFtrue} < \lambda$. By \Cref{prop:worst-case-for-identifiable}, then,
    \[\Regret(\qalg) \leq C_G(\qopt_{\offeCDFtrue})-C_G(\qopt_G).\]

Hence, bounding the regret of $\qalg$ in this region reduces to the question of bounding the regret of the SAA of $\qopt_G$.

The following lemma, {whose proof is} adapted from \citet{chen2024survey} (see Appendix \ref{apx:not_well_separated_concentration_high_probability}), allows us to bound this SAA error.

\begin{lemma}
\label{lem:not_well_separated_concentration_high_probability}
Suppose $\Gminus(\lambda)\geq \rho$, and $\Gminushat(\lambda) \geq \rho$. With probability at least $1 - \delta$,
\[
C_G(\qopt_{\offeCDFtrue}) - C_G(\qopt_G) \leq \lambda(b+h)\sqrt{\frac{\log(2/\delta)}{2N}}.
\]
\end{lemma}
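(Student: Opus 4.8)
The lemma to prove is:

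\begin{lemma}
Suppose $\Gminus(\lambda)\geq \rho$, and $\Gminushat(\lambda) \geq \rho$. With probability at least $1 - \delta$,
\[
C_G(\qopt_{\offeCDFtrue}) - C_G(\qopt_G) \leq \lambda(b+h)\sqrt{\frac{\log(2/\delta)}{2N}}.
\]
\end{lemma}

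Here $\qopt_{\offeCDFtrue}$ is the SAA quantile using the true (uncensored) demand samples at the boundary, $\qopt_G$ is the true critical quantile, and under $\Gminus(\lambda)\geq\rho$ we know $\qopt_G < \lambda$ by Lemma~\ref{lem:identifiable_same_opt_quantile}.

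Let me sketch how I'd prove this.

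**My proof plan.**

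The plan is to bound the suboptimality gap of the empirical quantile by relating it to the uniform deviation between the empirical CDF $\offeCDFtrue$ and the true CDF $G$, and then to apply the DKW inequality. First I would write out the cost gap using the closed-form difference formula (Proposition~\ref{lem:cost_difference_robustness}) with $q_1 = \qopt_{\offeCDFtrue}$ and $q_2 = \qopt_G$. The key structural fact I would exploit is that the derivative of $C_G$ is $C_G'(q) = (b+h)G(q) - b$, so that $C_G'(\qopt_G)=0$ at the true critical quantile (up to the usual quantile technicalities), meaning the cost is flat to first order at the optimum. This is what yields the $\sqrt{\cdot}$ rate rather than a linear dependence on the quantile estimation error.

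The core chain of inequalities I would set up is the following. Since $\qopt_{\offeCDFtrue}$ satisfies $\offeCDFtrue(\qopt_{\offeCDFtrue}) \geq \rho$ and, for any $x < \qopt_{\offeCDFtrue}$, $\offeCDFtrue(x) < \rho$, I can sandwich $G(\qopt_{\offeCDFtrue})$ between $\rho \pm \|\offeCDFtrue - G\|_\infty$ using the uniform bound $\sup_x |\offeCDFtrue(x) - G(x)| \leq \epsilon_N$. Concretely, $G(\qopt_{\offeCDFtrue}) \geq \offeCDFtrue(\qopt_{\offeCDFtrue}) - \epsilon_N \geq \rho - \epsilon_N$ and a symmetric argument bounds it from above. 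The cost gap can then be written as $C_G(\qopt_{\offeCDFtrue}) - C_G(\qopt_G) = \int_{\qopt_G}^{\qopt_{\offeCDFtrue}} C_G'(t)\,dt = (b+h)\int_{\qopt_G}^{\qopt_{\offeCDFtrue}}(G(t) - \rho)\,dt$ (handling both orderings of the two quantiles via the positive part). Since $|G(t) - \rho| \leq \epsilon_N$ for every $t$ strictly between the two quantiles (because $G$ is monotone and both $G(\qopt_G), G(\qopt_{\offeCDFtrue})$ are within $\epsilon_N$ of $\rho$), the integrand is bounded by $(b+h)\epsilon_N$, and the length of the integration interval is at most $\lambda$ since both quantiles lie in $[0,\lambda)$ (using $\qopt_G < \lambda$ and, via $\Gminushat(\lambda)\geq\rho$, that $\qopt_{\offeCDFtrue} = \qopt_{\offeCDF} < \lambda$ by Lemma~\ref{prop:saa_no_bias}). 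This gives $C_G(\qopt_{\offeCDFtrue}) - C_G(\qopt_G) \leq \lambda(b+h)\epsilon_N$.

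The final ingredient is to control $\epsilon_N = \|\offeCDFtrue - G\|_\infty$ probabilistically. I would invoke the Dvoretzky–Kiefer–Wolfowitz inequality (with the Massart constant): for $N$ i.i.d. samples from $G$, $\Pr(\sup_x|\offeCDFtrue(x) - G(x)| > \epsilon) \leq 2e^{-2N\epsilon^2}$. Setting the right-hand side equal to $\delta$ gives $\epsilon_N = \sqrt{\log(2/\delta)/(2N)}$, so with probability at least $1-\delta$ the uniform deviation is at most this $\epsilon_N$, and substituting yields exactly $\lambda(b+h)\sqrt{\log(2/\delta)/(2N)}$.

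**Main obstacle.** The delicate step is the argument that $|G(t)-\rho| \le \epsilon_N$ holds uniformly over the whole integration interval, not just at the endpoints — this requires carefully combining monotonicity of $G$ with the two-sided sandwich on $G$ at both quantiles, and cleanly handling the case $\qopt_{\offeCDFtrue} < \qopt_G$ versus $\qopt_{\offeCDFtrue} > \qopt_G$ (the integrand changes sign, so one should take absolute values / split into positive parts). A secondary subtlety is the boundary point behavior of the empirical quantile (whether the infimum is attained and the strict-versus-weak inequalities at the jump), but this is handled by standard right-continuity arguments for empirical CDFs and does not affect the bound.
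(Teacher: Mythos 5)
Your proposal is correct and follows essentially the same route as the paper: both proofs reduce the cost gap to $(b+h)\times(\text{quantile gap})\times(\text{uniform CDF deviation})$, bound the quantile gap by $\lambda$ using $\qopt_G < \lambda$ and $\qopt_{\offeCDFtrue} < \lambda$ (the latter via $\Gminushat(\lambda)\geq\rho$), and invoke DKW--Massart; your integral representation $C_G(q_1)-C_G(q_2) = (b+h)\int_{q_2}^{q_1}\left(G(t)-\rho\right)dt$ is just a cleaner packaging of the paper's algebraic manipulation of the closed-form cost difference, with the paper's two cases corresponding to the sign of your integrand. One small correction to your justification of $\abs{G(t)-\rho}\le\epsilon_N$ on the interior of the interval: the endpoint sandwich plus monotonicity is not quite valid, since $G$ may jump at $\qopt_G$ or at $\qopt_{\offeCDFtrue}$, in which case $G(\qopt_G)$ or $G(\qopt_{\offeCDFtrue})$ can exceed $\rho+\epsilon_N$; instead apply the defining quantile inequalities pointwise at each interior $t$ --- when $\qopt_G < t < \qopt_{\offeCDFtrue}$ one has $\offeCDFtrue(t)<\rho\le G(t)$, and when $\qopt_{\offeCDFtrue}\le t<\qopt_G$ one has $G(t)<\rho\le\offeCDFtrue(t)$ --- and conclude by DKW, which is exactly the step the paper performs when it bounds $\rho-G(\qopt_{\offeCDFtrue})$ and $G(q)-\rho$ for $q<\qopt_{\offeCDFtrue}$.
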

Taking a union bound over the event $\Event$ and this latter good event, we obtain the first case in the theorem statement.

\smallskip

\paragraph{Case II: $\Gminus(\lambda) < \rho - 2 \width$.} 
By \Cref{thm:minimax-risk-identifiable}, $\qrisk = \qcrit_G$ in this case. Moreover, under event $\Event$, \mbox{$\Gminushat(\lambda) < \rho - \width$}, and our algorithm outputs $\qalg = \qcrithat$. Then, by \Cref{prop:regret-decomp}:
\begin{align}\label{eq:case2a}
    \Regret(\qalg) 
    &\leq \risk + \sup_{F \in \ambig{\lambda}} C_F(\qcrithat) - C_F(\qcrit_G) \leq \risk + \max\{b,h\}|\qcrithat - \qcrit_G|,
\end{align}
where the second inequality follows from the fact that $C_F$ is $\max\{b,h\}$-Lipschitz (\Cref{lem:newsvendor_lipschitz}).
It suffices then to bound the error in our algorithm's estimate of $\qcrit_G$. \Cref{lem:qcrit-err}, whose proof can be found in Appendix \ref{apx:qcrit-err}, leverages the fact that $\abs{\Gminus(\lambda)-\Gminushat(\lambda)} < \width$ under event $\Event$ to bound this error.
\begin{lemma}\label{lem:qcrit-err}
Under event $\Event$, $\abs{\qcrit_G-\qcrithat} \leq \frac{M-\lambda}{1-\rho}\width$.
\end{lemma}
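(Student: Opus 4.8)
The plan is to observe that both $\qcrit_G$ and $\qcrithat$ are the \emph{same} rational function evaluated at two nearby points. Concretely, writing $\func(x) = \frac{bM+h\lambda-(b+h)xM}{(b+h)(1-x)}$, we have $\qcrit_G = \func(\Gminus(\lambda))$ and $\qcrithat = \func(\Gminushat(\lambda))$, so the lemma reduces to bounding $\abs{\func(\Gminus(\lambda)) - \func(\Gminushat(\lambda))}$ in terms of the gap $\abs{\Gminus(\lambda) - \Gminushat(\lambda)}$, which is at most $\width$ on the event $\Event$. Since we operate in the Case~II regime $\Gminus(\lambda) < \rho$, and since $\Event$ forces $\Gminushat(\lambda) < \Gminus(\lambda) + \width < \rho$, both arguments of $\func$ lie strictly below $\rho < 1$; this keeps $\func$ well-defined (the denominators never vanish) and will be what controls the denominator in the bound below.

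The crux is a single algebraic simplification. Writing $x = \Gminus(\lambda)$ and $y = \Gminushat(\lambda)$ and placing $\func(x)-\func(y)$ over the common denominator $(b+h)(1-x)(1-y)$, the numerator becomes $(bM+h\lambda-(b+h)Mx)(1-y) - (bM+h\lambda-(b+h)My)(1-x)$. The point I expect to require the most care to verify cleanly is that, upon expansion, the quadratic cross terms $(b+h)Mxy$ cancel and the terms linear in $M$ collapse, leaving the simple factorization: the numerator equals $(x-y)\bigl(bM+h\lambda-(b+h)M\bigr) = (x-y)\,h(\lambda-M)$. Hence
\[
\func(x)-\func(y) = \frac{(x-y)\,h(\lambda-M)}{(b+h)(1-x)(1-y)}.
\]

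Given this identity the bound is immediate. Taking absolute values, using $\abs{\lambda-M} = M-\lambda$ (since $\lambda < M$) and $\abs{x-y} = \abs{\Gminus(\lambda)-\Gminushat(\lambda)} < \width$ on $\Event$, together with $1-x > 1-\rho$ and $1-y > 1-\rho$ (from $x,y < \rho$), I would conclude
\[
\abs{\qcrit_G - \qcrithat} < \frac{h(M-\lambda)}{(b+h)(1-\rho)^2}\,\width.
\]
Finally, substituting the critical-ratio identity $1-\rho = \tfrac{h}{b+h}$, the prefactor simplifies as $\frac{h}{(b+h)(1-\rho)^2} = \frac{1-\rho}{(1-\rho)^2} = \frac{1}{1-\rho}$, yielding exactly $\abs{\qcrit_G - \qcrithat} \le \frac{M-\lambda}{1-\rho}\width$, as claimed. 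The only genuinely delicate point is the numerator cancellation; everything afterward is a routine application of the concentration guarantee encoded in $\Event$ and the definition of $\rho$.
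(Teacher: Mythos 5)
Your proof is correct, and it reaches the paper's bound by a mildly different route. Both you and the paper start from the identical reduction: $\qcrit_G = \func(\Gminus(\lambda))$ and $\qcrithat = \func(\Gminushat(\lambda))$ for $\func(x) = \frac{bM+h\lambda-(b+h)Mx}{(b+h)(1-x)}$, so everything hinges on bounding the oscillation of $\func$ between two points within $\width$ of each other under $\Event$. The paper does this with calculus: it bounds $\abs{d\func/dx} \leq \frac{h(M-\lambda)}{(b+h)(1-\rho+\width)^2}$ on $[0,\rho-\width]$, concludes $\func$ is Lipschitz there, and relaxes $(1-\rho)^2 \leq (1-\rho+\width)^2$ at the end. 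You instead derive the exact identity
\[
\func(x)-\func(y)=\frac{(x-y)\,h(\lambda-M)}{(b+h)(1-x)(1-y)},
\]
and your flagged cancellation does go through: the numerator collapses to $(x-y)\left(bM+h\lambda-(b+h)M\right)=(x-y)\,h(\lambda-M)$. You then bound the denominator using $x,y<\rho$ and the identity $1-\rho = h/(b+h)$. Your route is more elementary (no derivative or mean value theorem), needs only $x,y\leq\rho$ rather than $x,y\leq\rho-\width$, and avoids the paper's final relaxation step; both yield the same constant $\frac{M-\lambda}{1-\rho}$. One small imprecision: you justify $\Gminushat(\lambda)<\rho$ via the chain $\Gminushat(\lambda)<\Gminus(\lambda)+\width<\rho$, but the second inequality requires $\Gminus(\lambda)<\rho-\width$, not merely $\Gminus(\lambda)<\rho$ as you state. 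This is harmless in context --- the lemma is invoked in Case~II of \Cref{thm:minmax_regret}, where $\Gminus(\lambda)<\rho-2\width$, and in any event \Cref{alg:newsvendor} only outputs $\qcrithat$ when $\Gminushat(\lambda)<\rho-\width$, which gives $\Gminushat(\lambda)<\rho$ directly --- but the hypothesis should be stated as it is actually used.
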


Plugging this back into \eqref{eq:case2a}, we obtain:
\begin{align}\label{eq:final-case-2a}
\Regret(\qalg) &\leq \risk + \frac{\max\{b,h\}(M-\lambda)}{1-\rho}\width,
\end{align}
completing the proof of this case in the theorem statement.

\smallskip

We now turn our attention to the remaining case, where $\Gminus(\lambda)\in[\rho-2\width,\rho+2\width)$, further partitioning the analysis based on whether $\Gminus(\lambda) \geq \rho$.

\smallskip

\paragraph{Case III: $\Gminus(\lambda) \in [\rho, \rho + 2 \width)$.} By \Cref{thm:minimax-risk-identifiable}, $\qrisk = \qopt_G$, and $\risk= 0$.  Moreover, given $\Event$, \mbox{$\Gminushat(\lambda) \in {(}\rho - \width, \rho + 3 \width)$}. 

Suppose first that $\Gminushat(\lambda) \in [\rho+\width,\rho+3\width)$. 
Then, $\qalg = \qopt_{\offeCDF} < \lambda$. Identical arguments as those used for Case I yield:
\begin{align}\label{eq:case2-saa-bound}
\Regret(\qalg)& \leq C_G(\qopt_{\offeCDFtrue}) - C_G(\qopt_G) \leq {{\lambda}(b+h)\width.}
\end{align}

Suppose now that $\Gminushat(\lambda) \in {(}\rho - \width, \rho + \width)$. 
In this case, $\qalg = \lambda$. Using the fact that $\qrisk = \qopt_G < \lambda$ and $\risk = 0$, by \Cref{prop:regret-decomp}, we have:
\begin{align}
 \Regret(\qalg) &\leq \risk + \sup_{F \in \ambig{\lambda}} C_F(\qalg) - C_F(\qrisk)= \sup_{F \in \ambig{\lambda}} C_F(\lambda) - C_F(\qopt_G). \notag
 \end{align}
\Cref{prop:worst-case-for-identifiable} and \Cref{lem:identifiable_same_opt_quantile} together imply that:
\begin{align*}
&\sup_{F \in \ambig{\lambda}} C_F(\lambda) - C_F(\qopt_G)\\ &= b(\qopt_G-\lambda) + (b+h)\E_G\bigg[(\lambda-D)\Ind{D < \lambda} - (\qopt_G-D)\Ind{D \leq \qopt_G}\bigg]\\
&=b(\qopt_G-\lambda) + (b+h)\E_G\bigg[(\lambda-\qopt_G)\Ind{D < \lambda} + (\qopt_G-D)\Ind{D < \lambda} - (\qopt_G-D)\Ind{D \leq \qopt_G}\bigg]\\
&=b(\qopt_G-\lambda) + (b+h)\E_G\bigg[(\lambda-\qopt_G)\Ind{D < \lambda} + (\qopt_G-D)\Ind{\qopt_G <D < \lambda}\bigg]\\
&\leq (\lambda-\qopt_G)\bigg((b+h)\Gminus(\lambda)-b\bigg)\\
&= (b+h)(\lambda-\qopt_G)(\Gminus(\lambda)-\rho).
\end{align*}

Using the fact that $\qopt_G \in [0,\lambda)$ for $\Gminus(\lambda) \geq \rho$, and $\Gminus(\lambda) \leq \rho + 2\width$ by assumption, we obtain the final bound of: 
\[\Regret(\qalg) \leq 2\lambda(b+h)\width.\]

We proceed to our final case.
\smallskip

\paragraph{Case IV: $\Gminus(\lambda) \in [\rho - 2 \width, \rho)$.} By \Cref{thm:minimax-risk-identifiable}, $\qrisk = \qcrit_G$. Moreover, under $\Event$, \mbox{$\Gminushat(\lambda) \in {(}\rho - 3 \width, \rho + \width)$}.  

Suppose first that $\Gminushat(\lambda)\in{(}\rho-3\width,\rho-\width)$. 
In this case, the algorithm outputs $\qalg=\qcrithat$. By the same arguments as those used in Case II, we obtain:
\begin{align*}
    \Regret(\qalg) \leq \risk +  \frac{\max\{b,h\}(M-\lambda)}{(1-\rho)}\width.
\end{align*}

Suppose now that $\Gminushat(\lambda) \in [\rho-\width,\rho+\width)$. In this case, $\qalg = \lambda$. Again, leveraging \Cref{prop:regret-decomp}, we have:
\begin{align}\label{eq:bound-lambda-qcrit}
\Regret(\qalg) &\leq \risk + \sup_{F\in\ambig{\lambda}}C_F(\lambda)-C_F(\qcrit_G) \leq \risk + \max\{b,h\}|\lambda-\qcrit_G|,
\end{align}
where the final inequality follows from Lipschitzness of the newsvendor cost function (\cref{lem:newsvendor_lipschitz}). Plugging in the definition of $\qcrit_G$, algebra gives us that:
\begin{align*}
|\lambda-\qcrit_G| 
&=\frac{(M-\lambda)(b-(b+h)\Gminus(\lambda))}{(b+h)(1-\Gminus(\lambda))}\\
&\leq \frac{(M-\lambda)(b-(b+h)(\rho-2\width))}{(b+h)(1-\rho)}\\
&= \frac{2(M-\lambda)(b+h)\width}{h}\\
&=\frac{2(M-\lambda)}{1-\rho}\width,
\end{align*}
where the first inequality uses the fact that $\Gminus(\lambda)\in[\rho-2\width,\rho)$, and the subsequent equalities plug in the definition of $\rho = b/(b+h)$. Plugging this bound back into \eqref{eq:bound-lambda-qcrit}, we obtain:
\begin{align*}
\Regret(\qalg)\leq \risk + \max\{b,h\}\frac{2(M-\lambda)}{1-\rho}\width,
\end{align*}
concluding the proof of the theorem.
\hfill\Halmos
\end{proof}

\medskip

\subsubsection{Proof of \Cref{prop:regret-decomp}}\label{apx:regret-decomp}

\begin{proof}{\it Proof.}
Adding and subtracting $C_F(\qrisk)$ from the definition of regret, we have:
\begin{align*}
    \Regret(\qalg) 
    & = \sup_{F \in \ambig{\lambda}} C_F(\qalg) - C_F(\qrisk) + C_F(\qrisk) - C_F(\qopt_F) \notag \\
    & \leq \sup_{F \in \ambig{\lambda}} C_F(\qalg) - C_F(\qrisk) + \sup_{F \in \ambig{\lambda}} C_F(\qrisk) - C_F(\qopt_F) \notag \\
    &=\sup_{F \in \ambig{\lambda}} C_F(\qalg) - C_F(\qrisk) + \risk,
\end{align*}
where the final equality uses the definition of $\risk = \sup_{F\in\ambig{\lambda}} C_F(\qrisk)-C_F(\qopt_F)$.\hfill\Halmos
\end{proof}

\medskip

\subsubsection{Proof of \Cref{lem:newsvendor_lipschitz}}\label{apx:newsvendor_lipschitz}

\begin{proof}{\it Proof.}
By definition:
\begin{align*}
\abs{C_F(q) - C_F(q')} & = \bigg{|}\ \E_F\left[b\left((D - q)^+ - (D - q')^+\right) + h\left((q-D)^+ - (q' - D)^+\right)\right]\ \bigg{|}.
\end{align*}
Without loss of generality, suppose $q < q'$. Consider first the term $b((D - q)^+ - (D - q')^+)$.  We have:
\begin{align*}
(D - q)^+ - (D - q')^+ &= \begin{cases}
q'-q \quad &\text{if } D \geq q' \\
0 \quad &\text{if } D \leq q \\
D-q \quad &\text{if } D \in (q,q'). 
\end{cases}
\end{align*}
Similarly:
\begin{align*}
(q-D)^+ - (q'-D)^+ &= \begin{cases}
0 \quad &\text{if } D \geq q' \\
q-q' \quad &\text{if } D \leq q \\
D-q' \quad &\text{if } D \in (q,q'). 
\end{cases}
\end{align*}
Putting these two together, we have:
\begin{align*}
&b\left((D - q)^+ - (D - q')^+\right) + h\left((q-D)^+ - (q' - D)^+\right) = \begin{cases}
b(q'-q) \quad &\text{if } D \geq q' \\
h(q-q') \quad &\text{if } D \leq q \\
b(D-q)+h(D-q') \quad &\text{if } D \in (q,q')
\end{cases}\\
\implies &\bigg{|}\ \E_F\left[b\left((D - q)^+ - (D - q')^+\right) + h\left((q-D)^+ - (q' - D)^+\right)\right]\ \bigg{|} \leq \max\{b,h\}|q-q'|.
\end{align*}
\hfill\Halmos
\end{proof}

\medskip

\subsubsection{Proof of \Cref{lem:g_minus_concentration}}\label{apx:g_minus_concentration}
\begin{proof}{\it Proof.}
Since $\Ind{\lambdasamples_i < \lambda} = \Ind{\doff_i < \lambda}$ for all $i$, we have:
\begin{align*}
\Gminushat(\lambda) = \frac1N\sum_{i\in[N]}\Ind{\doff_i < \lambda} 
\implies \E_G[\Gminushat(\lambda)] = \Gminus(\lambda).
\end{align*}
Hence, by Hoeffding's inequality \citep{blm}:
\begin{align*}
\Pr_G(|\Gminushat(\lambda) - \Gminus(\lambda)| \geq \width) \leq 2 \exp(-2N\width^2).
\end{align*}
Letting $\width = \sqrt{\frac{\log(2 / \delta)}{2 N}}$, we obtain the result.\hfill\Halmos
\end{proof}

\medskip

\subsubsection{Proof of \Cref{prop:saa_no_bias}}\label{apx:saa_no_bias}
\begin{proof}{\it Proof.}
We first show that $\Gminushat(\lambda) \geq \rho$ implies $\qopt_{\offeCDFtrue} < \lambda$.
Again, since $\Ind{\lambdasamples_i < \lambda} = \Ind{\doff_i < \lambda}$,
\begin{align*}
\frac1N\sum_{i=1}^N\Ind{\doff_i < \lambda} = \frac1N\sum_{i=1}^N\Ind{\lambdasamples_i < \lambda} = \Gminushat(\lambda) \geq \rho,
\end{align*}
by assumption.
Then, it must be that $\offeCDFtrue(x) \geq \rho$ for some $x < \lambda$, which implies that $\qopt_{\offeCDFtrue} < \lambda$.

We use this to show that $\qopt_{\offeCDF} = \qopt_{\offeCDFtrue}$. By definition, $\qopt_{\offeCDFtrue}$ satisfies:
\begin{align*}
\begin{cases}
\frac1N\sum_{i\in[N]}\Ind{\doff_i\leq x} < \rho \quad \forall \ x < \qopt_{\offeCDFtrue} \\
\frac1N\sum_{i\in[N]}\Ind{\doff_i\leq x} \geq \rho \quad \forall \ x \geq \qopt_{\offeCDFtrue}. 
\end{cases}
\end{align*}
Fix $x \geq \qopt_{\offeCDFtrue}$. Since $\lambdasamples_i \leq \doff_i$ for all $i$,
\begin{align*}
\frac1N\sum_{i\in[N]}\Ind{\lambdasamples_i\leq x} \geq \frac1N\sum_{i\in[N]}\Ind{\doff_i\leq x} \geq \frac1N\sum_{i\in[N]}\Ind{\doff_i\leq \qopt_{\offeCDFtrue}} \geq {\rho},
\end{align*}
by definition of $\qopt_{\offeCDFtrue}$. 

\smallskip

Now, fix $x < \qopt_{\offeCDFtrue}$. Since $x < \qopt_{\offeCDFtrue} < \lambda$, if $\doff_i \geq \lambda$, $\lambdasamples_i = \lambda$, which then implies that $\Ind{\lambdasamples_i \leq x} = 0$. We moreover have in this case that $\Ind{\doff_i \leq x} = 0$. Putting these two together, we have that $\Ind{\lambdasamples_i \leq x} = \Ind{\doff_i \leq x}$ if $\doff_i \geq \lambda$.

If $\doff_i < \lambda$, on the other hand, $\lambdasamples_i = \doff_i$ by definition. Therefore, $\Ind{\lambdasamples_i \leq x} = \Ind{\doff_i \leq x}$ whenever $\doff_i < \lambda$.

Since $\Ind{\lambdasamples_i \leq x} = \Ind{\doff_i \leq x}$ whenever $x < \qopt_{\offeCDFtrue}$, we obtain: 
\[\frac1N\sum_{i\in[N]}\Ind{\lambdasamples_i\leq x} = \frac1N\sum_{i\in[N]}\Ind{\doff_i\leq x} < \rho \quad \forall \ x < \qopt_{\offeCDFtrue}.\]

Putting these two facts together, we have:
\begin{align*}
\begin{cases}
\frac1N\sum_{i\in[N]}\Ind{\lambdasamples_i\leq x} < \rho \quad \forall \ x < \qopt_{\offeCDFtrue} \\
\frac1N\sum_{i\in[N]}\Ind{\lambdasamples_i\leq x} \geq \rho \quad \forall \ x \geq \qopt_{\offeCDFtrue},
\end{cases}
\end{align*}
which implies that $\qopt_{\offeCDF}=\qopt_{\offeCDFtrue}$.\hfill\Halmos
\end{proof}

\medskip

\subsubsection{Proof of \Cref{lem:not_well_separated_concentration_high_probability}}\label{apx:not_well_separated_concentration_high_probability}
\begin{proof}{\it Proof.}
The proof is modified from \citet{chen2024survey}. 

By the Dvoretzky–Kiefer–Wolfowitz–Massart inequality (DKW) inequality \citep{massart1990tight}:
\[
\Pr\left[ \sup_{a \geq 0} |\offeCDFtrue(a) - G(a)| \leq \sqrt{\frac{\log(2 / \delta)}{2N}} \right] \geq 1 - 2 \exp\left(-2N\left(\sqrt{\frac{\log(2/\delta)}{2N}} \right)^2 \right) = 1 - \delta.
\]
Therefore with probability at least $1 - \delta$:
\begin{equation}
\label{eq:concentration_dkw_proof_two}
\sup_{a \geq 0} |\offeCDFtrue(a) - G(a)| \leq \sqrt{\frac{\log(2 / \delta)}{2N}}.
\end{equation}
We condition our analysis on the above event, partitioning the proof based on (i) $\qopt_{\offeCDFtrue} \leq \qopt_G$, and (ii) $\qopt_{\offeCDFtrue} > \qopt_G$.

\paragraph{Case I: $\qopt_{\offeCDFtrue} \leq \qopt_G$.} By \Cref{lem:cost_difference_robustness}:
\begin{align}\label{eq:first-case-a}
&C_G(\qopt_{\offeCDFtrue}) - C_G(\qopt_G) \notag \\ &= b(\qopt_G-\qopt_{\offeCDFtrue}) + (b+h)\E_G\left[(\qopt_{\offeCDFtrue}-D)\Ind{D\leq\qopt_{\offeCDFtrue}}-(\qopt_G-D)\Ind{D\leq\qopt_G}\right] \notag \\ 
&= b(\qopt_G-\qopt_{\offeCDFtrue}) + (b+h)\E_G\left[(\qopt_{\offeCDFtrue}-\qopt_G)\Ind{D\leq\qopt_{\offeCDFtrue}}-(\qopt_G-D)\Ind{\qopt_{\offeCDFtrue} < D\leq\qopt_G}\right] \notag \\
&\leq (b+h)(\qopt_G-\qopt_{\offeCDFtrue})(\rho-G(\qopt_{\offeCDFtrue})),
\end{align}
where the upper bound follows from the fact that $(\qopt_G-D)\Ind{\qopt_{\offeCDFtrue}< D\leq \qopt_G} \geq 0$, and $b = (b+h)\rho$ by definition. Moreover:
\begin{equation}
\label{eq:first_case}
\rho - G(\qopt_{\offeCDFtrue}) =  \rho - \offeCDFtrue(\qopt_{\offeCDFtrue}) + \offeCDFtrue(\qopt_{\offeCDFtrue})  - G(\qopt_{\offeCDFtrue}) \leq \sup_{a \geq 0} |\offeCDFtrue(a) - G(a)|,
\end{equation}
where the inequality follows from $\offeCDFtrue(\qopt_{\offeCDFtrue}) \geq \rho$, by definition of $\qopt_{\offeCDFtrue}$. 

Using this fact in \eqref{eq:first-case-a}, we have:
\begin{align*}
    C_G(\qopt_{\offeCDFtrue}) - C_G(\qopt_G) & \leq (b+h)(\qopt_G - \qopt_{\offeCDFtrue}) (\rho - G(\qopt_{\offeCDFtrue})) \\
    & \leq (b+h)\cdot \abs{\qopt_G - \qopt_{\offeCDFtrue}}\cdot \sup_{a \geq 0} |\offeCDFtrue(a) - G(a)|.
\end{align*}

\paragraph{Case II: $\qopt_{\offeCDFtrue} > \qopt_G$.} Again, by \Cref{lem:cost_difference_robustness}:
\begin{align}\label{eq:use-for-rcn-plus}
&C_G(\qopt_{\offeCDFtrue}) - C_G(\qopt_G) \notag \\ &= b(\qopt_G-\qopt_{\offeCDFtrue}) + (b+h)\E_G\left[(\qopt_{\offeCDFtrue}-D)\Ind{D\leq\qopt_{\offeCDFtrue}}-(\qopt_G-D)\Ind{D\leq\qopt_G}\right] \notag \\ 
&= -\rho(b+h)(\qopt_{\offeCDFtrue}-\qopt_G)+(b+h)\E_G\bigg[(\qopt_{\offeCDFtrue}-\qopt_G)\Ind{D\leq \qopt_G}+(\qopt_{\offeCDFtrue}-D)\Ind{\qopt_G< D < \qopt_{\offeCDFtrue}}\bigg]\notag \\
&\leq (b+h)(\qopt_{\offeCDFtrue}-\qopt_G)\left(-\rho+\Gminus\left(\qopt_{\offeCDFtrue}\right)\right),
\end{align}
where the inequality follows from $\qopt_{\offeCDFtrue}-D < \qopt_{\offeCDFtrue}-\qopt_G$ for all $D > \qopt_G$.

For all $q < \qopt_{\offeCDFtrue}$:
\[ G(q) - \rho =  G(q) - \offeCDFtrue(q) + \offeCDFtrue(q)  - \rho \leq \sup_{a \geq 0} |\offeCDFtrue(a) - G(a)|,
\]
where the inequality follows from the fact that $\offeCDFtrue(q) < \rho$ for all $q < \qopt_{\offeCDFtrue}$, by definition of $\qopt_{\offeCDFtrue}$. We similarly use this to obtain the final upper bound:
\[C_G(\qopt_{\offeCDFtrue}) - C_G(\qopt_G) \leq (b+h)\cdot\abs{\qopt_{\offeCDFtrue}-\qopt_G}\cdot\sup_{a \geq 0} |\offeCDFtrue(a) - G(a)|.\]
Applying \eqref{eq:concentration_dkw_proof_two} to both cases, and using the fact that both $\qopt_G \leq \lambda$ and $\qopt_{\offeCDFtrue} \leq \lambda$ under the assumption that $\Gminus(\lambda) \geq \rho$ and $\Gminushat(\lambda) \geq \rho$, we obtain the claim.\hfill\Halmos 
\end{proof}

\medskip

\subsubsection{Proof of \Cref{lem:qcrit-err}}\label{apx:qcrit-err}

\begin{proof}{\it Proof.}
Consider the function $\func(x) = \frac{bM + h \lambda - (b+h) M x}{(b+h)(1-x)}.$ Observe that $\qcrit_G = \func(\Gminus(\lambda))$, and \mbox{$\qcrithat = \func(\Gminushat(\lambda))$} by definition. For any $x\in[0,\rho-\width]$:
\begin{align*}
\frac{d\func}{dx} = \frac{h(\lambda - M)}{(b+h)(1 - x)^2} \implies \bigg{|}\frac{d\func}{dx}\bigg{|}\leq \frac{h(M-\lambda)}{(b+h)(1-\rho+\width)^2}.
\end{align*}
Hence $\func$ is $\frac{h(M-\lambda)}{(b+h)(1-\rho+\width)^2}$-Lipschitz, which implies
\begin{align*}
|\qcrit_G-\qcrithat| \leq \frac{h(M-\lambda)}{(b+h)(1-\rho+\width)^2}|\Gminus(\lambda)-\Gminushat(\lambda)| \leq \frac{h(M-\lambda)}{(b+h)(1-\rho+\width)^2}\width = \frac{(1-\rho)(M-\lambda)}{(1-\rho+\width)^2}\width
\end{align*}
under event $\Event$. Using the fact that since $\rho < 1$ and $\width > 0$, $(1 - \rho)^2 \leq (1 - \rho + \width)^2$, which completes the proof of the claim. \hfill\Halmos
\end{proof}
\subsection{Proof of \Cref{cor:exp-regret}}\label{apx:exp-regret}

\begin{proof}{\it Proof.}
For ease of notation, we let $R^{\text{id}} = 2\lambda(b+h)\width$ be the upper bound on the regret of \ALG derived in \Cref{thm:minmax_regret}; we also let {$R_{\max}^{\id} = \max_{q \in [0,M]}\sup_{F \in \ambig{\lambda}} \Regret(q)$} when $\Gminus(\lambda) \geq \rho$. Similarly, let $R^{\text{ui}} = \Delta+ 2\max\left\{\frac{b}{h},1\right\}(M-\lambda)(b+h)\width$ and $R^{\text{ui}}_{\max} = \max_{q\in[0,M]}\sup_{F \in \ambig{\lambda}}\Regret(q)$ when $\Gminus(\lambda) < \rho$. Applying the bounds from \Cref{thm:minmax_regret}, we have:
\begin{align}\label{eq:exp-regret}
\E_G[\Regret(\qalg)] \leq \begin{cases}
R^{\text{id}} + R_{\max}^{\text{id}} \cdot 2\delta \quad \text{if } \Gminus(\lambda) \geq \rho \\
R^{\text{ui}} + R_{\max}^{\text{ui}} \cdot 2\delta \quad \text{if } \Gminus(\lambda) < \rho.
\end{cases}
\end{align}
We first bound $R^{\text{id}}_{\max}$. By \Cref{prop:worst-case-for-identifiable}, if $q < \lambda$, $\Regret(q) = C_G(q) - C_G(\qopt_G)$.
By \Cref{lem:newsvendor_lipschitz}:
\begin{align*}
C_G(q)-C_G(\qopt_G) \leq \max\{b,h\}|q-\qopt_G| \leq (b+h)\max\{\rho,1-\rho\}\lambda.
\end{align*}

If $q \geq \lambda$, by \Cref{prop:worst-case-for-identifiable},
\begin{align*}
\Regret(q) &= b(\qopt_G-q) + (b+h)\bigg[(q-\lambda)+\E_G\Big[(\lambda-D)\Ind{D < \lambda} - (\qopt_G - D)\Ind{D \leq \qopt_G}\Big]\bigg] \\
&= b\qopt_G +hq - (b+h)\lambda + (b+h)\E_G\Big[(\lambda-D)\Ind{D < \lambda} - (\qopt_G - D)\Ind{D \leq \qopt_G}\Big]\\
&\leq b\qopt_G+hq+(b+h)\left(-\lambda + \lambda\Gminus(\lambda)-\qopt_G\rho - \E_G\big[D\Ind{D \in (\qopt_G, \lambda)}\big]\right).
\end{align*}
Using the fact that $(b+h)\rho\qopt_G = b\qopt_G$ and $\E_G\big[D\Ind{D \in (\qopt_G, \lambda)}\big] \geq 0$, we obtain:
\begin{align*}
\Regret(q) \leq hM - (b+h)\lambda(1-\Gminus(\lambda)) \leq hM.
\end{align*}

Putting these two bounds together, we obtain:
\begin{align*}
R^{\text{id}}_{\max} \leq (b+h) \max\left\{\max\{\rho,(1-\rho)\}\lambda,(1-\rho)M\right\} = (b+h) \max\left\{\rho\lambda,(1-\rho)M\right\}.
\end{align*}

We now bound $R^{\text{ui}}_{\max}$. By \Cref{lem:sup_expression_identifiability}, for $q < \lambda$,
\begin{align*}
\Regret(q) &= b(M-q) + (b+h)\bigg[\E_G\Big[(q-D)\Ind{D \leq q} - (M-D)\mathds{1}\{D < \lambda\}\Big]\bigg] \leq bM,
\end{align*}
where the inequality follows from the fact that $ -bq + (b+h)\E_G[(q-D)\Ind{D \leq q}]$ is decreasing in $q$ for $q < \lambda < \qopt_G$.

For $q \in \left[\lambda,\qcrit_G\right]$,
\begin{align*}
\Regret(q) = (b-(b+h)\Gminus(\lambda))(M-q) \leq (b-(b+h)\Gminus(\lambda))(M-\lambda).
\end{align*}
Finally, for $q > \qcrit_G$,
\begin{align*}
\Regret(q) = h(q-\lambda) \leq h(M-\lambda).
\end{align*}
Putting these three bounds together, we obtain:
\begin{align*}
R^{\text{ui}}_{\max} &\leq (b+h) \max\left\{\rho M, (\rho-\Gminus(\lambda))(M-\lambda),(1-\rho)(M-\lambda)\right\}= (b+h)\max\left\{\rho M, (1-\rho)(M-\lambda)\right\}.
\end{align*}

We conclude by plugging these bounds back into \eqref{eq:exp-regret}. Since $\width = \sqrt{\log(2/\delta)/2N} = c'\sqrt{\log N / N}$ for some $c' > 0$ and $\delta = c/\sqrt{N}$, if $\Gminus(\lambda) \geq \rho$, we have:
\begin{align*}
\E_G[\Regret(\qalg)] &\leq 2c'(b+h)\left(\lambda + \max\left\{\rho\lambda,(1-\rho)M\right\}\right) \sqrt{\log N /N}.
\end{align*}

If $\Gminus(\lambda) < \rho$:
\begin{align*}
\E_G[\Regret(\qalg)] &\leq \Delta + 2c'(b+h)\Bigg(\max\left\{\frac{\rho}{1-\rho},1\right\}(M-\lambda) + \max\{\rho M, (1-\rho)(M-\lambda)\}\Bigg)\sqrt{\log N /N}.
\end{align*}
Renaming $c' = 2c'$, we obtain the result.
\hfill\Halmos 
\end{proof}

\subsection{Proof of \Cref{thm:lb}}
\label{app:lb_proof}

\begin{proof}{\it Proof.}
We consider an instance for which $\lambda \in (0,1)$, and present the set of ``hard'' distributions associated with each regime:
\begin{enumerate}[$(i)$]
\item strictly unidentifiable regime: 
\[G_0^\ui = \Ber(1-\rho+\delta_0^{\ui}), \quad G_1^\ui = \Ber(1-\rho + \delta_0^{\ui} + \delta_1^{\ui}),\] where $\delta_0^{\ui} = \min\left\{\frac\rho2,\frac{1-\rho}{2}\right\}$ and $\delta_1^{\ui} = \min\left\{\frac\rho4,\frac{3\rho-1}{4},\frac12\sqrt{\frac{1-\rho}{N}}\right\}$. For these distributions, \mbox{$G_0^{\ui,-}(\lambda) = \rho-\delta_0^{\ui}$}, and $G_1^{\ui,-}(\lambda) = \rho-(\delta_0^{\ui}+\delta_1^{\ui})$.
\item knife-edge regime: \[G_0^{\ke} = \Ber(1-\rho+\delta^{\ke}), \quad G_1^{\ke} = \Ber(1-\rho-\delta^{\ke}),\] where \mbox{$\delta^{\ke} = \min\left\{\frac{\rho}{2},\frac{1-\rho}{2},\frac14\sqrt{\frac{1-\rho}{N}}\right\}$}. Here, $G_0^{\ke,-}(\lambda)=\rho-\delta^{\ke}$, \mbox{$G_1^{\ke,-}(\lambda)=\rho+\delta^{\ke}$}.
\item strictly identifiable regime: $G_0^{\id}$, $G_1^{\id}$ are respectively defined by cdf's: 
\begin{align*}
G_0^{\id}(x) = \begin{cases}
0 \quad \forall \ x < 0 \\
\rho-\delta^{\id} \quad \forall \ x \in [0,H) \\
1 \quad \forall \ x \geq H,
\end{cases}
\quad \quad G_1^{\id}(x) = \begin{cases}
0 \quad \forall \ x < 0\\
\rho+\delta^{\id} \quad \forall \ x \in [0,H) \\
1 \quad \forall \ x \geq H,
\end{cases}
\end{align*}
where $\delta^{\id} = \min\left\{\frac{\rho}{2},\frac{1-\rho}{2},\frac14\sqrt{\frac{1-\rho}{N}}\right\}$, and $H = \lambda/2$. In this case, \mbox{$G_0^{\id,-}(\lambda) = G_1^{\id,-}(\lambda) = 1$}.
\end{enumerate}

Note that all instances are effectively uncensored. For $k \in \{\ui,\ke\}$, this holds because \mbox{$\lambdasamples = \lambda \implies \doff_i = 1$}. For $k = \id$, this holds because $\doff_i < \lambda$ for all $i$, which then implies that $\doff_i = \lambdasamples_i$ for all $i$. 

\smallskip 

For all $k \in \{\id,\ui,\ke\}$:
\begin{align}\label{eq:bound-two-dist}
\sup_{G \in \mathcal{G}^k}\E_{G}\bigg[\sup_{F\in\ambig{\lambda}}C_F(q^{\pi})-C_F(\qopt_F)-\risk_G\bigg] \geq \sup_{G \in \{G_0^k, G_1^k\}}\E_{G}\bigg[\sup_{F\in\ambig{\lambda}}C_F(q^{\pi})-C_F(\qopt_F)-\risk_G\bigg].
\end{align}

For clarity, in the remainder of the proof we make clear the dependence of $\risk$ and $\qrisk$ on the underlying demand distribution $G$.

Recall, by \Cref{thm:minimax-risk-identifiable}, for any distribution $G$, the {\minimaxquant} and minimax risk are respectively given by:
\begin{align}\label{eq:recall-risk}
\qrisk_G = \begin{cases}
        \qopt_G & \text{if } \Gminus(\lambda) \geq \rho \\
        \qcrit_G & \text{if } \Gminus(\lambda) < \rho
    \end{cases}
    \hspace{3cm}  \risk_G = \begin{cases}
    0 & \text{if } \Gminus(\lambda) \geq \rho \\
    \frac{h\left(b-(b+h)\Gminus(\lambda)\right)(M-\lambda)}{(b+h)(1-\Gminus(\lambda)} &\text{if } \Gminus(\lambda) < \rho,
    \end{cases}
\end{align}
with $\frac{h\left(b-(b+h)\Gminus(\lambda)\right)(M-\lambda)}{(b+h)(1-\Gminus(\lambda)} = \left(b-(b+h)\Gminus(\lambda)\right)(M-\qcrit_G) = h(\qcrit_G-\lambda)$ for $G$ such that $\Gminus(\lambda) < \rho$ (see proof of \Cref{thm:minimax-risk-identifiable}).

\Cref{lem:risks-for-all-regimes} provides the {\minimaxquant} for the three sets of distributions described above. We defer its proof to Appendix \ref{apx:risks-for-all-regimes}.
\begin{lemma}\label{lem:risks-for-all-regimes}
The following holds, in each regime:
\begin{enumerate}[$(i)$]
\item strictly unidentifiable regime:
\[\qrisk_G = \qcrit_G \quad \forall \ G \in \{G_0^{\ui}, G_1^{\ui}\}\]
\item knife-edge regime:
\[\qrisk_{G_0^{\ke}} = \qcrit_{G_0^{\ke}}, \quad \qrisk_{G_1^{\ke}} = \qopt_{G_1^{\ke}} = 0\]
\item strictly identifiable regime:
\[\qrisk_{G_0^{\id}} = \qopt_{G_0^{\id}} = H, \quad \qrisk_{G_1^{\id}} = \qopt_{G_1^{\id}} = 0.\]
\end{enumerate}
\end{lemma}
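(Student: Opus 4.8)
The plan is to reduce the entire lemma to a single invocation of \Cref{thm:minimax-risk-identifiable}, which already expresses $\qrisk_G$ as $\qopt_G$ when $\Gminus(\lambda)\geq\rho$ and as $\qcrit_G$ when $\Gminus(\lambda)<\rho$. Thus for each of the six distributions it suffices to (a) compute $\Gminus(\lambda)=\Pr_G(D<\lambda)$ directly from its two-point description, (b) compare it to $\rho$ to decide the regime, and (c) in the identifiable cases only, read off $\qopt_G=\inf\{q:G(q)\geq\rho\}$ from the cdf. The key simplification throughout is that $\lambda\in(0,1)$: for the Bernoulli instances $D\in\{0,1\}$, so $\{D<\lambda\}=\{D=0\}$, while for the identifiable instances $D\in\{0,H\}$ with $H=\lambda/2<\lambda$, so $\{D<\lambda\}$ is the whole support and $\Gminus(\lambda)=1$.

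First I would handle the unidentifiable classifications. For $G_0^\ui,G_1^\ui$ and $G_0^\ke$ the mass below $\lambda$ is $\rho-\delta_0^\ui$, $\rho-(\delta_0^\ui+\delta_1^\ui)$, and $\rho-\delta^\ke$ respectively; since $\delta_0^\ui,\delta^\ke>0$ and $\delta_0^\ui+\delta_1^\ui>0$ (as $\delta_0^\ui>0$), each satisfies $\Gminus(\lambda)<\rho$ strictly, so \Cref{thm:minimax-risk-identifiable} immediately gives $\qrisk_G=\qcrit_G$. This yields part $(i)$ in full and the first claim of part $(ii)$, with no further computation needed since the lemma only asks for $\qcrit_G$ symbolically in these cases.

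Next I would handle the identifiable cases, where an explicit value of $\qopt_G$ must be produced. For $G_1^\ke=\Ber(1-\rho-\delta^\ke)$ we have $\Pr(D=0)=\rho+\delta^\ke$, so $\Gminus(\lambda)=\rho+\delta^\ke\geq\rho$ and moreover $G_1^\ke(0)=\rho+\delta^\ke\geq\rho$, whence $\qopt_{G_1^\ke}=0$. For the two identifiable instances $G_0^\id,G_1^\id$, $\Gminus(\lambda)=1\geq\rho$; evaluating the cdf, $G_0^\id(q)=\rho-\delta^\id<\rho$ on $[0,H)$ and jumps to $1$ at $H$, so $\qopt_{G_0^\id}=H$, while $G_1^\id(0)=\rho+\delta^\id\geq\rho$, so $\qopt_{G_1^\id}=0$. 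Applying \Cref{thm:minimax-risk-identifiable} in each case gives $\qrisk_G=\qopt_G$ with the stated values, completing $(ii)$ and $(iii)$.

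The argument involves no genuine obstacle: everything reduces to evaluating two-point cdf's and comparing to $\rho$. The only points requiring care are (i) confirming that the perturbation parameters $\delta_0^\ui,\delta_1^\ui,\delta^\ke,\delta^\id$ are strictly positive under the operative parameter constraints (so the regime classifications are strict rather than borderline, and so that each specified Bernoulli parameter lies in $[0,1]$), and (ii) correctly taking the infimum defining $\qopt_G$ at the left endpoint $q=0$ when the cdf already meets or exceeds $\rho$ there, which is precisely what produces the degenerate optimal quantity $\qopt_G=0$ in three of the six cases.
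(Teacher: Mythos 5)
Your proposal is correct and follows essentially the same route as the paper's own proof: classify each constructed distribution by comparing $\Gminus(\lambda)$ (computed directly from the two-point support, using $\lambda\in(0,1)$ and $H<\lambda$) against $\rho$, invoke \Cref{thm:minimax-risk-identifiable} to identify $\qrisk_G$ as $\qcrit_G$ or $\qopt_G$ accordingly, and read off $\qopt_G$ from the cdf in the identifiable cases. Your write-up is if anything slightly more explicit than the paper's (which asserts the same facts more tersely), and your flagged caveat about strict positivity of the perturbation parameters is a condition the paper's construction also implicitly assumes.
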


\Cref{lem:regret-to-est} next formalizes the idea that minimizing the worst-case regret in excess of $\risk_G$ reduces to the problem of estimating $\qrisk_G$. We defer its proof to Appendix \ref{apx:lem:regret-to-est}.

\begin{lemma}\label{lem:regret-to-est}
For $k \in \{\id,\ui,\ke\}$, $G \in \{G_0^k, G_1^k\}$:
\[\sup_{F\in\ambig{\lambda}}C_F(q^{\pi})-C_F(\qopt_F)-\risk_G  \geq \begin{cases}
(b+h)\delta_0^k \ \abs{q^{\pi}-\qrisk_G} \quad \text{if } k = \ui \\
(b+h)\delta^k \ \abs{q^{\pi}-\qrisk_G} \quad \text{if } k \in \{\ke,\id\}.
\end{cases}\]
\end{lemma}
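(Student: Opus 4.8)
The plan is to show that, for each of the five hard distributions, the \emph{excess} worst-case regret $\Regret(q) - \risk_G$, viewed as a function of the decision $q$, is a piecewise-linear ``V-shaped'' function that vanishes exactly at $\qrisk_G$ and whose slope on either side of $\qrisk_G$ has magnitude at least the prescribed constant ($(b+h)\delta_0^k$ when $k=\ui$, and $(b+h)\delta^k$ when $k\in\{\ke,\id\}$). Once this structural fact is in hand, the claimed inequality is immediate: a convex function attaining value $0$ at $\qrisk_G$ with every one-sided slope of magnitude at least $s$ dominates $s\,\abs{q-\qrisk_G}$ at every point. Throughout I would use the identity $b-(b+h)\Gminus(\lambda) = (b+h)(\rho - \Gminus(\lambda))$, which follows from $\rho = b/(b+h)$, to convert ``censoring gaps'' $\rho - \Gminus(\lambda)$ into the $\delta$ parameters.

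First I would treat the distributions with $\Gminus(\lambda) < \rho$, namely $G\in\{G_0^{\ui},G_1^{\ui}\}$ and $G=G_0^{\ke}$, for which \Cref{thm:minimax-risk-identifiable} gives $\qrisk_G = \qcrit_G$ and $\risk_G = h(\qcrit_G - \lambda)$. Since these are two-point distributions on $\{0,1\}$ and $\lambda\in(0,1)$, evaluating the three-branch formula of \Cref{lem:sup_expression_identifiability} shows that the $q<\lambda$ branch collapses to the \emph{same} linear expression $(M-q)\bigl(b-(b+h)\Gminus(\lambda)\bigr)$ as the $q\in[\lambda,\qcrit_G]$ branch (both truncated expectations reduce to $\Gminus(\lambda)$ times a linear term). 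Consequently $\Regret(q)-\risk_G$ equals $\bigl(b-(b+h)\Gminus(\lambda)\bigr)(\qcrit_G-q)$ for $q\le\qcrit_G$ and $h(q-\qcrit_G)$ for $q>\qcrit_G$. The left slope $b-(b+h)\Gminus(\lambda)=(b+h)(\rho-\Gminus(\lambda))$ equals, by construction, $(b+h)\delta_0^{\ui}$ for $G_0^{\ui}$, $(b+h)(\delta_0^{\ui}+\delta_1^{\ui})\ge(b+h)\delta_0^{\ui}$ for $G_1^{\ui}$, and $(b+h)\delta^{\ke}$ for $G_0^{\ke}$; the right slope is $h=(b+h)(1-\rho)$. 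Since every $\delta$ parameter is capped at $(1-\rho)/2$, both slopes are at least the stated constant, giving the bound in these three cases.

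Next I would handle the identifiable distributions $G=G_1^{\ke}$ and $G\in\{G_0^{\id},G_1^{\id}\}$, where $\Gminus(\lambda)\ge\rho$, so $\risk_G=0$ and $\qrisk_G=\qopt_G$. Here the argument is even shorter: because $G\in\ambig{\lambda}$, for every $q$ we have $\Regret(q)\ge\UncensoredRegret(q)=C_G(q)-C_G(\qopt_G)$, so it suffices to lower-bound the vanilla regret. The latter is convex in $q$ with minimizer $\qopt_G$ and one-sided slopes given by the newsvendor marginal $(b+h)G(q)-b$. For each two-point distribution, on the side adjacent to $\qopt_G$ the cdf takes its prescribed value $\rho\pm\delta$, yielding marginal magnitude exactly $(b+h)\delta$, while on the far side $G(q)=1$ yields marginal $h\ge(b+h)\delta$. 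Hence $C_G(q)-C_G(\qopt_G)\ge(b+h)\delta\,\abs{q-\qopt_G}$ for all $q$, and the bound follows.

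The main obstacle is purely bookkeeping across the five instances: I must evaluate the (three-branch) regret formula for each two-point distribution, verify continuity of $\Regret(\cdot)-\risk_G$ at its kink $q=\qcrit_G$ (unidentifiable cases) and the slope computations at $q=\qopt_G$ (identifiable cases), and confirm that the algebra $b-(b+h)\Gminus(\lambda)=(b+h)(\rho-\Gminus(\lambda))$ makes each left slope match its prescribed $\delta$ exactly. The only genuinely quantitative input is the inequality $(b+h)\delta\le h$, i.e.\ $\delta\le 1-\rho$, which holds because each $\delta$ parameter is explicitly capped at $(1-\rho)/2$ in the construction of the hard instances.
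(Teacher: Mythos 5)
Your proof is correct, and it splits naturally into a part that mirrors the paper and a part that does not. For the unidentifiable instances ($G_0^{\ui}$, $G_1^{\ui}$, $G_0^{\ke}$) your argument is essentially the paper's: the paper likewise evaluates the branches of \Cref{lem:sup_expression_identifiability}, subtracts $\risk_G = \left(b-(b+h)\Gminus(\lambda)\right)(M-\qcrit_G) = h(\qcrit_G-\lambda)$, and bounds the left slope below by $(b+h)\delta$ via $\Gminus(\lambda)\le\rho-\delta$ and the right slope by $h=(b+h)(1-\rho)\ge(b+h)\delta$; your observation that for Bernoulli demand with $\lambda\in(0,1)$ the $q<\lambda$ branch collapses onto the $[\lambda,\qcrit_G]$ branch simply merges the paper's first two cases into one, which is a tidy but minor streamlining. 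Where you genuinely diverge is in the identifiable instances ($G_1^{\ke}$, $G_0^{\id}$, $G_1^{\id}$): the paper invokes \Cref{prop:worst-case-for-identifiable} to compute the exact worst-case regret, including the adversarial term that appears for $q\ge\lambda$, and then lower-bounds the resulting expressions distribution by distribution, whereas you use only the elementary inclusion $G\in\ambig{\lambda}$ to get $\Regret(q)\ge C_G(q)-C_G(\qopt_G)$ and then bound the vanilla regret through the newsvendor marginals $(b+h)G(q)-b$. Your route is shorter and never needs \Cref{prop:worst-case-for-identifiable}; the cost is a weaker intermediate quantity (vanilla rather than worst-case regret), but since for these two-point distributions the marginal on the side adjacent to $\qopt_G$ has magnitude exactly $(b+h)\delta$ and on the far side equals $h\ge(b+h)\delta$ (using $\delta\le(1-\rho)/2$), the weaker bound still yields the lemma's constant, so nothing is lost. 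One bookkeeping point to make explicit when writing this up: taking $F=G$ in the supremum gives exactly $C_G(q)-C_G(\qopt_G)$ because $\qopt_F=\qopt_G$ for $F=G$, which is the one-line justification of the inequality $\Regret(q)\ge\UncensoredRegret(q)$ you rely on.
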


Applying \Cref{lem:regret-to-est} to \eqref{eq:bound-two-dist}, it suffices to lower bound the worst-case absolute difference between $q^{\pi}$ and $\qrisk_G$. Namely:
\begin{align}
&\sup_{G \in \mathcal{G}^k}\E_{G}\bigg[\Regret(q^\pi)-\risk_G\bigg] \geq (b+h)\delta_0^k\sup_{G \in \{G_0^k, G_1^k\}}\E_{G}\left[\abs{q^{\pi}-\qrisk_G}\right] \quad\text{if } k = \ui \label{eq:regret-to-est-1} \\
&\sup_{G \in \mathcal{G}^k}\E_{G}\bigg[\Regret(q^\pi)-\risk_G\bigg] \geq (b+h)\delta^k\sup_{G \in \{G_0^k, G_1^k\}}\E_{G}\left[\abs{q^{\pi}-\qrisk_G}\right] \quad \text{if } k \in \{\ke,\id\}. \label{eq:regret-to-est-2}
\end{align}

We next lower bound $\sup_{G \in \{G_0^k,G_1^k\}}\mathbb{E}_{G}\big[\abs{q^{\pi}-\qrisk_G}\big]$ via a reduction to hypothesis testing. To formalize this, we introduce some additional notation. Let $\mathcal{D}^N$ be the set of all possible demand samples. For \mbox{$k \in \{\id,\ui,\ke\}$}, let $\Psi^k: \mathcal{D}^N\mapsto\left\{G_0^k,G_1^k\right\}$ denote a mapping from the observed demand samples to a prediction of the underlying demand distribution. We moreover let $G_0^{k,N}$ and $G_1^{k,N}$ be the joint distributions of the $N$ independent demand samples generated under $G_0^k$ and $G_1^k$, respectively.

{By Proposition 9.2.1. in \citet{duchi2016lecture}}:
\begin{align}\label{eq:est-to-test}
&\sup_{G\in \{G_0^k, G_1^k\}} \E_{G} \left[|q^{\pi}-\qrisk_G| \right]\notag \\ &\qquad \geq  \frac12 {\bigg{|}\qrisk_{G_0^k}-\qrisk_{G_1^k}\bigg{|}}{\inf_{\Psi^k}\bigg\{\Pr_{G_1^{k}}\left(\Psi^k(\doff) = G_0^k\right)+  \Pr_{G_0^{k}}\left(\Psi^k(\doff) = G_1^k \right)\bigg\}} \notag \\
&\qquad \geq \frac14\underbrace{\bigg{|}\qrisk_{G_0^k}-\qrisk_{G_1^k}\bigg{|}}_{(I)}\underbrace{\exp\left(-d_{KL}(G_1^{k,N}\mid\mid G_0^{k,N})\right)}_{(II)},
\end{align}
where $d_{KL}(G_1^{k,N}\mid\mid G_0^{k,N})$ denotes the Kullback-Leibler (KL) divergence, and the second inequality follows from the Bretagnolle-Huber inequality \citep{lattimore2020bandit}.

Equation \eqref{eq:est-to-test} demonstrates the main drivers of regret in our setting. In particular, any algorithm must incur higher regret if (i) the {minimax optimal ordering quantities} under $G_0^k$ and $G_1^k$ are far apart (term $(I)$), or (ii) $G_0^k$ and $G_1^k$ are distributionally ``close enough'' that any hypothesis test that aims to distinguish between the two incurs high error rate (term $(II)$). Lemmas \ref{lem:qrisk-diff} and \ref{lem:hypothesis} respectively lower bound these two component terms for all regimes. We defer their proofs to Appendix \ref{apx:qrisk-diff} and \ref{apx:hypothesis}, respectively.

\begin{lemma}\label{lem:qrisk-diff}
For $k \in \{\ui,\ke,\id\}$:
\begin{align*}
\big{|}\qrisk_{G_0^k}-\qrisk_{G_1^k}\big{|} \geq \begin{cases}
\delta_1^{\ui}\cdot \frac{h(M-\lambda)}{b+h} \quad &\text{if } k = \ui \\
\lambda \quad &\text{if } k = \ke \\
H \quad &\text{if } k = \id.
\end{cases}
\end{align*}
\end{lemma}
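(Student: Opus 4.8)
The plan is to treat the three regimes separately, in each case substituting the closed-form minimax optimal ordering quantities supplied by \Cref{lem:risks-for-all-regimes} and then reducing the claim either to an elementary subtraction or to a monotonicity bound on the map $\func(x) = \frac{bM + h\lambda - (b+h)Mx}{(b+h)(1-x)}$, which satisfies $\qcrit_G = \func(\Gminus(\lambda))$ by \eqref{eq:qcrit}.

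For the strictly identifiable regime, \Cref{lem:risks-for-all-regimes} gives $\qrisk_{G_0^{\id}} = H$ and $\qrisk_{G_1^{\id}} = 0$, so $\big|\qrisk_{G_0^{\id}} - \qrisk_{G_1^{\id}}\big| = H$ and the bound holds with equality, requiring no further argument. For the knife-edge regime, \Cref{lem:risks-for-all-regimes} gives $\qrisk_{G_1^{\ke}} = 0$ and $\qrisk_{G_0^{\ke}} = \qcrit_{G_0^{\ke}}$, so it suffices to show $\qcrit_{G_0^{\ke}} \geq \lambda$. I would invoke the identity derived inside the proof of \Cref{thm:minimax-risk-identifiable},
\[
\qcrit_G - \lambda = \frac{(M-\lambda)(\rho - \Gminus(\lambda))}{1 - \Gminus(\lambda)},
\]
which is nonnegative whenever $\Gminus(\lambda) < \rho$ and $\lambda < M$. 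Since $G_0^{\ke,-}(\lambda) = \rho - \delta^{\ke} < \rho$, this yields $\qcrit_{G_0^{\ke}} \geq \lambda$, as required.

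The main work is the strictly unidentifiable regime. Here both distributions lie below the threshold, so $\qrisk_{G_i^{\ui}} = \qcrit_{G_i^{\ui}} = \func(x_i)$ with $x_0 = \rho - \delta_0^{\ui}$ and $x_1 = \rho - \delta_0^{\ui} - \delta_1^{\ui}$, hence $x_0 - x_1 = \delta_1^{\ui}$. Recalling from the proof of \Cref{lem:qcrit-err} that $\func'(x) = \tfrac{h(\lambda - M)}{(b+h)(1-x)^2}$, and that $\func$ is monotone on $[x_1, x_0]$, I would write the difference as an integral and bound the integrand from below:
\[
\big|\func(x_0) - \func(x_1)\big| = \int_{x_1}^{x_0} \frac{h(M-\lambda)}{(b+h)(1-t)^2}\,dt \;\geq\; \frac{h(M-\lambda)}{b+h}\,(x_0 - x_1) = \delta_1^{\ui}\,\frac{h(M-\lambda)}{b+h},
\]
where the inequality uses $(1-t)^2 \leq 1$ on the range of integration.

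The one point requiring care — and the only real obstacle — is justifying that $(1-t)^2 \leq 1$ for all $t \in [x_1, x_0]$, i.e.\ that $t \geq 0$ there; this reduces to checking $x_1 = \rho - \delta_0^{\ui} - \delta_1^{\ui} \geq 0$. Using $\delta_0^{\ui} \leq \rho/2$ and $\delta_1^{\ui} \leq \rho/4$ from their definitions, I would bound $\delta_0^{\ui} + \delta_1^{\ui} \leq 3\rho/4 < \rho$, so that $x_1 > \rho/4 > 0$, which closes the argument and establishes the claimed bound in all three regimes.
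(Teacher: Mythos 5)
Your proposal is correct and follows essentially the same route as the paper: all three cases reduce to the closed forms from \Cref{lem:risks-for-all-regimes}, with the strictly unidentifiable case handled via the derivative bound $\big|\func'(t)\big| = \frac{h(M-\lambda)}{(b+h)(1-t)^2} \geq \frac{h(M-\lambda)}{b+h}$ (the paper invokes the mean value theorem where you integrate, which is the same argument), and the knife-edge case handled by the observation that $\qcrit_{G_0^{\ke}} \geq \lambda$ (the paper gets this from monotonicity of $\func$ together with $\func(\rho)=\lambda$, which is exactly your algebraic identity). Your explicit verification that $x_1 = \rho - \delta_0^{\ui} - \delta_1^{\ui} \geq \rho/4 > 0$, which is what justifies $(1-t)^2 \leq 1$ on the interval, is a point the paper leaves implicit, so it is a welcome addition rather than a deviation.
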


\medskip

\begin{lemma}\label{lem:hypothesis}
For $k \in \{\ui,\ke,\id\}$:
\begin{align*}
\exp\left(-d_{KL}(G_1^{k,N}\mid\mid G_0^{k,N})\right) \geq \begin{cases}
\exp\left(-2N\cdot\frac{(\delta_1^{k})^2}{1-\rho}\right) \quad&\text{if } k = \ui\\
\exp\left(-8N\cdot\frac{(\delta^{k})^2}{1-\rho}\right) \quad&\text{if } k \in \{\ke,\id\}.
\end{cases}
\end{align*}
\end{lemma}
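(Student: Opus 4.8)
The plan is to reduce each relative-entropy computation to a single sample and then to a comparison of two Bernoulli laws. Since $G_0^{k,N}$ and $G_1^{k,N}$ are the joint laws of $N$ i.i.d.\ draws, additivity of the Kullback--Leibler divergence over product measures gives
\[
d_{KL}(G_1^{k,N}\,\|\,G_0^{k,N}) = N\, d_{KL}(G_1^{k}\,\|\,G_0^{k}),
\]
so it suffices to bound the one-sample divergence by $\tfrac{2(\delta_1^{\ui})^2}{1-\rho}$ in the unidentifiable case and by $\tfrac{8(\delta^{k})^2}{1-\rho}$ in the knife-edge and identifiable cases; exponentiating and carrying the factor $N$ into the exponent then yields exactly the three stated inequalities.

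Next I would cast each one-sample comparison as a divergence between two-point distributions. In the $\ui$ and $\ke$ regimes the laws are already Bernoulli in the mass placed on $D=1$, with parameters $(1-\rho+\delta_0^{\ui},\,1-\rho+\delta_0^{\ui}+\delta_1^{\ui})$ and $(1-\rho+\delta^{\ke},\,1-\rho-\delta^{\ke})$ respectively; in the $\id$ regime I would relabel the two atoms $\{0,H\}$ as $\{0,1\}$, turning $G_0^{\id},G_1^{\id}$ into Bernoulli laws with parameters $1-\rho+\delta^{\id}$ and $1-\rho-\delta^{\id}$. The gap between the two parameters is $\delta_1^{\ui}$ in the first case and $2\delta^{k}$ in the latter two, and this is precisely what produces the constant $2$ versus $8$ after squaring. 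I would then control the Bernoulli divergence through its second-order (mean-value) form
\[
d_{KL}(\Ber(a)\,\|\,\Ber(b)) = \frac{(a-b)^2}{2\,\xi(1-\xi)} \quad\text{for some } \xi \text{ between } a \text{ and } b,
\]
which is sharper than the crude $\chi^2$ estimate $\tfrac{(a-b)^2}{b(1-b)}$ and is what is needed to recover the stated constants.

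The crux, and the step I expect to be most delicate, is to lower bound the Bernoulli variance $\xi(1-\xi)$ appearing in the denominator by the correct multiple of $1-\rho$, uniformly over the regime. Here I would invoke the explicit definitions $\delta_0^{\ui}=\min\{\rho/2,(1-\rho)/2\}$, $\delta^{\ke}$, and $\delta^{\id}$, which were engineered so that the relevant atom mass stays within a constant factor of $1-\rho$ while its complement remains bounded away from $0$; tracking which branch of each $\min$ is active (and setting aside the degenerate small-$\rho$ corner, where the corresponding lower bound in \Cref{thm:lb} is already vacuous through the $\min\{\rho,1-\rho\}$ and $\min\{\rho,3\rho-1\}$ factors) gives $\xi(1-\xi)$ bounded below by a constant multiple of $1-\rho$, and hence the one-sample bounds above. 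Finally, I would observe that these constants are calibrated against $\delta^{k} \le \tfrac14\sqrt{(1-\rho)/N}$ (resp.\ $\delta_1^{\ui} \le \tfrac12\sqrt{(1-\rho)/N}$): substituting shows $N\,d_{KL}(G_1^k\,\|\,G_0^k) \le \tfrac12$, so that $\exp(-d_{KL}(G_1^{k,N}\,\|\,G_0^{k,N})) \ge e^{-1/2}$, which is exactly the form of term $(II)$ consumed in \eqref{eq:est-to-test} to produce the $e^{-1/2}$ appearing throughout \Cref{thm:lb}.
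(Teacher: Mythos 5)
Your proposal follows the same skeleton as the paper's proof: tensorization of the KL divergence over the $N$ i.i.d.\ samples, reduction of each regime to a comparison of two Bernoulli laws (including the relabeling of the atoms $\{0,H\}$ for $k=\id$, which is valid since KL is invariant under bijective relabeling), and a bound on the one-sample divergence of the form (parameter gap)$^2$ over a variance-type denominator that must be lower bounded by a multiple of $1-\rho$. The only methodological difference is the tool for that one-sample bound: the paper invokes a reverse-Pinsker/$\chi^2$-type inequality whose denominator is the mass $G_0^k$ places on the upper atom ($1-\rho+\delta_0^{\ui}$, resp.\ $1-\rho+\delta^{k}$), then relaxes it to $1-\rho$; you use the exact Lagrange form $d_{KL}(\Ber(a)\mid\mid\Ber(b)) = (a-b)^2/\left(2\xi(1-\xi)\right)$ and need $\xi(1-\xi)\geq(1-\rho)/4$ uniformly over $\xi$ between the two parameters. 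Your closing calibration ($N\,d_{KL}\leq 1/2$, hence $e^{-1/2}$) matches exactly how the lemma is consumed in \Cref{thm:lb}.

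The genuine gap is the step you yourself call the crux: the bound $\xi(1-\xi)\geq(1-\rho)/4$ is asserted, not proved, and it is in fact false on a portion of the parameter range where \Cref{thm:lb} is \emph{not} vacuous, so your hedge about the degenerate small-$\rho$ corner does not cover it. Concretely, take $k=\ui$ and $\rho\in\left(1/3,(3-\sqrt{5})/2\right)$, say $\rho=0.36$: then $\delta_0^{\ui}=\rho/2$, both Bernoulli parameters are at least $a_0=1-\rho/2=0.82$, so every admissible $\xi$ satisfies $\xi(1-\xi)\leq(\rho/2)(1-\rho/2)\approx 0.148<(1-\rho)/4=0.16$, while $\min\{\rho,3\rho-1\}=0.08>0$ keeps part (iii) of \Cref{thm:lb} nontrivial. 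Worse, this is not mere slack in your method: since $1/\left(t(1-t)\right)$ is increasing on $[1/2,1]$ and the whole interval $[a_0,a_0+\delta_1^{\ui}]$ lies above $1/2$, the Taylor remainder gives $d_{KL}(G_1^{\ui}\mid\mid G_0^{\ui})\geq(\delta_1^{\ui})^2/\left(2a_0(1-a_0)\right)>2(\delta_1^{\ui})^2/(1-\rho)$ in this window, so the stated constant $2$ cannot be recovered there by \emph{any} argument — the claimed inequality itself fails by a constant factor. (The paper's own proof is exposed at exactly the same spot: reverse Pinsker requires the \emph{minimum} mass of $G_0^k$ in the denominator, which for $\rho<1/2$ is $\rho-\delta_0^{\ui}=\rho/2$, not $1-\rho+\delta_0^{\ui}$.) There is also a milder looseness away from small $\rho$: at $\rho=1/2$ with $\delta_1^{\ui}$ at its cap $(3\rho-1)/4=1/8$, $\xi$ can reach $7/8$, so your route yields only $d_{KL}\leq(32/7)(\delta_1^{\ui})^2$ rather than the required $4(\delta_1^{\ui})^2$, even though the inequality happens to hold there. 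To make your plan rigorous you must carry out the branch-by-branch verification you allude to, and where it fails either restrict $\rho$ (e.g.\ so that both parameters stay below $1/2$) or accept larger absolute constants in the exponent and propagate them through \Cref{thm:lb}.
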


Applying these two lemmas to \eqref{eq:est-to-test}, and plugging back into \eqref{eq:regret-to-est-1} and \eqref{eq:regret-to-est-2}, we obtain:
\begin{align*}
\sup_{G \in \mathcal{G}^k}\E_{G}\bigg[\Regret(q^\pi)-\risk_G\bigg] \geq \begin{cases}
\frac14{h(M-\lambda)}\delta_0^{\ui}\delta_1^{\ui}\exp\left(-2N\cdot\frac{(\delta_1^{\ui})^2}{1-\rho}\right) \quad& \text{if } k = \ui \\
\frac14\lambda(b+h)\delta^{\ke}\exp\left(-8N\cdot\frac{(\delta^{\ke})^2}{1-\rho}\right) \quad&\text{if } k = \ke\\
\frac14H(b+h)\delta^{\id}\exp\left(-8N\cdot\frac{(\delta^{\id})^2}{1-\rho}\right) \quad&\text{if } k = \id.
\end{cases}
\end{align*}
For $\delta_0^{\ui} = \min\left\{\frac\rho2,\frac{1-\rho}{2}\right\}$ and $\delta_1^{\ui} = \min\left\{\frac\rho4,\frac{3\rho-1}{4},\frac{1}{\sqrt{2\cdot 2N/(1-\rho)}}\right\}$, we obtain, for $k = \ui$: 
\begin{align*}
&\sup_{G \in \mathcal{G}^k}\E_{G}\bigg[\Regret(q^\pi)-\risk_G\bigg]\\ &\geq \frac{h(M-\lambda)}{8}\min\{\rho,1-\rho\}\cdot\min\left\{\frac\rho4,\frac{3\rho-1}{4},\frac{\sqrt{1-\rho}}{2\sqrt{N}}\right\}\cdot e^{-1/2}\\
\\&\geq \frac{h(M-\lambda)\sqrt{1-\rho}\min\{\rho,1-\rho\}\cdot\min\left\{\rho,{3\rho-1}\right\}\cdot e^{-1/2}}{64\sqrt{N}},
\end{align*}
where the second inequality follows from the fact that $\min\{x,y\} \geq xy$ for $x, y \in [0,1]$. 

\medskip 
For $\delta^{\ke} = \min\left\{\frac{\rho}{2},\frac{1-\rho}{2},\frac{1}{\sqrt{2\cdot 8N/(1-\rho)}}\right\}$, we obtain, for $k = \ke$:
\begin{align*}
&\sup_{G \in \mathcal{G}^k}\E_{G}\bigg[\sup_{F\in\ambig{\lambda}}C_F(q^{\pi})-C_F(\qopt_F)-\risk_G\bigg] \\
&\geq \frac{\lambda(b+h)}{4}\min\left\{\frac{\rho}{2},\frac{1-\rho}{2},\frac{1}{4}\sqrt{\frac{1-\rho}{N}}\right\}\cdot e^{-1/2}\\
&\geq \frac{\lambda(b+h)\sqrt{1-\rho}\min\{\rho,1-\rho\}e^{-1/2}}{32\sqrt{N}}.
\end{align*}
Finally, using the same instantiation of $\delta^{\id}$ as $\delta^{\ke}$, and setting $H = \lambda/2$, we obtain:
\begin{align*}
&\sup_{G \in \mathcal{G}^k}\E_{G}\bigg[\sup_{F\in\ambig{\lambda}}C_F(q^{\pi})-C_F(\qopt_F)-\risk_G\bigg] \\
&\geq \frac{H(b+h)\sqrt{1-\rho}\min\{\rho,1-\rho\}e^{-1/2}}{32\sqrt{N}}\\
&= \frac{\lambda(b+h)\sqrt{1-\rho}\min\{\rho,1-\rho\}e^{-1/2}}{64\sqrt{N}}.
\end{align*}
\hfill\Halmos
\end{proof}

\medskip

\subsubsection{Proof of \Cref{lem:risks-for-all-regimes}}\label{apx:risks-for-all-regimes}
\begin{proof}{\it Proof.} 
We proceed regime-by-regime.

\smallskip

\paragraph{Case I: Strictly unidentifiable regime.} 
This follows from the fact that $\Gminus(\lambda) < \rho$ for all \mbox{$G \in \{G_0^{\ui},G_1^{\ui}\}$}.

\smallskip

\paragraph{Case II: Knife-edge regime.} 
This follows from the fact that $G_0^{\ke,-}(\lambda) < \rho$, and $G_1^{\ke,-}(\lambda) \geq \rho$. Moreover, since $G_1^{\ke}$ is Bernoulli, it must then be that $\qopt_{G_1^{\ke}} = 0$.

\smallskip

\paragraph{Case III: Strictly identifiable regime.} This follows from the fact that $\Gminus(\lambda) \geq \rho$ for all \mbox{$G \in \{G_0^{\id}, G_1^{\id}\}$}. For $G = G_0^{\id}$, $\Pr_G(D = 0) < \rho$, which implies that $\qopt_G = H$. For $G = G_1^{\id}$, \mbox{$\Pr_G(D=0) \geq \rho$}, which implies $\qopt_G = 0$. 
\hfill\Halmos
\end{proof}

\medskip

\subsubsection{Proof of \Cref{lem:regret-to-est}}\label{apx:lem:regret-to-est}
\begin{proof}{\it Proof.}
We prove the lemma separately for each regime. In all cases, we rely repeatedly on the closed-form characterizations of the {minimax risk} established in \Cref{lem:sup_expression_identifiability} and \Cref{prop:worst-case-for-identifiable}, bounding the loss based on the location of $q^{\pi}$ relative to $\lambda$.

\smallskip

\paragraph{Case I: Strictly unidentifiable regime.} By \Cref{lem:risks-for-all-regimes}, for all $G \in \{G_0^{\ui}, G_1^{\ui}\}$, $\qrisk_G = \qcrit_G$.

{Suppose first that $q^{\pi} < \lambda$. By \Cref{lem:sup_expression_identifiability}, we have:
\begin{align*}
\sup_{F \in \ambig{\lambda}} C_F(q^{\pi}) - C_F(\qopt_F) &= b(M-q^{\pi})+(b+h)\bigg(q^{\pi}\Pr_G(D = 0) - M\Pr_G(D=0)\bigg)\\
&= (M-q^{\pi})\left(b-(b+h)\Gminus(\lambda)\right),
\end{align*}
since $\lambda \in (0,1)$ and $G$ is Bernoulli.
Subtracting $\risk_G = \left(b-(b+h)\Gminus(\lambda)\right)(M-\qcrit_G)$ on both sides, we have: 
\begin{align*}
\sup_{F \in \ambig{\lambda}} C_F(q^{\pi}) - C_F(\qopt_F)-\risk_G &= (M-q^{\pi})\left(b-(b+h)\Gminus(\lambda)\right)-(b - (b+h)\Gminus(\lambda))(M - \qcrit_G) \\
&= (\qcrit_G-q^{\pi})\left(b-(b+h)\Gminus(\lambda)\right)\\
&\geq (\qcrit_G-q^{\pi})(b+h)\delta_0^{\ui},
\end{align*}
where the final inequality follows from the fact that $\Gminus(\lambda) \leq \rho-\delta_0^{\ui}$ for $G \in \{G_0^{\ui}, G_1^{\ui}\}$, and $\qcrit_G \geq \lambda > q^{\pi}$.
}

Consider now the case where $q^{\pi} \in [\lambda, \qcrit_G]$.  By \cref{lem:sup_expression_identifiability}, for {$\Gminus(\lambda) < \rho$}, we have:
\begin{align*}
    \sup_{F \in \ambig{\lambda}} C_F(q^{\pi}) - C_F(\qopt_F) - \risk_G & = (b - (b+h)\Gminus(\lambda))(M - q^{\pi}) - (b - (b+h)\Gminus(\lambda))(M - \qcrit_G) \\
    &= (b - (b+h)\Gminus(\lambda))(\qcrit_G - q^{\pi}) \\
    &\geq (b+h)\delta_0^{\ui}(\qcrit_G - q^{\pi}),
\end{align*}
where again we used the fact that $\Gminus(\lambda) \leq \rho - \delta_0^{\ui}$ for $G \in \{G_0^{\ui}, G_1^{\ui}\}$.

Finally, if $q^{\pi} \in (\qcrit_G, M]$, using the alternative representation of $\risk_G = h(\qcrit_G-\lambda)$, by \cref{lem:sup_expression_identifiability}:
\begin{align*}
    \sup_{F \in \ambig{\lambda}} C_F(q^{\pi}) - C_F(\qopt_F) - \risk_G & = h(q^{\pi} - \lambda) - h(\qcrit_G - \lambda)  = h(q^{\pi} - \qcrit_G).
\end{align*}

Combining the three cases and using the fact that $\delta_0^{\ui} < 1-\rho \implies (b+h)\delta_0^{\ui} < h$, we obtain the lower bound for the strictly unidentifiable regime.

\smallskip

\paragraph{Case II: Knife-edge regime.} If $G = G_0^{\ke}$, since $G_0^{\ke,-}(\lambda) < \rho$, by the same arguments as above:
\begin{align}\label{eq:gminus-ke-1}
\sup_{F \in \ambig{\lambda}} C_F(q^{\pi}) - C_F(\qopt_F) - \risk_G \geq (b+h)\delta^{\ke}|q^{\pi}-\qcrit_G|.
\end{align}

If $G = G_1^{\ke}$, by \eqref{eq:recall-risk}, $\risk_G = 0$, since $G_1^{\ke,-}(\lambda) \geq \rho$ by construction. Moreover, by \Cref{lem:risks-for-all-regimes}, \mbox{$\qrisk_G = \qopt_G = 0$}. 

Suppose first that $q^{\pi} < \lambda$. Applying 
{
\Cref{prop:worst-case-for-identifiable} to $G = G_1^{\ke}$, we have:
\begin{align*}
\sup_{F \in \ambig{\lambda}} C_F(q^{\pi})-C_F(\qopt_F) &= -bq^{\pi} + (b+h) q^\pi \Pr_G(D = 0)\\
&=-bq^{\pi} + (b+h) q^\pi (\rho+\delta^{\ke}) \\
&= q^{\pi}(b+h)\delta^{\ke},
\end{align*}
where the first equality simplified the expression in \Cref{prop:worst-case-for-identifiable} using the fact that $\lambda \in (0,1)$, and $\qopt_G = 0$.
}

Suppose now that $q^{\pi} \geq \lambda$. Again, by 
{\Cref{prop:worst-case-for-identifiable}:
\begin{align*}
\sup_{F \in \ambig{\lambda}} C_F(q^{\pi})-C_F(\qopt_F) &= -bq^{\pi} + (b+h)\left(q^\pi-\lambda + \lambda\Pr_G(D = 0)\right)\\
&=hq^\pi - \lambda(b+h)\Pr_G(D = 1)\\
&\geq q^{\pi}\left(h-(b+h)(1-\rho-\delta^{\ke})\right)\\
&= q^\pi(b+h)\delta^{\ke}.
\end{align*}
}

Hence, in both cases:
\begin{align}\label{eq:gminus-ke-2}
\sup_{F \in \ambig{\lambda}} C_F(q^{\pi})-C_F(\qopt_F) \geq (q^{\pi}-\qopt_G)(b+h)\delta^{\ke}.
\end{align}

Since $\qrisk_G = \qcrit_G$ for $G = G_0^{\ke}$, and $\qrisk_G = \qopt_G$ for $G = G_1^{\ke}$, putting \eqref{eq:gminus-ke-1} and \eqref{eq:gminus-ke-2} together we obtain:
\begin{align}
\sup_{F\in\ambig{\lambda}} C_F(q^{\pi})-C_F(\qopt_F)-\risk_G \geq (b+h)\delta^{\ke}|q^{\pi}-\qrisk| \quad \forall \ G \in \{G_0,G_1\}. \notag
\end{align}

\smallskip

\paragraph{Case III: Strictly identifiable regime.}
By \Cref{lem:risks-for-all-regimes}, $\qrisk_{G_0^{\id}} = \qopt_{G_0^{\id}} = H$, and $\qrisk_{G_1^{\id}} = \qopt_{G_1^{\id}} = 0$. Moreover, for $G \in \{G_0^{\id}, G_1^{\id}\}$, $\risk_G = 0$, by \eqref{eq:recall-risk}.

\smallskip

Suppose first that $G = G_0^{\id}$. By 
{
\Cref{prop:worst-case-for-identifiable}, if $q^{\pi} < \lambda$:
\begin{align}\label{eq:id-loss1}
&\sup_{F \in \ambig{\lambda}} C_F(q^{\pi}) - C_F(\qopt_F) \notag \\&= b(H-q^\pi) + (b+h)\bigg(q^\pi\Pr_G(D = 0) +(q^\pi-H)\Ind{H \leq q^\pi}\Pr_G(D=H)-H\Pr_G(D = 0)\bigg) \notag\\
&=(H-q^{\pi})\left(b-(b+h)\Pr_G(D=0)-(b+h)\Pr_G(D=H)\Ind{H\leq q^{\pi}}\right) \notag \\
&=(b+h)(H-q^{\pi})\left(\delta^{\id}-(1-\rho+\delta^{\id})\Ind{H\leq q^{\pi}}\right)\notag \\
&=(b+h)(1-\rho)(q^{\pi}-H)\Ind{H \leq q^{\pi}} + (b+h)(H-q^{\pi})\delta^{\id}\Ind{H > q^{\pi}} \notag\\
&\geq (b+h)|H-q^{\pi}|\delta^{\id} = (b+h)|\qrisk_G-q^{\pi}|\delta^{\id},
\end{align}
where the inequality follows from the fact that $\delta^{\id} < 1-\rho$.
}

If $q^{\pi} \geq \lambda$, on the other hand, by 
{
\Cref{prop:worst-case-for-identifiable}:
{
\begin{align}\label{eq:id-loss2}
&\sup_{F \in \ambig{\lambda}} C_F(q^{\pi}) - C_F(\qopt_F)\notag \\ &= 
b(H-q^\pi)+(b+h)\bigg(q^\pi-\lambda + \lambda\Pr_G(D = 0) + (\lambda-H)\Pr_G(D = H) - H\Pr_G(D = 0)\bigg)\notag \\
&=b(H-q^{\pi})+(b+h)(q^{\pi}-H)\notag \\
&= h(q^{\pi}-H)\notag\\
&\geq (b+h)|H-q^{\pi}|\delta^{\id} = (b+h)\abs{\qrisk_G-q^{\pi}}\delta^{\id},
\end{align}
where the inequality follows from the fact that $q^{\pi} \geq \lambda > H$, and $\delta^{\id} < 1-\rho$.
}
}

Putting \eqref{eq:id-loss1} and \eqref{eq:id-loss2} together, we obtain the lower bound for $G = G_0^{\id}$.

\smallskip

{
Suppose now that $G = G_1^{\id}$.
By \Cref{prop:worst-case-for-identifiable}, if $q^{\pi} < \lambda$:{
\begin{align}\label{eq:id-loss3}
\sup_{F \in \ambig{\lambda}} C_F(q^{\pi}) - C_F(\qopt_F) &=-bq^\pi + (b+h)\bigg(q^\pi \Pr_G(D = 0) + (q^\pi-H)\Pr_G(D = H)\Ind{H \leq q^\pi}\bigg)\notag \\&= -bq^{\pi} + (b+h)\left(q^{\pi}(\rho+\delta^{\id})+(q^{\pi}-H)\Ind{H\leq q^{\pi}}(1-\rho-\delta^{\id})\right)\notag \\
&= (b+h)\left(q^{\pi}\delta^{\id}+(q^{\pi}-H)(1-\rho-\delta^{\id})\right)\Ind{H \leq q^{\pi}} \notag \\
&\quad +(b+h)\delta^{\id}q^{\pi}\Ind{H > q^{\pi}} \notag\\
&\geq (b+h)\delta^{\id}q^{\pi} = (b+h)\delta^{\id}\abs{q^{\pi}-\qrisk_G},
\end{align}
where the inequality follows from the fact that $\delta^{\id} < 1-\rho$.
}
}

If $q^{\pi} \geq \lambda$, on the other hand,
{by \Cref{prop:worst-case-for-identifiable}:{
\begin{align}\label{eq:id-loss4}
\sup_{F \in \ambig{\lambda}} C_F(q^{\pi}) - C_F(\qopt_F) &= -bq^{\pi} + (b+h)\bigg[(q^{\pi}-\lambda)+\lambda\Pr_G(D=0)+(\lambda-H)\Pr_G(D=H)\bigg] \notag \\
&=hq^{\pi} - H(b+h)(1-\rho-\delta^{\id}) \notag \\
&\geq \left(h-(b+h)(1-\rho-\delta^{\id})\right)q^{\pi} \notag \\
&=(b+h)\delta^{\id}q^{\pi} = (b+h)\delta^{\id}\abs{q^{\pi}-\qrisk_G},
\end{align}
where the inequality follows from the fact that $q^{\pi} \geq \lambda > H$.
}
}

Putting \eqref{eq:id-loss3} and \eqref{eq:id-loss4} together, we obtain the lower bound for $G = G_1^{\id}$.
\hfill\Halmos
\end{proof}

\medskip

\subsubsection{Proof of \Cref{lem:qrisk-diff}}\label{apx:qrisk-diff}

\begin{proof}{\it Proof.}
We proceed case-by-case.

\medskip

\paragraph{Case I: Strictly unidentifiable regime.} By \Cref{lem:risks-for-all-regimes}, in this setting, for $G \in \{G_0^{\ui}, G_1^{\ui}\}$, 
\begin{align*}
\qrisk_G = \qcrit_G &= \frac{h\lambda + (b-(b+h)\Gminus(\lambda))M}{(b+h)(1-\Gminus(\lambda))}.
\end{align*}
As in the proof of \Cref{thm:minmax_regret}, define $\func_G(\cdot)$ as:\[
\func_G(x) = \frac{h \lambda + (b - (b+h)x) M}{(b+h)(1 - x)}.
\]
Recall, $G_0^{\ui,-}(\lambda) = \rho-\delta_0^{\ui}$, and $G_1^{\ui,-}(\lambda) = \rho-\delta_0^{\ui}-\delta_1^{\ui}$. Hence, by the mean value theorem, for some $c \in (\rho-\delta_0^{\ui}-\delta_1^{\ui},\rho-\delta_0^{\ui})$:
\begin{align}\label{eq:mvt}
&\func_G(\rho-\delta_0^{\ui})-\func_G(\rho-(\delta_0^{\ui}+\delta_1^{\ui})) = \delta_1^{\ui} \cdot \frac{d\qcrit_G}{dx}\bigg{|}_{x=c} = \delta_1^{\ui} \cdot \frac{h(\lambda-M)}{(b+h)(1-c)^2} \notag \\
\implies &\big{|}\func_G(\rho-\delta_0^{\ui})-\func_G(\rho-(\delta_0^{\ui}+\delta_1^{\ui}))\big{|} \geq \delta_1^{\ui} \cdot \frac{h(M-\lambda)}{b+h},
\end{align}
thus providing the bound for the unidentifiable regime.  

\smallskip

\paragraph{Case II: Knife-edge regime.} By \Cref{lem:risks-for-all-regimes}, in this setting $\qrisk_{G_0^{\ke}} = \qcrit_{G_0^{\ke}}$, and $\qrisk_{G_1^{\ke}} = \qopt_{G_1^{\ke}} = 0$. Hence:
\begin{align*}
\abs{\qrisk_{G_0^{\ke}}-\qrisk_{G_1^{\ke}}} =\frac{h\lambda+(b-(b+h)(\rho-\delta^{\ke}))M}{(b+h)(1-\rho+\delta^{\ke})} \geq \frac{h\lambda+(b-(b+h)\rho)M}{(b+h)(1-\rho)} = \lambda,
\end{align*}
where the inequality follows from the fact that $\func_G(x)$ is decreasing in $x$, and $\rho-\delta^{\ke} \leq \rho$ for all $\delta \geq 0$.

\smallskip

\paragraph{Case III: Strictly identifiable regime.} This follows from the fact that, by \Cref{lem:risks-for-all-regimes}, $\qrisk_{G_0^{\id}} = H$ and $\qrisk_{G_1^{\id}} = 0$. 
\hfill\Halmos
\end{proof}

\medskip

\subsubsection{Proof of \Cref{lem:hypothesis}}\label{apx:hypothesis}

\begin{proof}{\it Proof.}
Since we are in the uncensored setting and demand samples are drawn i.i.d., we have that \mbox{$d_{KL}(G_1^{k,N}\mid\mid G_0^{k,N}) = N d_{KL}(G_1^k\mid\mid G_0^k)$}. Moreover, across all regimes, $G_0^k$ and $G_1^k$ are distributions with support over two point masses. Hence, we can use reverse Pinsker's inequality to upper bound the KL divergence from $G_1^{k,N}$ to $G_0^{k,N}$ {(\cite{sason2015reverse}, Eqn. 10)}. Namely, for any two distributions $G_0^k$, $G_1^k$ with support on the same two point masses $a$ and $b$, with $a > b$, we have:
\begin{align*}
d_{KL}(G_1^k \mid\mid G_0^k) = 2\cdot\frac{(p-q)^2}{q},
\end{align*}
where $p$ and $q$ respectively denote the mass $G_1^k$ and $G_0^k$ place on $a$. We apply this to each regime, to obtain:
\paragraph{Case I: Strictly unidentifiable regime.}
\begin{align*}
\exp\left(-d_{KL}(G_1^{\ui,N}\mid\mid G_0^{\ui,N})\right) \geq \exp\left(-2N\cdot\frac{(\delta_1^{\ui})^2}{1-\rho+\delta_0^{\ui}}\right) \geq \exp\left(-2N\cdot\frac{(\delta_1^{\ui})^2}{1-\rho}\right).
\end{align*}

\paragraph{Case II: Knife-edge regime.}
\begin{align*}
\exp\left(-d_{KL}(G_1^{\ke,N}\mid\mid G_0^{\ke,N})\right) \geq \exp\left(-2N\cdot\frac{(2\delta^{\ke})^2}{1-\rho+\delta^{\ke}}\right) \geq \exp\left(-8N\cdot\frac{(\delta^{\ke})^2}{1-\rho}\right).
\end{align*}

\paragraph{Case III: Strictly identifiable regime.}
\begin{align*}
\exp\left(-d_{KL}(G_1^{\id,N}\mid\mid G_0^{\id,N})\right) \geq \exp\left(-2N\cdot\frac{(2\delta^{\id})^2}{1-\rho+\delta^{\id}}\right) \geq \exp\left(-8N\cdot\frac{(\delta^{\id})^2}{1-\rho}\right).
\end{align*}
\hfill\Halmos
\end{proof}

\section{Analysis of \ALGplus}\label{apx:rcn-plus}

\minedit{
\begin{theorem}
Fix $\delta \in (0,1)$, and let {$\width_k = \sqrt{\frac{\log(2(K-1)/\delta)}{2N_k}}$ for $k \neq K$ and $\width_K = \sqrt{\frac{\log(2/\delta)}{2N_k}}$}. With probability at least $1 - {3}\delta$, {\Cref{alg:newsvendor_plus_plus}} outputs an ordering quantity $\qalg$ such that:
\begin{enumerate}[(i)]
\item if $\Gminus(\qoff_k) \geq \rho + 2 \width_k$ for some $k \in [K]$:
\[ \Regret(\qalg) \leq \lambda(b+h)\sqrt{\frac{\log(2/\delta)}{2\sum_{k\in\mathcal{U}_1}N_k}}, \]
where $\mathcal{U}_1 = \{k: \Gminus(\qoff_k) \geq \rho + 2\width_k\}$.
\item if $\Gminus(\lambda) \geq \rho$ and $\Gminus(\qoff_k) < \rho + 2\width_k$ for all $k \in [K]$:
\[\Regret(\qalg) \leq \lambda(b+h)\max\left\{2\width_K, \sqrt{\frac{\log(2/\delta)}{2\min_{k\in\mathcal{U}_2}N_k}}\right\},\]
where $\mathcal{U}_2 = \{k: \Gminus(\qoff_k) \geq \rho\}$.
\item if {$\Gminus(\lambda) \in [\rho - 2 \width_K, \rho)$}:
\[\Regret(\qalg) \leq \risk + {2 \max\left\{\frac{b}{h}, 1\right\}(M - \lambda)}(b+h) \width_K\]
\item if {$\Gminus(\lambda) < \rho - 2 \width_K$}:
\[\Regret(\qalg) \leq \risk + {\max\left\{\frac{b}{h},1\right\}(M-\lambda)}(b+h)\width_K.\]
\end{enumerate}
\end{theorem}

\begin{proof}{\it Proof.}
Abusing notation, we define $\qcrithat = \frac{bM+h\lambda-(b+h)\Gminushatk{K}(\lambda)M}{(b+h)(1-\Gminushatk{K}(\lambda))}$. We also let $\Nest = \sum_{k\in\Uest}N_k$, \mbox{$\widehat{G}(x) = \frac{1}{\Nest}\sum_{k\in\Uest}\sum_{i\in[N_k]}\Ind{\soff_{ki} \leq x}$}, and $\offeCDFtrue(x) = \frac{1}{\Nest}\sum_{k\in\Uest}\sum_{i\in[N_k]}\Ind{\doff_{ki} \leq x}$.

Recall, by \Cref{prop:regret-decomp}, 
\begin{align}\label{eq:regret-decomp-2}
\Regret(\qalg) \leq \Delta + \sup_{F \in \ambig{\lambda}} C_F(\qalg) - C_F(\qrisk).
\end{align}

Therefore, as in the proof of \Cref{thm:minmax_regret}, we focus on bounding $\sup_{F \in \ambig{\lambda}} C_F(\qalg) - C_F(\qrisk)$. We will rely heavily on the following facts in the remainder of our proof.

\begin{lemma}\label{lem:g_minus_concentration_plus}
Let $\Event = \left\{ \abs{\Gminushatk{k}(\qoff_k) - \Gminus(\qoff_k)} {<} \width_k \text{ for all } k \in [K] \right\}$. Then, $\Pr_G(\Event) \geq 1-{2}\delta$.
\end{lemma}

\Cref{lem:g_minus_concentration_plus} is an analog of \Cref{lem:g_minus_concentration}, where we additionally union bound over all selling seasons $k \in [K]$. It similarly follows from the fact that $\Ind{\soff_{ki} < \qoff_k} = \Ind{\doff_{ki} < \qoff_k}$ for all $k \in [K]$, $i \in [N_k]$. We omit the proof of this fact as such.

With these facts in hand, we prove each of the regret bounds separately, beginning with the two ``extreme'' cases of $\Gminus(\qoff_k) \geq \rho +2\width_k$ for some $k \in [K]$ and {$\Gminus(\lambda) < \rho-2\width_K$}. Moreover, in the remainder of the proof we condition on the good event $\Event$ defined in \Cref{lem:g_minus_concentration_plus}.

\smallskip 

\paragraph{Case I: $\Gminus(\qoff_k) \geq \rho +2\width_k$ for some $k \in [K]$.} In this case, the following facts hold:
\begin{enumerate}
\item $\Gminus(\lambda)\geq \rho$. Therefore, $\qopt_G < \lambda$.
\item Under event $\Event$:
\begin{enumerate}
\item For all $k \in \mathcal{U}_1$, $\Gminushatk{k}(\qoff_k) \geq \rho + \width_k$. Hence, $\Uest \neq \emptyset$, and $\qalg = \inf\left\{x \ \mid \ \widehat{G}(x) \geq \rho\right\} < \min_{k\in\Uest} \qoff_k$, where the strict inequality holds because $\Gminushatk{k}(\qoff_k) > \rho$ for all $k \in \Uest$. 
\item For all $k \in \Uest$, $\Gminus(\qoff_k) \geq \rho$. Therefore, $\qopt_G < \min_{k \in \Uest} \qoff_k$. 
\end{enumerate}
\end{enumerate}

Since $\Gminus(\lambda) \geq \rho$, we can apply \Cref{prop:worst-case-for-identifiable}, obtaining:
\begin{align}\label{eq:plus-likely-identifiable}
    \Regret(\qalg) &= C_G(\qalg)-C_G(\qopt_G).
\end{align}

Suppose first that $\qalg \leq \qopt_G$. By the same arguments as those used to establish \eqref{eq:first-case-a}, we have:
\begin{align*}
C_G(\qalg)-C_G(\qopt_G) &\leq (b+h)(\qopt_G-\qalg)(\rho-G(\qalg)) \\
&\leq (b+h)\lambda(\widehat{G}(\qalg)-G(\qalg)),
\end{align*}
where we use the fact that $\widehat{G}(\qalg) \geq \rho$ by definition, and moreover $\max\{\qopt_G, \qalg\} \leq \lambda \implies \qopt_G - \qalg \leq \lambda$. 

Now, since  $\qalg \leq \qopt_G$ and $\qopt_G < \min_{k \in \Uest}\qoff_k$, $\qalg < \min_{k \in \Uest}\qoff_k$. Hence, $\Ind{\soff_{ki} \leq \qalg} = \Ind{\doff_{ki} \leq \qalg}$ for all $k \in \Uest$, $i \in [N_k]$. Therefore, $\widehat{G}(\qalg) = \offeCDFtrue(\qalg)$. Plugging this into the above, we obtain:
\begin{align*}
C_G(\qalg)-C_G(\qopt_G) &\leq (b+h)\lambda(\offeCDFtrue(\qalg)-G(\qalg)).
\end{align*}
Applying the DKW inequality, we have that with probability at least $1-\delta$, 
\begin{align*}
C_G(\qalg)-C_G(\qopt_G) &\leq (b+h)\lambda \sqrt{\frac{\log(2/\delta)}{2\Nest}}.
\end{align*}

Suppose now that $\qalg > \qopt_G$. In this case, by the same arguments as those used to establish \eqref{eq:use-for-rcn-plus}, we have:
\begin{align*}
C_G(\qalg)-C_G(\qopt_G) &\leq (b+h)(\qalg-\qopt_G)(\Gminus(\qalg)-\rho) \\
&\leq (b+h)(\qalg-\qopt_G)\left(\lim_{x\to\qalg}{\offeCDFtrue}(x)+\sqrt{\frac{\log(2/\delta)}{2\Nest}}-\rho\right)\\
&\leq (b+h)(\qalg-\qopt_G)\left(\lim_{x\to\qalg}{\widehat{G}}(x)+\sqrt{\frac{\log(2/\delta)}{2\Nest}}-\rho\right) \\
&\leq (b+h)\lambda\sqrt{\frac{\log(2/\delta)}{2\Nest}},
\end{align*}
where the second inequality applies the DKW inequality, the third inequality uses the fact that $\Ind{\soff_{ki} \leq x} \geq \Ind{\doff_{ki} \leq x}$ for all $x$, the fourth inequality uses the fact that $\lim_{x\to\qalg}{\widehat{G}}(x) < \rho$ by definition of $\qalg$, and the final inequality uses the fact that $\qalg \leq \lambda$.

Using the fact that $k \in \mathcal{U}_1 \implies k \in {\Uest}$, we have that $\Nest \geq \sum_{k\in\mathcal{U}_1}N_k$. Applying this bound to the above concludes the proof of the first case. 

\smallskip 

\paragraph{Case II: $\Gminus(\lambda) < \rho - 2 \width_K$.}
In this case, by \Cref{thm:minimax-risk-identifiable}, $\qrisk = \qcrit_G$. Moreover, under event $\Event$, our algorithm outputs $\qalg = \qcrithat$. To see why this is the case, observe that, under $\Event$, $\Gminushatk{K}(\lambda) < \rho-\width_K$. It remains to establish that $\Gminushatk{k}(\qoff_k) < \rho + \width_k$ for all $k \leq K-1$. Suppose for contradiction that $\Gminushatk{k}(\qoff_k) \geq \rho + \width_k$ for some $k \leq K-1$. Then, under $\Event$, we must have $\Gminus(\qoff_k) \geq \rho$, a contradiction, since $\Gminus(\lambda) < \rho-2\width_K$ by assumption.

Hence, the same analysis as that of Case II in the proof of \Cref{thm:minmax_regret} applies, and we obtain:
\begin{align}\label{eq:final-case-2a-plus}
\Regret(\qalg) &\leq \risk + \frac{\max\{b,h\}(M-\lambda)}{1-\rho}\width_K,
\end{align}
completing the proof of the second case.

\smallskip 

Having analyzed the two extreme cases, we proceed to the ``hard'' regime, i.e., $\Gminus(\lambda) \in [\rho-2\width_K, \rho+2\width_K)$.

\smallskip

\paragraph{Case III: $\Gminus(\lambda)\in[\rho,\rho+2\width_K)$, $\Gminus(\qoff_k) < \rho + 2\width_k$ for all $k \leq K-1$.} By \Cref{thm:minimax-risk-identifiable}, $\qrisk = \qopt_G$, and $\risk= 0$.  Moreover, given $\Event$, \mbox{$\Gminushatk{k}(\qoff_k) \in {(}\rho - \width_k, \rho + 3 \width_k)$} for all $k \in \mathcal{U}_2$, and $\Gminushat(\qoff_k) < \rho + \width_k$ for all $k \not\in\mathcal{U}_2$.
\begin{enumerate}[a.]
\item Suppose first that $\Gminushatk{k}(\qoff_k) \in [\rho+\width_k,\rho+3\width_k)$ for some $k \in \mathcal{U}_2$. Then, $\Uest \neq \emptyset$, and identical arguments as those used for Case I yield:
\begin{align}\label{eq:case2-saa-bound-plus}
\Regret(\qalg)& \leq (b+h)\lambda\sqrt{\frac{\log(2/\delta)}{2\Nest}}.
\end{align}
Using the fact that $\Nest \geq \min_{k\in\mathcal{U}_2} N_k$ in this case,
we obtain:
\begin{align}\label{eq:case3-saa-bound-plus}
\Regret(\qalg)& \leq (b+h)\lambda\sqrt{\frac{\log(2/\delta)}{2\min_{k\in\mathcal{U}_2}N_k}}.
\end{align}
\item Suppose now that $\Gminushatk{k}(\qoff_k) \in {(}\rho - \width_k, \rho + \width_k)$ for all $k \in \mathcal{U}_2$. In this case, $\qalg = \lambda$. Using the same arguments as those used in Case III of the proof of \Cref{thm:minmax_regret}, we have:
\begin{align*}
 \Regret(\qalg) &\leq (b+h)(\lambda-\qopt_G)(\Gminus(\lambda)-\rho).
\end{align*}
Using the fact that $\qopt_G \in [0,\lambda)$ for $\Gminus(\lambda) \geq \rho$, and $\Gminus(\lambda) \leq \rho + 2\width_K$ by assumption, we obtain the final bound of: 
\[\Regret(\qalg) \leq 2\lambda(b+h)\width_K.\]
\end{enumerate}
\smallskip 
\paragraph{Case IV: $\Gminus(\lambda) \in [\rho-2\width_K,\rho)$.} By \Cref{thm:minimax-risk-identifiable}, $\qrisk = \qcrit_G$. Moreover, under $\Event$, \mbox{$\Gminushatk{K}(\lambda) \in {(}\rho - 3 \width_K, \rho + \width_K)$}, and $\Gminushatk{k}(\qoff_k) < \rho + \width_k$ for all $k \leq K-1$.
\begin{enumerate}[a.]
\item Suppose first that $\Gminushatk{K}(\lambda) < \rho-\width_K$.
In this case, the algorithm outputs $\qalg=\qcrithat$. By the same arguments as those used in Case II, we obtain:
\begin{align*}
\Regret(\qalg) {\leq \risk + \frac{\max\{b,h\}(M-\lambda)}{(1-\rho)}\width_K}.
\end{align*}
\item Suppose now that $\Gminushatk{K}(\lambda) \geq \rho-\width_K$. In this case, $\qalg = \lambda$. By the same arguments as those used in Case IV of the proof of \Cref{thm:minmax_regret}, we have:
\begin{align*}
\Regret(\qalg)\leq \risk + \max\{b,h\}\frac{2(M-\lambda)}{1-\rho}\width_K.
\end{align*}
\end{enumerate}
\hfill\Halmos
\end{proof}
}

\newpage 
\section{Computational Experiments: Additional Details}
\label{sec:experiment_details}

\subsection{Simulation Information}
\label{app:simulation_details}

\subsubsection*{Computing Infrastructure:} The experiments were conducted on a personal computer with an Apple M2, 8-core processor and 16.0GB of RAM.

\medskip 

\subsection{Synthetic Experiments: Supplemental Results}

\subsubsection{Impact of the Observable Boundary.\label{apx:observe-boundary}} 
\Cref{tab:apx-rel-vs-lam-75,tab:apx-rel-vs-lam-98} show the impact of $\lambda$ on policy performance, for $\rho \in \{ 0.75, 0.98\}$. Trends are identical to those identified for $\rho = 0.9$. Note that, for $\rho = 0.98$, all non-robust benchmarks exhibit extremely high relative regret for small values of $\lambda$, as the worst-case distribution forces these policies to incur exorbitant underage costs in the unidentifiable regime.

\begin{table}
\setlength\tabcolsep{3pt}
\small
\centering
\begin{subtable}[b]{\linewidth}
\centering
\begin{tabular}{lrrr!{\vrule width 1.5pt}rrrrr}
\toprule
$\lambda$ & 44.50 & 57.21 & 69.93 & 82.64 & 95.36 & 108.07 & 120.79 & 133.50 \\
\midrule
\TrueSAA &  &  &  & 0.0 (0.0\%) & 0.0 (0.1\%) & 0.0 (0.1\%) & 0.0 (0.1\%) & 0.0 (0.1\%) \\
\midrule
\ALG & 4.5 (3.0\%) & 5.4 (5.0\%) & 11 (26\%) & 0.2 (0.4\%) & 0.1 (0.2\%) & 0.1 (0.2\%) & 0.1 (0.1\%) & 0.1 (0.2\%) \\
\CensoredSAA & 184 (120\%) & 74 (68\%) & 8.8 (21\%) & 0.1 (0.2\%) & 0.1 (0.1\%) & 0.1 (0.2\%) & 0.1 (0.1\%) & 0.0 (0.1\%) \\
\KM & 184 (120\%) & 74 (68\%) & 8.8 (21\%) & 0.0 (0.1\%) & 0.0 (0.1\%) & 0.1 (0.1\%) & 0.0 (0.1\%) & 0.0 (0.1\%) \\
\IgnorantSAA & 184 (120\%) & 80 (74\%) & 21 (49\%) & 12 (31\%) & 9.0 (24\%) & 7.3 (19\%) & 5.6 (15\%) & 4.8 (13\%) \\
\SubsampleSAA & 209 (137\%) & 97 (89\%) & 27 (63\%) & 10.0 (27\%) & 3.9 (10\%) & 2.7 (7.2\%) & 2.3 (6.2\%) & 2.3 (6.2\%) \\
\bottomrule
\end{tabular}
\caption{Uniform, $\qopt_G = 74$}
\label{tab:uniform_75}       
\end{subtable}

\medskip
\begin{subtable}[b]{\linewidth}
\centering
\begin{tabular}{lr!{\vrule width 1.5pt}rrrrrrr}
\toprule
$\lambda$ & 92.07 & 118.38 & 144.68 & 170.99 & 197.30 & 223.60 & 249.91 & 276.22 \\
\midrule
\TrueSAA & & 0.1 (0.1\%) & 0.1 (0.1\%) & 0.1 (0.1\%) & 0.1 (0.1\%) & 0.1 (0.1\%) & 0.1 (0.1\%) & 0.1 (0.1\%) \\
\midrule
\ALG & 11 (23\%) & 0.4 (0.3\%) & 0.7 (0.7\%) & 0.3 (0.3\%) & 0.2 (0.2\%) & 0.3 (0.3\%) & 0.2 (0.2\%) & 0.2 (0.2\%) \\
\CensoredSAA & 13 (27\%) & 0.2 (0.2\%) & 0.2 (0.2\%) & 0.2 (0.2\%) & 0.2 (0.2\%) & 0.1 (0.1\%) & 0.2 (0.2\%) & 0.1 (0.1\%) \\
\IgnorantSAA & 34 (70\%) & 19 (17\%) & 13 (12\%) & 7.6 (6.9\%) & 7.5 (6.7\%) & 5.5 (5.0\%) & 3.3 (2.9\%) & 1.4 (1.3\%) \\
\KM & 13 (27\%) & 0.2 (0.2\%) & 0.2 (0.2\%) & 0.2 (0.2\%) & 0.2 (0.1\%) & 0.2 (0.1\%) & 0.2 (0.1\%) & 0.1 (0.1\%) \\
\SubsampleSAA & 46 (95\%) & 26 (23\%) & 18 (17\%) & 13 (12\%) & 11 (9.7\%) & 8.3 (7.5\%) & 6.4 (5.7\%) & 4.5 (4.1\%) \\
\bottomrule
\end{tabular}
\caption{Exponential, $\qopt_G = 110.90$}
\label{tab:exponential_75}       
\end{subtable}

\medskip

\begin{subtable}[b]{\linewidth}
\centering
\begin{tabular}{lrrrr!{\vrule width 1.5pt}rrrr}
\toprule
$\lambda$ & 46 & 59.14 & 72.29 & 85.43 & 98.57 & 111.71 & 124.86 & 138 \\
\midrule
\TrueSAA & &  &  &  & 0.0 (0.1\%) & 0.0 (0.2\%) & 0.0 (0.1\%) & 0.0 (0.2\%) \\
\midrule
\ALG & 0.0 (0.0\%) & 0.5 (0.2\%) & 2.4 (1.4\%) & 1.0 (6.8\%) & 0.0 (0.2\%) & 0.0 (0.3\%) & 0.0 (0.3\%) & 0.0 (0.3\%) \\
\CensoredSAA & 628 (300\%) & 590 (297\%) & 380 (219\%) & 0.6 (4.4\%) & 0.0 (0.2\%) & 0.0 (0.3\%) & 0.0 (0.2\%) & 0.0 (0.2\%) \\
\KM & 628 (300\%) & 590 (297\%) & 380 (219\%) & 0.6 (4.4\%) & 0.0 (0.2\%) & 0.0 (0.3\%) & 0.0 (0.2\%) & 0.0 (0.3\%) \\
\IgnorantSAA & 628 (300\%) & 590 (297\%) & 380 (219\%) & 3.6 (26\%) & 3.0 (26\%) & 2.8 (25\%) & 2.2 (19\%) & 2.1 (18\%) \\
\SubsampleSAA & 628 (300\%) & 593 (298\%) & 383 (221\%) & 2.6 (18\%) & 0.1 (0.5\%) & 0.1 (1.1\%) & 0.2 (1.6\%) & 0.2 (1.5\%) \\
\bottomrule
\end{tabular}
\caption{Poisson, $\qopt_G = 86$}
\label{tab:poisson_75}       
\end{subtable}
\caption{
Impact of $\lambda$ on policy performance. Here, $\rho = 0.75$. Values to the left of the thick vertical line correspond to the unidentifiable regime, where we report $\Regret(q^\pi) - \risk$ and $\mathcal{R}^{ui}(q^\pi)\%$; values to the right of the thick vertical line correspond to the identifiable regime, where we report $\UncensoredRegret(q^\pi)$ and $\mathcal{R}^{id}(q^\pi)\%$.}\label{tab:apx-rel-vs-lam-75}
\end{table}

\begin{table*}[!t]
\setlength\tabcolsep{2pt} 
\small
\centering

\begin{subtable}[b]{0.95\linewidth}
\centering
\resizebox{\linewidth}{!}{%
\begin{tabular}{lrrrrr!{\vrule width 1.5pt}rrr}
\toprule
$\lambda$ & 44.50 & 57.21 & 69.93 & 82.64 & 95.36 & 108.07 & 120.79 & 133.50 \\
\midrule
\TrueSAA &  &  &  &  & & 0.0 (0.0\%) & 0.0 (0.0\%) & 0.0 (0.1\%) \\
\midrule
\ALG & 4.7 (1.7\%) & 6.9 (2.7\%) & 6.9 (2.9\%) & 11 (5.4\%) & 115 (100\%) & 0.0 (0.1\%) & 0.1 (0.1\%) & 0.1 (0.3\%) \\
\CensoredSAA & 7165 (2651\%) & 5102 (2001\%) & 3335 (1401\%) & 1605 (751\%) & 116 (101\%) & 1.2 (2.4\%) & 0.9 (1.9\%) & 2.5 (5.1\%) \\
\KM & 7165 (2651\%) & 5102 (2001\%) & 3335 (1401\%) & 1605 (751\%) & 116 (101\%) & 0.0 (0.1\%) & 0.0 (0.1\%) & 0.1 (0.2\%) \\
\IgnorantSAA & 7165 (2651\%) & 5102 (2001\%) & 3335 (1401\%) & 1605 (751\%) & 116 (101\%) & 1.2 (2.4\%) & 1.3 (2.7\%) & 1.3 (2.7\%) \\
\SubsampleSAA & 7207 (2666\%) & 5135 (2014\%) & 3377 (1419\%) & 1629 (762\%) & 121 (105\%) & 0.4 (0.9\%) & 0.7 (1.3\%) & 0.7 (1.5\%) \\
\bottomrule
\end{tabular}}
\caption{Uniform, $\qopt_G = 97$}
\label{tab:uniform_98}
\end{subtable}

\vspace{0.5em}

\begin{subtable}[b]{0.95\linewidth}
\centering
\resizebox{\linewidth}{!}{%
\begin{tabular}{lrrrrrrrr}
\toprule
$\lambda$ & 92.07 & 118.38 & 144.68 & 170.99 & 197.30 & 223.60 & 249.91 & 276.22 \\
\midrule
\TrueSAA &  &  & & & & & & \\
\midrule
\ALG & 6.2 (2.8\%) & 7.6 (4.0\%) & 7.6 (4.8\%) & 7.7 (6.0\%) & 69 (71\%) & 111 (163\%) & 48 (118\%) & 11 (59\%) \\
\CensoredSAA & 3232 (1482\%) & 1956 (1038\%) & 1138 (720\%) & 626 (490\%) & 316 (324\%) & 140 (206\%) & 51 (124\%) & 13 (71\%) \\
\IgnorantSAA & 3232 (1482\%) & 1956 (1038\%) & 1138 (720\%) & 626 (490\%) & 316 (324\%) & 140 (206\%) & 55 (136\%) & 30 (169\%) \\
\KM & 3232 (1482\%) & 1956 (1038\%) & 1138 (720\%) & 626 (490\%) & 316 (324\%) & 140 (206\%) & 49 (120\%) & 11 (61\%) \\
\SubsampleSAA & 3326 (1525\%) & 2053 (1090\%) & 1246 (788\%) & 737 (576\%) & 424 (435\%) & 246 (362\%) & 144 (352\%) & 101 (564\%) \\
\bottomrule
\end{tabular}}
\caption{Exponential, $\qopt_G = 312.96$}
\label{tab:exponential_98}
\end{subtable}

\vspace{0.5em}

\begin{subtable}[b]{0.95\linewidth}
\centering
\resizebox{\linewidth}{!}{%
\begin{tabular}{lrrrrr!{\vrule width 1.5pt}rrr}
\toprule
$\lambda$ & 46 & 59.14 & 72.29 & 85.43 & 98.57 & 111.71 & 124.86 & 138 \\
\midrule
\TrueSAA &  &  &  &  & & 0.1 (0.6\%) & 0.2 (0.7\%) & 0.1 (0.5\%) \\
\midrule
\ALG & 0.0 (0.0\%) & 0.5 (0.2\%) & 2.5 (1.0\%) & 8.1 (3.7\%) & 3.1 (16\%) & 0.2 (1.0\%) & 0.2 (1.1\%) & 0.2 (1.0\%) \\
\CensoredSAA & 13397 (4900\%) & 12652 (4857\%) & 9578 (3887\%) & 2717 (1226\%) & 3.3 (16\%) & 0.6 (2.5\%) & 0.9 (4.2\%) & 1.9 (8.4\%) \\
\KM & 13397 (4900\%) & 12652 (4857\%) & 9578 (3887\%) & 2717 (1226\%) & 3.2 (16\%) & 0.2 (1.0\%) & 0.2 (1.1\%) & 0.2 (0.9\%) \\
\IgnorantSAA & 13397 (4900\%) & 12652 (4857\%) & 9578 (3887\%) & 2717 (1226\%) & 4.8 (24\%) & 1.7 (7.8\%) & 1.7 (7.5\%) & 1.5 (6.7\%) \\
\SubsampleSAA & 13397 (4900\%) & 12680 (4868\%) & 9589 (3892\%) & 2722 (1229\%) & 5.2 (26\%) & 0.3 (1.4\%) & 0.4 (1.7\%) & 0.5 (2.4\%) \\
\bottomrule
\end{tabular}}
\caption{Poisson, $\qopt_G = 99$}
\label{tab:poisson_98}
\end{subtable}
\caption{
Impact of $\lambda$ on policy performance. Here, $\rho = 0.98$. Values to the left of the thick vertical line correspond to the unidentifiable regime, where we report $\Regret(q^\pi) - \risk$ and $\mathcal{R}^{ui}(q^\pi)\%$; values to the right of the thick vertical line correspond to the identifiable regime, where we report $\UncensoredRegret(q^\pi)$ and $\mathcal{R}^{id}(q^\pi)\%$.}
\label{tab:apx-rel-vs-lam-98}
\end{table*}

\clearpage

\subsubsection{Impact of Demand Variability.\label{apx:demand-var}}  In \cref{tab:normal_full} we illustrate the impact of demand variability for additional values of $\lambda$, fixing $\rho = 0.9$.  (Results are identical for $\rho \in \{0.75,0.98\}$.)

When $\lambda = 59.23$, the problem is easily unidentifiable across all values of $\sigma$. Our algorithm achieves less than 1.5\% relative regret in this regime, with all other algorithms achieving over 598\% relative regret.  When $\lambda = 177.69$, on the other hand, the problem is easily identifiable for all values of $\sigma$, with \ALG achieving a relative regret of less than 0.5\% across all such values.

\begin{table}[!t]
    \setlength\tabcolsep{3pt}
    \small
\resizebox{\textwidth}{!}{%
\begin{tabular}{lrrrrr!{\vrule width 2pt}rrrrr}
\toprule
$\lambda$ & \multicolumn{5}{c}{59.23} & \multicolumn{5}{c}{177.69} \\
\midrule
$\sigma$ & 20 & 25 & 30 & 35 & 40 & 20 & 25 & 30 & 35 & 40 \\
\midrule
\TrueSAA &  &  &  &  &  & 0.1 (0.2\%) & 0.1 (0.1\%) & 0.1 (0.1\%) & 0.1 (0.2\%) & 0.1 (0.2\%) \\
\midrule
\ALG & 2.1 (0.9\%) & 2.6 (1.1\%) & 3.1 (1.4\%) & 3.2 (1.4\%) & 3.0 (1.3\%) & 0.1 (0.4\%) & 0.1 (0.3\%) & 0.2 (0.3\%) & 0.2 (0.4\%) & 0.2 (0.3\%) \\
\CensoredSAA & 1760 (751\%) & 1620 (697\%) & 1512 (656\%) & 1428 (623\%) & 1363 (598\%) & 0.1 (0.3\%) & 0.1 (0.3\%) & 0.3 (0.5\%) & 0.3 (0.5\%) & 0.3 (0.4\%) \\
\KM & 1760 (751\%) & 1620 (697\%) & 1512 (656\%) & 1428 (623\%) & 1363 (598\%) & 0.1 (0.3\%) & 0.1 (0.2\%) & 0.2 (0.3\%) & 0.2 (0.3\%) & 0.2 (0.2\%) \\
\IgnorantSAA & 1760 (751\%) & 1620 (697\%) & 1512 (656\%) & 1428 (623\%) & 1363 (598\%) & 2.6 (7.4\%) & 3.8 (8.5\%) & 4.3 (8.1\%) & 5.4 (8.9\%) & 7.5 (11\%) \\
\SubsampleSAA & 1773 (756\%) & 1636 (704\%) & 1533 (665\%) & 1453 (635\%) & 1393 (612\%) & 1.0 (2.8\%) & 1.4 (3.2\%) & 1.4 (2.7\%) & 2.5 (4.1\%) & 3.3 (4.7\%) \\
\bottomrule
\end{tabular}
}
        \caption{
        Impact of $\sigma$ on policy performance. Here, $\rho = 0.9$. At $\lambda = 59.23$, the problem is unidentifiable for all values of $\sigma$; in this case we report $\Regret(q^\pi) - \risk$ and $\mathcal{R}^{ui}(q^\pi)\%$. At $\lambda=177.69$, the problem is identifiable for all values of $\sigma$; in this case we report $\UncensoredRegret(q^\pi)$ and $\mathcal{R}^{id}(q^\pi)\%$.}
    \label{tab:normal_full}
\end{table}

\clearpage

\subsubsection{\minedit{Vanilla Regret Comparison in the ``Clearly Unidentifiable'' Regime}.}
\label{app:synthetic_vanilla_regret}
While \ALG achieves the optimal worst-case regret guarantee in the unidentifiable regime, a natural question is whether its strong performance extends to {\it vanilla regret} (see \cref{eq:uncensored-regret}) in this regime, evaluated with respect to the true distribution $G$. In this section we provide numerical support for the idea that vanilla regret is not the meaningful regret metric in the unidentifiable regime. In particular, our results will illustrate the intuitive fact that the performance of all non-robust algorithms is poor in the unidentifiable regime, but improves as $\lambda$ increases. By contrast, the performance of \ALG is highly dependent on the gap between $\qopt_G$ and $\qcrit_G$, which in turn depends on $\lambda$ and the tightness of the upper bound $M$.

\paragraph{Setup.} We consider the {uniform}, exponential, and Poisson distributions described in \Cref{sec:experiments_synthetic}. Across all experiments, we let $h = 1$ and $b = 9$ (i.e., $\rho = 0.9$). We moreover let $M = 100$ in the case of the uniform distribution, and $M = 325$ for the exponential and Poisson distributions. \minedit{The sampling of $\soff$ and the order levels is identical to that of \cref{sec:experiment_setup}, where we set $N = 500$.} \minedit{We report the vanilla regret of the five policies of interest in the ``clearly unidentifiable'' regime, where $\lambda \ll \qopt_G$, in \Cref{tab:vanilla-rel-vs-lam-09}. We study the knife-edge regime, where $\lambda \approx \qopt_G$ in Appendix \ref{apx:knife-edge}. Finally, we omit the ``clearly identifiable'' regime, where $\lambda \gg \qopt_G$, as this regime is already covered by \Cref{tab:rel-vs-lam-09}.} 

\paragraph{Results.} For the uniform distribution (\Cref{tab:uniform_vanilla}), it is easy to show that $\qopt_G = \qcrit_G$ {across all values of $\lambda$}. Since \ALG outputs an empirical estimate of $\qcrit_G$ in this setting, it achieves near-zero vanilla regret across all values of $\lambda$. In the worst case, its additive vanilla regret is 0.9 (corresponding to a relative vanilla regret of 2.1\%) when $\lambda = 82.64$.  In contrast, the other policies perform very poorly for small values of $\lambda$, with a relative vanilla regret of 300\% in the worst case, achieved by \SubsampleSAA. Since \CensoredSAA, \KM and \IgnorantSAA output $\lambda$ in almost all replications in the unidentifiable regime, their performance improves as $\lambda$ increases, thereby approaching $\qopt_G$. Similarly, the performance of \SubsampleSAA improves, since the fraction of samples for which $d_{ki} < \qoff_k$ increases with $\lambda$.

Under the exponential distribution (\Cref{tab:exponential_vanilla}), \ALG continues to perform well in the unidentifiable regime, despite the fact that $\qopt_G \neq \qcrit_G$ {($\qopt_G = 184.21$ and $\qcrit_G \in \{251.32, 234.17, 214.81, 194.24\}$ for increasing unidentifiable values of $\lambda$)}. Its cost exceeds the optimal newsvendor cost by 12\% in the worst case, with improved performance as $\lambda$ increases and $\qcrit_G$ approaches $\qopt_G$. By contrast, the remaining policies exhibit significantly higher regret (up to $79\%$ for \SubsampleSAA), as for the uniform distribution.

Under the Poisson distribution (\Cref{tab:poisson_vanilla}), the difference between $\qopt_G$ and $\qcrit_G$ is significantly higher than in the previous two cases {($\qopt_G = 92$ versus $\qcrit_G \in \{297.10, 298.14, 293.32, 234.82\}$ for increasing unidentifiable $\lambda$)}. The gap between $\qcrit_G$ and $\qopt_G$ --- explained by the very loose upper bound $M$ on $\qopt_G$ --- results in the poor performance of \ALG; in the best case ($\lambda = 85.43$), its cost is over 9 times the optimal newsvendor cost. 
Finally, we note that when $\lambda = 46$, our algorithm --- which outputs an estimate of $\qcrit_G$ --- outperforms all baselines. This is despite the fact that $\qcrit_G$ is over three times as high as $\qopt_G$ for this value of $\lambda$, and can be explained by the fact that $\rho = 0.9$, meaning that lost sales are far more heavily penalized than overage. Consequently, over-ordering relative to $\qopt_G$ mitigates expected cost more effectively than under-ordering by outputting $\lambda$, which the baselines do. As $\lambda$ increases and approaches $\qopt_G$, this advantage diminishes, since $\lambda$ approaches $\qopt_G$ more rapidly than $\qcrit_G$.

\begin{table}[!ht]
\setlength\tabcolsep{3pt}
\small
\begin{subtable}[b]{\linewidth}
\centering
\begin{tabular}{lrrrr}
\toprule
$\lambda$ & 44.50 & 57.21 & 69.93 & 82.64 \\
\midrule
\TrueSAA & 0.1 (0.2\%) & 0.1 (0.2\%) & 0.1 (0.2\%) & 0.1 (0.2\%) \\
\midrule
\ALG & 0.0 (0.1\%) & 0.1 (0.1\%) & 0.1 (0.1\%) & 0.9 (2.1\%) \\
\CensoredSAA & 101 (225\%) & 52 (115\%) & 19 (41\%) & 2.1 (4.7\%) \\
\KM & 101 (225\%) & 52 (115\%) & 19 (41\%) & 2.1 (4.7\%) \\
\IgnorantSAA & 101 (225\%) & 52 (115\%) & 19 (41\%) & 5.2 (12\%) \\
\SubsampleSAA & 135 (300\%) & 85 (188\%) & 45 (100\%) & 19 (42\%) \\
\bottomrule
\end{tabular}
\caption{Uniform, $\qopt_G = 89$}
\label{tab:uniform_vanilla}       
\end{subtable}
\medskip
\begin{subtable}[b]{\linewidth}
\centering
\begin{tabular}{lrrrr}
\toprule
$\lambda$ & 92.07 & 118.38 & 144.68 & 170.99 \\
\midrule
\TrueSAA & 0.6 (0.3\%) & 0.6 (0.3\%) & 0.5 (0.3\%) & 0.5 (0.3\%) \\
\midrule
\ALG & 21 (12\%) & 13 (7.0\%) & 9.6 (5.2\%) & 1.3 (0.7\%) \\
\CensoredSAA & 81 (44\%) & 37 (20\%) & 12 (6.4\%) & 1.6 (0.9\%) \\
\KM & 81 (44\%) & 37 (20\%) & 12 (6.4\%) & 1.5 (0.8\%) \\
\IgnorantSAA & 81 (44\%) & 37 (20\%) & 24 (13\%) & 26 (14\%) \\
\SubsampleSAA & 146 (79\%) & 103 (56\%) & 74 (40\%) & 54 (29\%) \\
\bottomrule
\end{tabular}
\caption{Exponential, $\qopt_G = 184.21$}
\label{tab:exponential_vanilla}       
\end{subtable}
\medskip
\begin{subtable}[b]{\linewidth}
\centering
\begin{tabular}{lrrrr}
\toprule
$\lambda$ & 46 & 59.14 & 72.29 & 85.43 \\
\midrule
\TrueSAA & 0.0 (0.2\%) & 0.0 (0.1\%) & 0.0 (0.2\%) & 0.0 (0.2\%) \\
\midrule
\ALG & 201 (1251\%) & 202 (1258\%) & 197 (1228\%) & 138 (857\%) \\
\CensoredSAA & 290 (1805\%) & 172 (1069\%) & 63 (389\%) & 4.6 (29\%) \\
\KM & 290 (1805\%) & 172 (1069\%) & 63 (389\%) & 4.6 (29\%) \\
\IgnorantSAA & 290 (1805\%) & 172 (1069\%) & 63 (389\%) & 4.6 (29\%) \\
\SubsampleSAA & 290 (1806\%) & 179 (1114\%) & 65 (405\%) & 8.5 (53\%) \\
\bottomrule
\end{tabular}
\caption{Poisson, $\qopt_G = 92$}
\label{tab:poisson_vanilla}       
\end{subtable}
\centering
\caption{Comparison of policies' vanilla regret in the \minedit{``clearly unidentifiable''} regime. Here, $\rho = 0.9$.}
\label{tab:vanilla-rel-vs-lam-09}
\end{table}

\clearpage

\minedit{\subsubsection{Algorithm Performance in the ``Knife-Edge'' Regime.}\label{apx:knife-edge}
We next closely study the impact of $N$ and $\lambda$ on the performance of \ALG, \CensoredSAA, and \KM in the knife-edge regime, where $\lambda$ lies near $\qopt_G$. (We omit \SubsampleSAA and \IgnorantSAA, given their clear suboptimality.) We consider four equally spaced values of $\lambda \in [0.95\qopt_G, 1.05\qopt_G]$. For each value of $\lambda$, we report algorithm performance for $N \in \{100, 250, 500\}$.

\smallskip 

\paragraph{Comparison of Worst-Case Regret.} We first compare the worst-case regret of our three policies in \Cref{tab:combined_ke_results}.  Recall, for $\lambda < \qopt_G$, we report  $\Regret(q^\pi)-\risk$, while in the identifiable regime we report $\UncensoredRegret(q^\pi)$.

\smallskip 

\noindent\textbf{Impact of $\lambda$.} We first fix $N = 500$ and compare algorithm performance as $\lambda$ increases. In the case of the uniform and exponential distributions (\Cref{tab:uniform_ke_worst_case,tab:exponential_ke_worst_case}), we observe that \ALG slightly outperforms \KM and \CensoredSAA for $\lambda < \qopt_G$, though the difference in regret is quite small. This is because, for all of these values of $\lambda$, \ALG classifies the problem as knife-edge (thereby outputting $\lambda$) in the vast majority of replications. \KM and \CensoredSAA similarly output $\lambda$ in the majority of replications. \ALG obtains a slight edge due to the minority of replications in which it correctly classifies the problem as unidentifiable, in which case it outputs an estimate of $\qcrit_G$. (For instance, this occurs in 30\% of replications when $\lambda = 84.55$ in the case of the uniform distribution.) We moreover observe an improvement in all three policies as they approach the boundary. This is because the distance between $\qcrit_G$ and $\lambda$ is decreasing in $\lambda$ when $\Gminus(\lambda) < \rho$; hence outputting $\lambda$ in the majority of replications is less costly. While we observe an improvement in all policies as $\lambda$ increases in the case of the Poisson distribution as well (\Cref{tab:poisson_ke_worst_case}), for $\lambda = 87.40$ we remark that the regret of \ALG is less than one-tenth that of \KM and \CensoredSAA. This follows from the fact that our algorithm correctly classifies the problem as unidentifiable in 99\% of replications.

To the right of the observable boundary, while \KM and \CensoredSAA are able to correctly output an estimate of $\qopt_G$, \ALG continues to output $\lambda$ in a majority of replications, for both the uniform and exponential distributions. However, given that $\lambda$ is close to $\qopt_G$ in this regime, it still exhibits strong performance, with a slight decrease as $\lambda$ increases, and the inability to classify the instance becomes more costly as a result. The Poisson distribution exhibits slightly different behavior as $\lambda$ increases. In this case, when $\lambda = 96.60$ the algorithm correctly classifies the problem as identifiable in 100\% of replications, resulting in a {\it decrease} in vanilla regret as $\lambda$ increases.

\smallskip 

\noindent\textbf{Impact of $N$.} For fixed $\lambda$, the worst-case regret of \ALG weakly decreases in $N$ in this regime. In the case of the uniform and exponential distributions, this is driven by the improvement in classification rates when $\lambda < \qopt_G$. For instance, under the uniform distribution, when $\lambda = 84.55$, the fraction of replications in which \ALG correctly classifies the problem as unidentifiable increases from 4\% when $N = 250$ to 30\% when $N = 500$. The improvement is less pronounced when $\lambda > \qopt_G$; the reason for this is that an increase in $N$ does not have a significant impact on the classification rate of \ALG in this regime. These results contrast with the Poisson distribution, where, at both ``knife-edge extremes'' of $\lambda \in \{87.4,96.6\}$, an increase in $N$ has a pronounced impact on the classification rate. In particular, for $\lambda = 87.4$, \ALG goes from correctly classifying the problem as unidentifiable in 18\% of replications when $N = 100$, to 99\% of replications when $N = 500$. Similarly, when $\lambda = 96.6$ the classification rate improves from 0\% to 100\%; here, however, there is no correspondingly large decrease in vanilla regret, since outputting $\lambda$ already achieves relatively low regret. The improvement in classification rate is not as significant for $\lambda \in \{90.35,93.53\}$; here, we conjecture that we would have to increase $N$ significantly (e.g., $N = 1000$) in order to observe a decrease in regret. Finally, we briefly remark on the impact of $N$ on the performance of \KM and \CensoredSAA when $\lambda < \qopt_G$. (When $\lambda > \qopt_G$, the fact that an increase in sample size improves vanilla regret is an expected and well-studied phenomenon.) This improvement follows from the fact that, as $N$ increases, the fraction of replications in which both algorithms output $\lambda$ increases (as $\Gminushat(\lambda) > \rho$ becomes an increasingly rare event). Since $\qcrit_G > \lambda$ in this regime, outputting $\lambda$ more frequently results in a performance improvement.

In summary, these results show that, while the knife-edge regime is the most difficult in terms of classification, all algorithms exhibit strong performance in the majority of tested instances. The reason for this is that these algorithms frequently output $\lambda$ in this setting (albeit for different reasons); by definition of the knife-edge regime, this quantity is guaranteed to be close to the optimum (be it $\qcrit_G$ or $\qopt_G$). Our results for the Poisson distribution, on the other hand, establish that the strong performance of \KM and \CensoredSAA in this regime is not robust to the true underlying demand distribution.

\begin{table}[t]
\centering
\small
\setlength\tabcolsep{3pt}
\minedit{
\begin{subtable}[b]{\linewidth}
\begin{tabular}{ll|rr!{\vrule width 1.5pt}rr}
\toprule
Algorithm & $N$ & 84.55 & 87.51 & 90.48 & 93.45 \\
\midrule

\multirow{3}{*}{\ALG}
& 100  & 2.6 (50\%) & 0.4 (19\%) & 0.0 (0.1\%) & 0.8 (1.7\%)  \\
& 250  & 2.6 (50\%) & 0.4 (19\%) & 0.0 (0.1\%) & 0.8 (1.7\%)  \\
& 500  & 2.1 (42\%) & 0.4 (19\%) & 0.0 (0.1\%) & 0.7 (1.6\%)  \\

\midrule

\multirow{3}{*}{\KM}
& 100  & 2.7 (52\%) & 0.6 (27\%) & 0.3 (0.7\%) & 0.5 (1.0\%)  \\
& 250  & 2.6 (50\%) & 0.5 (22\%) & 0.1 (0.2\%) & 0.2 (0.3\%)  \\
& 500  & 2.6 (50\%) & 0.4 (20\%) & 0.0 (0.1\%) & 0.1 (0.1\%)  \\

\midrule

\multirow{3}{*}{\CensoredSAA}
& 100  & 2.7 (52\%) & 0.7 (31\%) & 0.5 (1.1\%) & 0.6 (1.2\%)  \\
& 250  & 2.6 (50\%) & 0.5 (22\%) & 0.1 (0.2\%) & 0.2 (0.4\%)  \\
& 500  & 2.6 (50\%) & 0.4 (20\%) & 0.1 (0.1\%) & 0.1 (0.1\%) \\

\bottomrule
\end{tabular}
\centering
\caption{Uniform, $\qopt_G = 89$}
\label{tab:uniform_ke_worst_case}
\end{subtable}

\begin{subtable}[b]{\linewidth}
\begin{tabular}{ll|rr!{\vrule width 1.5pt}rr}
\toprule
Algorithm & $N$ & 174.96 & 181.10 & 187.24 & 193.38  \\
\midrule

\multirow{3}{*}{\ALG}
& 100  & 2.8 (17\%) & 1.1 (20\%) & 0.0 (0.0\%) & 0.4 (0.2\%)  \\
& 250  & 2.1 (13\%) & 0.3 (5.5\%) & 0.0 (0.0\%) & 0.4 (0.2\%)  \\
& 500  & 2.1 (13\%) & 0.3 (5.5\%) & 0.0 (0.0\%) & 0.4 (0.2\%)  \\

\midrule

\multirow{3}{*}{\KM}
& 100  & 3.2 (20\%) & 1.9 (35\%) & 1.2 (0.7\%) & 2.1 (1.1\%)  \\
& 250  & 2.4 (15\%) & 0.8 (15\%) & 0.5 (0.3\%) & 0.6 (0.4\%)  \\
& 500  & 2.3 (14\%) & 0.6 (12\%) & 0.2 (0.1\%) & 0.3 (0.2\%)  \\

\midrule

\multirow{3}{*}{\CensoredSAA}
& 100  & 3.8 (23\%) & 3.1 (59\%) & 1.9 (1.0\%) & 2.8 (1.5\%)  \\
& 250  & 2.7 (16\%) & 1.0 (19\%) & 0.8 (0.4\%) & 0.8 (0.4\%)  \\
& 500  & 2.3 (14\%) & 0.7 (13\%) & 0.3 (0.2\%) & 0.4 (0.2\%)  \\

\bottomrule
\end{tabular}
\centering
\caption{Exponential, $\qopt_G = 184.21$}
\label{tab:exponential_ke_worst_case}
\end{subtable}

\begin{subtable}[b]{\linewidth}
\begin{tabular}{ll|rr!{\vrule width 1.5pt}rr}
\toprule
Algorithm & $N$ & 87.40 & 90.46 & 93.53 & 96.60  \\
\midrule

\multirow{3}{*}{\ALG}
& 100  & 101 (86\%) & 9.6 (23\%) & 0.3 (2.1\%) & 1.9 (12\%)  \\
& 250  & 30 (26\%) & 8.7 (21\%) & 0.3 (2.1\%) & 0.7 (4.5\%)  \\
& 500  & 9.7 (8.2\%) & 9.6 (23\%) & 0.3 (2.0\%) & 0.0 (0.3\%)  \\

\midrule

\multirow{3}{*}{\KM}
& 100  & 117 (99\%) & 8.8 (21\%) & 0.2 (1.1\%) & 0.3 (1.7\%)  \\
& 250  & 117 (99\%) & 8.8 (21\%) & 0.1 (0.6\%) & 0.1 (0.5\%)  \\
& 500  & 117 (99\%) & 8.7 (21\%) & 0.0 (0.3\%) & 0.0 (0.3\%)  \\

\midrule

\multirow{3}{*}{\CensoredSAA}
& 100  & 117 (99\%) & 8.8 (21\%) & 0.2 (1.1\%) & 0.3 (1.8\%)  \\
& 250  & 117 (99\%) & 8.8 (21\%) & 0.1 (0.6\%) & 0.1 (0.5\%)  \\
& 500  & 117 (99\%) & 8.7 (21\%) & 0.1 (0.3\%) & 0.0 (0.3\%)  \\

\bottomrule
\end{tabular}
\centering
\caption{Poisson, $\qopt_G = 92$}
\label{tab:poisson_ke_worst_case}
\end{subtable}
}
\caption{\minedit{Impact of $\lambda$ and $N$ on policy performance in the knife-edge regime. Here, $\rho = 0.9$.  Values to the left of the thick vertical line correspond to the unidentifiable regime, where we report $\Regret(q^\pi) - \risk$ and $\mathcal{R}^{ui}(q^\pi)\%$; values to the right of the thick vertical line correspond to the identifiable regime, where we report $\UncensoredRegret(q^\pi)$ and $\mathcal{R}^{id}(q^\pi)\%$.}}
\label{tab:combined_ke_results}
\end{table}

\smallskip 

\paragraph{Comparison of Vanilla Regret.} We next compare the vanilla regret of all algorithms in \Cref{tab:vanilla-rel-vs-lam-ke}. The results are intuitive, given the above discussion. (We do not discuss results for $\lambda > \qopt_G$ below, given that they are identical to those reported in \Cref{tab:combined_ke_results}.) 

Fixing $N = 500$, all algorithms output $\lambda$ in the majority of replications when $\lambda < \qopt_G$ (except in the case of the Poisson distribution, for $\lambda = 87.4$). Since $\lambda$ is close to $\qopt_G$ in these settings, all algorithms exhibit low vanilla regret; this performance only improves as $\lambda$ increases and approaches $\qopt_G$. The exception to this is $\lambda = 87.4$ under the Poisson distribution where, as discussed above, \ALG correctly classifies the problem as unidentifiable in the majority of replications. For this specific instance, $\qcrit_G \approx 203$; hence, $|\qcrit_G-\qopt_G| \gg |\qopt_G-\lambda|$, which explains why the vanilla regret of \ALG is higher than that of \KM and \CensoredSAA.

Finally, the vanilla regret of \ALG weakly decreases as $N$ increases (again, except for the isolated instance under the Poisson distribution). This result is slightly unintuitive, since \ALG outputs an estimate of $\qcrit_G$ {\it more} frequently as $N$ increases. However, for these instances, $\qcrit_G$ is actually {\it closer} to $\qopt_G$ than $\lambda$ is; hence, worst-case and vanilla regret of \ALG are directionally aligned. This is not the case for $\lambda = 87.4$ under the Poisson distribution, where $\qcrit_G \gg \qopt_G$, as discussed above.

\begin{table}[t]
\centering
\small
\minedit{
\setlength\tabcolsep{3pt}
\begin{subtable}[b]{\linewidth}
\begin{tabular}{ll|rr!{\vrule width 1.5pt}rr}
\toprule
Algorithm & $N$ & 84.55 & 87.51 & 90.48 & 93.45 \\
\midrule

\multirow{3}{*}{\ALG}
& 100  & 1.2 (2.7\%) & 0.2 (0.4\%) & 0.0 (0.1\%) & 0.8 (1.7\%) \\
& 250  & 1.2 (2.6\%) & 0.2 (0.4\%) & 0.0 (0.1\%) & 0.8 (1.7\%) \\
& 500  & 0.9 (1.9\%) & 0.2 (0.4\%) & 0.0 (0.1\%) & 0.7 (1.6\%) \\

\midrule

\multirow{3}{*}{\KM}
& 100  & 1.3 (2.9\%) & 0.4 (0.8\%) & 0.3 (0.7\%) & 0.5 (1.0\%)  \\
& 250  & 1.2 (2.7\%) & 0.2 (0.5\%) & 0.1 (0.2\%) & 0.2 (0.3\%)  \\
& 500  & 1.2 (2.7\%) & 0.2 (0.5\%) & 0.0 (0.1\%) & 0.1 (0.1\%)  \\

\midrule

\multirow{3}{*}{\CensoredSAA}
& 100  & 1.3 (3.0\%) & 0.4 (1.0\%) & 0.5 (1.1\%) & 0.6 (1.2\%)  \\
& 250  & 1.2 (2.7\%) & 0.3 (0.6\%) & 0.1 (0.2\%) & 0.2 (0.4\%)  \\
& 500  & 1.2 (2.7\%) & 0.2 (0.5\%) & 0.1 (0.1\%) & 0.1 (0.1\%)  \\

\bottomrule
\end{tabular}
\centering
\caption{Uniform, $\qopt_G = 89$}
\label{tab:uniform_vanilla_ke}
\end{subtable}

\begin{subtable}[b]{\linewidth}
\begin{tabular}{ll|rr!{\vrule width 1.5pt}rr}
\toprule
Algorithm & $N$
 & 174.96 & 181.10 & 187.24 & 193.38  \\
\midrule

\multirow{3}{*}{\ALG}
& 100  & 0.7 (0.4\%) & 0.2 (0.1\%) & 0.0 (0.0\%) & 0.4 (0.2\%)  \\
& 250  & 0.4 (0.2\%) & 0.0 (0.0\%) & 0.0 (0.0\%) & 0.4 (0.2\%)  \\
& 500  & 0.4 (0.2\%) & 0.0 (0.0\%) & 0.0 (0.0\%) & 0.4 (0.2\%)  \\

\midrule

\multirow{3}{*}{\KM}
& 100  & 1.5 (0.8\%) & 1.5 (0.8\%) & 1.2 (0.7\%) & 2.1 (1.1\%)  \\
& 250  & 0.7 (0.4\%) & 0.4 (0.2\%) & 0.5 (0.3\%) & 0.6 (0.4\%)  \\
& 500  & 0.6 (0.3\%) & 0.2 (0.1\%) & 0.2 (0.1\%) & 0.3 (0.2\%)  \\

\midrule

\multirow{3}{*}{\CensoredSAA}
& 100  & 2.1 (1.1\%) & 2.7 (1.5\%) & 1.9 (1.0\%) & 2.8 (1.5\%)  \\
& 250  & 0.9 (0.5\%) & 0.6 (0.3\%) & 0.8 (0.4\%) & 0.8 (0.4\%)  \\
& 500  & 0.6 (0.3\%) & 0.3 (0.2\%) & 0.3 (0.2\%) & 0.4 (0.2\%)  \\

\bottomrule
\end{tabular}
\centering
\caption{Exponential, $\qopt_G = 184.14$}
\label{tab:exponential_vanilla_ke}
\end{subtable}

\begin{subtable}[b]{\linewidth}
\begin{tabular}{ll|rr!{\vrule width 1.5pt}rr}
\toprule
Algorithm & $N$ & 87.40 & 90.46 & 93.53 & 96.60  \\
\midrule

\multirow{3}{*}{\ALG}
& 100  & 26 (164\%) & 1.4 (8.9\%) & 0.3 (2.1\%) & 1.9 (12\%)  \\
& 250  & 93 (576\%) & 0.1 (0.8\%) & 0.3 (2.1\%) & 0.7 (4.5\%)  \\
& 500  & 108 (675\%) & 1.9 (12\%) & 0.3 (2.0\%) & 0.0 (0.3\%)  \\

\midrule

\multirow{3}{*}{\KM}
& 100  & 2.0 (12\%) & 0.2 (1.3\%) & 0.2 (1.1\%) & 0.3 (1.7\%)  \\
& 250  & 2.0 (12\%) & 0.2 (0.9\%) & 0.1 (0.6\%) & 0.1 (0.5\%)  \\
& 500  & 2.0 (12\%) & 0.1 (0.8\%) & 0.0 (0.3\%) & 0.0 (0.3\%)  \\

\midrule

\multirow{3}{*}{\CensoredSAA}
& 100  & 2.0 (12\%) & 0.2 (1.4\%) & 0.2 (1.1\%) & 0.3 (1.8\%)  \\
& 250  & 2.0 (12\%) & 0.2 (0.9\%) & 0.1 (0.6\%) & 0.1 (0.5\%)  \\
& 500  & 2.0 (12\%) & 0.1 (0.8\%) & 0.1 (0.3\%) & 0.0 (0.3\%)  \\

\bottomrule
\end{tabular}
\centering
\caption{Poisson, $\qopt_G = 92$}
\label{tab:poisson_vanilla_ke}
\end{subtable}
}
\caption{\minedit{Impact of $\lambda$ and $N$ on policy performance for vanilla regret in the knife-edge regime. Here, $\rho = 0.9$. Across all values of $\lambda$ we report $\UncensoredRegret(q^\pi)$ and $\mathcal{R}^{id}(q^\pi)\%$.}}
\label{tab:vanilla-rel-vs-lam-ke}
\end{table}
}

\clearpage

\subsubsection{\minedit{Robustifying the Kaplan-Meier Estimator}}
\label{app:sims_robust_km}

\begin{table}[!ht]
\minedit{
\setlength\tabcolsep{3pt}
\small
\begin{subtable}[b]{\linewidth}
\centering
\begin{tabular}{lrrrr!{\vrule width 1.5pt}rrrr}
\toprule
$\lambda$ & 44.50 & 57.21 & 69.93 & 82.64 & 95.36 & 108.07 & 120.79 & 133.50 \\
\midrule
\ALG & 4.0 (1.7\%) & 6.7 (3.3\%) & 7.7 (4.5\%) & 27 (27\%) & 0.1 (0.2\%) & 0.1 (0.2\%) & 0.1 (0.1\%) & 0.1 (0.2\%) \\
\KM & 1033 (450\%) & 653 (320\%) & 341 (200\%) & 70 (70\%) & 0.1 (0.1\%) & 0.1 (0.1\%) & 0.0 (0.1\%) & 0.1 (0.1\%) \\
\RKM & 4.0 (1.7\%) & 6.7 (3.3\%) & 7.7 (4.5\%) & 27 (27\%) & 0.1 (0.2\%) & 0.1 (0.1\%) & 0.0 (0.1\%) & 0.1 (0.1\%)
\\
\bottomrule
\end{tabular}
\caption{Uniform, $\qopt_G = 89$}
\label{tab:rkm_uniform}       
\end{subtable}
\medskip
\begin{subtable}[b]{\linewidth}
\centering
\begin{tabular}{lrrrr!{\vrule width 1.5pt}rrrr}
\toprule
$\lambda$ & 92.07 & 118.38 & 144.68 & 170.99 & 197.30 & 223.60 & 249.91 & 276.22 \\
\midrule
\ALG & 6.7 (4.2\%) & 6.9 (6.0\%) & 21 (29\%) & 4.3 (19\%) & 1.0 (0.5\%) & 7.4 (4.0\%) & 3.9 (2.1\%) & 0.8 (0.5\%) \\
\KM & 345 (216\%) & 148 (128\%) & 45 (64\%) & 4.5 (19\%) & 0.6 (0.3\%) & 0.6 (0.3\%) & 0.5 (0.3\%) & 0.5 (0.3\%) \\
\RKM & 6.7 (4.2\%) & 6.9 (6.0\%) & 21 (29\%) & 4.3 (19\%) & 1.0 (0.5\%) & 6.8 (3.7\%) & 2.9 (1.6\%) & 0.5 (0.5\%) \\
\bottomrule
\end{tabular}
\caption{Exponential, $\qopt_G = 184.21$}
\label{tab:tkm_exponential}       
\end{subtable}
\medskip
\begin{subtable}[b]{\linewidth}
\centering
\begin{tabular}{lrrrr!{\vrule width 1.5pt}rrrr}
\toprule
$\lambda$ & 46 & 59.14 & 72.29 & 85.43 & 98.57 & 111.71 & 124.86 & 138 \\
\midrule
\ALG & 0.0 (0.0\%) & 0.5 (0.2\%) & 2.4 (1.1\%) & 7.1 (4.7\%) & 0.1 (0.3\%) & 0.0 (0.3\%) & 0.1 (0.3\%) & 0.1 (0.4\%) \\
\KM & 2260 (900\%) & 2131 (891\%) & 1542 (697\%) & 247 (165\%) & 0.1 (0.3\%) & 0.0 (0.3\%) & 0.1 (0.3\%) & 0.1 (0.4\%) \\
\RKM & 0.0 (0.0\%) & 0.5 (0.2\%) & 2.4 (1.1\%) & 7.1 (4.7\%) & 0.1 (0.3\%) & 0.0 (0.3\%) & 0.1 (0.3\%) & 0.1 (0.4\%) \\
\bottomrule
\end{tabular}
\caption{Poisson, $\qopt_G = 92$}
\label{tab:rkm_poisson}       
\end{subtable}
\centering
\caption{
\minedit{Comparison of \ALG, \KM, and \RKM algorithms for $\rho = 0.9$.  Values to the left of the thick vertical line correspond to the unidentifiable regime, where we report $\Regret(q^\pi) - \risk$ and $\mathcal{R}^{ui}(q^\pi)\%$; values to the right of the thick vertical line correspond to the identifiable regime, where we report $\UncensoredRegret(q^\pi)$ and $\mathcal{R}^{id}(q^\pi)\%$.}}
\label{tab:rkm_comparison}
}
\end{table}

\minedit{In this section we demonstrate the flexibility of our robust framework by implementing \RKM, a ``robust'' variant of the \KM algorithm.  \RKM is identical to \ALG in all regimes except the ``likely identifiable'' regime, where it outputs the $\rho$-quantile of the KM estimator, instead of the SAA estimate used by \ALG. 

We compare the performance of \KM, \RKM, and \ALG in \Cref{tab:rkm_comparison} for various values of $\lambda$, as described in \Cref{sec:experiment_setup}, and letting $\rho = 0.9$ and $N = 500$. 
For the five smallest values of $\lambda$, \RKM and \ALG coincide across all demand families, far outperforming \KM. This behavior is expected, since both algorithms output the same classification (either ``knife-edge'' or ``likely unidentifiable'') in these regimes. Moreover, conditional on an instance being classified in either of these two ways, the algorithms output the same quantity. For larger values of $\lambda$, we observe that \RKM slightly outperforms \ALG, with the most pronounced improvement in the exponential case, for $\lambda = 223.60$ and $\lambda = 249.91$. The reason for this is that, for these values of $\lambda$, while the classification of the two algorithms may be the same, when the instance is classified as ``likely identifiable,'' \RKM outputs the $\rho$-quantile of the KM estimator, which slightly outperforms the censored SAA estimate in the identifiable regime. Finally, we remark that, in the case of the exponential distribution, \KM outperforms \RKM throughout the identifiable regime. This is the cost of robustness: in this regime, \RKM makes classification errors in which it outputs $\lambda$ instead of the KM estimator.

Overall, these results highlight the flexibility of our framework as a ``robustification'' tool to adapt optimal algorithms for the uncensored setting to the censored setting.
}

\section{On The Value of Information: The Case of Globally Well-Separated Distributions}
\label{app:well_separated}

In this section we study how to leverage additional distributional information to improve decision-making in the face of censored data. We do so by considering the setting where the demand distribution is {\it globally well-separated}, and the decision-maker knows a lower bound $\gamma > 0$ on its pdf. For this setting, our goal is to quantify (i) the {\em value of information}, measured as the reduction in minimax risk when additional well-separatedness information is available, and (ii) the resulting {\em sample complexity improvements} of a modified version of \ALG that explicitly exploits this information to detect whether the problem instance is identifiable or not. In the remainder of this section, we let $\riskws$ and $\qriskws$ respectively denote the minimax risk and minimax optimal ordering quantity under the well-separatedness condition.

\subsection{Characterizing the Minimax Risk}

We first characterize $\riskws$ and $\qriskws$ in this setting. Under this additional distributional information, the ambiguity set is given by:
\begin{equation}
\label{eq:well_separated_ambig}
\ambig{\lambda} =\left\{F \in \F \mid F(x) = G(x) \ \forall \ x < \lambda, \,\, f(x) \geq \gamma \text{ on its support}, \qopt_F \leq M \right\}.
\end{equation}

The following lemma will be useful in our derivation of $\qriskws$. We defer its proof to Appendix \ref{apx:ws-useful-facts}.
\begin{lemma}\label{lem:ws-useful-facts}
For all $F\in\ambig{\lambda}$, $F$ is a distribution with bounded support, with $f(x) = 0$ for all $x > \frac1\gamma$. Moreover, \minedit{if $\Gminus(\lambda) \leq \rho$,}  $\qopt_F \leq \lambda + \frac{\rho - \Gminus(\lambda)}{\gamma}$.
\end{lemma}

Noting that $\Gminus(\lambda) \geq \lambda\gamma \implies \lambda + \frac{\rho-\Gminus(\lambda)}{\gamma} \leq \frac\rho\gamma \leq \frac1\gamma$, we let $\Msep = \min\left\{M,\lambda + \frac{\rho - \Gminus(\lambda)}{\gamma}\right\}$ and modify the definition of $\ambig{\lambda}$ in \eqref{eq:well_separated_ambig} to be such that $\qopt_F \leq \Msep$ \minedit{when $\Gminus(\lambda) \leq \rho$}. 

\begin{theorem}
\label{thm:risk_well_separated}
Consider a data-driven censored newsvendor problem with well-separated demand distribution $G$ and observable boundary $\lambda$. Then, given $\gamma > 0$:
\begin{enumerate}
\item If $\Gminus(\lambda) \geq \rho$, $\qriskws = \qopt_G < \lambda$, and $\riskws = 0$.
\item If $\Gminus(\lambda) < \rho$, 
\[\qriskws =  \lambda + \frac{1-\Gminus(\lambda)-\sqrt{(1-\Gminus(\lambda))^2 + (\rho-\Gminus(\lambda)-\gamma(\Msep-\lambda))^2-(\rho-\Gminus(\lambda))^2}}{\gamma} \geq \lambda.\]
Moreover, \[\riskws = (b+h)(1-\rho)\cdot\frac{1-\Gminus(\lambda)-\sqrt{(1-\Gminus(\lambda))^2 + (\rho-\Gminus(\lambda)-\gamma(\Msep-\lambda))^2-(\rho-\Gminus(\lambda))^2}}{\gamma}.\]
\end{enumerate}
\end{theorem}

To gain further insight into the value of distributional information in this setting, in \Cref{fig:del-diff} we plot $\risk-\riskws$ as a function of $\gamma$, when $D \sim \text{Unif}[0,1]$ and {$b = 9, h = 1$ so $\rho = 0.9$}.  We observe that $\risk-\riskws$ is increasing in $\gamma$, for fixed $\lambda$. This phenomenon is intuitive, since the size of the ambiguity set decreases as $\gamma$ increases; this supports the interpretation of $\gamma$ as the quality of the decision-maker's additional information. We additionally observe that, for fixed $\gamma$, $\risk-\riskws$ is concave decreasing in $\lambda$. This shows that the decision-maker has the most to gain from additional distributional information when the dataset is highly censored and therefore relatively uninformative. 

\Cref{fig:q-diff} further illustrates that $\gamma$ not only affects the worst-case regret of the decision-maker, but also her minimax optimal ordering quantity. In particular, we observe that $\qriskws$ is {\it lower} than $\qrisk$ for all values of $\lambda$ and $\gamma$. Moreover, for fixed $\lambda$, the difference between $\qrisk$ and $\qriskws$ is increasing in $\gamma$. The reason for this is also intuitive: as $\gamma$ increases, the decision-maker is guaranteed that a larger amount of mass resides close to $\lambda$, thereby reducing the likelihood of large realizations of $D$ and lowering the minimax optimal ordering quantity as a result. 
\begin{figure}[t]
    \centering
        \begin{subfigure}[b]{0.48\textwidth}
        \centering
        \includegraphics[width=\textwidth]{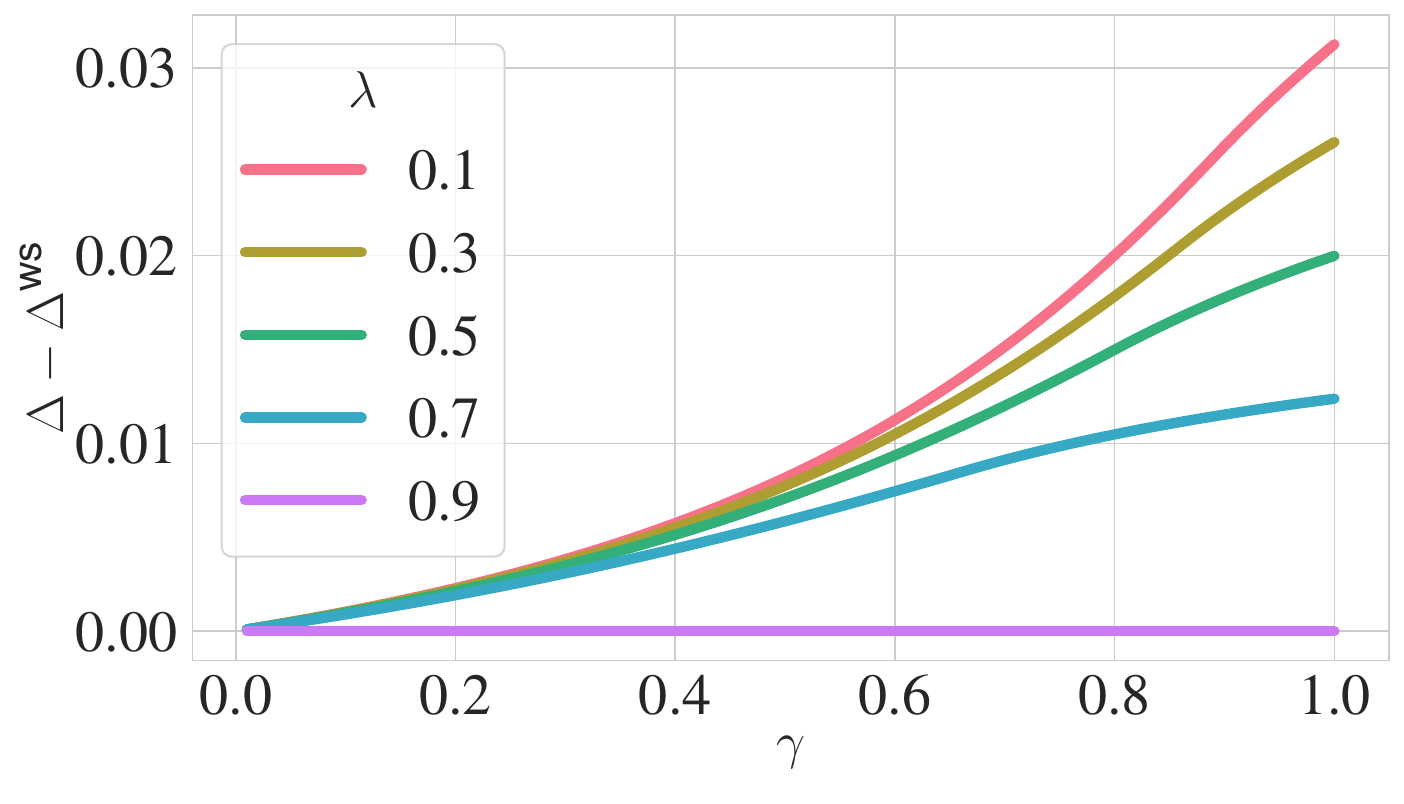}
        \caption{$\risk-\riskws$ vs. $\gamma$}
        \label{fig:del-diff}
    \end{subfigure}
    \hfill
    \begin{subfigure}[b]{0.48\textwidth}
        \centering
        \includegraphics[width=\textwidth]{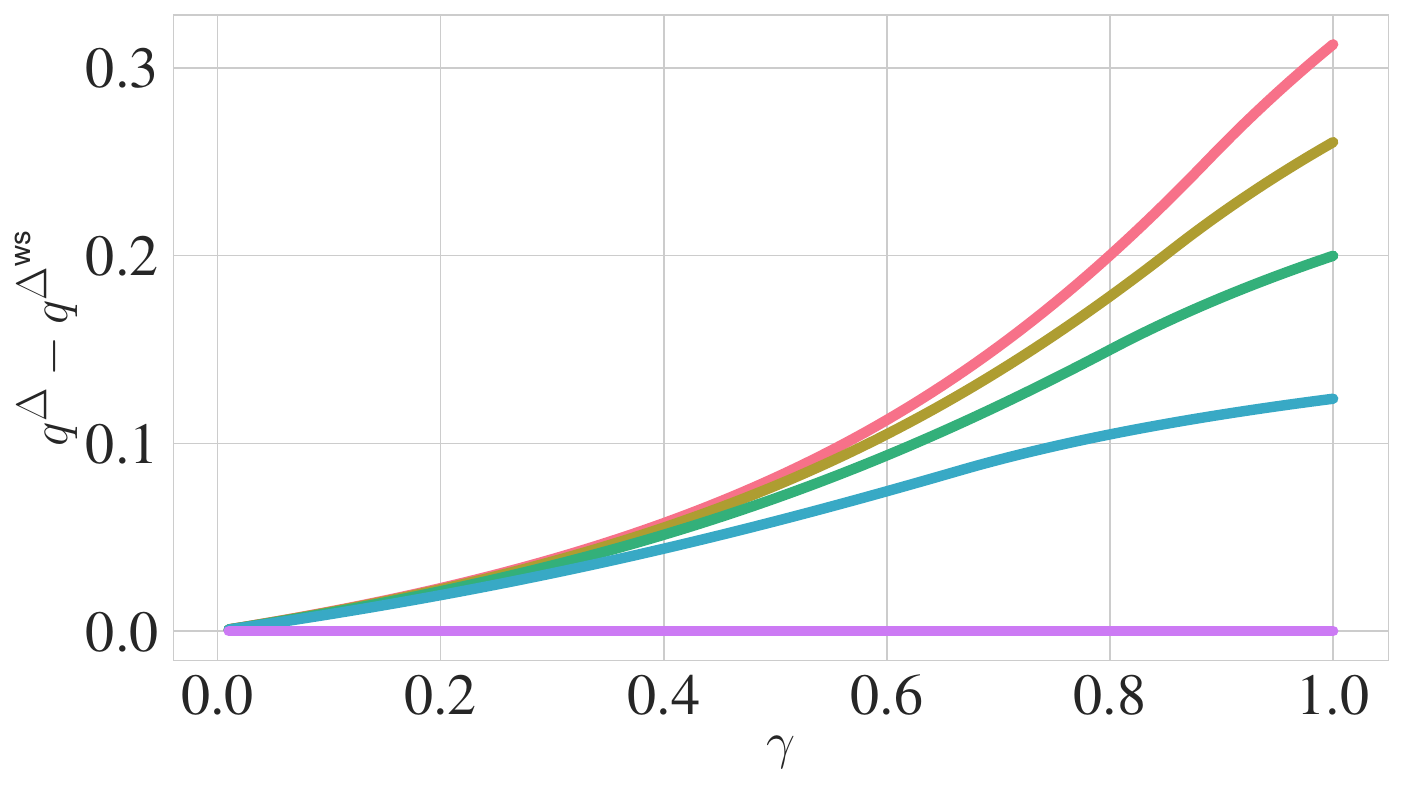}
        \caption{$\qrisk-\qriskws$ vs. $\gamma$}
        \label{fig:q-diff}
    \end{subfigure}
    \caption{Dependence of $\qrisk-\qriskws$ and $\risk-\riskws$ on $\gamma$, for $D\sim\text{Unif}[0,1]$. Here, we assume $M = 1$, $b = 9$ and $h = 1$.}
    \label{fig:well-sep}
\end{figure}

\begin{proof}{\it Proof of \Cref{thm:risk_well_separated}.}
The proof of the first fact is identical to the more general setting we consider in \Cref{thm:minimax-risk-identifiable}; we omit it as such. In the remainder of the proof we focus on the case where $\Gminus(\lambda) < \rho$. The proof proceeds in the same fashion as that of \Cref{thm:minimax-risk-identifiable}. Namely, we first provide closed-form expressions for the worst-case regret over $\ambig{\lambda}$, given $q \leq \Msep$.

\begin{lemma}\label{lem:reg-well-sep}
\begin{align*}
\Regret(q) = \begin{cases}
(b+h)\left(\rho(\Msep-q) + \E_G\left[(q-D)\Ind{D \leq q}\right] - \E_G\left[(\Msep-D)\Ind{D < \lambda}\right] - \frac\gamma2(\Msep-\lambda)^2\right) \quad \text{if } q < \lambda \\
(b+h)\max\left\{(1-\rho)(q-\lambda), (\Msep-q)\rho-\Gminus(\lambda)-\gamma(q-\lambda))-\frac\gamma2(\Msep-q)^2\right\}  \quad \text{if } q \geq \lambda. 
\end{cases}
\end{align*}
\end{lemma}

We defer the proof of the lemma to Appendix \ref{apx:reg-well-sep}. We use these expressions to compute the minimax optimal ordering quantity $\qcrit_G$ in this case. For all $q < \lambda$, we have:
\begin{align*}
\frac{d}{dq}\left[\rho(\Msep-q) + \E_G\left[(q-D)\Ind{D \leq q}\right] - \E_G\left[(\Msep-D)\Ind{D < \lambda}\right] - \frac\gamma2(\Msep-\lambda)^2\right] = -\rho + G(q) < 0, 
\end{align*}
since $\Gminus(\lambda) < \rho$. Therefore:
\begin{align*}
\inf_{q < \lambda}\Regret(q) &= (b+h)\left(\rho(\Msep-\lambda) + \E_G\left[(\lambda-D)\Ind{D \leq \lambda}\right] - \E_G\left[(\Msep-D)\Ind{D < \lambda}\right] - \frac\gamma2(\Msep-\lambda)^2\right)\\
&= (b+h)\left(\rho(\Msep-\lambda) - (\Msep-\lambda)\Gminus(\lambda) - \frac\gamma2(\Msep-\lambda)^2\right)\\
&= (b+h)\left((\Msep-\lambda)(\rho-\Gminus(\lambda)) - \frac\gamma2(\Msep-\lambda)^2\right) = \Regret(\lambda).
\end{align*}
Thus we have, 
$\qcrit_G = \arg \min_{q \geq \lambda}\max\left\{(1-\rho)(q-\lambda), (\Msep-q)(\rho-\Gminus(\lambda)-\gamma(q-\lambda))-\frac\gamma2(\Msep-q)^2\right\}$. Equivalently, $\qcrit_G$ is the smallest solution to:
\begin{align*}
&(1-\rho)(q-\lambda) = (\Msep-q)(\rho-\Gminus(\lambda)-\gamma(q-\lambda))-\frac\gamma2(\Msep-q)^2\\ \iff &\qcrit_G = \lambda + \frac{1-\Gminus(\lambda)-\sqrt{(1-\Gminus(\lambda))^2 + (\rho-\Gminus(\lambda)-\gamma(\Msep-\lambda))^2-(\rho-\Gminus(\lambda))^2}}{\gamma} \geq \lambda,
\end{align*}
where the final solution simply follows from algebra; we omit it as such.
\hfill\Halmos
\end{proof}

\subsubsection{Proof of \Cref{lem:ws-useful-facts}}\label{apx:ws-useful-facts}

\begin{proof}{\it Proof.}
To see that $f(x) = 0$ for all $x > \frac1\gamma$, note that $F(x) \geq \gamma x$ for all $x$ in the support of $F$. Therefore, $F(x) = 1$ for some $x \leq \frac1\gamma$, which proves the claim.

For the lower bound on $\qopt_F$, \minedit{since $\qopt_F \geq \lambda$ whenever $\Gminus(\lambda) \leq \rho$, we have:}
\[
\qopt_F = \inf \{q > 0 : F(q) \geq \rho \} \leq \inf \{ q > 0 : \Gminus(\lambda) + \gamma(q - \lambda) \geq \rho\}.
\]
Rearranging the above inequality yields the result.

\hfill\Halmos
\end{proof}

\subsubsection{Proof of \Cref{lem:reg-well-sep}}\label{apx:reg-well-sep}

\begin{proof}{\it Proof.}
As in the proof of \Cref{lem:sup_expression_identifiability}  {(see~\cref{eq:worst-case-rewrite})}, it suffices to analyze:
\begin{align}\label{eq:come-back-to-this}
&\sup_{F \in \ambig{\lambda}} C_F(q) - C_F(\qopt_F) \notag \\
&=\sup_{\tilde{q}\in[\lambda,\Msep]}b(\tilde{q} - q) + (b+h)\left(\sup_{\substack{F\in \ambig{\lambda}:\\\qopt_F=\tilde{q}}} \E_F\left[(q-D)\Ind{D \leq q} - (\tilde{q}-D)\Ind{D \leq \tilde{q}}\right]\right) \notag \\
&= (b+h)\left(\sup_{\tilde{q}\in[\lambda,\Msep]}\rho(\tilde{q} - q) + \left(\sup_{\substack{F\in \ambig{\lambda}:\\\qopt_F=\tilde{q}}} \E_F\left[(q-D)\Ind{D \leq q} - (\tilde{q}-D)\Ind{D \leq \tilde{q}}\right]\right) \right)
\end{align}

Note that, for $\tilde{q} = q$, $\eqref{eq:come-back-to-this} = 0$. Therefore, we partition our analysis into two cases: $\tilde{q} < q$ and $\tilde{q} > q$.

\begin{enumerate}[(i)]
\item $q > \tilde{q} \geq \lambda$: In this case, by the proof of \Cref{lem:sup_expression_identifiability} {(\cref{eq:worst-case-1})}, the worst-case distribution $F$ that has $\tilde{q}$ as its newsvendor solution maximizes:
\begin{align}\label{eq:sep-1}
\E_F\left[(q-\tilde{q})\Ind{D \leq \tilde{q}} + ({q}-D)\Ind{\tilde{q} <D \leq q}\right]. 
\end{align}
Observe that $q-D < q-\tilde{q}$ for all $D \in (\tilde{q},q]$. Hence, any worst-case distribution $F$ should minimize $\Pr_F(D \in (\tilde{q},q])$, which occurs by letting $\tilde{q}$ be the maximum of the support of the distribution. Therefore:
\begin{align*}
\eqref{eq:sep-1} &= (q-\tilde{q})\\
\implies \Regret(q) &= (b+h)\sup_{\tilde{q} \in [\lambda, \Msep]}(1-\rho)(q-\tilde{q}) = (b+h)(1-\rho)(q-\lambda).
\end{align*}

\item $q < \lambda \leq \tilde{q}$. As in the proof of \Cref{lem:sup_expression_identifiability}  {(\cref{eq:worst_case_other_setting})}, for a fixed $\tilde{q}$, it suffices to find the distribution that maximizes:
\begin{align}\label{eq:sep-2}
\sup_{\substack{F\in\ambig{\lambda}:\\\qopt_F=\tilde{q}}} \E_G\left[(q-D)\Ind{D \leq q}\right] - \E_G\left[(\tilde{q}-D)\Ind{D < \lambda}\right] - \E_F\left[(\tilde{q}-D)\Ind{\lambda \leq D < \tilde{q}}\right].
\end{align}
Suppose first that $\tilde{q} > \lambda$.
Since $\tilde{q}-D > 0$ for all $D \in [\lambda,\tilde{q})$, a similar argument as above establishes that any worst-case distribution $F \in \ambig{\lambda}$ necessarily sets \minreplace{$\Pr_F(D \in [\lambda,\tilde{q})) = \gamma$}{$f(x) = \gamma$ for all $x \in [\lambda,\tilde{q})$}. Using this fact above, we obtain:
\begin{align*}
\eqref{eq:sep-2} &= \E_G\left[(q-D)\Ind{D \leq q}\right] - \E_G\left[(\tilde{q}-D)\Ind{D < \lambda}\right] - \gamma\int_{\lambda}^{\tilde{q}}(\tilde{q}-x)dx \\
&= \E_G\left[(q-D)\Ind{D \leq q}\right] - \E_G\left[(\tilde{q}-D)\Ind{D < \lambda}\right] - \frac\gamma2(\tilde{q}-\lambda)^2.
\end{align*}
Plugging this back into \eqref{eq:come-back-to-this}, we have:
\begin{align*}
\eqref{eq:come-back-to-this} = (b+h)\left(\sup_{\tilde{q}\in[\lambda,\Msep]} \rho(\tilde{q}-q) + \E_G\left[(q-D)\Ind{D \leq q}\right] - \E_G\left[(\tilde{q}-D)\Ind{D < \lambda}\right] - \frac\gamma2(\tilde{q}-\lambda)^2\right).
\end{align*}
By the first-order condition, the unconstrained supremum of the above is achieved at the smallest value of $\tilde{q}$ such that:
\begin{align*}
&\rho-\Gminus(\lambda)-\gamma(\tilde{q}-\lambda) \leq 0
\iff \tilde{q} \geq \lambda + \frac{\rho-\Gminus(\lambda)}{\gamma}.
\end{align*}
Since $\tilde{q} \leq \Msep \leq \frac{\rho-\Gminus(\lambda)}{\gamma}$, the constrained supremum is attained at $\Msep$, and
\begin{align}\label{eq:q-greater-lam}
\Regret(q) = (b+h)\left(\rho(\Msep-q) + \E_G\left[(q-D)\Ind{D \leq q}\right] - \E_G\left[(\Msep-D)\Ind{D < \lambda}\right] - \frac\gamma2(\Msep-\lambda)^2\right).
\end{align}

Now, if $\tilde{q} = \lambda$:
\begin{align}\label{eq:q-equal-lam}
\Regret(q) = (b+h)\Big(\rho(\lambda-q) + \E_G\left[(q-D)\Ind{D \leq q}\right] - \E_G\left[(\lambda-D)\Ind{D < \lambda}\right]\Big).
\end{align}

Comparing \eqref{eq:q-greater-lam} and \eqref{eq:q-equal-lam}, setting $\tilde{q} > \lambda$ is optimal if and only if:
\begin{align*}
(\rho-\Gminus(\lambda))(\Msep-\lambda) - \frac\gamma2(\Msep-\lambda)^2 \geq 0 \iff \rho-\Gminus(\lambda)-\frac\gamma2(\Msep-\lambda)\geq 0.
\end{align*}
The above holds, since by definition $\Msep \leq \lambda+\frac{\rho-\Gminus(\lambda)}{\gamma}$, and $\rho-\Gminus(\lambda) > 0$. 

\item $\lambda \leq q < \tilde{q}$: In this case, by the proof of \Cref{lem:sup_expression_identifiability}  {(\cref{eq:last_case_for_worst_case_distribution})}, it suffices to find a distribution $F$ that maximizes:
\begin{align}\label{eq:sep-3}
-(\tilde{q}-q)\Gminus(\lambda)-(\tilde{q}-q)\Pr_F(\lambda\leq D \leq q)-\E_F\bigg[(\tilde{q}-D)\Ind{{q} <D \leq \tilde{q}}\bigg]. 
\end{align}
Since $0 < \tilde{q}-D < \tilde{q}-q$ for all $D \in (q,\tilde{q})$, the supremum of the above is achieved for $F$ such that \mbox{$f(x) = \gamma$ for all $x \in [\lambda, \tilde{q})$}. In this case, we have:
\begin{align*}
\eqref{eq:sep-3} &= -(\tilde{q}-q)\Gminus(\lambda)-\gamma(\tilde{q}-q)(q-\lambda)-\gamma\int_{q}^{\tilde{q}}(\tilde{q}-x)dx \\
&= -(\tilde{q}-q)\left(\Gminus(\lambda)+\gamma(q-\lambda)\right)-\frac\gamma2(\tilde{q}-q)^2.
\end{align*}
Plugging this back into \eqref{eq:come-back-to-this}, we have:
\begin{align*}
\eqref{eq:come-back-to-this} = (b+h)\left(\sup_{\tilde{q}\in[\lambda,\Msep]}\rho(\tilde{q}-q) -(\tilde{q}-q)\left(\Gminus(\lambda)+\gamma(q-\lambda)\right)-\frac\gamma2(\tilde{q}-q)^2\right).
\end{align*}
By the first-order condition, the unconstrained supremum of the above is attained at the smallest $\tilde{q}$ such that:
\begin{align*}
\rho-(\Gminus(\lambda)+\gamma(q-\lambda))-\gamma(\tilde{q}-q) \leq 0
\iff \tilde{q} \geq q + \frac{\rho-(\Gminus(\lambda)+\gamma(q-\lambda))}{\gamma} = \lambda + \frac{\rho-\Gminus(\lambda)}{\gamma}.
\end{align*}
Again, using the fact that $\tilde{q} \leq \Msep \leq \lambda+\frac{\rho-\Gminus(\lambda)}{\gamma}$, we obtain that the constrained supremum is attained at $\tilde{q} = \Msep$, and
\[\Regret(q) = (b+h) \left((\Msep-q)(\rho-(\Gminus(\lambda)+\gamma(q-\lambda))-\frac\gamma2(\Msep-q)^2\right).\]
\end{enumerate}
Putting these three cases together, we obtain the result.
\hfill\Halmos
\end{proof}

\subsection{Practical Performance Improvements in the Knife-Edge Regime}
\label{app:sims_globally_well_separated}

We next numerically investigate how the well-separated condition can generate sample complexity gains in the knife-edge regime, where testing for identifiability is the hardest. To do so, we introduce a new algorithm, which we denote \ALGWS. Similar to \ALG, \ALGWS proceeds hierarchically, by first testing the identifiability regime, and then estimating the appropriate $\qriskws$. However, its identifiability test leverages the well-separatedness condition via the following observations:
\begin{enumerate}
    \item If $\gamma \lambda \geq \rho$, by the well-separatedness condition, $\Gminus(\lambda) \geq \lambda \gamma \geq \rho,$
    and the problem is guaranteed to be identifiable. In this case, no statistical test is needed.
    \item If $\Gminushat(\lambda) > \rho$, $\qopt_{\offeCDF} < \lambda$. Then, by the well-separatedness condition, with high probability:
\[
\Gminus(\lambda) \geq \Gminus(\qopt_{\offeCDF}) + \gamma(\lambda - \qopt_{\offeCDF}) \geq \Gminushatk{K}(\qopt_{\offeCDF}) - \width_K + \gamma(\lambda - \qopt_{\offeCDF}) \geq \rho - \width_K + \gamma(\lambda - \qopt_{\offeCDF}).
\]
Thus, if $\gamma(\lambda - \qopt_{\offeCDF}) \geq \width_K$, $\Gminus(\lambda) \geq \rho$, and the problem is identifiable.
    \item If $\Gminushatk{k}(\qoff_k) \geq \rho + \width_k - \gamma(\lambda - \qoff_k)$ for some $k \in [K]$, we similarly have:
    \[
    \Gminus(\lambda) \geq \Gminus(\qoff_k) + \gamma(\lambda - \qoff_k) \geq \Gminushatk{k}(\qoff_k) - \width_k + \gamma(\lambda - \qoff_k) \geq \rho,
    \]
    which again guarantees identifiability.
\end{enumerate} 
We use these three observations in our modified algorithm, presented in \Cref{alg:newsvendor_ws}.

\begin{algorithm}[!t]
\DontPrintSemicolon 
\KwIn{Censored demand samples $\soff$,  confidence terms $\width_k, k \in [K]$, well-separated parameter $\gamma$}
\KwOut{Ordering quantity $\qalg$}

For all $k \in [K]$, compute censored SAA of $\Gminus(\qoff_k)$: $\Gminushatk{k}{(\qoff_k)} = \frac{1}{N_k} \sum_{i \in [N_k]} \Ind{\soff_{ki} < \qoff_k}.$

Define $\Gminushat(\lambda) = \Gminushatk{K}(\qoff_K)$.

Compute censored SAA of $\qopt_G$: $\qopt_{\offeCDF} = \inf \left\{x \mid \offeCDF(x) \geq \rho \right\},$
where  $\offeCDF(x) = \frac{1}{N_K}\sum_{i \in [N_K]} \Ind{\soff_{iK} \leq x}.$

\uIf(\tcp*[h]{Likely identifiable}){$\gamma\lambda \geq \rho$ \emph{or} $\Gminushat(\lambda) \geq \rho+\width_K$ \emph{or} $\Gminushatk{k}(\qoff_k) \geq \rho + \width_k - \gamma(\lambda - \qoff_k)$ \emph{for some} $k \in [K]$}{
Let $\qalg = \qopt_{\offeCDF}$.
}
\uElseIf(\tcp*[h]{Borderline: use separation test}){$\Gminushat(\lambda) > \rho$}{
\uIf(\tcp*[h]{Separation overcomes noise}){$\gamma(\lambda - \qopt_{\offeCDF}) \geq \width_K$}{
    Let $\qalg = \qopt_{\offeCDF}$. \tcp*[h]{Likely identifiable}
    }
    \uElse{
        Let $\qalg = \lambda$. \tcp*[h]{Knife-edge}
    }
}
\uElseIf(\tcp*[h]{Knife-edge}){$\Gminushat(\lambda) \in (\rho-\zeta,\;\rho]$}{
    Let $\qalg = \lambda$.
}

\uElse(\tcp*[h]{Likely {unidentifiable}}){
    Compute empirical estimate of $\qcrit_G$:
    \begin{align}
    \label{eq:qcrithat_ws}
    \qcrithat \;=\; \frac{\Msep(\rho - \Gminushat(\lambda) + \gamma\lambda) + \lambda(1- \rho) - \frac\gamma2(\Msep^2 + \lambda^2)}{1 - \Gminushat(\lambda)} \quad \text{ where } \Msep = \min\left\{M, \frac{1}{\gamma}, \lambda + \frac{\rho - \Gminushat(\lambda)}{\gamma}\right\}.
    \end{align}
    Let $\qalg = \qcrithat$.
}
\Return{$\qalg$}
\caption{\textsf{Robust Censored Well-Separated Newsvendor} (\ALGWS)}
\label{alg:newsvendor_ws}
\end{algorithm}

\subsubsection{Experimental setup.} Across all experiments we let $h = 1$ and $b = 9$, so that $\rho = 0.9$.  We consider the following demand distributions:
\begin{itemize}
    \item {\em Continuous Uniform}: Demand is drawn from a continuous uniform distribution over $[0,100]$. We let $M = 100$, and $\gamma =  1/100$.
    \item {\em Truncated Exponential}: Demand is drawn from a truncated exponential distribution with rate $1 / 80$ with support over $[0,325]$. We let $M = 325$, and $\gamma = \frac{1}{80}e^{-325/80}$.
\end{itemize}
We moreover consider $K = 2$ historical ordering quantities and vary $\lambda$ as in \cref{sec:experiment_setup}.

\subsubsection{Results.}
We demonstrate the improvements in classifying the problem as identifiable by comparing the performance of \ALGWS to that of \ALG in the knife-edge regime where $\lambda \in [\qopt_G, 1.05\qopt_G]$. Additionally, in order to isolate the {\it sample complexity} gains of \ALGWS, we modify \ALG to instead estimate the value of $\qriskws$ associated with the well-separated setting. Our results can be found in \Cref{tab:rel-vs-lam-09_ws}. The performance of the two algorithms in the likely unidentifiable and likely identifiable regimes is similar, and omitted as such.

For the Continuous Uniform distribution (\Cref{tab:uniform_ws}), \ALGWS yields modest but consistent performance gains over \ALG across all tested values of $\lambda$. Specifically, for $\lambda \in \{90.89, 91.79, 92.69\}$, \ALGWS achieves a $0\%$ misclassification rate thanks to the ``extra conditions'' for identifiability, and therefore correctly outputs an estimate of $\qopt_G$. \ALG, on the other hand, misclassifies the problem as {\em knife-edge} in $100\%$ of replications, therefore outputting $\lambda$. For $\lambda = 93.59$ and $\lambda = 94.49$, \ALGWS applies the identifiability condition in $90\%$ and $60\%$ of runs respectively (again with no misclassifications), while \ALG continues to misclassify the problem at those same rates.

For the Truncated Exponential distribution (\Cref{tab:exponential_ws}), \ALG and \ALGWS achieve identical performance. This behavior arises because the chosen value of $\gamma$ severely underestimates the local probability density around $\lambda$, rendering the additional identifiability conditions in \ALGWS inactive. This highlights the importance of having tight bounds on $\gamma$ in order to achieve sample complexity gains from this additional distributional information.

\begin{table}[!t]
\setlength\tabcolsep{3pt}
\small
\begin{subtable}[b]{\linewidth}
\centering
\begin{tabular}{lrrrrr}
\toprule
$\lambda$ & 90.89 & 91.79 & 92.69 & 93.59 & 94.49 \\ \midrule
\TrueSAA &  0.1 (0.2\%) & 0.1 (0.2\%) & 0.1 (0.1\%) & 0.1 (0.2\%) & 0.1 (0.2\%) \\
\midrule
\ALG & 0.1 (0.2\%) & 0.2 (0.4\%) & 0.4 (0.9\%) & 0.7 (1.5\%) & 0.7 (1.5\%) \\
\ALGWS & 0.1 (0.3\%) & 0.1 (0.2\%) & 0.1 (0.2\%) & 0.1 (0.3\%) & 0.1 (0.3\%) \\
\bottomrule
\end{tabular}
\caption{Uniform, $\qopt_G = 89$}
\label{tab:uniform_ws}       
\end{subtable}
\medskip
\begin{subtable}[b]{\linewidth}
\centering
\begin{tabular}{lrrrrr}
\toprule
$\lambda$ & 186.69 & 190.63 & 194.58 & 198.52 & 202.47 \\ \midrule
\midrule
\TrueSAA & 0.2 (0.1\%) & 0.2 (0.1\%) & 0.3 (0.2\%) & 0.3 (0.2\%) & 0.2 (0.1\%) \\
\midrule
\ALG & 0.1 (0.0\%) & 0.2 (0.1\%) & 0.5 (0.3\%) & 1.1 (0.6\%) & 1.8 (1.1\%) \\
\ALGWS & 0.1 (0.0\%) & 0.2 (0.1\%) & 0.5 (0.3\%) & 1.1 (0.6\%) & 1.8 (1.1\%) \\
\bottomrule
\end{tabular}
\caption{Truncated Exponential, $\qopt_G = 184.14$}
\label{tab:exponential_ws}       
\end{subtable}
\centering
\caption{{$\UncensoredRegret(q^\pi)$ vs. $\lambda$. The relative vanilla regret $\mathcal{R}^{id}(q^\pi)\%$ is shown in parentheses.}}
\label{tab:rel-vs-lam-09_ws}
\end{table}

\end{APPENDICES}







\end{document}